\newtheorem{theorem}{Theorem}[section]
\newtheorem{proposition}[theorem]{Proposition}
\newtheorem{remark}[theorem]{Remark}
\numberwithin{equation}{section}
\newcommand{\supp}{\mathrm{supp}}
\newcommand{\gs}{\mathbb{X}}
\begin{document}

\title{Inverse modified scattering and polyhomogeneous expansions for the Vlasov--Poisson system}

\author[$*$ $\ddag$]{Volker Schlue}

\author[$\dag$]{Martin Taylor}

\affil[$\dag$]{\small Imperial College London,
Department of Mathematics,
South~Kensington~Campus,~London~SW7~2AZ,~United~Kingdom\vskip.2pc martin.taylor@imperial.ac.uk\vskip.2pc \ }

\affil[$*$]{\small University of Melbourne, School of Mathematics and Statistics, Parkville, VIC, 3010, Australia\vskip.2pc  volker.schlue@unimelb.edu.au\vskip.2pc \  }

\affil[$\ddag$]{\small Universit\"at M\"unster,
Mathematisches~Institut, Einsteinstrasse~62,~48149~M\"unster,~Germany\vskip.2pc  volker.schlue@uni-muenster.de\vskip.2pc \  }

\date{April 24, 2024}

\maketitle

\begin{abstract}
	We give a new proof of well posedness of the inverse modified scattering problem for the Vlasov--Poisson system: for every suitable scattering profile there exists a solution of Vlasov--Poisson which disperses and scatters, in a modified sense, to this profile.  Further, as a consequence of the proof, the solutions are shown to admit a polyhomogeneous expansion, to any finite but arbitrarily high order, with coefficients given explicitly in terms of the scattering profile.  The proof does not exploit the full ellipticity of the Poisson equation.
\end{abstract}

\tableofcontents

\section{Introduction}

The Vlasov--Poisson system describes the evolution of an ensemble of collisionless particles, interacting via a collectively generated gravitational or electrostatic potential force.  The system on $\mathbb{R}^3$, in the electrostatic case, takes the form
\begin{equation} \label{eq:VP1}
	\gs_{\phi} f = 0,
\end{equation}
\begin{equation} \label{eq:VP2}
	\Delta_{\mathbb{R}^3} \phi (t,x) = \varrho(t,x),
	\qquad
	\varrho(t,x) = \int_{\mathbb{R}^3} f(t,x,p) dp,
\end{equation}
where $f:I \times \mathbb{R}^{3} \times \mathbb{R}^3 \to [0,\infty)$, and $\phi, \varrho \colon I \times \mathbb{R}^{3} \to \mathbb{R}$, for a suitable interval $I \subset \mathbb{R}$, and the operator $\gs_{\phi}$ is defined by,
\[
	\gs_{\phi} = \partial_t + p^i \partial_{x^i} + \partial_{x^i} \phi (t,x) \partial_{p^i}.
\]

This article concerns the inverse scattering problem for the system \eqref{eq:VP1}--\eqref{eq:VP2}.
  The first main result is a new proof of the existence of future-global solutions which scatter in the modified sense to a given asymptotic profile.

\begin{theorem}[Inverse modified scattering map] \label{thm:intromain}
	For any smooth compactly supported function $f_{\infty} \colon \mathbb{R}^3 \times \mathbb{R}^3 \to [0,\infty)$, there exists $T_0 > 0$ and a smooth solution $f \colon [T_0,\infty) \times \mathbb{R}^3 \times \mathbb{R}^3 \to [0,\infty)$ of the Vlasov--Poisson system such that, for all $x,p\in \mathbb{R}^3$,
	\begin{equation} \label{eq:intromainconvergence}
		\lim_{t\to \infty} f\big(t,x+tp - \log t \, \nabla \phi_{\infty} ( p), p \big)
		=
		f_{\infty}(x,p),
	\end{equation}
	where $\phi_{\infty}$ is the unique solution of
	\begin{equation} \label{eq:intromainphiinfty}
		\Delta \phi_{\infty} (p) = \varrho_{\infty}(p),
		\qquad
		\phi_{\infty} (p) \to 0, \quad \text{as } \vert p \vert \to \infty,
		\qquad
		\varrho_{\infty}(p) = - \int_{\mathbb{R}^3} f_{\infty} (x,p) dx.
	\end{equation}
\end{theorem}

Theorem \ref{thm:intromain} was first obtained by Flynn--Ouyang--Pausader--Widmayer \cite{FOPW}, using a proof based on a certain \emph{pseudo-conformal inversion} of $\mathbb{R}_t \times \mathbb{R}_x^3 \times \mathbb{R}_p^3$.  Theorem~\ref{thm:intromain} has also recently been generalised to the Vlasov--Maxwell system independently, with a different method, by Bigorne \cite{Big23}.

\begin{remark}[Modified scattering]
  For free transport (namely if $f$ solves \eqref{eq:VP1} with $\phi \equiv0$), 
  the quantity $	f(t,x+t p,p)$ is independent of $t$.  We would say that a solution $f$ of Vlasov--Poisson \emph{scatters to free transport} if this quantity  was to converge to a function independent of $t$ as $t \to \infty$.
  We say that the solutions of Theorem \ref{thm:intromain} scatter \emph{in a modified sense} in view of the presence of the logarithmic correction involving $\nabla \phi_{\infty}$ in \eqref{eq:intromainconvergence}.  See Figure \ref{fig:log:traject} for a comparison of the trajectories $t\mapsto x+tp - \log t \, \nabla \phi_{\infty} ( p)$ with the free trajectories $t \mapsto x+t p$.
\end{remark}

\begin{figure}

  \centering

  \includegraphics[scale=0.8]{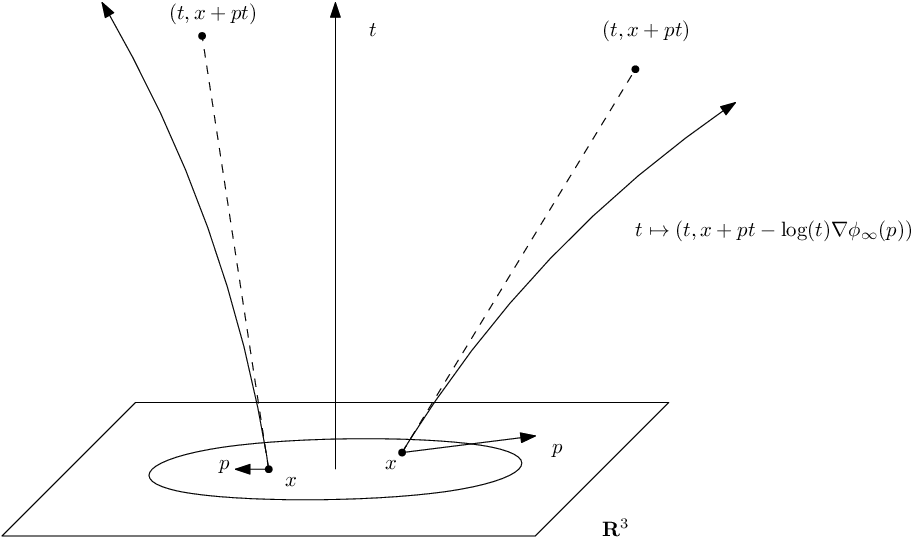}

  \caption{Trajectories of particles with initial position $x$ and momentum $p$ in the potential $\phi_\infty$.}

  \label{fig:log:traject}

\end{figure}

  Our proof of Theorem~\ref{thm:intromain} proceeds by constructing suitable \emph{approximate solutions} to \eqref{eq:VP1}--\eqref{eq:VP2}.
These are polyhomogeneous expansions of the form
\begin{equation} \label{eq:introansatz1}
	f_{[K]}(t,x,p)
	=
	\sum_{k=0}^K \sum_{l=0}^{k} \frac{(\log t)^l}{t^k} f_{k,l}\big(x-tp + \log t \, \nabla \phi_{\infty} ( p), p \big),
\end{equation}
\begin{equation} \label{eq:introansatz2}
	\phi_{[K]}(t,x)
	=
	\frac{1}{t}
	\sum_{k=0}^K \sum_{l=0}^{k} \frac{(\log t)^l}{t^k} \phi_{k,l} \left( \frac{x}{t} \right),
	\quad
	\varrho_{[K]}(t,x)
	=
	\frac{1}{t^3}
	\sum_{k=0}^K \sum_{l=0}^{k} \frac{(\log t)^l}{t^k} \varrho_{k,l} \left( \frac{x}{t} \right),
\end{equation}
  which are determined explicitly from the scattering profile $f_{\infty}$ alone.
  In fact, we show that these approximate solutions provide a detailed description the asymptotic behaviour of the solutions  in Theorem \ref{thm:intromain}:

\begin{theorem}[Polyhomogeneous expansion of modified scattering solutions] \label{thm:intromain2}
	For any smooth compactly supported function $f_{\infty} \colon \mathbb{R}^3 \times \mathbb{R}^3 \to [0,\infty)$, there are sequences of smooth compactly supported functions
	\[
		f_{k,l} \colon \mathbb{R}^3 \times \mathbb{R}^3 \to \mathbb{R},
		\qquad
		\phi_{k,l}, \varrho_{k,l} \colon \mathbb{R}^3 \to \mathbb{R},
		\qquad
		\text{for }
		k = 0,1,2,\ldots, \text{ and } l=0,\ldots k,
	\]
	defined explicitly in terms of $f_{\infty}$, with
	\[
		f_{0,0} = f_{\infty}, \qquad \varrho_{0,0} = \varrho_{\infty}, \qquad \phi_{0,0} = \phi_{\infty},
	\]
	such that, for any $K \in \mathbb{N}$, there exists $T_0 > 0$ such that the solution $(f,\varrho,\phi)$ of Theorem \ref{thm:intromain} satisfies, for all $(t,x,p) \in [T_0,\infty) \times \mathbb{R}^3 \times \mathbb{R}^3$,
	\begin{equation} \label{eq:fintromain}
		\Big\vert
		f(t,x,p)
		- f_{[K]}(t,x,p)
		\Big\vert
		\leq
		C_K \mathcal{F}_K \left( \frac{\log t}{t} \right)^{K+1},
	\end{equation}
	where $f_{[K]}$ is given by \eqref{eq:introansatz1}, and with $\phi_{[K]}$ and $\rho_{[K]}$ given by \eqref{eq:introansatz2},
	\begin{align}
		\Big\vert
		\nabla \phi(t,x)
		-
		\nabla \phi_{[K]}(t,x)
		\Big\vert
		&
		\leq
		C_K \mathcal{F}_K \frac{(\log t)^{K+1}}{t^{K+3}},
		\label{eq:phiintromain}
		\\
		\Big\vert
		\varrho(t,x)
		-
		\varrho_{[K]}(t,x)
		\Big\vert
		&
		\leq
		C_K \mathcal{F}_K \frac{(\log t)^{K+1}}{t^{K+4}},
		\label{eq:rhointromain}
	\end{align}
      	where $C_K>1$ is constant depending on $K$, and $\mathcal{F}_K$ is defined in terms of the $L^2$ norm of $f_{\infty}$, along with the $L^2$ norm of a large number (depending on $K$) of derivatives of $f_{\infty}$.  Similar statements to \eqref{eq:fintromain}--\eqref{eq:rhointromain} hold for, appropriately weighted, higher order derivatives.
\end{theorem}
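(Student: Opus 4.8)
The plan is to produce the coefficients $f_{k,l},\varrho_{k,l},\phi_{k,l}$ and the approximate solutions $f_{[K]},\varrho_{[K]},\phi_{[K]}$ explicitly from $f_{\infty}$, to show they solve \eqref{eq:VP1}--\eqref{eq:VP2} up to an error that is summable in $t$, and then to close the remaining system by a weighted energy estimate run backwards from $t=\infty$. First I would pass to the coordinates $(t,y,p)$ adapted to the modified free flow, $y = x - tp + \log t\,\nabla\phi_{\infty}(p)$, together with the self-similar variable $v = x/t = p - t^{-1}\bigl(y - \log t\,\nabla\phi_{\infty}(p)\bigr)$ in which the field is naturally expressed. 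In these coordinates the transport part of $\gs_{\phi}$ becomes $\partial_t + t^{-1}\nabla\phi_{\infty}(p)\cdot\nabla_y$, while $\nabla_x\phi_{[K]} = t^{-2}\sum (\log t)^l t^{-k}(\nabla\phi_{k,l})(v)$ contributes, at leading order, $-t^{-1}(\nabla\phi_{0,0})(p)\cdot\nabla_y$; with $\phi_{0,0}=\phi_{\infty}$ these cancel. This cancellation is exactly what the logarithmic correction is built to produce and is why the leftover error is $O(t^{-2})$ rather than $O(t^{-1})$.

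Inserting the polyhomogeneous ansatz \eqref{eq:introansatz1}--\eqref{eq:introansatz2} and expanding in powers of $t^{-1}$ and $\log t$ (Taylor expanding the fields' argument $v$ about $v=p$) organises $\gs_{\phi_{[K]}}f_{[K]}$, $\Delta_x\phi_{[K]}-\varrho_{[K]}$ and $\varrho_{[K]}-\int f_{[K]}\,dp$ as polyhomogeneous expressions whose coefficients at each order are determined by lower order data. This yields an explicit recursion: an algebraic relation determining $f_{k,l}$ at its own order (again thanks to the leading cancellation), then $\varrho_{k,l}$ by integrating $f_{k,l}$ and its $y$-corrections, then $\phi_{k,l}$ by solving a Poisson equation in $v$ with compactly supported smooth right-hand side. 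At this last step one uses only that the Newtonian potential of a compactly supported smooth function is smooth with the expected decay; no $L^p$ bound on second derivatives of $\phi_{k,l}$ enters, and all $f_{k,l}$ come out smooth and compactly supported in $(y,p)$, with norms controlled by a finite, $K$-dependent number of $L^2$ derivatives of $f_{\infty}$. Truncating at order $K$ leaves $\gs_{\phi_{[K]}}f_{[K]}$ of size $O\bigl((\log t)^{K+1}t^{-(K+2)}\bigr)$ in the relevant weighted norms, precisely integrable in $t$ to $O\bigl((\log t/t)^{K+1}\bigr)$.

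Next I would derive the equation for the remainder $g=f-f_{[K]}$, $\psi=\phi-\phi_{[K]}$, $\sigma=\varrho-\varrho_{[K]}=\int g\,dp$, $\Delta_x\psi=\sigma$, namely $\gs_{\phi_{[K]}}g = -\mathcal{E}_{[K]} - \nabla_x\psi\cdot\nabla_p f_{[K]} - \nabla_x\psi\cdot\nabla_p g$ with $\mathcal{E}_{[K]}=\gs_{\phi_{[K]}}f_{[K]}$, supplemented by the scattering condition $g\to0$ as $t\to\infty$; $\psi$ is slaved to $g$ through the Poisson equation, so this is a closed nonlinear nonlocal transport equation, which I would solve by a contraction in a weighted high order Sobolev space with norm of the form $\sup_{t\ge T_0}(\log t/t)^{-(K+1)}\sum_{|\alpha|\le N}\bigl\|w_\alpha(t)\,Z^\alpha g(t)\bigr\|_{L^2_{x,p}}$, where the $Z$ are the vector fields adapted to the modified flow (essentially $\nabla_y$, the $(y,p)$-frame version of $\nabla_p$, and multiplication by $y$ and $p$), the $w_\alpha$ are $t$-weights compensating their scaling, and $N$ is large enough for $H^N(\mathbb{R}^6)\hookrightarrow L^\infty$ and to absorb the commutators of the $Z$ with $\gs_{\phi_{[K]}}$ (these reintroduce $\nabla^2_x\phi_{[K]}$, which is explicitly bounded and $t$-decaying). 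Since $\gs_{\phi_{[K]}}$ is divergence-free in $(x,p)$, the basic transport estimate is $\tfrac{d}{dt}\|g\|\le\|S\|$ up to weight and commutator terms, so integrating from $t=\infty$ gives $\|g(t)\|\lesssim\int_t^\infty\|S(s)\|\,ds$; the error contributes $O\bigl((\log t/t)^{K+1}\bigr)$, and for $S$ I would use the sharp Newtonian bound obtained by exploiting that $\sigma(t,\cdot)$ is supported in $|x|\lesssim t$ while, for fixed $x$, the $p$-integral defining $\sigma$ runs over a ball of radius $\sim\log t/t$: this gives $\|\nabla_x\psi(t)\|\lesssim t^{-2}(\cdots)\|g(t)\|$, the extra $t^{-2}$ defeating the factor of $t$ produced by $\nabla_p f_{[K]}$ along the modified flow and leaving the linear-in-$g$ source of size $\lesssim t^{-1}(\log t/t)^{K+1}(\cdots)$, hence integrable; only first-derivative Newtonian estimates enter (extra derivatives transferred onto $\sigma$, hence onto $g$), which is the sense in which the full ellipticity of \eqref{eq:VP2} is not used. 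The linear feedback then contributes a factor $C(f_{\infty})/(K+1)$ times the norm and the quadratic term is $O(\|g\|^2)$, so taking the order of the expansion large enough in terms of $f_{\infty}$ (and $T_0$ large) makes the map a contraction; its fixed point is $g$, and $f=f_{[K]}+g$ is the desired solution. Positivity of $f$ follows since $f$ is constant along the characteristics of $\gs_{\phi}$, which are global and converge to modified-free curves along which $f\to f_{\infty}\ge0$; the bounds \eqref{eq:fintromain}--\eqref{eq:rhointromain} at this high order follow from the norm and Sobolev embedding, and at any lower order $K'$ (in particular the $K'=0$ case, which with $f_{0,0}=f_{\infty}$ is Theorem~\ref{thm:intromain}) by telescoping $f-f_{[K']}=g+\sum_{k=K'+1}^{K}\sum_l(\log t)^l t^{-k}f_{k,l}(y,p)$ and bounding each remaining term by $(\log t/t)^{K'+1}$; letting the order tend to infinity produces the full sequences $f_{k,l},\varrho_{k,l},\phi_{k,l}$, and weighted higher derivative estimates are already contained in the norm.

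The main obstacle I expect is the borderline time decay: the truncation error and the linear feedback term both sit exactly at the threshold of $t$-integrability needed to close the backward estimate, so the weights $w_\alpha$ and the order of the expansion must be chosen with care, and the estimate for a prescribed small $K$ is obtained only indirectly, by first running the contraction at a sufficiently high order determined by $f_{\infty}$ and then descending via the polyhomogeneous telescoping. Making the Newtonian-potential estimate sharp enough to extract the full $t^{-2}$ decay of $\nabla_x\psi$ from the compact $x$-support of $\sigma$ and the concentration of its $p$-integral — so that it genuinely beats the factor of $t$ from differentiating $f_{[K]}$ in momentum along the modified flow — while simultaneously controlling the commutators of the adapted vector fields with $\gs_{\phi_{[K]}}$ in the weighted norms, is the technical heart of the argument.
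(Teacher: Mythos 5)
Your construction of the approximate solutions --- the change of variables $y=x-tp+\log t\,\nabla\phi_\infty(p)$, the leading cancellation produced by the logarithmic correction, the Taylor expansion of $\nabla\phi_{[K]}(x/t)$ about $p$ and of the Jacobian $\det(\partial p/\partial y)$, the recursion determining $f_{k,l}$, then $\varrho_{k,l}$, then $\phi_{k,l}$, and the choice of $K$ large in terms of $f_\infty$ to close the borderline estimate --- coincides in substance with the paper's Section~\ref{section:expansions} and the heuristics of Section~\ref{subsec:introexistence}. Where you genuinely diverge is in how existence is established: you propose a contraction mapping in a weighted Sobolev space with the scattering condition imposed directly at $t=\infty$, whereas the paper considers a sequence of finite Cauchy problems on $[T_0,T_f]$ with final data $f\vert_{t=T_f}=f_{[K]}(T_f,\cdot,\cdot)$, proves uniform estimates on $[T_0,T_f]$ by bootstrap plus Gr\"onwall, and then shows the family $\{f^{(T_f)}\}$ is Cauchy on compact subintervals as $T_f\to\infty$. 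The two routes are interchangeable in spirit; the paper's buys simplicity by reducing everything to standard local existence for the Cauchy problem (Theorem~\ref{thm:localexistence}) and a limiting argument, and thus never has to produce solutions to a linearised transport equation with data prescribed at infinity, which in a fixed-point formulation you would still need to do --- typically by the very same approximation-by-finite-problems device, so the contraction framing tends to push the issue around rather than eliminate it. Your identification of the remaining mechanics --- the adapted vector fields and their commutators with $\gs_{\phi_{[K]}}$, the Hardy-type gradient bound for $\nabla\psi$ in place of full ellipticity, the $t^{-3}$ volume concentration of the $p$-support (note the radius of that ball is $\sim t^{-1}$, not $\log t/t$; only the integrated $t^{-3}$ is used, so this is harmless), and the telescoping to descend from a high order $K$ to a prescribed lower $K'$ --- matches the paper's Propositions~\ref{prop:phicheckho}, \ref{prop:rhocheckho}, \ref{prop:fcheckkcommuted}, \ref{prop:fcheckkho} and the final paragraphs of the proof in Section~\ref{section:logicofproof}.
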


Polyhomogeneous expansions of the form \eqref{eq:fintromain}--\eqref{eq:rhointromain} have also very recently been obtained independently, in a closely related setting, by Bigorne--Velozo Ruiz \cite{BiRu}.  See Theorem \ref{thm:BiRu} below.

  While the explicit expressions for $\varrho_{k,l}$, $\phi_{k,l}$ and $f_{k,l}$ are given in the proof of Theorem \ref{thm:approx} in Section \ref{subsec:thmapprox} below,
 we will discuss the constructions for $K=0$ and $K=1$  in detail in Section \ref{subsec:proofoverview}.
 The proof of Theorem \ref{thm:intromain} is based on the existence of expansions of the form \eqref{eq:fintromain}--\eqref{eq:rhointromain}, and thus the proof of both Theorem \ref{thm:intromain} and Theorem \ref{thm:intromain2} are established together.

  A similar approach, of using approximate solutions to finite order, has been used to obtain inverse scattering results for nonlinear wave equations in \cite{LS23}. \emph{Homogeneous} asymptotics as in \eqref{eq:introansatz2}, and their implications for the scattering problem for wave equations have recently been considered in \cite{LS24}.
  For another recent example in general relativity, of the broad strategy of using sequences of increasingly better approximate solutions to prove existence of solutions of an inverse scattering-type problem, see \cite{FoLu}.

  \begin{remark}[Motivation: elliptic and hyperbolic field equations] \label{rmk:ellipticity}
    A motivation to give a new proof of Theorem~\ref{thm:intromain} in this paper is to illustrate a method which can be applied to other collisionless kinetic equations, such as the Vlasov--Maxwell and Einstein--Vlasov systems.
In contrast to the  \emph{ellipticity} of the Poisson equation,
the field equations in each of these two examples are, appropriately interpreted, \emph{hyperbolic}.  As such, we have elected not to utilise the full ellipticity of the Poisson equation in the proof of Theorem \ref{thm:intromain}.  In concrete terms, this means that the estimate \eqref{eq:gradelliptic} below --- which has appropriate analogues
for hyperbolic operators --- is used, but not the estimate \eqref{eq:fullelliptic}.  We have also chosen, for this reason, to base the proof of Theorem \ref{thm:intromain} entirely on $L^2$ estimates.
\end{remark}

\begin{remark}[Simpler proof for small scattering data]
  The proof given in this paper simplifies considerably if the scattering profile $f_{\infty}$ is further assumed to be suitably small.
In this case, an approximation to order $K=0$ suffices; see discussion in Section~\ref{subsec:introexistence} below.
\end{remark}

The final main result concerns the uniqueness of the solutions of Theorem \ref{thm:intromain}, which is shown to hold in the class of solutions agreeing with the polyhomogeneous expansions of Theorem \ref{thm:intromain2} to sufficiently high order.

\begin{theorem}[Uniqueness of modified scattering solutions] \label{thm:intromain3}
	For any smooth compactly supported function $f_{\infty} \colon \mathbb{R}^3 \times \mathbb{R}^3 \to [0,\infty)$, the solution $(f,\varrho,\phi)$ of Theorem \ref{thm:intromain} is unique in the class of solutions which satisfy an expansion of the form \eqref{eq:fintromain} to sufficiently high order.
\end{theorem}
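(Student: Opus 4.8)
The plan is to argue by contradiction: suppose $(f,\varrho,\phi)$ is the solution produced by Theorem~\ref{thm:intromain} and that $(\tilde f,\tilde\varrho,\tilde\phi)$ is another solution of \eqref{eq:VP1}--\eqref{eq:VP2} on some $[\tilde T_0,\infty)\times\mathbb{R}^3\times\mathbb{R}^3$ which also satisfies the expansion \eqref{eq:fintromain}--\eqref{eq:rhointromain} to order $K$, for $K$ large to be fixed later, with the \emph{same} data $f_\infty$ (hence the same coefficients $f_{k,l},\phi_{k,l},\varrho_{k,l}$, since these are determined explicitly by $f_\infty$ alone). Subtracting the two expansions, the difference $(g,\sigma,\psi):=(f-\tilde f,\varrho-\tilde\varrho,\phi-\tilde\phi)$ satisfies, on the common interval $[T_*,\infty)$ with $T_*=\max(T_0,\tilde T_0)$,
\begin{equation} \label{eq:unique-smallness}
	\big| g(t,x,p)\big| \leq 2C_K\mathcal{F}_K\Big(\frac{\log t}{t}\Big)^{K+1},
	\qquad
	\big|\nabla\psi(t,x)\big| \leq 2C_K\mathcal{F}_K\frac{(\log t)^{K+1}}{t^{K+3}},
\end{equation}
together with the corresponding bound for $\sigma$, and all of these also in the appropriately weighted higher-order norms that Theorem~\ref{thm:intromain2} provides. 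The goal is to upgrade this a priori decay into the statement $g\equiv0$, $\psi\equiv0$ on $[T_*,\infty)$.

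The mechanism is a backwards-in-time Grönwall argument on the nonlinear difference equation. Writing out \eqref{eq:VP1} for $f$ and for $\tilde f$ and subtracting, $g$ solves a transport equation along the characteristics of $\gs_\phi$ (say) with right-hand side of schematic form $\nabla\psi\cdot\partial_p\tilde f$, i.e.
\[
	\gs_\phi g = -\,\partial_{x^i}\psi\,\partial_{p^i}\tilde f,
\]
while $\sigma=\int g\,dp$ and, crucially, $\nabla\psi$ is recovered from $\sigma$ by the estimate \eqref{eq:gradelliptic} (the weak-ellipticity bound that the paper insists on using in place of the full elliptic estimate \eqref{eq:fullelliptic}). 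The plan is to propagate a weighted $L^2$-type energy $E(t)$ for $g$ — the same combination of weighted Vlasov norms used elsewhere in the paper — following the change of variables adapted to the modified trajectories $x-tp+\log t\,\nabla\phi_\infty(p)$ so that $\tilde f$ and its $p$-derivatives are bounded with the expected weights. Differentiating $E(t)$ and using \eqref{eq:gradelliptic} to control $\nabla\psi$ by (a weighted norm of) $\sigma$, hence by $E(t)$ itself, yields a differential inequality of the form $\big|\frac{d}{dt}E(t)\big| \lesssim \frac{1}{t^{1-\varepsilon}}E(t)$ for some small $\varepsilon>0$ coming from the logarithmic losses — the exact same structure as the estimates driving the existence proof. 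Integrating this from a large time $t$ down to $T_*$ and feeding in the a priori upper bound \eqref{eq:unique-smallness}, which forces $E(t)\to0$ as $t\to\infty$ at a rate $\big(\tfrac{\log t}{t}\big)^{K+1}$ that beats any fixed power of $t$ once $K$ is chosen large enough, gives $E(T_*)\le \lim_{t\to\infty} C\,t^{C}E(t) = 0$; hence $E\equiv0$ and $g\equiv0$, and then $\psi\equiv0$ follows from \eqref{eq:gradelliptic}. (One fixes $K$ at the end so that $K+1$ exceeds the total power of $t$ lost in the Grönwall integration over all the derivatives required; this is precisely the meaning of ``sufficiently high order'' in the statement.)

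The main obstacle is the derivative loss in the transport term: the source $\nabla\psi\cdot\partial_p\tilde f$ is controlled only after giving up a derivative on $\tilde f$ (harmless, as $\tilde f$ is smooth with known asymptotics) but, more seriously, closing the energy estimate for high-order derivatives of $g$ requires commuting derivatives through $\gs_\phi$, which produces terms involving derivatives of $\nabla\phi$ acting on derivatives of $g$; these must be absorbed using the weighted-ellipticity bound \eqref{eq:gradelliptic} applied at each order, and one must check that the weights are consistent with those in Theorem~\ref{thm:intromain2} so that the a priori smallness is available at every order that appears. A secondary subtlety is that the two solutions are a priori only assumed to satisfy the \emph{zeroth-order} form of \eqref{eq:fintromain}; one either strengthens the hypothesis to include the weighted higher-order statements (as the last sentence of Theorem~\ref{thm:intromain2} permits), or one first runs a bootstrap showing that any solution matching the scalar expansion to sufficiently high order automatically inherits the weighted derivative bounds, using the equation itself. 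Modulo these bookkeeping points, the argument is a standard uniqueness-by-backwards-Grönwall once the existence-type estimates are in hand.
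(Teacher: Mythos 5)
Your approach is essentially the paper's: subtract the two solutions, estimate the difference $g=f-\tilde f$ in $L^2_xL^2_p$ via the inhomogeneous transport estimate (Proposition \ref{prop:generalVlasovestimates}), close the loop through the Hardy/gradient estimate \eqref{eq:gradelliptic} and the $O(t^{-3})$ $p$-support, run a backwards Gr\"onwall which loses at most $T^{C\mathcal{F}}$, and beat this loss with the a priori $(\log T/T)^{K+1}$ decay supplied by the expansion, choosing $K\gtrsim C\mathcal{F}$.

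One worth-noting simplification you miss: the paper's argument closes at \emph{zeroth order} in $L^2$ and needs no commutation at all. The source term $\partial_{x^i}\psi\,\partial_{p^i}f$ contains $p$-derivatives of the \emph{known} solution $f$, not of $g$, and these are already controlled ($\sup_x\Vert\partial_p f(t,x,\cdot)\Vert_{L^2_p}\lesssim \mathcal{F}\,t^{-1/2}$ from the support property and the existence estimates). So the differential inequality is $\Vert\gs_{\phi'}g\Vert_{L^2_xL^2_p}\lesssim t^{-1}\mathcal{F}\Vert g\Vert_{L^2_xL^2_p}$ with no higher-order norms of $g$ appearing on the right. Your ``main obstacle'' paragraph about derivative loss and commutation with $\gs_\phi$ therefore addresses a problem that does not arise; the bookkeeping you describe would only be needed if one wanted higher-order uniqueness statements. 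Likewise, your ``secondary subtlety'' about upgrading from the scalar expansion to weighted derivative bounds is not needed: the hypothesis in the precise statement (Theorem \ref{thm:backwardsproblem4}, equation \eqref{eq:uniquenesscondition}) is exactly the zeroth-order $L^2$ bound, together with the support property, and that is all the proof uses.
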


The work \cite{FOPW} also provides a uniqueness statement as in Theorem \ref{thm:intromain3}, but without requiring the restriction to the class of solutions which a priori admit expansions of the form \eqref{eq:fintromain}.

For a more precise statement of Theorem \ref{thm:intromain}, Theorem \ref{thm:intromain2}, and Theorem \ref{thm:intromain3}, see Section \ref{section:theorem} below.

\subsection{Scattering results and the small data regime}
\label{subsec:scattering}

  It is well known that solutions of Vlasov--Poisson \eqref{eq:VP1}--\eqref{eq:VP2}, for which $f\vert_{t=0}$ is appropriately small, scatter in modified sense, meaning that there exists a function $f_{\infty} \colon \mathbb{R}^3 \times \mathbb{R}^3 \to [0,\infty)$ such that \eqref{eq:intromainconvergence} holds.
  This insight is due to Choi--Kwon \cite{ChKw},   and an alternative proof was later given by Ionescu--Pausader--Wang--Widmayer~\cite{IPWW}.

\begin{theorem}[Modified scattering for solutions with small Cauchy data \cite{ChKw, IPWW}] \label{thm:forward}
	For any function $f_C \colon \mathbb{R}_x \times \mathbb{R}_p \to [0,\infty)$ which is suitably regular and small, in an appropriate norm, there exists a solution $(f,\varrho,\phi)$ which exists globally in time, attains the Cauchy data, $f(0,\cdot,\cdot) \equiv f_C$, disperses as $t \to \infty$ and scatters in a modified sense.  More precisely, there exists a function $f_{\infty} \colon \mathbb{R}_x^3 \times \mathbb{R}_p^3 \to [0,\infty)$ such that \eqref{eq:intromainconvergence} holds for all $x,p\in \mathbb{R}^3$.
\end{theorem}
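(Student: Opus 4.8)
The plan is to prove Theorem~\ref{thm:forward} by a continuity (bootstrap) argument combining the method of characteristics for \eqref{eq:VP1} with dispersive decay estimates for the density $\varrho$ and the field $\nabla\phi$ extracted from \eqref{eq:VP2}. Denote by $s\mapsto(X(s),P(s))$ the characteristics of $\gs_\phi$, solving $\dot X=P$, $\dot P=\nabla_x\phi(s,X)$; since $f$ is transported along these, $f$ at time $t$ is the pushforward of $f_C$, and local well posedness in a suitable weighted phase-space Sobolev norm is standard, so the task is to upgrade this to a global solution with precise asymptotics. The two families of quantities to be controlled uniformly in time are: (i) the concentration of $f(t,\cdot,p)$ around the modified trajectory $x\approx tp-\log t\,\nabla\phi_\infty(p)$, quantified by bounds on weighted derivatives of $f$ in the straightened frame $y=x-tp$; and (ii) the decay $|\varrho(t,x)|\lesssim\varepsilon t^{-3}$, $|\nabla_x\phi(t,x)|\lesssim\varepsilon t^{-2}$, together with analogous bounds on their derivatives, where $\varepsilon$ is the size of $f_C$.

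On a maximal interval $[0,T]$ one would make the bootstrap assumptions (i)--(ii) with constants a fixed multiple of $\varepsilon$ and then improve them. The decay of $\varrho=\int f\,dp$ follows from the concentration bound by a change of variables in momentum: writing $f(t,x,p)=F(t,x-tp,p)$ with $F$ of compact $y$-support uniformly in $t$, the substitution $z=x-tp$ has Jacobian of size $t^{-3}$, so $\varrho(t,x)=t^{-3}\int F(t,z,(x-z)/t)\,dz$, giving $|\varrho(t,x)|\lesssim\varepsilon t^{-3}$ with matching bounds on derivatives; moreover $s^{3}\varrho(s,s\,\cdot)$ converges as $s\to\infty$ to a profile from which $\varrho_\infty$, and then $\phi_\infty$ via \eqref{eq:intromainphiinfty}, are read off. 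Inserting this into the Poisson equation \eqref{eq:VP2} and using standard elliptic (Newtonian potential) estimates gives (ii), with the leading behaviour of $\nabla_x\phi(t,x)$ homogeneous of degree $-2$ and governed by $\phi_\infty$. To improve (i) one commutes $\gs_\phi$ with vector fields adapted to free transport --- principally $Y_i=t\,\partial_{x^i}+\partial_{p^i}$ (which commutes with $\partial_t+p\cdot\nabla_x$, so that $[\gs_\phi,Y_i]$ involves only $t$ times second $x$-derivatives of $\phi$), the scaling field $t\partial_t+x\cdot\nabla_x$, and $\partial_{x^i}$ --- and observes that the resulting source terms are, schematically, at most one power of $t$ times first or second derivatives of $\phi$ times lower-order weighted quantities; by (ii) these decay like $t^{-2}$, hence are integrable in $s$, so integrating along characteristics (Gr\"onwall) returns the improved bound. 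This closes the bootstrap, yielding a global solution with uniform weighted bounds and the stated decay.

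Given these, modified scattering is read off from the characteristic ODEs. From $|\nabla_x\phi(s,X)|\lesssim\varepsilon s^{-2}$ the momentum $P(s)$ has a limit $P_\infty$ as $s\to\infty$; substituting back, $X(s)=X(s_0)+\int_{s_0}^s P(s')\,ds'$ develops the logarithmic correction, $X(s)=sP_\infty-\log s\,(\nabla\phi_\infty)(P_\infty)+O(1)$, since $\int^s (s')^{-2}\,ds'\sim s^{-1}$ integrates the momentum correction into a $\log s$. Consequently the flow of $\gs_\phi$, conjugated by the modified free flow $(x,p)\mapsto(x+tp-\log t\,\nabla\phi_\infty(p),p)$, converges as $t\to\infty$; defining $f_\infty$ to be $f_C$ composed with the limit of the inverse maps and unwinding the definitions gives exactly \eqref{eq:intromainconvergence}, with quantitative convergence rates coming from the tail estimates $\int_t^\infty s^{-2}\,ds\lesssim t^{-1}$.

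The main difficulty is to arrange the weighted norms so that this nonlinear loop genuinely closes: improving the regularity and concentration of $f$ requires decay and regularity of $\nabla\phi$, which through \eqref{eq:VP2} requires the same of $\varrho$, which is recovered from $f$ only through the tight dispersive bound $|\varrho(t,x)|\lesssim\varepsilon t^{-3}$; one must check that this circle of estimates closes with no loss of derivatives and that the linearly growing commutator vector fields produce only time-integrable sources. A second, more conceptual, point is to pin down the logarithmic correction --- one must carry the characteristic expansion to second order and identify $\phi_\infty$ self-consistently through \eqref{eq:intromainphiinfty} to see that the correction is precisely $\log t\,\nabla\phi_\infty(p)$ with no residual divergent contribution, which is exactly why genuine scattering to free transport must here be replaced by modified scattering.
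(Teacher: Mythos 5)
The paper does not prove Theorem~\ref{thm:forward}: it is stated as a cited background result due to Choi--Kwon~\cite{ChKw} and Ionescu--Pausader--Wang--Widmayer~\cite{IPWW}, and the paper's own analytic work is devoted to the \emph{inverse} scattering problem, Theorem~\ref{thm:intromain}.  So there is no ``paper's proof'' of this statement to compare against.  That said, your sketch contains a genuine gap which the paper explicitly flags, in Remark~\ref{rmk:goodvectorfields}, as the central obstruction in this problem.

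The gap is in the commutator estimate.  You commute with the uncorrected vector field $Y_i = t\partial_{x^i} + \partial_{p^i}$ and claim the resulting source terms ``decay like $t^{-2}$, hence are integrable.''  They do not.  One has $[\gs_\phi, Y_i] = -t\,\partial_{x^i}\partial_{x^j}\phi\,\partial_{p^j}$, and when applied to a solution concentrated on the modified free flow the derivative $\partial_p f$ grows like $t$ (equivalently, $t^{-1}\partial_p f$ is the bounded weighted quantity), so
\[
	\big|[\gs_\phi, Y_i] f\big|
	\lesssim
	t^2\,|\nabla^2\phi|\,\big|t^{-1}\partial_p f\big|
	\lesssim
	\frac{1}{t}\,,
\]
using the expected leading behaviour $|\nabla^2\phi|\lesssim t^{-3}$.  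This is borderline: the $t^{-1}$ tail integrates to a logarithm, so your bootstrap on the weighted derivative norms would drift, not close.  (This is exactly the computation in Remark~\ref{rmk:goodvectorfields}, which contrasts the uncorrected vector field with the paper's corrected $L_i$ and shows the former gives $\mathcal{F}/t$ while the latter gives $\mathcal{F}\log t/t^2$.)  The same $1/t$ borderline is, of course, the very source of the logarithmic correction you correctly identify in the characteristics at zeroth order; the point you miss is that this borderline contaminates \emph{all} the derivative estimates, not only the particle trajectory itself, and cannot be dismissed as an integrable error.  The cited works resolve this by a genuine extra idea: Choi--Kwon perform a finer analysis in which the borderline term is absorbed using its specific structure (it involves the fixed asymptotic potential rather than a full nonlinearity), while Ionescu--Pausader--Wang--Widmayer work on the Fourier side with a gauge/phase correction; the present paper, for the inverse problem, uses the corrected vector fields $L_k$ of~\eqref{eq:defofL} carrying the $\tfrac{\log t}{t}\nabla^2\phi_\infty(p)\partial_p$ correction for exactly this reason.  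As written, your proposal does not contain such a mechanism, so the nonlinear loop does not close.

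Two smaller remarks.  First, your derivation of $X(s) = sP_\infty - \log s\,\nabla\phi_\infty(P_\infty) + O(1)$ from the characteristic ODEs is fine, but it already presupposes pointwise control $|\nabla\phi(s,x) - s^{-2}\nabla\phi_\infty(x/s)|\lesssim \varepsilon s^{-2-\delta}$ to identify $\phi_\infty$ self-consistently, and that control is exactly what requires the derivative bootstrap to close.  Second, your claim that $s^3\varrho(s,s\cdot)$ converges as $s\to\infty$ is a conclusion rather than an input: it needs the full machinery, and obtaining $\phi_\infty$ from it before the estimates are closed is circular unless one instead characterises $\phi_\infty$ through the Vlasov density itself (as in~\eqref{eq:intromainphiinfty}) and propagates smallness.
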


The part of Theorem \ref{thm:forward} concerning global existence and dispersion is a classical result of Bardos--Degond \cite{BaDe}, for which there now exists a number of alternative proofs and improvements \cite{Dua,HRV,Smu,Wa}, and has been generalised to certain large dispersive solutions \cite{Sch}.  Theorem \ref{thm:forward} has also been generalised to higher dimensions by Pankavich \cite{Pan} (see also Remark \ref{rmk:higherdimensions} below), and to the Vlasov--Maxwell system by Bigorne \cite{Big22} and Pankavich--Ben-Artzi \cite{PaBe23}.  Furthermore the very recent work of Bigorne--Velozo Ruiz \cite{BiRu}, completed independently, provides a detailed description of the asymptotic behaviour of the solutions of Theorem \ref{thm:forward}.

\begin{theorem}[Polyhomogeneous expansion for solutions with small Cauchy data \cite{BiRu}] \label{thm:BiRu}
	For any function $f_C \colon \mathbb{R}_x^3 \times \mathbb{R}_p^3 \to [0,\infty)$ which is suitably regular and small, in an appropriate norm, there exist functions $f_{k,l} \colon \mathbb{R}^3 \times \mathbb{R}^3 \to \mathbb{R}$, $\phi_{k,l}, \varrho_{k,l} \colon \mathbb{R}^3 \to \mathbb{R}$ for $k = 0,1,2,\ldots$, and $l=0,\ldots k$, such that the solutions of Theorem \ref{thm:forward} admit polyhomogeneous expansions of the form \eqref{eq:fintromain}--\eqref{eq:rhointromain}.
\end{theorem}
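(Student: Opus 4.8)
The plan is to obtain the expansion of Theorem~\ref{thm:BiRu} by combining the forward modified scattering result, Theorem~\ref{thm:forward}, with the approximate solution machinery this paper develops for the inverse problem. First I would invoke Theorem~\ref{thm:forward}: for suitably regular and small $f_C$, the solution $(f,\varrho,\phi)$ exists globally, disperses, and satisfies \eqref{eq:intromainconvergence} for some asymptotic profile $f_\infty \colon \mathbb{R}^3 \times \mathbb{R}^3 \to [0,\infty)$, which is itself smooth and small, with decay (or support) properties at infinity inherited from those of $f_C$ along the modified characteristics. Treating this $f_\infty$ as given, one then runs the construction of Theorem~\ref{thm:approx} to produce the coefficients $f_{k,l}$, $\phi_{k,l}$, $\varrho_{k,l}$ and the polyhomogeneous approximate solutions $f_{[K]}$, $\phi_{[K]}$, $\varrho_{[K]}$ of \eqref{eq:introansatz1}--\eqref{eq:introansatz2}, normalised so that $f_{0,0}=f_\infty$, $\varrho_{0,0}=\varrho_\infty$, $\phi_{0,0}=\phi_\infty$. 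It then remains to prove the exact solution obeys the bounds \eqref{eq:fintromain}--\eqref{eq:rhointromain} against these explicitly constructed profiles.

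The core of the argument is an induction on $K$. For the base case, $|f(t,x,p)-f_{[0]}(t,x,p)| \to 0$ is exactly the content of \eqref{eq:intromainconvergence}, but one needs the quantitative rate $\log t / t$; this can be read off from the decay estimates in the proofs of \cite{ChKw, IPWW}, or re-derived by a bootstrap against $f_{[0]}$ as in the present paper, which simultaneously supplies the $L^2$-based control of derivatives encoded in $\mathcal{F}_K$. For the inductive step, set $g_K := f - f_{[K]}$; then $\gs_{\phi} g_K = \mathcal{E}_K + \mathcal{N}_K$, where $\mathcal{E}_K$ is the error by which the approximate solution fails to solve \eqref{eq:VP1}--\eqref{eq:VP2}, which by the very construction of the coefficients through order $K$ is of size $(\log t / t)^{K+1}$ with the appropriate weight, and $\mathcal{N}_K$ gathers the terms coupling $g_K$ to the field difference $\nabla\phi - \nabla\phi_{[K]}$. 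The latter is controlled from $\varrho - \varrho_{[K]} = \int_{\mathbb{R}^3} g_K \, dp$ through the gradient estimate \eqref{eq:gradelliptic} --- crucially, not the full elliptic estimate \eqref{eq:fullelliptic}, in keeping with Remark~\ref{rmk:ellipticity}. One then closes a weighted $L^2$ energy estimate for $g_K$ and its (appropriately weighted) derivatives along the modified characteristic flow, integrating inward from $t=\infty$, where $g_K$ vanishes by the order-$K$ hypothesis together with $f_{[K]}-f_{[K-1]} = O\big((\log t/t)^K\big)$. Grönwall's inequality, with the smallness of $f_\infty$ keeping the coefficients of the terms linear in $g_K$ small, then yields \eqref{eq:fintromain} at order $K$; the field and density estimates \eqref{eq:phiintromain}--\eqref{eq:rhointromain} follow from this together with $\varrho-\varrho_{[K]} = \int_{\mathbb{R}^3} g_K\,dp$ and one more application of \eqref{eq:gradelliptic}. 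An alternative, logically slicker but less self-contained route, once enough regularity is available, is to compare $f$ with the solution furnished by Theorem~\ref{thm:intromain2} that scatters to the same $f_\infty$ and invoke the uniqueness Theorem~\ref{thm:intromain3}.

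The main obstacle is the one that occupies the bulk of this paper: closing the difference estimates for $g_K$ with only the gradient estimate \eqref{eq:gradelliptic} and $L^2$ methods, while correctly bookkeeping the powers of $\log t$ and the loss of derivatives incurred at each order $k$ --- the dependence of $\mathcal{F}_K$ on a $K$-dependent number of derivatives of $f_\infty$ is precisely the shadow of this loss. In particular one must track how the modified characteristic flow, straightened by the logarithmic shift $\log t \, \nabla\phi_\infty(p)$, interacts with momentum derivatives: differentiating the shift produces factors of $\log t$ that must be reabsorbed into the polyhomogeneous structure \eqref{eq:introansatz1}--\eqref{eq:introansatz2} rather than allowed to degrade the decay. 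A secondary point is reconciling the forward Cauchy problem --- data at $t=0$, or at some large $T_0$ after local well-posedness --- with the ``from infinity'' nature of the comparison: one first uses Theorem~\ref{thm:forward} to propagate from $t=0$ to large time with quantitative decay, and only then runs the order-by-order improvement on $[T_0,\infty)$.
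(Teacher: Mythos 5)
You should first note that Theorem~\ref{thm:BiRu} is a \emph{cited} result, obtained independently by Bigorne--Velozo Ruiz \cite{BiRu}; the present paper states it but does not prove it. What the paper proves is the inverse analogue, Theorem~\ref{thm:intromain2}/Theorem~\ref{thm:backwardsproblem3}, in which the scattering profile $f_\infty$ is prescribed (smooth, compactly supported) rather than produced from Cauchy data. There is therefore no ``paper's own proof'' to compare against, and your proposal is best read as an independent sketch of how one might derive Theorem~\ref{thm:BiRu} from Theorem~\ref{thm:forward} plus the machinery of Sections~\ref{section:expansions}--\ref{section:logicofproof}, which is not necessarily the route taken in \cite{BiRu}.

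As a sketch your strategy is reasonable, but one step needs to be made honest, and it is the one you gloss with ``$g_K$ vanishes at $t=\infty$''. For a forward solution specified by Cauchy data, $g_K = f - f_{[K]}$ does not vanish at any finite $T_f$; it only decays. Integrating the energy estimate backward and applying Gr\"onwall as in Proposition~\ref{prop:fcheckkho} produces a boundary contribution of size $\check{\mathcal{E}}_{[K]}(T_f)\,(T_f/t)^{C\mathcal{F}}$, and from the order-$(K-1)$ inductive hypothesis together with $f_{[K]}-f_{[K-1]}=\mathcal{O}\big((\log t/t)^K\big)$ the best one knows a priori is $\check{\mathcal{E}}_{[K]}(T_f)\lesssim (\log T_f)^K/T_f^K$ --- a full power of $\log t/t$ short of the target. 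Sending $T_f\to\infty$ kills this boundary term only when $K>C\mathcal{F}$, so the smallness of the data is doing quantitative work in \emph{every} inductive step, not merely keeping the linear-in-$g_K$ Gr\"onwall coefficient tame as you put it; this should be stated. Relatedly, the base case $K=0$ cannot be closed by this backward integration at all: \eqref{eq:intromainconvergence} gives no rate at $T_f$, so the quantitative $\log t/t$ convergence must be imported from the proofs of \cite{ChKw,IPWW}, as you correctly flag. Finally, your alternative via Theorem~\ref{thm:intromain3} is attractive but does not sidestep the bootstrap: its hypothesis \eqref{eq:uniquenesscondition} requires agreement with $f_{[K]}$ to order $K(\mathcal{F}_\infty^{N+1})$ together with the support property \eqref{eq:suppfstatement}, both of which must be established for the forward solution first, so uniqueness only spares you the orders beyond $K(\mathcal{F}_\infty^{N+1})$.
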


A scattering theory for the system \eqref{eq:VP1}--\eqref{eq:VP2} may be formulated in terms of the following questions:

\begin{enumerate}
	\item[(i)]
		\emph{Existence of the forward modified scattering operator:} For a given time $T_0 \in \mathbb{R}$, for which classes of Cauchy data $f_C = f\vert_{t=T_0}$ does there exist a corresponding scattering state $f_{\infty}$?  What can be said about the forward scattering map $\mathcal{S}_F \colon f_C \mapsto f_{\infty}$?
	\item[(ii)]
		\emph{Uniqueness of forward modified scattering states:} Do solutions giving rise to the same scattering state coincide, i.\@e.\@, when acting on appropriate spaces, is the forward scattering map $\mathcal{S}_F \colon f_C \mapsto f_{\infty}$ injective?
	\item[(iii)]
		\emph{Asymptotic completeness of the forward modified scattering map:}  For every $f_{\infty}$, does there exist corresponding Cauchy data $f_C$, i.\@e.\@, when acting on appropriate spaces, is the map $\mathcal{S}_F \colon f_C \mapsto f_{\infty}$ surjective?
\end{enumerate}

 Theorem~\ref{thm:intromain}  addresses question (iii),  and Theorem \ref{thm:intromain3} concerns question~(ii).
    In the \emph{small data regime}, Theorem~\ref{thm:forward}    gives  a resolution of question~(i).
    Despite global existence results for general classes of Cauchy data \cite{LiPe, Pfa, Sch91}, question (i) remains open in general.

Due to the time reversibility of the system \eqref{eq:VP1}--\eqref{eq:VP2}, a resolution of questions (i)--(iii) also provides a resolution of the corresponding questions for the \emph{backwards modified scattering operator} $\mathcal{S}_B \colon f_C \mapsto f_{-\infty}$.  Such an $f_{-\infty} \colon (-\infty,-T_0] \times \mathbb{R}^3 \times \mathbb{R}^3 \to [0,\infty)$ satisfies, for all $x,p\in \mathbb{R}^3$,
\begin{equation} \label{eq:intromainconvergencepast}
	\lim_{t\to -\infty} f\big(t,x+tp - \log t \, \nabla \phi_{-\infty} ( p), p \big)
	=
	f_{-\infty}(x,p),
\end{equation}
where $f \colon (-\infty,-T_0] \times \mathbb{R}^3 \times \mathbb{R}^3 \to [0,\infty)$ is the unique solution of the Vlasov--Poisson system attaining the Cauchy data $f_C$, and $\phi_{-\infty}$ is the unique solution of \eqref{eq:intromainphiinfty}, after replacing $f_{\infty}$ with $f_{-\infty}$.

\begin{remark}[Time reversed analogue of Theorem~\ref{thm:intromain}]\label{rmk:reversed}
    The time reversed analogue of Theorem~\ref{thm:intromain} states that for any smooth compactly supported function $f_{-\infty} \colon \mathbb{R}^3 \times \mathbb{R}^3 \to [0,\infty)$, there exists $T_0 > 0$ and a solution $f \colon (-\infty,-T_0] \times \mathbb{R}^3 \times \mathbb{R}^3 \to [0,\infty)$ of the Vlasov--Poisson system such that, for all $x,p\in \mathbb{R}^3$, \eqref{eq:intromainconvergencepast} holds.
    Similarly, expansions of the form \eqref{eq:fintromain}--\eqref{eq:rhointromain} hold for $(t,x,p) \in (-\infty,-T_0] \times \mathbb{R}^3 \times \mathbb{R}^3$.
	\end{remark}

One can also ask analogous questions of the \emph{full modified scattering operator} $\mathcal{S} \colon f_{-\infty} \mapsto f_{\infty}$:
\begin{enumerate}
	\item[(iv)]
		\emph{Existence of the full modified scattering operator:} For which classes of past scattering states $f_{-\infty}$ does there exist a corresponding future scattering state $f_{\infty}$?  What can be said about the full scattering map $\mathcal{S} \colon f_{-\infty} \mapsto f_{\infty}$?
	\item[(v)]
		\emph{Uniqueness of full modified scattering states:} Is the full scattering map, when acting on appropriate spaces, $\mathcal{S} \colon f_{-\infty} \mapsto f_{\infty}$ injective?
	\item[(vi)]
		\emph{Asymptotic completeness of the full modified scattering map:}  Is the full scattering map, when acting on appropriate spaces, $\mathcal{S} \colon f_{-\infty} \mapsto f_{\infty}$ surjective?
\end{enumerate}

In the small data regime, affirmative answers of questions (iv)--(vi)  are due to Flynn--Ouyang--Pausader--Widmayer~\cite{FOPW}.

Alternatively,
in view of the time reversibility, Theorem \ref{thm:intromain}  can also be combined directly with Theorem~\ref{thm:forward} to answer questions (iv)--(vi) in the affirmative, under the additional assumption that $f_{-\infty}$ is small:
For any smooth compactly supported function $f_{-\infty} \colon \mathbb{R}^3 \times \mathbb{R}^3 \to [0,\infty)$, which is small in an appropriate sense, there exists a smooth compactly supported function $f_{\infty} \colon \mathbb{R}^3 \times \mathbb{R}^3 \to [0,\infty)$ and a smooth solution $f \colon \mathbb{R} \times \mathbb{R}^3 \times \mathbb{R}^3 \to [0,\infty)$ of the Vlasov--Poisson system \eqref{eq:VP1}--\eqref{eq:VP2} such that both \eqref{eq:intromainconvergence} and \eqref{eq:intromainconvergencepast} hold.  Moreover, the function $f_{\infty}$ is quantitatively controlled by $f_{-\infty}$.

Finally, we remark that there have been previous ``scattering constructions'' for Vlasov--Poisson where the spatial domain is the torus \cite{CaMa, HwVe}.

\subsection{Further remarks on the main results}

  Before we turn to the proof of Theorem \ref{thm:intromain}, Theorem \ref{thm:intromain2}, and Theorem \ref{thm:intromain3}, several further remarks are given:

\begin{remark}[Repulsive sign not relevant]
	The proof of Theorem \ref{thm:intromain} makes no essential use of the \emph{repulsive} sign of the nonlinearity in \eqref{eq:VP1}--\eqref{eq:VP2} and thus equally applies, with the obvious modifications to the statements, to the \emph{gravitational Vlasov--Poisson system}, i.\@e.\@ the equation
	\[
		\partial_t f + p^i \partial_{x^i} f - \partial_{x^i} \phi \, \partial_{p^i} f = 0,
	\]
	coupled to equation \eqref{eq:VP2}.  Attention is restricted here to \eqref{eq:VP1}--\eqref{eq:VP2} in order to ease notation.
\end{remark}

\begin{remark}[Convergence to scattering data]
	The convergence \eqref{eq:intromainconvergence} is shown to in fact hold in a much stronger sense, with a quantitative rate. In particular, see already the estimate \eqref{eq:fintromain} with $K=0$.  See also equation \eqref{eq:dataattained} in Theorem \ref{thm:backwardsproblem} for a stronger statement. 
\end{remark}

\begin{remark}[Finite regularity]
	The function $f_{\infty}$ in Theorem \ref{thm:intromain} is assumed, for simplicity, to be smooth.  The proof also holds, however, for far rougher $f_{\infty}$ (for example for $f_{\infty}$ lying in a suitable Sobolev space).  No attempt has been made to optimise the regularity requirements here.
\end{remark}

\begin{remark}[Higher dimensions] \label{rmk:higherdimensions}
	The proof of Theorem \ref{thm:intromain} can be adapted to the case of $d+1$ dimensions, for $d \geq 4$, to obtain solutions of Vlasov--Poisson which scatter to free transport, i.\@e.\@ which satisfy
	\begin{equation} \label{eq:fscatter}
		\lim_{t\to \infty} f(t,x+tp,p) = f_{\infty}(x,p),
	\end{equation}
	for all $x,p\in \mathbb{R}^3$.  The proof, in fact, simplifies considerably, and the $3+1$ dimensional case should be viewed as critical, in a certain sense.  The borderline $t^{-1}$ behaviour discussed in Section \ref{subsec:introexistence} below is replaced by integrable $t^{2-d}$ behaviour (meaning that, for $\check{f}(t,x,p) = f(t,x,p) - f_{\infty}(x-tp,p)$, inequality \eqref{eq:introEtinequ} becomes $\check{\mathcal{E}}(t) \leq \mathcal{F} \int_t^{T_f} s^{-d+2} \check{\mathcal{E}}(s) ds + \mathcal{F}t^{-d+3}$), obviating the need for the approximate solutions \eqref{eq:introansatz1}--\eqref{eq:introansatz2} in the proof.  Recall that the analogue of Theorem \ref{thm:forward} in higher dimensions, where the solutions satisfy \eqref{eq:fscatter}, is due to Pankavich \cite{Pan}.
	
	Theorem \ref{thm:intromain2} also generalises to higher dimensions to give expansions of the form
	\[
		f(t,x,p) = \sum_{k=0}^K \frac{1}{t^k} f_{k}(x-tp, p ) + \mathcal{O} \bigg(\frac{1}{t^{1+K}} \bigg),
	\]
	\[
		\varrho(t,x)
		=
		\frac{1}{t^d}
		\sum_{k=0}^K \frac{1}{t^k} \varrho_{k} \left( \frac{x}{t} \right)
		+
		\mathcal{O} \bigg(\frac{1}{t^{d+K+1}} \bigg),
		\qquad
		\nabla \phi(t,x)
		=
		\frac{1}{t^{d-1}}
		\sum_{k=0}^K \frac{1}{t^k} (\nabla \phi_{k}) \left( \frac{x}{t} \right)
		+
		\mathcal{O} \bigg(\frac{1}{t^{d+K}} \bigg).
	\]
\end{remark}

\begin{remark}[$\nabla \phi_{\infty} ( p)$ vs $\nabla \phi_{\infty} (\frac{x}{t})$]
	As will be seen in the proof, in the context of Theorem \ref{thm:intromain}, $p$ and $\frac{x}{t}$ are comparable in the support of $f$ and thus, in the argument of the elements of the expansion \eqref{eq:fintromain}, $x-tp + \log t \, \nabla \phi_{\infty} ( p)$ can be replaced with $x-tp + \log t \, \nabla \phi_{\infty} (\frac{x}{t})$, at the expense of further terms (involving further $\log t$ corrections) in the summation \eqref{eq:fintromain}.
	
	Indeed, one sees that further terms are necessary in such an expansion in view of the following fact.  Since $f_{\infty}$, and hence $\phi_{\infty}$, is smooth and, as will be seen in the proof of Theorem \ref{thm:intromain}, $x-tp + \log t \, \nabla \phi_{\infty} ( p)$ is uniformly bounded in the support of the solution, it follows that
	\begin{multline*}
		f_{\infty}\Big(x-tp + \log t \, \nabla \phi_{\infty} \Big( \frac{x}{t} \Big), p \Big)
		=
		f_{\infty}\big(x-tp + \log t \, \nabla \phi_{\infty} ( p), p \big)
		\\
		+
		\frac{(\log t)^2}{t}
		\partial_i \phi_{\infty}(p) \partial_i \partial_j \phi_{\infty}(p) 
		(\partial_{x^j} f_{\infty})\big(x-tp + \log t \, \nabla \phi_{\infty} ( p), p \big)
		+
		\mathcal{O} \left( \frac{\log t}{t} \right).
	\end{multline*}
	Thus, if $x-tp + \log t \, \nabla \phi_{\infty} ( p)$ is replaced with $x-tp + \log t \, \nabla \phi_{\infty} (\frac{x}{t})$ in the argument of the elements of the expansion, it is necessary to include a $\frac{(\log t)^2}{t}$ term in the expansion (which is not present in \eqref{eq:fintromain}).
\end{remark}

\begin{remark}[Convergence of the series]
	Under alternative assumptions on $f_{\infty}$, such as analyticity, one may hope to show that the infinite series
	\[
		\sum_{k=0}^\infty \sum_{l=0}^{k} \frac{(\log t)^l}{t^k} f_{k,l}\big(x-tp + \log t \, \nabla \phi_{\infty} ( p), p \big),
	\]
	converges to the solution of Vlasov--Poisson of Theorem \ref{thm:intromain}.  Such convergence is not considered here.
\end{remark}

\subsection{Overview of the proof}
\label{subsec:proofoverview}

In this section an overview of the proof of Theorem \ref{thm:intromain} is given.  As noted above, the proof of Theorem \ref{thm:intromain2} is established at the same time.  The approximate solutions of Theorem \ref{thm:intromain2} are discussed in Section \ref{subsec:introexpansion}, and in Section \ref{subsec:introexistence} it is outlined how the approximate solutions are used to prove the existence of a solution of attaining the scattering data $f_{\infty}$, which moreover agrees with the approximate solutions to increasing order.

\subsubsection{The proof of the main results --- the approximate solutions} \label{subsec:introexpansion}
In the logic of the proof of Theorem \ref{thm:intromain}, the first step is to give explicit definitions of the functions $f_{k,l}$, $\phi_{k,l}$, $\varrho_{k,l}$ in terms of $f_{\infty}$.  Each $\phi_{k,l}$ is defined in terms of the corresponding $\varrho_{k,l}$ as the unique solution of the Poisson equation
\begin{equation} \label{eq:introPoissonapprox}
	\Delta_{\mathbb{R}^3} \phi_{k,l} = \varrho_{k,l},
	\qquad
	\phi_{k,l}(w) \to 0, \quad \text{as } \vert w \vert \to \infty,
\end{equation}
for which there is a well known representation formula for solutions, and so it remains to define $f_{k,l}$ and $\varrho_{k,l}$.

Given such functions define $f_{[K]}$, $\varrho_{[K]}$, and $\phi_{[K]}$, for each $K \geq 0$, by \eqref{eq:introansatz1}--\eqref{eq:introansatz2}.
The functions $(f_{[K]}, \varrho_{[K]}, \phi_{[K]})$ are said to be an \emph{approximate solution of order $K$} if, for each $x,p\in\mathbb{R}^3$,
\begin{equation} \label{eq:introapproxsolutions}
	\gs_{\phi_{[K]}} f_{[K]} (t,x,p)
	=
	\mathcal{O} \bigg(\frac{(\log t)^{1+K}}{t^{2+K}} \bigg),
	\qquad
	\int_{\mathbb{R}^3} f_{[K]} (t,x,p) dp - \varrho_{[K]} (t,x) 
	=
	\mathcal{O} \bigg( \frac{(\log t)^K}{t^{4+K}} \bigg).
\end{equation}

The functions $f_{k,l}$, and $\varrho_{k,l}$ are defined simply by inserting the expressions \eqref{eq:introansatz1}--\eqref{eq:introansatz2} into the Vlasov--Poisson system \eqref{eq:VP1}--\eqref{eq:VP2} as an ansatz and solving order by order so that $(f_{[K]}, \varrho_{[K]}, \phi_{[K]})$ is an approximate solution of order $K$.  Remarkably, after inserting these expressions, though $\varrho_{k,l}$ appears in the expression for $f_{k,l}$, the quantity $f_{k,l}$ does not appear in the expression for $\varrho_{k,l}$ and so each quantity can be explicitly defined.  In order to illustrate the procedure, the cases $K=0$ and $K=1$ are presented here explicitly.  The full details are given in Section \ref{section:expansions}.  See, in particular, Theorem \ref{thm:approx}.

The fact that each approximate solution $(f_{[K]}, \phi_{[K]}, \varrho_{[K]})$ agrees with the true solution to order $K$ is only later shown, as part of the proof of the existence of the true solution (see the discussion in Section \ref{subsec:introexistence} below).

In order to simplify expressions, the notation
\begin{equation} \label{eq:introdefofy}
	y = y(t,x,p) = x-tp + \log t \, \nabla \phi_{\infty} ( p),
\end{equation}
is used.  Since $f_{\infty}$ is compactly supported, the function $y$ is uniformly bounded by a constant $M>0$ in the support of $f$ and in the support of $f_{[K]}$ for all $K \geq 0$.  In particular, it follows that, if $T_0$ is suitably large,
\begin{equation} \label{eq:introybound}
	\vert y(t,x,p) \vert \leq M,
	\qquad
	\vert x \vert \leq 3M t,
	\qquad
	\vert p \vert \leq 2M,
	\qquad
	\text{in } \supp(f) \text{ and } \supp(f_{[K]}).
\end{equation}
For fixed $p$ and $y$, the trajectory $t\mapsto y + tp - \log t \nabla \phi_{\infty}$ is as depicted in Figure \ref{fig:log:traject}.  Fixing $y$ and $p$ can be viewed as identifying a particle trajectory in phase space.

It follows from \eqref{eq:introybound} that, for each $t,x$, the $p$-supports, $\supp(f(t,x,\cdot))$ and $\supp(f_{[K]}(t,x,\cdot))$, are contained inside a ball centred at $\frac{x}{t} + \frac{\log t}{t} \phi_{\infty}(\frac{x}{t})$ with a radius of order $t^{-1}$ (cf.\@ \eqref{eq:intronablaphicancel} below) and so
\begin{equation} \label{eq:introindicator}
	\int_{\mathbb{R}^3} \mathds{1}_{\supp(f)} (t,x,p)
	+
	\int_{\mathbb{R}^3} \mathds{1}_{\supp(f_{[K]})} (t,x,p) dp \lesssim \frac{1}{t^3}.
\end{equation}
It also follows that the right hand sides in \eqref{eq:introapproxsolutions} also satisfy the support properties \eqref{eq:introybound}.

For functions $h\colon \mathbb{R}^3 \times \mathbb{R}^3 \to \mathbb{R}$, such as the functions $f_{k,l}$, for $i=1,2,3$, the notation $\partial_{x^i} h$ is always used to denote the derivative with respect to the $i$-th component, and the notation $\partial_{p^i} h$ is used to denote the derivative with respect to the $i+3$-th component.

\subsubsection*{The case $K=0$}
It is convenient to first consider the case $K=0$.  Defining
\[
	f_{0,0}(y,p)
	=
	f_{\infty}\big(y, p \big),
	\qquad
	\varrho_{0,0}(w)
	=
	\varrho_{\infty} \left( w \right),
\]
so that
\[
	f_{[0]}(t,x,p)
	=
	f_{\infty}\big(y(t,x,p), p \big),
	\qquad
	\phi_{[0]}(t,x)
	=
	\frac{1}{t}
	\phi_{\infty} \left( \frac{x}{t} \right),
	\qquad
	\varrho_{[0]}(t,x)
	=
	\frac{1}{t^3}
	\varrho_{\infty} \left( \frac{x}{t} \right),
\]
with $y(t,x,p)$ defined by \eqref{eq:introdefofy}, we check that \eqref{eq:introapproxsolutions} holds for $K=0$.  One computes
\begin{align} \label{eq:introphi0f0}
	\gs_{\phi_{[0]}} f_{[0]}(t,x,p)
	=
	\
	&
	\frac{1}{t} \Big( \partial_i \phi_{\infty}(p) - \partial_i \phi_{\infty} \Big(\frac{x}{t}\Big) \Big) (\partial_{x^i} f_{\infty})(y,p)
	\\
	&
	+
	\frac{\log t}{t^2} \partial_j \phi_{\infty} \Big(\frac{x}{t}\Big) \partial_i \partial_j \phi_{\infty}(p) (\partial_{x^i} f_{\infty})(y,p)
	+
	\frac{1}{t^2} \partial_i \phi_{\infty} \Big(\frac{x}{t}\Big) (\partial_{p^i} f_{\infty})(y,p).
	\nonumber
\end{align}
For $y$ in the support of $f_{\infty}$, so that \eqref{eq:introybound} holds,
\begin{equation} \label{eq:intronablaphicancel}
	\sup_{\vert y \vert \leq M} \Big\vert \partial_i \phi_{\infty} \Big(\frac{x}{t}\Big) - \partial_i \phi_{\infty}(p) \Big\vert
	\leq
	\Vert \nabla^2 \phi_{\infty} \Vert_{L^{\infty}} \Big( \frac{M}{t} + \frac{\log t \Vert \nabla \phi_{\infty} \Vert_{L^{\infty}}}{t} \Big),
\end{equation}
which ensures that there is a cancellation in the first line of \eqref{eq:introphi0f0} and one indeed has
\[
	\gs_{\phi_{[0]}} f_{[0]}(t,x,p)
	=
	\mathcal{O} \bigg(\frac{\log t}{t^{2}} \bigg).
\]

For $t$ suitably large, the expression \eqref{eq:introdefofy} can be inverted to give $p$ as an implicit function of $t,x,y$, and so one moreover checks
\begin{equation} \label{eq:introf0rho0}
	\int f_{[0]}(t,x,p) dp
	-
	\frac{1}{t^3}
	\varrho_{\infty} \left( \frac{x}{t} \right)
	=
	\int f_{\infty}(y,p) \det \frac{\partial p}{\partial y}dy + \frac{1}{t^3} \int f_{\infty}\left( y, \frac{x}{t} \right) dy
	=
	\mathcal{O} \left( \frac{\log t}{t^4} \right),
\end{equation}
where the uniform boundedness of $y$, and fact that $\det \frac{\partial p}{\partial y}$ is equal to $-t^{-3}$ to leading order (see Proposition \ref{prop:Jexpansion} below), has been used.

\subsubsection*{The case $K=1$}
Consider now the case $K=1$.  Recall $f_{[1]}$ and $\phi_{[1]}$ defined by \eqref{eq:introansatz1}--\eqref{eq:introansatz2}, so that
\[
	f_{[1]}(t,x,p) = f_{\infty}(y,p) + \frac{\log t}{t} f_{1,1}(y,p) + \frac{1}{t} f_{1,0}(y,p),
\quad
	\phi_{[1]}(t,x)
	=
	\frac{1}{t} \phi_{\infty} \left( \frac{x}{t} \right) +\frac{\log t}{t^2} \phi_{1,1} \left( \frac{x}{t} \right) + \frac{1}{t^2} \phi_{1,0} \left( \frac{x}{t} \right),
\]
for some smooth functions $f_{1,1}$, $f_{1,0}$, $\phi_{1,1}$ and $\phi_{1,0}$.  Revisiting \eqref{eq:introphi0f0} one sees that, in order to ensure that $\gs_{\phi_{[1]}} f_{[1]}(t,x,p) = \mathcal{O} \big(\frac{(\log t)^2}{t^{3}} \big)$, it is necessary to not only exploit the cancellation \eqref{eq:intronablaphicancel}, but to moreover take into account the higher order terms in an expansion (whose coefficients are functions of $y$ and $p$) for $\nabla \phi_{\infty} (x/t)$, in the region $\vert y \vert \leq M$.  Replacing $x/t$ using the expression \eqref{eq:introdefofy}, and Taylor expanding around $p$, one computes, for $\vert y \vert \leq M$,
\begin{equation} \label{eq:intronablaphiexpansion}
	\nabla \phi_{[1]}(t,x)
	=
	\frac{1}{t^2} \nabla \phi_{\infty} (p)
	+
	\frac{\log t}{t^3} \Big( \nabla \phi_{1,1}(p) - \nabla \phi_{\infty}(p) \cdot \nabla^{2} \phi_{\infty} (p) \Big)
	+
	\frac{1}{t^3} \Big( \nabla \phi_{1,0}(p) + y \cdot \nabla^{2} \phi_{\infty} (p) \Big)
	+
	\mathcal{O}\left(\frac{(\log t)^2}{t^4}\right).
\end{equation}
It thus follows that
\begin{align*}
	&
	\gs_{\phi_{[1]}} f_{[1]}(t,x,p)
	=
	\mathcal{O} \bigg( \frac{(\log t)^2}{t^3} \bigg)
	+
	\frac{\log t}{t^2}
	\Big[
	- f_{1,1}(y,p)
	-
	\big( \partial_i \phi_{1,1}(p)
	-
	2\partial_{j} \phi_{\infty}(p) \partial_i \partial_j \phi_{\infty}(p) 
	\big) (\partial_{x^i} f_{\infty})(y,p)
	\Big]
	\\
	&
	\quad
	+
	\frac{1}{t^2}
	\Big[
	f_{1,1}(y,p)
	-
	f_{1,0}(y,p)
	-
	\big(
	\partial_i \phi_{1,0}(p) + y^j \partial_i \partial_j \phi_{\infty}(p)
	\big)
	(\partial_{x^i} f_{\infty})(y,p)
	+
	\partial_{i} \phi_{\infty}(p) (\partial_{p^i} f_{\infty})(y,p)
	\Big]
	,
\end{align*}
and one sees that the former of \eqref{eq:introapproxsolutions} holds for $K=1$, provided the functions $f_{1,1}$, $f_{1,0}$, $\phi_{1,1}$ and $\phi_{1,0}$ satisfy
\begin{align}
	f_{1,1}(y,p)
	&
	=
	\big( 2 \partial_j \phi_{\infty}(p) \partial_i \partial_j \phi_{\infty}(p) - \partial_i \phi_{1,1}(p) \big) (\partial_{x^i} f_{\infty})(y,p),
	\label{eq:introiteratedef1}
	\\
	f_{1,0}(y,p)
	&
	=
	f_{1,1}(y,p)
	-
	\big( \partial_i \phi_{1,0}(p) + y^j \partial_i \partial_j \phi_{\infty}(p) \big) (\partial_{x^i} f_{\infty})(y,p)
	+
	\partial_i \phi_{\infty}(p) (\partial_{p^i} f_{\infty})(y,p).
	\label{eq:introiteratedef2}
\end{align}
Once $\varrho_{1,1}$ and $\varrho_{1,0}$ --- and hence $\phi_{1,1}$ and $\phi_{1,0}$ --- have been defined, $f_{1,1}$ and $f_{1,0}$ will indeed be defined by \eqref{eq:introiteratedef1} and \eqref{eq:introiteratedef2} respectively.  Observe that, regardless of how $\phi_{1,1}$ and $\phi_{1,0}$ are defined, the terms in \eqref{eq:introiteratedef1}--\eqref{eq:introiteratedef2} involving the functions $\phi_{1,1}$ and $\phi_{1,0}$ vanish upon integration in $y$:
\begin{equation} \label{eq:introiteratedefintegrated}
	\int_{\mathbb{R}^3} f_{1,1}(y,p) dy = 0,
	\qquad
	\int_{\mathbb{R}^3} f_{1,0}(y,p) dy
	=
	\Delta \phi_{\infty}(p)
	\int_{\mathbb{R}^3} f_{\infty}(y,p) dy
	+
	\partial_i \phi_{\infty}(p) \int_{\mathbb{R}^3} (\partial_{p^i} f_{\infty})(y,p) dy
	.
\end{equation}
This observation is returned to below.

Turning now to the latter of \eqref{eq:introapproxsolutions}, one similarly sees, upon revisiting \eqref{eq:introf0rho0}, that it is necessary to not only exploit the fact that $\det \frac{\partial p}{\partial y}$ is equal to $-t^{-3}$ to leading order, but to moreover take into account higher order terms in an expansion for $\det \frac{\partial p}{\partial y}$.  One computes
\begin{equation} \label{eq:introdetexpansion}
	\det \frac{\partial p}{\partial y}(t,x,y)
	=
	- \frac{1}{t^3} - \frac{\log t}{t^4} \Delta \phi_{\infty}\Big( \frac{x}{t} \Big) + \mathcal{O} \bigg( \frac{(\log t)^2}{t^5} \bigg),
\end{equation}
for $\vert y \vert + \vert x/t \vert \leq M$.  Similarly one has (recalling that \eqref{eq:introdefofy} can be inverted to give $p$ as an implicit function of $t,x,y$), for $\vert y \vert + \vert x/t \vert \leq M$,
\[
	f_{\infty}(y,p(t,x,y))
	=
	f_{\infty}\left( y, \frac{x}{t} \right)
	+
	\frac{\log t}{t}
	\partial_i \phi_{\infty}\Big( \frac{x}{t} \Big) (\partial_{p^i} f_{\infty}) \left( y, \frac{x}{t} \right)
	-
	\frac{1}{t}
	y^i (\partial_{p^i} f_{\infty}) \left( y, \frac{x}{t} \right)
	+
	\mathcal{O} \bigg( \frac{(\log t)^2}{t^2} \bigg).
\]
Recall now $\varrho_{[1]}$ defined by \eqref{eq:introansatz2}, so that
\[
	\varrho_{[1]}(t,x)
	=
	\frac{1}{t^3} \varrho_{\infty} \left( \frac{x}{t} \right)
	+
	\frac{\log t}{t^4} \varrho_{1,1} \left( \frac{x}{t} \right)
	+
	\frac{1}{t^4} \varrho_{1,0} \left( \frac{x}{t} \right),
\]
for some smooth functions $\varrho_{1,1}$ and $\varrho_{1,0}$.  It follows that
\begin{align*}
	&
	\int_{\mathbb{R}^3} f_{[1]} (t,x,p) dp - \varrho_{[1]} (t,x)
	\\
	&
	=
	- \frac{\log t}{t^4} \left[
	\varrho_{1,1} \left( \frac{x}{t} \right)
	+
	\int_{\mathbb{R}^3} f_{1,1}\left( y, \frac{x}{t} \right) dy
	+
	\Delta \phi_{\infty}\Big( \frac{x}{t} \Big) \int_{\mathbb{R}^3} f_{\infty}\left( y, \frac{x}{t} \right) dy
	-
	\partial_i \phi_{\infty}\Big( \frac{x}{t} \Big) \int_{\mathbb{R}^3} (\partial_{p^i} f_{\infty}) \left( y, \frac{x}{t} \right) dy
	\right]
	\\
	&
	\quad
	- \frac{1}{t^4} \left[
	\varrho_{1,0} \left( \frac{x}{t} \right)
	+
	\int_{\mathbb{R}^3} f_{1,0}\left( y, \frac{x}{t} \right) dy
	-
	\int_{\mathbb{R}^3} y^i (\partial_{x^i} f_{\infty}) \left( y, \frac{x}{t} \right) dy
	\right]
	+
	\mathcal{O} \bigg( \frac{(\log t)^2}{t^5} \bigg).
\end{align*}
The latter of \eqref{eq:introapproxsolutions} then holds if 
\begin{align}
	\varrho_{1,1} (w)
	&
	=
	-
	\int_{\mathbb{R}^3} f_{1,1}( y, w) dy
	-
	\Delta \phi_{\infty}(w) \int_{\mathbb{R}^3} f_{\infty} ( y, w) dy
	+
	\partial_i \phi_{\infty}(w) \int_{\mathbb{R}^3} (\partial_{p^i} f_{\infty}) ( y, w) dy,
	\label{eq:introiteratedef3}
	\\
	\varrho_{1,0} (w)
	&
	=
	-
	\int_{\mathbb{R}^3} f_{1,0} ( y, w) dy
	+
	\int_{\mathbb{R}^3} y^i (\partial_{x^i} f_{\infty}) ( y,w) dy.
	\label{eq:introiteratedef4}
\end{align}

Thus, $(f_{[1]}, \varrho_{[1]}, \phi_{[1]})$ is an approximate solution of order $1$ if $f_{1,1}$, $f_{1,0}$, $\varrho_{1,1}$, and $\varrho_{1,0}$, satisfy both \eqref{eq:introiteratedef1}--\eqref{eq:introiteratedef2}, and \eqref{eq:introiteratedef3}--\eqref{eq:introiteratedef4}.  Remarkably, as noted above, the terms involving $\phi_{1,1}$ and $\phi_{1,0}$ in \eqref{eq:introiteratedef1}--\eqref{eq:introiteratedef2} vanish upon integration in $y$ and thus, when the integrated expressions \eqref{eq:introiteratedefintegrated} are inserted into \eqref{eq:introiteratedef3}, one obtains explicit expressions for $\varrho_{1,1}$, and $\varrho_{1,0}$:
\begin{align}
	\varrho_{1,1} (w)
	&
	=
	-
	\Delta \phi_{\infty}(w) \int_{\mathbb{R}^3} f_{\infty} ( y, w) dy
	-
	\partial_i \phi_{\infty}(w) \int_{\mathbb{R}^3} (\partial_{p^i} f_{\infty}) ( y, w) dy.
	\label{eq:introiteratedef5}
\\
	\varrho_{1,0} (w)
	&
	=
	-
	\Delta \phi_{\infty}(w)
	\int_{\mathbb{R}^3} f_{\infty}(y,w) dy
	-
	\partial_i \phi_{\infty}(w) \int_{\mathbb{R}^3} (\partial_{p^i} f_{\infty}) ( y, w) dy
	+
	\int_{\mathbb{R}^3} y^i (\partial_{x^i} f_{\infty}) ( y,w) dy.
	\label{eq:introiteratedef6}
\end{align}
One therefore defines $\varrho_{1,1}$ and $\varrho_{1,0}$ by \eqref{eq:introiteratedef5} and \eqref{eq:introiteratedef6} respectively, and then defines $f_{1,1}$ and $f_{1,0}$ by \eqref{eq:introiteratedef1} and \eqref{eq:introiteratedef2} respectively (recalling that $\phi_{1,1}$ and $\phi_{1,0}$ are defined explicitly in terms of $\varrho_{1,1}$ and $\varrho_{1,0}$ as the unique solutions of the Poisson equation \eqref{eq:introPoissonapprox}).

\subsubsection*{The case $K\geq 2$}
For $K\geq 2$, one proceeds inductively by similarly inserting the expressions \eqref{eq:introansatz1}--\eqref{eq:introansatz2} into the Vlasov--Poisson system \eqref{eq:VP1}--\eqref{eq:VP2} as an ansatz and solving for $f_{k,l}$, and $\varrho_{k,l}$ order by order so that $(f_{[K]}, \varrho_{[K]}, \phi_{[K]})$ is an approximate solution of order $K$.  Higher order analogues of the steps discussed in the $K=1$ case above are required.  These steps form the content of Section \ref{section:expansions}.

In Proposition \ref{prop:phiderivexpansion}, a higher order analogue of the first order expansion \eqref{eq:intronablaphiexpansion} for $\nabla \phi_{[K]}$ is given.  The coefficients of the expansion for $\nabla \phi_{[K]}$ are denoted $\Psi_{k,l}$.  

In Section \ref{subsec:fexpansion} (see \eqref{eq:fklpsi2}--\eqref{eq:fklpsi3}) higher order analogues of the explicit expressions \eqref{eq:introiteratedef1}--\eqref{eq:introiteratedef2} for $f_{k,l}$ are given in terms of $\Psi_{k,l}$ (which are in turn given explicitly in term of $\phi_{k,l}$ in \eqref{eq:Psikl1}--\eqref{eq:Psikl2}) and in Proposition \ref{prop:fKdef} it is shown that, if these expressions are satisfied for all $0\leq k \leq K$, $0 \leq l \leq k$, then the former of \eqref{eq:introapproxsolutions} holds.  Moreover, an analogue of the observation \eqref{eq:introiteratedefintegrated} --- namely that, upon integrating the expression for $f_{k,l}$ with respect to its first argument, the terms involving $\phi_{k,l}$ all vanish and only terms involving $\phi_{k',l'}$, for $k'\leq k-1$, remain --- persists to higher orders.  See Remark \ref{rmk:totalyderiv}.

Turing again to the latter of \eqref{eq:introapproxsolutions}, one sees that a higher order analogue of the first order expansion \eqref{eq:introdetexpansion} for $\det (\partial p / \partial y)$ is required.  Such an expansion is given in Proposition \ref{prop:Jexpansion}, where the coefficients of this expansion are denoted $J_{k,l}$.  Higher order analogues of the expressions \eqref{eq:introiteratedef3}--\eqref{eq:introiteratedef4} for $\varrho_{k,l}$ are then given in terms of $f_{k,l}$ and $J_{k,l}$.  See \eqref{eq:rhoklf2}--\eqref{eq:rhoklf3}.  The analogue of the observation \eqref{eq:introiteratedefintegrated}, discussed in Remark \ref{rmk:totalyderiv}, means that these expressions, together with the expressions \eqref{eq:fklpsi2}--\eqref{eq:fklpsi3} for $f_{k,l}$, define each $\varrho_{k,l}$ and $f_{k,l}$ explicitly.  In Proposition \ref{prop:rhoKdef} it is then shown that, with these definitions, the latter of \eqref{eq:introapproxsolutions} holds.

The reader is referred to Section \ref{section:expansions} for more details.

\subsubsection{The proof of the main results --- existence of the solution}
\label{subsec:introexistence}
The existence part of the proof of Theorem \ref{thm:intromain} proceeds by considering some arbitrarily large time $T_f$, and considering an associated ``finite problem'' for each such time.  One defines ``final data'' at time $t=T_f$ to coincide with the approximate solution $f_{[K]}(T_f,\cdot,\cdot)$ (discussed in Section \ref{subsec:introexpansion}):
\begin{equation} \label{eq:introfinaltime}
	f \vert_{t=T_f} = f_{[K]}(T_f,\cdot,\cdot),
\end{equation}
for some sufficiently large $K$, to be determined later.  One then shows that the corresponding solution of Vlasov--Poisson exists on the interval $[T_0,T_f]$, for some $T_0$ which is independent of $T_f$.  The main step in the proof is in establishing uniform estimates for the solution on this interval, with constants independent of $T_f$.  Once such estimates have been established, the proof of the existence of a solution on $[T_0,\infty)$ follows from considering, on their common domain, the differences $f^{(T_f)} - f^{(T_f')}$ of such solutions corresponding to different final times $T_f<T_f'$, and showing that such differences vanish in the limit $T_f\to \infty$ (see Section \ref{section:logicofproof}).  The prescription \eqref{eq:introfinaltime}, for each $T_f$, ensures that the limiting solution on $[T_0,\infty)$ attains the scattering data $f_{\infty}$ in the sense \eqref{eq:intromainconvergence}.

The solution is not estimated directly but, for some fixed appropriate $K \geq 0$, the quantity
\[
	\check{f}_{[K]}(t,x,p)
	:=
	f(t,x,p)
	-
	f_{[K]}(t,x,p),
\]
is considered, where $f_{[K]}$ is the approximate solution discussed in Section \ref{subsec:introexpansion} (see equation \eqref{eq:introansatz1}).  This quantity satisfies an inhomogeneous equation of the form
\begin{equation} \label{eq:introfcheckk1}
	\gs_{\phi} \check{f}_{[K]} = F_{[K]},
\end{equation}
where
\begin{equation} \label{eq:introfcheckk2}
	F_{[K]} := - \gs_{\phi} f_{[K]}
	=
	- \partial_{x^i}(\phi - \phi_{[K]}) \partial_{p^i} f_{[K]} + \mathcal{O} \bigg(\frac{(\log t)^{1+K}}{t^{2+K}} \bigg),
\end{equation}
by virtue of the fact that $(f_{[K]}, \varrho_{[K]},\phi_{[K]})$ is an approximate solution of order $K$ (see \eqref{eq:introapproxsolutions}).  The definition \eqref{eq:introfinaltime} of the ``final data'' ensures that
\begin{equation} \label{eq:introfcheckk3}
	\check{f}_{[K]}(T_f,x,p) = 0,
\end{equation}
for all $x,p \in \mathbb{R}^3$.  For fixed $N\geq 6$, $L^2$ based energies of the form
\begin{equation} \label{eq:introEtdef}
	\check{\mathcal{E}}_{[K]}(t)
	: =
	\sum_{\vert I \vert +\vert J \vert = 0}^N \Vert L^I (t^{-1} \partial_p)^J \check{f}_{[K]}(t,\cdot,\cdot) \Vert_{L^2_x L^2_p},
\end{equation}
are considered, where $I$, $J$ are multi-indices and $L$ is an appropriate collection of vector fields (see Section \ref{subsec:vectorfields} below).

In the remainder of this section, the estimates for $\check{f}_{[K]}$ via the energy \eqref{eq:introEtdef}, on the interval $[T_0,T_f]$ are outlined.  After describing the procedure for obtaining $L^2$ based estimates, the case that $f_{\infty}$ is assumed to be small, in an appropriate sense, is discussed.  When $f_{\infty}$ is assumed to be appropriately small, it is only necessary to consider the energy \eqref{eq:introEtdef} for $K=0$ (and the thus the approximate solutions \eqref{eq:introansatz1}--\eqref{eq:introansatz2} are only considered up to order zero).  It is helpful to present first this simpler case.  The case for general $f_{\infty}$ is then presented, which involves considering the energy \eqref{eq:introEtdef} for $K$ suitably large, depending on an appropriate norm of $f_{\infty}$.

For simplicity, only the estimate for the energy \eqref{eq:introEtdef} with $N=0$ is discussed in the present overview.  In the proof of Theorem \ref{thm:intromain} it is important, however, to estimate derivatives of $f$ and, in order to control the nonlinear terms arising from commuting the equations, derivatives up to order $N \geq 6$ are estimated.  As noted above, a collection of vector fields $\{L_1, L_2, L_3\}$, are introduced.  These vector fields are defined in terms of the scattering profile $\phi_{\infty}$ so as to have good commutation properties with the Vlasov equation.  See Section \ref{subsec:vectorfields}.

\subsubsection*{$L^2$ estimates}
We have chosen, in this work, to make all estimates $L^2$ based (see Remark \ref{rmk:ellipticity}, and note further the simplicity with which \eqref{eq:introL2f} and \eqref{eq:introL2phi} are derived).  For any smooth, suitably decaying function $f$, and any smooth $\phi$, one has
\[	
	\partial_t f^2 + p^i \partial_{x^i} f^2 + \partial_{x^i} \phi \, \partial_{p^i} f^2 = 2 f \gs_{\phi} f,
\]
and, since the second two terms on the left vanish after integrating in $x$ and $p$,
\begin{equation} \label{eq:introL2f}
	\Vert f (t,\cdot, \cdot) \Vert_{L^2_xL^2_p}
	\lesssim
	\Vert f (T_f,\cdot, \cdot) \Vert_{L^2_xL^2_p}
	+
	\int_t^{T_f} \Vert \gs_{\phi} f (s,\cdot, \cdot) \Vert_{L^2_xL^2_p} ds.
\end{equation}
Moreover, for any smooth function $\phi$, multiplying $\Delta \phi$ by $\phi$ and integrating by parts gives,
\[
	\int_{\mathbb{R}^3} \vert \nabla \phi \vert^2 dx
	=
	-
	\int_{\mathbb{R}^3} \phi \Delta \phi dx
	\leq
	\Vert r^{-1} \phi \Vert_{L^2} \Vert r \Delta \phi \Vert_{L^2}
	\lesssim
	\Vert \nabla \phi \Vert_{L^2} \Vert r \Delta \phi \Vert_{L^2}
	,
\]
where $r(x) = \vert x \vert$, and a Hardy inequality (see Proposition \ref{prop:Hardy} below) is used in the last step.
Thus, if $\Delta \phi$ is supported in the region $\vert x \vert \lesssim t$, one has
\begin{equation} \label{eq:introL2phi}
	\Vert \nabla \phi (t,\cdot) \Vert_{L^2_x} \lesssim t \Vert \Delta \phi (t,\cdot) \Vert_{L^2_x}.
\end{equation}

\subsubsection*{Small $f_{\infty}$ and the case $K=0$}

One already sees the utility of the quantity $\check{f}_{[K]}$ for $K=0$, in that $\check{f}_{[0]}$ leads to an estimate for $f$, independent of $T_f$, provided $f_{\infty}$ is small in an appropriate norm.  Indeed, define $\check{f} = \check{f}_{[0]}$, and consider the equation \eqref{eq:introfcheckk1}--\eqref{eq:introfcheckk2}.  The approximate solution $f_{[0]}$ has the property that, for all $t \in [T_0,T_f]$,
\begin{equation} \label{eq:intropartialpf0}
	\sup_{x \in \mathbb{R}^3} \Big( \int_{\mathbb{R}^3} \vert \partial_p f_{[0]}(t,x,p) \vert^2 dp \Big)^{\frac{1}{2}}
	\leq
	t^{-\frac{1}{2}} \mathcal{F},
\end{equation}
where $\mathcal{F}$ is a constant depending on an appropriate norm of the scattering data $f_{\infty}$.  The final term on the right hand side of \eqref{eq:introfcheckk2} is supported in \eqref{eq:introybound} (and so in particular satisfies \eqref{eq:introindicator}).  Hence
\begin{equation} \label{eq:introorderestimate}
	\bigg\Vert \mathcal{O} \bigg(\frac{\log t}{t^{2}} \bigg) \bigg\Vert_{L^2_xL^2_p}
	\leq
	\mathcal{F}\frac{\log t}{t^{2}}.
\end{equation}
Applying the inequality \eqref{eq:introL2f} with $\check{f}$ and setting $K=0$ in the final condition \eqref{eq:introfcheckk3}, it follows that the energy $\check{\mathcal{E}} = \check{\mathcal{E}}_{[0]}$, defined by \eqref{eq:introEtdef} with $N=0$, satisfies, for all $T_0 \leq t \leq T_f$,
\[
	\check{\mathcal{E}}(t)
	\lesssim
	\int_t^{T_f} \Vert F_{[0]} (s,\cdot, \cdot) \Vert_{L^2_xL^2_p} ds
	\lesssim
	\mathcal{F}
	\int_t^{T_f} s^{-\frac{1}{2}} \Vert \nabla_x \check{\phi} (s,\cdot) \Vert_{L^2_x} ds
	+
	\mathcal{F}\frac{\log t}{t}
	\lesssim
	\mathcal{F}
	\int_t^{T_f} s^{\frac{1}{2}} \Vert \check{\varrho} (s,\cdot) \Vert_{L^2_x} ds
	+
	\mathcal{F}\frac{\log t}{t}
	,
\]
where the last step uses \eqref{eq:introL2phi}.  Using the Cauchy--Schwarz inequality and the fact \eqref{eq:introindicator}, it follows that the energy $\check{\mathcal{E}}(t)$ satisfies, for all $T_0 \leq t \leq T_f$,
\begin{equation} \label{eq:introEtinequ}
	\check{\mathcal{E}}(t)
	\leq
	\mathcal{F} \int_t^{T_f} \frac{\check{\mathcal{E}}(s)}{s} ds + \frac{\mathcal{F}}{t^{1-}},
\end{equation}
where $\mathcal{F}$ is a constant depending on an appropriate norm of the scattering data $f_{\infty}$, and the $-$ in $t^{1-}$ indicates a logarithmic loss.  The Gr\"{o}nwall inequality (see Proposition \ref{prop:Gronwall}) then implies that
\[
	\check{\mathcal{E}}(t)
	\leq
	\frac{\mathcal{F}}{t^{1-}}
	+
	\frac{\mathcal{F}^2}{t^{\mathcal{F}}} \int_t^{T_f} \frac{1}{s^{-\mathcal{F}+2 -}} ds.
\]
If $\mathcal{F}$ is sufficiently small --- which can be achieved by making the relevant norm of the scattering profile $f_{\infty}$ suitably small --- then this inequality gives
\[
	\check{\mathcal{E}}(t)
	\leq
	\frac{\mathcal{F} +\mathcal{F}^2}{t^{1-}},
\]
as desired.  For larger $\mathcal{F}$, however, this inequality fails to provide an estimate for $\check{\mathcal{E}}(t)$ which is uniform in $T_f$.

\subsubsection*{General $f_{\infty}$ and the case $K\geq 1$}

In order to obtain uniform estimates of the solution without assuming any smallness on the scattering data $f_{\infty}$, one considers the energy $\check{\mathcal{E}}_{[K]}(t)$, for general $K \geq 0$.  Consider the equation \eqref{eq:introfcheckk1}--\eqref{eq:introfcheckk2}.  

The approximate solution $f_{[K]}$ retains the property \eqref{eq:intropartialpf0}, i.\@e.\@ for all $t \in [T_0,T_f]$,
\[
	\sup_{x \in \mathbb{R}^3} \Big( \int_{\mathbb{R}^3} \vert \partial_p f_{[K]}(t,x,p) \vert^2 dp \Big)^{\frac{1}{2}}
	\leq
	t^{-\frac{1}{2}} \mathcal{F},
\]
and, further, the final term on the right hand side of \eqref{eq:introfcheckk2} satisfies \eqref{eq:introorderestimate} with $(\log t)^{1+K}/t^{2+K}$ in place of $\log t/t^2$.
Revisiting the steps above one therefore sees that the energy $\check{\mathcal{E}}_{[K]}(t)$ satisfies an inequality which, in the case $N=0$, takes the form
\[
	\check{\mathcal{E}}_{[K]}(t)
	\leq
	\mathcal{F} \int_t^{T_f} \frac{\check{\mathcal{E}}_{[K]}(s)}{s} ds + \frac{\mathcal{F}}{t^{K+1-}},
\]
(see the proof of Proposition \ref{prop:fcheckkho}, in particular equation \eqref{eq:eqnforEcheckk}, for the inequality satisfied by $\check{\mathcal{E}}_{[K]}(t)$ for general $N \geq 6$).  Note the improvement, compared to \eqref{eq:introEtinequ}.  The Gr\"{o}nwall inequality (see Proposition \ref{prop:Gronwall}) then gives
\[
	\check{\mathcal{E}}_{[K]}(t)
	\leq
	\frac{\mathcal{F}}{t^{K+1-}}
	+
	\frac{\mathcal{F}^2}{t^{\mathcal{F}}} \int_t^{T_f} \frac{1}{s^{-\mathcal{F}+K+2-}} ds.
\]
Now, if $K$ is chosen to be suitably large, with respect to $\mathcal{F}$ (and hence to the size of $f_{\infty}$), it follows that
\[
	\check{\mathcal{E}}_{[K]}(t)
	\leq
	\frac{\mathcal{F} +\mathcal{F}^2}{t^{K+1-}},
\]
as desired.

\subsection{Outline of the paper}

Section \ref{section:prelim} concerns certain preliminaries, including various inequalities which are used throughout, along with a collection of vector fields used in the proof of Theorem \ref{thm:intromain} and a discussion of their basic properties.  In Section \ref{section:theorem}, more precise versions of Theorem \ref{thm:intromain}, Theorem \ref{thm:intromain2} and Theorem \ref{thm:intromain3} are stated.  Section \ref{section:expansions} concerns the the functions $f_{k,l}$, $\varrho_{k,l}$, $\phi_{k,l}$, which define approximate solutions of the Vlasov--Poisson system \eqref{eq:VP1}--\eqref{eq:VP2}, as outlined in Section \ref{subsec:introexpansion} above.  In Section \ref{section:finiteprob} a sequence of ``finite problems'' is introduced, and the approximate solutions of Section \ref{section:expansions} are used to obtain estimates on the solutions of these finite problems, as outlined in Section \ref{subsec:introexistence} above.  Finally, in Section \ref{section:logicofproof}, the proof of the precise versions of Theorem \ref{thm:intromain}, Theorem \ref{thm:intromain2} and Theorem \ref{thm:intromain3}, stated in Section \ref{section:theorem}, are given using the solutions of these finite problems and the estimates obtained in Section \ref{section:finiteprob}.

\subsection*{Acknowledgements}

We would like to thank MATRIX, the mathematical research institute in Creswick, Australia, for their hospitality and  the collaborative environment during the workshop on ``Hyperbolic PDEs and Nonlinear Evolution Problems'', 18--29th September 2023, where part of this research was conducted.
     M.\@T.\@ acknowledges support through Royal Society Tata University Research Fellowship URF\textbackslash R1\textbackslash 191409.

\section{Preliminaries}
\label{section:prelim}

This section contains certain preliminaries which will be used throughout the remainder of the article.  In Section \ref{subsec:ineqs} some functional inequalities, such as Sobolev and Hardy inequalities, are collected.  Section \ref{subsec:elliptictransport} contains basic $L^2$ based estimates for the Vlasov equation and the Poisson equation.  In Section \ref{subsec:vectorfields} a collection of vector fields is introduced and their commutation properties with the Vlasov equation are discussed.  Further, a weighted Sobolev inequality, which features these vector fields, is shown.  Finally, in Section \ref{subsec:constants}, conventions for the constants appearing in this work are stated.

\subsection{Functional inequalities}
\label{subsec:ineqs}

Define the norms, for any smooth function $h\colon \mathbb{R}^3 \to \mathbb{R}$,
\[
	\Vert h \Vert_{L^{\infty}(\mathbb{R}^3)} = \sup_{x\in\mathbb{R}^3} \vert h(x) \vert,
	\qquad
	\Vert h \Vert_{L^2(\mathbb{R}^3)}^2 = \int_{\mathbb{R}^3} \vert h(x) \vert^2 dx,
\]
\[
	\Vert h \Vert_{H^k(\mathbb{R}^3)}^2
	=
	\sum_{i_1,i_2,i_3=0}^k \int_{\mathbb{R}^3} \vert \partial_{x^1}^{i_1} \partial_{x^2}^{i_2} \partial_{x^3}^{i_3} h(x) \vert^2 dx,
	\qquad
	\Vert h \Vert_{\mathring{H}^k(\mathbb{R}^3)}^2
	=
	\sum_{i_1,i_2,i_3=k} \int_{\mathbb{R}^3} \vert \partial_{x^1}^{i_1} \partial_{x^2}^{i_2} \partial_{x^3}^{i_3} h(x) \vert^2 dx.
\]
Functions $h\colon \mathbb{R}^6 \to \mathbb{R}$ of both $x$ and $p$ will also often be considered and so, for emphasis, we also write
\[
	\Vert h \Vert_{L^2_xL^2_p}^2 := \Vert h \Vert_{L^2(\mathbb{R}^6)}^2 = \int_{\mathbb{R}^3_x} \int_{\mathbb{R}^3_p} \vert h(x,p) \vert^2 dp dx.
\]

The first result is the following $L^{\infty}$---$L^2$ Sobolev inequality.

\begin{proposition}[$L^{\infty}$---$L^2$ Sobolev inequality on $\mathbb{R}^3$] \label{prop:SobolevL2}
	There exists a constant $C$ such that, for any function $h \in H^2(\mathbb{R}^3)$,
	\[
		\Vert h \Vert_{L^{\infty}(\mathbb{R}^3)}
		\leq
		C
		\Vert h \Vert_{L^{2}(\mathbb{R}^3)}^{\frac{1}{4}} \Vert h \Vert_{\mathring{H}^2(\mathbb{R}^3)}^{\frac{3}{4}}.
	\]
\end{proposition}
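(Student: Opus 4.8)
The plan is to prove the scale-invariant Gagliardo--Nirenberg--Sobolev inequality
\[
	\Vert h \Vert_{L^{\infty}(\mathbb{R}^3)}
	\leq
	C
	\Vert h \Vert_{L^{2}(\mathbb{R}^3)}^{\frac{1}{4}} \Vert h \Vert_{\mathring{H}^2(\mathbb{R}^3)}^{\frac{3}{4}}
\]
by first establishing the unscaled version $\Vert h\Vert_{L^\infty} \leq C(\Vert h\Vert_{L^2} + \Vert h\Vert_{\mathring H^2})$ and then optimising over dilations. For the unscaled bound, the cleanest route is via the Fourier transform: writing $h(x) = (2\pi)^{-3/2}\int_{\mathbb{R}^3} \hat h(\xi) e^{ix\cdot\xi}\,d\xi$, one has $\Vert h\Vert_{L^\infty} \leq (2\pi)^{-3/2}\Vert \hat h\Vert_{L^1}$, and then Cauchy--Schwarz gives
\[
	\Vert \hat h \Vert_{L^1}
	=
	\int_{\mathbb{R}^3} \frac{(1+\vert\xi\vert^2)^{1/2}\,\vert\hat h(\xi)\vert}{(1+\vert\xi\vert^2)^{1/2}}\,d\xi
	\leq
	\left( \int_{\mathbb{R}^3} \frac{d\xi}{(1+\vert\xi\vert^2)} \right)^{1/2}
	\left( \int_{\mathbb{R}^3} (1+\vert\xi\vert^2)\,\vert\hat h(\xi)\vert^2\,d\xi \right)^{1/2}.
\]
The first integral is finite in dimension $3$ since $(1+\vert\xi\vert^2)^{-1} \sim \vert\xi\vert^{-2}$ at infinity and $\int^\infty r^{-2} r^2\,dr$ diverges --- so this is the point where I need to be careful: one actually needs the weight $(1+\vert\xi\vert^2)^{-s}$ with $2s > 3$, i.e. $s > 3/2$, so I should split instead as $\Vert\hat h\Vert_{L^1} \leq \Vert (1+\vert\xi\vert^2)^{-1}\Vert_{L^2}\Vert (1+\vert\xi\vert^2)\hat h\Vert_{L^2}$, where now $(1+\vert\xi\vert^2)^{-2}$ is integrable over $\mathbb{R}^3$, and $\Vert(1+\vert\xi\vert^2)\hat h\Vert_{L^2} \lesssim \Vert h\Vert_{L^2} + \Vert h\Vert_{\mathring H^2}$ by Plancherel (the middle-order derivatives $\mathring H^1$ being controlled by interpolation between $L^2$ and $\mathring H^2$, or simply absorbed since $\vert\xi\vert^2 \leq \frac12(1 + \vert\xi\vert^4)$). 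This yields $\Vert h\Vert_{L^\infty} \leq C(\Vert h\Vert_{L^2} + \Vert h\Vert_{\mathring H^2})$.

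Next I carry out the scaling argument. For $\lambda > 0$ set $h_\lambda(x) = h(\lambda x)$. Then $\Vert h_\lambda\Vert_{L^\infty} = \Vert h\Vert_{L^\infty}$, while $\Vert h_\lambda\Vert_{L^2}^2 = \lambda^{-3}\Vert h\Vert_{L^2}^2$ and $\Vert h_\lambda\Vert_{\mathring H^2}^2 = \lambda^{4-3}\Vert h\Vert_{\mathring H^2}^2 = \lambda\Vert h\Vert_{\mathring H^2}^2$. Applying the unscaled inequality to $h_\lambda$ gives
\[
	\Vert h \Vert_{L^\infty}
	\leq
	C\left( \lambda^{-3/2}\Vert h\Vert_{L^2} + \lambda^{1/2}\Vert h\Vert_{\mathring H^2} \right)
\]
for every $\lambda > 0$. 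Optimising the right-hand side over $\lambda$ — the two terms balance when $\lambda^{-3/2}\Vert h\Vert_{L^2} = \lambda^{1/2}\Vert h\Vert_{\mathring H^2}$, i.e. $\lambda^2 = \Vert h\Vert_{L^2}/\Vert h\Vert_{\mathring H^2}$ — produces the claimed product $\Vert h\Vert_{L^2}^{1/4}\Vert h\Vert_{\mathring H^2}^{3/4}$, up to a new constant $C$. One should handle the degenerate cases ($h\equiv 0$, or $\Vert h\Vert_{\mathring H^2} = 0$ which by connectedness and decay forces $h\equiv 0$) separately, which is trivial.

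The only genuine subtlety — and the step most likely to trip one up — is making sure the weight integral $\int_{\mathbb{R}^3}(1+\vert\xi\vert^2)^{-2}\,d\xi$ is the right one to use (finite in $d = 3$), i.e. that one needs \emph{two} full powers of the Laplacian (equivalently $H^2$, the critical Sobolev exponent above which $H^s \hookrightarrow L^\infty$ in three dimensions being $s = 3/2$), and that the lower-order term $\vert\xi\vert^2\vert\hat h\vert$ in the weight $(1+\vert\xi\vert^2)\vert\hat h\vert$ is legitimately controlled by $\Vert h\Vert_{L^2}$ and $\Vert h\Vert_{\mathring H^2}$ via Young's inequality $\vert\xi\vert^2 \leq \tfrac12 + \tfrac12\vert\xi\vert^4$. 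An alternative to the Fourier approach, avoiding $\hat h$ entirely, is to first prove the one-dimensional bound $\Vert g\Vert_{L^\infty(\mathbb{R})}^2 \leq 2\Vert g\Vert_{L^2(\mathbb{R})}\Vert g'\Vert_{L^2(\mathbb{R})}$ by the fundamental theorem of calculus applied to $g^2$, and then iterate over the three coordinate directions, using at each stage the interpolation $\Vert\partial_{x^j}^{1/2}(\cdot)\Vert$-type bounds; this is more elementary but bookkeeping-heavy, so I would present the Fourier argument as the main proof. Either way the result is standard and the proof is short.
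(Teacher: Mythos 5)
Your proof is correct and follows essentially the same approach as the paper: establish the unscaled bound $\Vert h\Vert_{L^\infty}\lesssim\Vert h\Vert_{L^2}+\Vert h\Vert_{\mathring H^2}$ via the Fourier inversion formula and Cauchy--Schwarz, then optimise over dilations $h_\lambda(x)=h(\lambda x)$. The only cosmetic difference is that the paper splits $\int|\hat h|$ into the regions $\vert\xi\vert\le1$ and $\vert\xi\vert>1$ before applying Cauchy--Schwarz (which avoids the cross term $\vert\xi\vert^2$ entirely), whereas you apply a single weighted Cauchy--Schwarz with $(1+\vert\xi\vert^2)^{-1}$ and then absorb the cross term by Young's inequality; both are correct and of the same difficulty.
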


\begin{proof}
	First note that
	\begin{equation} \label{eq:prelimSobolev}
		\Vert h \Vert_{L^{\infty}(\mathbb{R}^3)}
		\leq
		C(
		\Vert h \Vert_{L^{2}(\mathbb{R}^3)} + \Vert h \Vert_{\mathring{H}^2(\mathbb{R}^3)}).
	\end{equation}
	Indeed, since $h \in H^2(\mathbb{R}^3)$ the Fourier inversion formula holds and so, for any $x\in \mathbb{R}^3$,
	\[
		\vert h (x) \vert
		\leq
		\int_{\mathbb{R}^3} \vert \hat{f} (\xi) \vert d \xi
		\leq
		\Big( \int_{\vert \xi \vert \leq 1} 1 \, d\xi \Big)^{\frac{1}{2}} \Big( \int \vert \hat{f} (\xi) \vert^2 d \xi \Big)^{\frac{1}{2}}
		+
		\Big( \int_{\vert \xi \vert > 1} \vert \xi \vert^{-4} d\xi \Big)^{\frac{1}{2}} \Big( \int \vert \xi \vert^4 \vert \hat{f} (\xi) \vert^2 d \xi \Big)^{\frac{1}{2}}.
	\]
	The proof of \eqref{eq:prelimSobolev} then follows from the Plancherel formula and the fact that
	\[
		\int_{\vert \xi \vert > 1} \vert \xi \vert^{-4} d\xi < \infty,
	\]
	in $3$ dimensions.
	
	Now, for any $\lambda>0$, define
	\[
		h_{\lambda}(x) = h(\lambda x).
	\]
	It follows that
	\[
		\Vert h_{\lambda} \Vert_{L^2(\mathbb{R}^3)} = \lambda^{-\frac{3}{2}} \Vert h \Vert_{L^2(\mathbb{R}^3)},
		\qquad
		\Vert h_{\lambda} \Vert_{\mathring{H}^1(\mathbb{R}^3)} = \lambda^{-\frac{1}{2}} \Vert h \Vert_{\mathring{H}^1(\mathbb{R}^3)},
		\qquad
		\Vert h_{\lambda} \Vert_{\mathring{H}^2(\mathbb{R}^3)} = \lambda^{\frac{1}{2}} \Vert h \Vert_{\mathring{H}^2(\mathbb{R}^3)},
	\]
	and thus, applying \eqref{eq:prelimSobolev} with $h_{\lambda}$ in place of $h$,
	\[
		\Vert h \Vert_{L^{\infty}(\mathbb{R}^3)}
		=
		\Vert h_{\lambda} \Vert_{L^{\infty}(\mathbb{R}^3)}
		\leq
		C(
		\lambda^{-\frac{3}{2}} \Vert h \Vert_{L^{2}(\mathbb{R}^3)} + \lambda^{\frac{1}{2}} \Vert h \Vert_{\mathring{H}^2(\mathbb{R}^3)}).
	\]
	The proof follows from setting $\lambda = \Vert h \Vert_{L^{2}(\mathbb{R}^3)}^{\frac{1}{2}} \Vert h \Vert_{\mathring{H}^2(\mathbb{R}^3)}^{-\frac{1}{2}}$ (and noting that the inequality is trivial if $\Vert h \Vert_{\mathring{H}^2(\mathbb{R}^3)} = 0$).
\end{proof}

\begin{remark}[Sobolev inequality with weighted derivatives] \label{rmk:Sobolev}
	In the proof of the main results of this article, weighted operators of the form $t\nabla_x$ will often be considered, and so the Sobolev inequality of Proposition \ref{prop:SobolevL2} will typically be used in the form, for any appropriate function $h\colon [T_0,\infty) \times \mathbb{R}^3 \to \mathbb{R}$,
	\begin{align*}
		\Vert h(t,\cdot) \Vert_{L^{\infty}(\mathbb{R}^3)}
		\leq
		C \Vert h(t,\cdot) \Vert_{L^{2}(\mathbb{R}^3)}^{\frac{1}{4}} \Vert h(t,\cdot) \Vert_{\mathring{H}^2(\mathbb{R}^3)}^{\frac{3}{4}}
		&
		\leq
		\frac{C}{t^{\frac{3}{2}}}
		\Vert h(t,\cdot) \Vert_{L^{2}(\mathbb{R}^3)}^{\frac{1}{4}} 
		\sum_{i,j=1}^3 \Vert (t\partial_{x^i}) (t\partial_{x^j}) h(t,\cdot) \Vert_{L^2(\mathbb{R}^3)}^{\frac{3}{4}}
		\\
		&
		\leq
		\frac{C}{t^{\frac{3}{2}}}
		\Big(
		\Vert h(t,\cdot) \Vert_{L^{2}(\mathbb{R}^3)}
		+
		\sum_{i,j=1}^3 \Vert (t\partial_{x^i}) (t\partial_{x^j}) h(t,\cdot) \Vert_{L^2(\mathbb{R}^3)}
		\Big),
	\end{align*}
	where the final step follows from Young's Inequality.
	See, in particular, the use of this fact in the proof of Proposition \ref{prop:systemfcheck}.
\end{remark}

Similarly, one has the following Sobolev inequality which can be applied to functions $a \colon \mathbb{R}^3_x \times \mathbb{R}^3_p \to \mathbb{R}$.

\begin{proposition}[$L^{\infty}$---$L^2$ Sobolev inequality on $\mathbb{R}^6$] \label{prop:SobolevL2R6}
	There exists a constant $C$ such that, for any function $a \in H^4(\mathbb{R}^6)$,
	\[
		\Vert a \Vert_{L^{\infty}(\mathbb{R}^6)}
		\leq
		C
		\Vert a \Vert_{L^{2}(\mathbb{R}^6)}^{\frac{1}{4}} \Vert a \Vert_{\mathring{H}^4(\mathbb{R}^6)}^{\frac{3}{4}}
		\leq
		C
		\Vert a \Vert_{H^4(\mathbb{R}^6)}.
	\]
\end{proposition}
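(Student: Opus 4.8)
The plan is to mimic the proof of Proposition \ref{prop:SobolevL2} verbatim, only adjusting the exponents and the critical decay rate for the six-dimensional Fourier integral. First I would establish the unscaled inequality
\[
	\Vert a \Vert_{L^{\infty}(\mathbb{R}^6)}
	\leq
	C\big( \Vert a \Vert_{L^2(\mathbb{R}^6)} + \Vert a \Vert_{\mathring{H}^4(\mathbb{R}^6)} \big),
\]
by Fourier inversion: since $a \in H^4(\mathbb{R}^6)$, $\widehat a \in L^1$ and $\vert a(z) \vert \leq \int_{\mathbb{R}^6} \vert \widehat a (\xi) \vert d\xi$, which is split as $\int_{\vert \xi \vert \leq 1}$ plus $\int_{\vert \xi \vert > 1}$ and estimated by Cauchy--Schwarz with weight $\vert \xi \vert^{-4}$ on the high-frequency piece. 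The point is that $\int_{\vert \xi \vert > 1} \vert \xi \vert^{-8} d\xi < \infty$ in $6$ dimensions (the relevant integrand is $\vert \xi \vert^{-8}$, integrable since $8 > 6$), which is exactly why four derivatives — and not fewer — are required here. Plancherel then converts the two frequency-space norms into $\Vert a \Vert_{L^2}$ and $\Vert a \Vert_{\mathring H^4}$, modulo the usual comparison of $\mathring H^4$ with the full sum of mixed fourth derivatives.

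Next I would run the scaling argument. For $\lambda > 0$ set $a_{\lambda}(z) = a(\lambda z)$ on $\mathbb{R}^6$; then $\Vert a_{\lambda} \Vert_{L^2(\mathbb{R}^6)} = \lambda^{-3} \Vert a \Vert_{L^2(\mathbb{R}^6)}$ and $\Vert a_{\lambda} \Vert_{\mathring H^4(\mathbb{R}^6)} = \lambda^{-3+4} \Vert a \Vert_{\mathring H^4(\mathbb{R}^6)} = \lambda \Vert a \Vert_{\mathring H^4(\mathbb{R}^6)}$, using that differentiating $j$ times picks up $\lambda^j$ and changing variables in a six-dimensional integral picks up $\lambda^{-3}$ in the $L^2$ norm. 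Applying the unscaled inequality to $a_{\lambda}$ and using $\Vert a_{\lambda} \Vert_{L^{\infty}} = \Vert a \Vert_{L^{\infty}}$ gives
\[
	\Vert a \Vert_{L^{\infty}(\mathbb{R}^6)}
	\leq
	C\big( \lambda^{-3} \Vert a \Vert_{L^2(\mathbb{R}^6)} + \lambda \Vert a \Vert_{\mathring H^4(\mathbb{R}^6)} \big),
\]
and optimising — taking $\lambda = \big( \Vert a \Vert_{L^2(\mathbb{R}^6)} / \Vert a \Vert_{\mathring H^4(\mathbb{R}^6)} \big)^{1/4}$, and noting the inequality is trivial when $\Vert a \Vert_{\mathring H^4(\mathbb{R}^6)} = 0$ — yields $\Vert a \Vert_{L^{\infty}} \leq C \Vert a \Vert_{L^2}^{3/4} \cdot$ wait, I should check the exponents: balancing $\lambda^{-3} A$ against $\lambda B$ gives $\lambda^4 = A/B$, so each term becomes of size $A^{1/4} B^{3/4}$, i.e.\ $\Vert a \Vert_{L^2}^{1/4} \Vert a \Vert_{\mathring H^4}^{3/4}$, matching the claimed exponents. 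The second inequality of the statement, $\Vert a \Vert_{L^2}^{1/4} \Vert a \Vert_{\mathring H^4}^{3/4} \leq C \Vert a \Vert_{H^4}$, is immediate since both factors are bounded by $\Vert a \Vert_{H^4}$.

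There is no real obstacle here — the argument is a routine dimensional bookkeeping exercise, structurally identical to Proposition \ref{prop:SobolevL2}. The only points requiring a moment's care are (i) getting the critical number of derivatives right, namely that one needs $s > d/2 = 3$ so $s = 4$ is the smallest integer that works in $\mathbb{R}^6$, which is precisely what forces the $\mathring H^4$ on the right-hand side, and (ii) tracking the scaling weights $\lambda^{-d/2} = \lambda^{-3}$ and $\lambda^{s - d/2} = \lambda$ correctly so that the final exponents $\tfrac14, \tfrac34$ come out as stated. I would therefore write the proof tersely, essentially as ``the proof is identical to that of Proposition \ref{prop:SobolevL2}, replacing $\mathbb{R}^3$ by $\mathbb{R}^6$ and $\mathring H^2$ by $\mathring H^4$; the key integrability fact is $\int_{\vert \xi \vert > 1} \vert \xi \vert^{-8} d\xi < \infty$ in $6$ dimensions.''
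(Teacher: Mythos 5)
Your proposal is correct and follows exactly the argument the paper intends: the paper leaves Proposition \ref{prop:SobolevL2R6} without proof precisely because it is the verbatim analogue of Proposition \ref{prop:SobolevL2}, with $\mathbb{R}^3,\mathring{H}^2$ replaced by $\mathbb{R}^6,\mathring{H}^4$, the high-frequency integrability $\int_{\vert\xi\vert>1}\vert\xi\vert^{-8}d\xi<\infty$ (valid since $8>6$) replacing $\int_{\vert\xi\vert>1}\vert\xi\vert^{-4}d\xi<\infty$, and the scaling weights $\lambda^{-3}$ and $\lambda$ balancing at $\lambda=(\Vert a\Vert_{L^2}/\Vert a\Vert_{\mathring{H}^4})^{1/4}$ to produce the stated exponents. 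Your dimensional bookkeeping and the final Young-type reduction to $\Vert a\Vert_{H^4}$ are both as the paper would have them.
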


Consider also the following Hardy inequality.

\begin{proposition}[Hardy inequality on $\mathbb{R}^3$] \label{prop:Hardy}
	For any function $h \in H^1(\mathbb{R}^3)$,
	\[
		\int_{\mathbb{R}^3} \vert x \vert^{-2} h^2 dx
		\leq
		4 \int_{\mathbb{R}^3} \vert \nabla h \vert^2 dx. 
	\]
\end{proposition}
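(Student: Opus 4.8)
The statement to prove is the Hardy inequality on $\mathbb{R}^3$:
\[
	\int_{\mathbb{R}^3} \vert x \vert^{-2} h^2 \, dx
	\leq
	4 \int_{\mathbb{R}^3} \vert \nabla h \vert^2 \, dx,
\]
for $h \in H^1(\mathbb{R}^3)$.

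\textbf{Proof proposal.}

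The plan is to prove the inequality first for $h \in C_c^\infty(\mathbb{R}^3)$ and then extend by density. For the core estimate I would use the standard integration-by-parts (divergence-theorem) trick exploiting the identity $\operatorname{div}(x/\vert x\vert^2) = 1/\vert x \vert^2$ in three dimensions. Concretely, writing $r = \vert x \vert$, one has $\nabla \cdot \big( r^{-2} x \big) = r^{-2}$ away from the origin (since in $d$ dimensions $\nabla \cdot (r^{-2} x) = (d-2) r^{-2}$, and here $d = 3$). So I would start from
\[
	\int_{\mathbb{R}^3} r^{-2} h^2 \, dx = \int_{\mathbb{R}^3} h^2 \, \nabla \cdot \big( r^{-2} x \big) \, dx,
\]
integrate by parts (the boundary term at infinity vanishes for compactly supported $h$, and the contribution of a small sphere around the origin vanishes as its radius tends to zero because $h^2 r^{-2} \cdot r^2 \to 0$), to obtain
\[
	\int_{\mathbb{R}^3} r^{-2} h^2 \, dx = - \int_{\mathbb{R}^3} r^{-2} \, x \cdot \nabla (h^2) \, dx = - 2 \int_{\mathbb{R}^3} r^{-2} \, h \, (x \cdot \nabla h) \, dx.
\]

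Then I would apply the Cauchy--Schwarz inequality: $\vert x \cdot \nabla h \vert \leq r \vert \nabla h \vert$, so that
\[
	\int_{\mathbb{R}^3} r^{-2} h^2 \, dx \leq 2 \int_{\mathbb{R}^3} r^{-1} \vert h \vert \, \vert \nabla h \vert \, dx \leq 2 \left( \int_{\mathbb{R}^3} r^{-2} h^2 \, dx \right)^{\frac{1}{2}} \left( \int_{\mathbb{R}^3} \vert \nabla h \vert^2 \, dx \right)^{\frac{1}{2}}.
\]
Dividing through by $\big( \int r^{-2} h^2 \big)^{1/2}$ (which is finite for $h \in C_c^\infty$, and if it vanishes the inequality is trivial) and squaring yields the constant $4$. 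Finally, I would extend to general $h \in H^1(\mathbb{R}^3)$ by approximating in the $H^1$ norm by $C_c^\infty$ functions and using Fatou's lemma on the left-hand side to pass to the limit.

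The only delicate point is justifying the integration by parts at the origin: one excises a ball $B_\varepsilon(0)$, picks up a boundary term of size $\lesssim \varepsilon^{-2} \sup_{B_\varepsilon} h^2 \cdot \varepsilon^2 = \sup_{B_\varepsilon} h^2$, and notes this is bounded (in fact one needs it to not blow up, which holds since $h$ is continuous near $0$ in the smooth case); more carefully the boundary integrand is $r^{-2} h^2 (x\cdot n)$ on $\partial B_\varepsilon$ with $\vert x \cdot n\vert = \varepsilon$, giving a term $O(\varepsilon^{-1})\cdot O(\varepsilon^2) = O(\varepsilon) \to 0$. This is routine but is the one place care is genuinely needed; everything else is Cauchy--Schwarz and density.
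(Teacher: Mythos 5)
Your proof is correct, and it gives the right constant $4$ via the standard computation. But the route differs from the paper's in a way worth noting. You establish the inequality by an integration by parts in all of $\mathbb{R}^3$, using the divergence identity $\nabla\cdot(r^{-2}x) = r^{-2}$ (valid in dimension $3$), then apply Cauchy--Schwarz globally; the price is that you have to excise a small ball around the origin and verify the boundary terms vanish, which you do. The paper instead slices $\mathbb{R}^3$ into radial rays: for fixed angular variables it integrates $\partial_r(rh^2)$ over $(0,\infty)$ (choosing a sequence $r_n$ with $(rh^2)(r_n,\cdot)\to 0$ to kill the endpoint), applies Cauchy--Schwarz in the one-dimensional variable $r$ with the measures $dr$ and $r^2\,dr$, and only at the end integrates over $S^2$. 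The two arguments are closely related --- after Cauchy--Schwarz you only use $|x\cdot\nabla h| = r|\partial_r h| \le r|\nabla h|$, exactly the radial derivative the paper works with --- but the paper avoids the divergence theorem and the excision of the singularity entirely, at the cost of needing a vanishing sequence $r_n\to\infty$ for a.e.\ direction. Your approach is arguably the more standard textbook presentation; the paper's is more elementary in that it needs only the one-dimensional fundamental theorem of calculus. Both then extend from smooth functions to $H^1$ by density.
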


\begin{proof}
	Consider polar coordinates $(r,\theta^1,\theta^2)$ on $\mathbb{R}^3$.  For fixed $(\theta^1,\theta^2)$, since $h \in H^1(\mathbb{R}^3)$, there is a sequence $r_n$ such that $\lim_{n \to \infty} (rh^2)(r_n, \theta^1,\theta^2) = 0$.  Integrating $\partial_r (r h^2)$ between $0$ and $r_n$, and taking $n \to \infty$, it follows that
	\[
		0
		=
		\int_0^{\infty} \partial_r (r h^2) dr
		=
		\int_0^{\infty} h^2 + 2 r h \partial_r h dr,
	\]
	and so, by the Cauchy--Schwarz inequality,
	\[
		\int_0^{\infty} h^2 dr
		\leq
		2 \Big( \int_0^{\infty} h^2 dr\Big)^{\frac{1}{2}} \Big(\int_0^{\infty} (\partial_r h)^2 r^2 dr\Big)^{\frac{1}{2}}.
	\]
	The result follows after dividing by $( \int_0^{\infty} h^2 dr)^{\frac{1}{2}}$, squaring, and integrating over $S^2$.
\end{proof}

The following form of Taylor's Theorem will be used.

\begin{proposition}[Taylor's Theorem] \label{prop:Taylor}
	For any smooth function $h \colon \mathbb{R}^n \to \mathbb{R}$, any $x_0 \in \mathbb{R}^n$, and any $K \geq 0$,
	\begin{align*}
		h(x)
		=
		\
		&
		\sum_{k=0}^K
		\frac{1}{k!} (x-x_0)^{i_1} \ldots (x-x_0)^{i_k} (\partial_{i_1} \ldots \partial_{i_k} h)(x_0)
		\\
		&
		+
		\frac{1}{K!} (x-x_0)^{i_1} \ldots (x-x_0)^{i_{K+1}} \int_0^1 (1-s)^K (\partial_{i_1} \ldots \partial_{i_{K+1}} h)(sx) ds,
	\end{align*}
	where $(x-x_0)^i \in \mathbb{R}$ denotes the $i$-th component of $x-x_0 = ((x-x_0)^1,\ldots,(x-x_0)^n) \in \mathbb{R}^n$.
\end{proposition}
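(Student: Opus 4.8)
The plan is to reduce this multivariable identity to the familiar one-dimensional Taylor formula with integral remainder, by restricting $h$ to the segment joining $x_0$ to $x$. Fix $x, x_0 \in \mathbb{R}^n$ and set $g(s) = h(x_0 + s(x-x_0))$ for $s \in [0,1]$; since $h$ is smooth, so is $g$, and $g(0) = h(x_0)$, $g(1) = h(x)$. By the chain rule $g'(s) = (x-x_0)^i (\partial_i h)(x_0 + s(x-x_0))$ (summation over the repeated index understood), and a straightforward induction on $k$ gives
\[
	g^{(k)}(s) = (x-x_0)^{i_1} \cdots (x-x_0)^{i_k}\, (\partial_{i_1} \cdots \partial_{i_k} h)(x_0 + s(x-x_0)).
\]
In particular $g^{(k)}(0) = (x-x_0)^{i_1} \cdots (x-x_0)^{i_k}\, (\partial_{i_1} \cdots \partial_{i_k} h)(x_0)$, which is exactly the $k$-th term of the claimed expansion.

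Next I would establish the one-dimensional statement: for every smooth $g \colon [0,1] \to \mathbb{R}$ and every $K \geq 0$,
\[
	g(1) = \sum_{k=0}^K \frac{g^{(k)}(0)}{k!} + \frac{1}{K!} \int_0^1 (1-s)^K g^{(K+1)}(s)\, ds,
\]
by induction on $K$. The case $K = 0$ is the fundamental theorem of calculus, $g(1) = g(0) + \int_0^1 g'(s)\, ds$. For the inductive step, integrate the remainder by parts using $(1-s)^K = -\tfrac{1}{K+1}\tfrac{d}{ds}(1-s)^{K+1}$, so that the boundary contribution produces $\tfrac{g^{(K+1)}(0)}{(K+1)!}$ and the surviving integral becomes $\tfrac{1}{(K+1)!}\int_0^1 (1-s)^{K+1} g^{(K+2)}(s)\, ds$; this upgrades the formula from order $K$ to order $K+1$. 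Applying this identity to the function $g$ constructed above and inserting the expressions just computed for $g^{(k)}(0)$ and $g^{(K+1)}(s)$ yields the stated formula, with the top-order derivative in the remainder evaluated at the interpolating point $x_0 + s(x-x_0)$.

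There is essentially no obstacle here: the result is classical. The only steps requiring any attention are the index bookkeeping in the chain-rule induction for $g^{(k)}$ and the single integration by parts in the one-dimensional induction, both entirely routine. (For clarity one should note that the argument of $\partial_{i_1}\cdots\partial_{i_{K+1}} h$ in the remainder term is the interpolating point $x_0 + s(x-x_0)$ rather than $sx$.)
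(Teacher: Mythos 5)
Your proof is correct and is the standard reduction to the one-dimensional integral-remainder Taylor formula via $g(s) = h(x_0 + s(x-x_0))$; the paper states this proposition without proof, as a classical fact. You are also right to flag the typo: as written the remainder has $\partial_{i_1}\cdots\partial_{i_{K+1}} h$ evaluated at $sx$, which is only correct when $x_0 = 0$, whereas the applications in the paper (for instance in Proposition \ref{prop:phiderivexpansion} and Proposition \ref{prop:rhoKdef}) expand around nonzero base points, so the argument should indeed read $x_0 + s(x-x_0)$.
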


Finally, the following Gr\"{o}nwall type inequality will also be used.

\begin{proposition}[Gr\"{o}nwall inequality] \label{prop:Gronwall}
	Suppose $0<T<T_f$ and $v \colon [T,T_f]\to \mathbb{R}$ satisfies
	\[
		v(t) \leq b(t) + \int_t^{T_f} a(s) v(s) ds,
	\] 
	for all $T \leq t \leq T_f$, for some $a,b \colon [T,T_f]\to [0,\infty)$.  Then
	\[
		v(t) \leq b(t) + \int_t^{T_f} a(s) b(s) e^{\int_t^s a(s')ds'}ds,
	\]
	for all $T_0 \leq t \leq T$.
\end{proposition}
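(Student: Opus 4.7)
The plan is to reduce the integral inequality to a differential inequality by introducing the auxiliary quantity
\[
	w(t) := \int_t^{T_f} a(s) v(s) ds,
\]
which is absolutely continuous with $w(T_f) = 0$. By the fundamental theorem of calculus, $w'(t) = -a(t) v(t)$ almost everywhere, and substituting the hypothesis $v(t) \leq b(t) + w(t)$ (valid because $a \geq 0$) yields the pointwise inequality
\[
	w'(t) + a(t) w(t) \geq - a(t) b(t).
\]

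The next step is to close this differential inequality by choosing an appropriate integrating factor. Setting $A(t) := \int_t^{T_f} a(s') ds'$, one has $A'(t) = -a(t)$, so multiplying by $e^{-A(t)}$ and applying the product rule gives
\[
	\frac{d}{dt} \bigl( e^{-A(t)} w(t) \bigr) \geq - a(t) b(t) e^{-A(t)}.
\]
I would then integrate this from $t$ to $T_f$, using the boundary values $w(T_f) = 0$ and $A(T_f) = 0$, and rearrange to get
\[
	w(t) \leq \int_t^{T_f} a(s) b(s) e^{A(t) - A(s)} ds = \int_t^{T_f} a(s) b(s) e^{\int_t^s a(s') ds'} ds,
\]
where the last equality uses the identity $A(t) - A(s) = \int_t^s a(s') ds'$. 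Combining this with the inequality $v(t) \leq b(t) + w(t)$ delivers the desired conclusion.

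The only subtlety is keeping track of the direction of integration: because the hypothesis is phrased as a backwards-in-time integral inequality (with $v(t)$ controlled by values of $v$ to the future), one must be careful with signs when differentiating $w$ and selecting the sign in the integrating factor $e^{-A(t)}$. Regularity-wise, the argument as stated requires continuity of $a$, $b$, $v$, but this suffices for the applications later in the paper; no additional hypothesis beyond $a, b \geq 0$ is needed. I do not anticipate any serious obstacle here — the result is classical and the proof is essentially the standard integrating-factor argument adapted to the backward-in-time setting.
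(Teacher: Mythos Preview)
Your proof is correct and follows essentially the same approach as the paper: both introduce the auxiliary function $w(t)=\int_t^{T_f}a(s)v(s)\,ds$, multiply by the integrating factor $e^{-\int_t^{T_f}a(s')\,ds'}$, and integrate the resulting differential inequality from $t$ to $T_f$. The only difference is presentational --- you isolate the step $w'(t)+a(t)w(t)\geq -a(t)b(t)$ before applying the integrating factor, whereas the paper computes the derivative of $w(t)e^{-A(t)}$ directly.
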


\begin{proof}
	First note that
	\[
		\frac{d}{dt} \Big(  \int_t^{T_f} a(s) v(s) ds \, e^{-\int^{T_f}_t a(s')ds'} \Big)
		=
		- a(t) \, e^{\int_t^{T_f} a(s')ds'} \Big( v(s) - \int_t^{T_f} a(s) v(s) ds \Big)
		\geq
		- a(t) b(t) \, e^{-\int_t^{T_f} a(s')ds'}.
	\]
	Integrating between $T_f$ and $t$ gives
	\[
		\int_t^{T_f} a(s) v(s) ds \, e^{-\int_t^{T_f} a(s')ds'}
		\leq
		\int_t^{T_f} a(s) b(s) \, e^{-\int_s^{T_f} a(s')ds'} ds,
	\]
	and the result follows.
\end{proof}

\subsection{Elliptic and transport estimates}
\label{subsec:elliptictransport}

The following gradient estimate for the Poisson equation will be used.

\begin{proposition}[Gradient estimate] \label{prop:gradelliptic}
	For any function $h\in H^2(\mathbb{R}^3)$,
	\begin{equation} \label{eq:gradelliptic}
		\Vert \nabla h \Vert_{L^2} \lesssim \Vert r \Delta h \Vert_{L^2}.
	\end{equation}
\end{proposition}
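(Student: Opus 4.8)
The plan is to prove the gradient estimate $\Vert \nabla h \Vert_{L^2} \lesssim \Vert r \Delta h \Vert_{L^2}$ by combining an integration by parts with the Hardy inequality of Proposition~\ref{prop:Hardy}. First I would test the Poisson equation against $h$ itself: integrating by parts gives
\[
	\int_{\mathbb{R}^3} \vert \nabla h \vert^2 \, dx = - \int_{\mathbb{R}^3} h \, \Delta h \, dx.
\]
To control the right-hand side I would insert a factor of $r = \vert x \vert$ and its inverse, writing $h \Delta h = (r^{-1} h)(r \Delta h)$, and then apply Cauchy--Schwarz:
\[
	\Big\vert \int_{\mathbb{R}^3} h \, \Delta h \, dx \Big\vert
	\leq
	\Vert r^{-1} h \Vert_{L^2} \, \Vert r \Delta h \Vert_{L^2}.
\]
The term $\Vert r^{-1} h \Vert_{L^2}$ is precisely what the Hardy inequality (Proposition~\ref{prop:Hardy}) bounds: $\Vert r^{-1} h \Vert_{L^2} \leq 2 \Vert \nabla h \Vert_{L^2}$. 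Combining these yields $\Vert \nabla h \Vert_{L^2}^2 \leq 2 \Vert \nabla h \Vert_{L^2} \Vert r \Delta h \Vert_{L^2}$, and dividing by $\Vert \nabla h \Vert_{L^2}$ (treating the degenerate case $\nabla h \equiv 0$ separately, where the inequality is trivial) gives the claim.

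The main subtlety — rather than a genuine obstacle — is justifying the integration by parts and the vanishing of the boundary term at infinity for a general $h \in H^2(\mathbb{R}^3)$, since such $h$ need not be compactly supported. The clean way to handle this is by density: the estimate is established first for $h \in C_c^\infty(\mathbb{R}^3)$, where all manipulations are manifestly legitimate, and then extended to $h \in H^2(\mathbb{R}^3)$ by approximation, noting that both sides of the inequality are continuous with respect to the $H^2$ norm (the right-hand side involves $r\Delta h$, but in the applications in this paper $\Delta h$ is compactly supported in a ball of radius $\lesssim t$, so the weight is harmless; more generally one can restrict to the natural class on which $r\Delta h \in L^2$). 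One should also record that the implicit constant is absolute — in fact one may take it to be $2$ — which matters because this is the estimate that, as emphasised in Remark~\ref{rmk:ellipticity}, plays the role of the ``hyperbolic-friendly'' substitute for full elliptic regularity throughout the paper.
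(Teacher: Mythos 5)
Your proof is correct and follows exactly the same route as the paper: integrate by parts, apply Cauchy--Schwarz with the $r$/$r^{-1}$ split, invoke the Hardy inequality of Proposition~\ref{prop:Hardy}, and divide. The additional remarks on the density argument and on the explicit constant $2$ are sound and slightly more careful than the paper's terse presentation, but the substance is identical.
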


\begin{proof}
	Integrating by parts,
	\[
		\int_{\mathbb{R}^3} \vert \nabla h \vert^2 dx
		=
		-
		\int_{\mathbb{R}^3} h \Delta h dx
		\leq
		\Vert r^{-1} h \Vert_{L^2} \Vert r \Delta h \Vert_{L^2},
	\]
	where $r(x) = \vert x \vert$.  The proof then follows from the Hardy inequality, Proposition \ref{prop:Hardy}.
\end{proof}

\begin{remark}[Full elliptic estimate] \label{rmk:fullelliptic}
	In addition to Proposition \ref{prop:gradelliptic}, one also has the full elliptic estimate
	\begin{equation} \label{eq:fullelliptic}
		\sum_{i,j=1}^3\Vert \partial_i \partial_j h \Vert_{L^2} \lesssim \Vert \Delta h \Vert_{L^2},
	\end{equation}
	which holds for all functions $h\in H^2(\mathbb{R}^3)$.  In order to illustrate an approach which can be applied to other equations, we have elected not to use the estimate \eqref{eq:fullelliptic} in this work (see Remark \ref{rmk:ellipticity}).  In particular, the estimate \eqref{eq:gradelliptic} has appropriate analogues --- in terms of number of derivatives appearing on each side of the inequality --- for hyperbolic operators, in contrast to the full elliptic estimate \eqref{eq:fullelliptic}.
\end{remark}

The next result is an $L^2$ estimate for the Vlasov equation.

\begin{proposition}[$L^2$ estimates for inhomogeneous Vlasov equation] \label{prop:generalVlasovestimates}
	Let $a\colon [T,T_f] \times \mathbb{R}^3_x \times \mathbb{R}^3_p \to \mathbb{R}$ be a suitably decaying smooth function satisfying
	\[
		\partial_t a + p^i \partial_{x^i} a + \partial_{x^i} \phi \, \partial_{p^i} a = H,
	\]
	for some smooth $H\colon [T,T_f] \times \mathbb{R}^3_x \times \mathbb{R}^3_p \to \mathbb{R}$ and some smooth $\phi \colon [T,T_f] \times \mathbb{R}^3_x \to \mathbb{R}$.  Then $a$ satisfies the $L^2$ estimate,
	\[
		\Vert a(t,\cdot,\cdot) \Vert_{L^2_x L^2_p}
		\lesssim
		\Vert a(T_f,\cdot,\cdot) \Vert_{L^2_x L^2_p}
		+
		\int^{T_f}_t \Vert H(s,\cdot,\cdot) \Vert_{L^2_x L^2_p} ds,
	\]
	for all $t \in [T,T_f]$, where
	\[
		\Vert a(t,\cdot,\cdot) \Vert_{L^2_x L^2_p}^2
		=
		\int_{\mathbb{R}^3} \int_{\mathbb{R}^3} \vert a(t,x,p) \vert^2 dp dx.
	\]
\end{proposition}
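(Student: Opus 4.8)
The plan is to derive the estimate from the pointwise identity satisfied by $a^2$ along the characteristics of the operator $\gs_{\phi}$, exactly as in the heuristic \eqref{eq:introL2f}. First I would multiply the equation by $2a$ and observe that
\[
	\partial_t (a^2) + p^i \partial_{x^i} (a^2) + \partial_{x^i} \phi \, \partial_{p^i} (a^2) = 2 a H.
\]
Next I would integrate this identity over $\mathbb{R}^3_x \times \mathbb{R}^3_p$ at fixed time $s$. The second term, $p^i \partial_{x^i}(a^2)$, integrates to zero since it is a total $x$-derivative (and $a$ is suitably decaying), and the third term, $\partial_{x^i}\phi \, \partial_{p^i}(a^2)$, integrates to zero because $\partial_{x^i}\phi$ does not depend on $p$, so it is a total $p$-derivative; again the decay of $a$ kills the boundary contribution. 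This leaves
\[
	\frac{d}{ds} \Vert a(s,\cdot,\cdot) \Vert_{L^2_x L^2_p}^2
	=
	2 \int_{\mathbb{R}^3}\int_{\mathbb{R}^3} a(s,x,p) H(s,x,p) \, dp \, dx
	\leq
	2 \Vert a(s,\cdot,\cdot) \Vert_{L^2_x L^2_p} \Vert H(s,\cdot,\cdot) \Vert_{L^2_x L^2_p},
\]
by Cauchy--Schwarz.

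From here the conclusion is a standard ODE argument. Writing $E(s) = \Vert a(s,\cdot,\cdot)\Vert_{L^2_xL^2_p}$, the inequality above reads $\frac{d}{ds} E(s)^2 \leq 2 E(s) \Vert H(s,\cdot,\cdot)\Vert_{L^2_xL^2_p}$, hence $\frac{d}{ds} E(s) \leq \Vert H(s,\cdot,\cdot)\Vert_{L^2_xL^2_p}$ wherever $E(s) > 0$ (and one handles the set where $E$ vanishes by a routine approximation, e.g.\ replacing $E$ by $\sqrt{E^2 + \varepsilon}$ and letting $\varepsilon \to 0$). Integrating from $t$ to $T_f$ and rearranging gives
\[
	E(t) \leq E(T_f) + \int_t^{T_f} \Vert H(s,\cdot,\cdot)\Vert_{L^2_xL^2_p} \, ds,
\]
which is the claimed estimate (the implied constant being $1$, in fact).

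The only genuinely delicate point is the justification of the two integrations by parts, i.e.\ that the boundary terms vanish; this is where the hypothesis that $a$ is ``suitably decaying'' is used, and in the application $a$ will in fact be compactly supported in $x$ and $p$ at each fixed time (by the support properties \eqref{eq:introybound}), so there is nothing to worry about. One should also note that the manipulation $\frac{d}{ds}E^2 = 2E E'$ together with dividing by $E$ requires a small amount of care at times where $E(s) = 0$; the $\sqrt{E^2+\varepsilon}$ regularisation disposes of this cleanly. Everything else is routine, and I do not anticipate any real obstacle.
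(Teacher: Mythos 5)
Your proof is correct and follows essentially the same route as the paper: derive the identity for $a^2$, integrate in $(x,p)$ to kill the transport and force terms, apply Cauchy--Schwarz, and integrate the resulting differential inequality backwards from $T_f$. The only addition is your $\sqrt{E^2+\varepsilon}$ regularisation to justify dividing by $E(s)$; the paper writes the chain rule directly and leaves this routine point implicit.
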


\begin{proof}
	First note that $a^2$ satisfies
	\[
		\partial_t a^2 + p^i \partial_{x^i} a^2 + \partial_{x^i} \phi \, \partial_{p^i} a^2 = 2 a H,
	\]
	and so
	\[
		\partial_t \Vert a(t,\cdot,\cdot) \Vert_{L^2_x L^2_p}^2
		=
		\int_{\mathbb{R}^3} \int_{\mathbb{R}^3}
		-
		p \cdot \nabla_x (\vert a(t,x,p) \vert^2)
		-
		\nabla_x \phi \cdot \nabla_p (\vert a(t,x,p) \vert^2)
		+
		2 a(t,x,p) H(t,x,p)
		dp dx.
	\]
	Since $a$ decays, it follows that the first two terms on the right hand side vanish and so, by Cauchy--Schwarz, for any $t \leq t_0 \leq T_f$,
	\[
		\partial_t \Vert a(t,\cdot,\cdot) \Vert_{L^2_x L^2_p}
		=
		\frac{\partial_t \Vert a(t,\cdot,\cdot) \Vert_{L^2_x L^2_p}^2}{2 \Vert a(t,\cdot,\cdot) \Vert_{L^2_x L^2_p}}
		\leq
		\Vert H(t,\cdot,\cdot) \Vert_{L^2_x L^2_p}.
	\]
	The result follows after integrating from $t$ to $T_f$.
\end{proof}

\subsection{Vector fields and multi-index notation}
\label{subsec:vectorfields}

Suppose a smooth function $\phi_{\infty} \colon \mathbb{R}^3 \to \mathbb{R}$ is given.  For $k=1,2,3$, define vector fields
\begin{equation} \label{eq:defofL}
	L_k
	=
	t \partial_{x^k} + \partial_{p^k}
	+
	\frac{\log t}{t} \partial_k \partial_i \phi_{\infty}(p) \partial_{p^i}.
\end{equation}

Given a multi-index $I=(I_1,I_2,I_3)$, with $I_1,I_2,I_3 \geq 0$, define the operators
\[
	L^I = (L_1)^{I_1} (L_2)^{I_2} (L_3)^{I_3},
	\quad
	(t^{-1}\partial_p)^I = (t^{-1}\partial_{p^1})^{I_1} (t^{-1}\partial_{p^2})^{I_2} (t^{-1}\partial_{p^3})^{I_3},
	\quad
	(t \partial_x)^I = (t \partial_{x^1})^{I_1} (t \partial_{x^2})^{I_2} (t \partial_{x^3})^{I_3}.
\]

The main reason for introducing the vector fields $L_k$ is due to the form of the commutator $[ \mathbb{X},L_k]$, given in the following proposition.  See the discussion in Remark \ref{rmk:goodvectorfields} below. The vector fields $L_k$ also behave well when applied to functions of $x-tp + \log t \nabla \phi_{\infty}( p )$ and $p$.  See Remark \ref{rmk:functionsofy} below.

The vector fields $L_i$ and $t^{-1} \partial_{p^i}$ have the following commutation properties with the Vlasov equation.

\begin{proposition}[Commutation of the Vlasov equation with $L_i$, and $t^{-1} \partial_{p^i}$] \label{prop:commVlasovLS}
	For $i=1,2,3$, the vector fields $L_i$ satisfy,
	\begin{align} \label{eq:gsLtildecommutator}
		\Big[
		\gs_{\phi}
		,
		L_i
		\Big]
		=
		\
		&
		\Big(
		t^{-2} \partial_i \partial_{k} \phi_{\infty}(p)
		-
		t \partial_{x^i} \partial_{x^k} \phi(t,x)
		\Big)
		\partial_{p^k}
		-
		\frac{\log t}{t^2} \partial_i \partial_{j} \phi_{\infty}(p) \, \big( t \partial_{x^j} + \partial_{p^j} \big)
		\\
		&
		+
		\frac{\log t}{t} \partial_{x^k} \phi(t,x) \, \partial_i \partial_k \partial_{l} \phi_{\infty}(p) \, \partial_{p^l}
		,
		\nonumber
	\end{align}
	and $t^{-1} \partial_{p^i}$ satisfy,
	\begin{equation} \label{eq:gsdpcomm}
		\Big[
		\gs_{\phi}
		,
		t^{-1} \partial_{p^i}
		\Big]
		=
		-
		\frac{1}{t^2} \big( t \partial_{x^i} + \partial_{p^i} \big)
		.
	\end{equation}
\end{proposition}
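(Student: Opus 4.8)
The plan is to verify both commutator identities by direct computation, expanding the operators and applying the product rule, relying only on the definitions $\gs_{\phi} = \partial_t + p^i \partial_{x^i} + \partial_{x^i}\phi(t,x)\partial_{p^i}$ and $L_k = t\partial_{x^k} + \partial_{p^k} + \frac{\log t}{t}\partial_k\partial_i\phi_{\infty}(p)\partial_{p^i}$. I would start with the simpler identity \eqref{eq:gsdpcomm}. Since $t^{-1}\partial_{p^i}$ involves the explicit factor $t^{-1}$, the time derivative in $\gs_{\phi}$ contributes a term $\partial_t(t^{-1})\partial_{p^i} = -t^{-2}\partial_{p^i}$ when commuted; the transport term contributes $[p^j\partial_{x^j}, t^{-1}\partial_{p^i}] = -t^{-1}[\partial_{p^i}, p^j]\partial_{x^j} = -t^{-1}\partial_{x^i}$; and the force term $[\partial_{x^j}\phi\,\partial_{p^j}, t^{-1}\partial_{p^i}]$ vanishes because $\phi$ depends only on $(t,x)$, so $\partial_{p^i}$ annihilates the coefficient. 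Summing these gives $-t^{-2}\partial_{p^i} - t^{-1}\partial_{x^i} = -t^{-2}(t\partial_{x^i} + \partial_{p^i})$, which is exactly \eqref{eq:gsdpcomm}.

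For the identity \eqref{eq:gsLtildecommutator} I would organize the calculation by splitting $L_i$ into its three summands $L_i = A_i + B_i + C_i$ with $A_i = t\partial_{x^i}$, $B_i = \partial_{p^i}$, $C_i = \frac{\log t}{t}\partial_i\partial_k\phi_{\infty}(p)\partial_{p^k}$, and computing $[\gs_{\phi}, A_i]$, $[\gs_{\phi}, B_i]$, $[\gs_{\phi}, C_i]$ separately. The term $[\gs_{\phi}, A_i]$ produces: from $[\partial_t, t\partial_{x^i}] = \partial_{x^i}$; from $[p^j\partial_{x^j}, t\partial_{x^i}] = 0$ (coefficients $p^j$ are independent of $x$); and from $[\partial_{x^j}\phi\,\partial_{p^j}, t\partial_{x^i}] = -t(\partial_{x^i}\partial_{x^j}\phi)\partial_{p^j}$. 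The term $[\gs_{\phi}, B_i]$ produces only $[\partial_{x^j}\phi\,\partial_{p^j}, \partial_{p^i}] = 0$ since $\phi$ is $p$-independent — so, carefully, $[\partial_t,\partial_{p^i}] = [p^j\partial_{x^j},\partial_{p^i}] = -\partial_{x^i}$; wait, $[p^j\partial_{x^j},\partial_{p^i}] = -[\partial_{p^i},p^j]\partial_{x^j} = -\delta^j_i\partial_{x^j} = -\partial_{x^i}$, and the force term contributes $-[\partial_{p^i},\partial_{x^j}\phi]\partial_{p^j} = 0$. So $[\gs_{\phi}, B_i] = -\partial_{x^i}$. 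The term $[\gs_{\phi}, C_i]$ is the most involved: the factor $\frac{\log t}{t}$ gets differentiated by $\partial_t$ yielding $\partial_t(\frac{\log t}{t}) = \frac{1-\log t}{t^2}$ times $\partial_i\partial_k\phi_{\infty}(p)\partial_{p^k}$; the transport term $p^j\partial_{x^j}$ commutes past everything in $C_i$ since $C_i$ has no $x$-dependence, contributing $0$; and the force term $\partial_{x^j}\phi\,\partial_{p^j}$ acting against $C_i$ gives two pieces, one where $\partial_{p^j}$ hits the coefficient $\partial_i\partial_k\phi_{\infty}(p)$ producing $-\frac{\log t}{t}\partial_{x^j}\phi\,\partial_i\partial_k\partial_j\phi_{\infty}(p)\partial_{p^k}$ (note sign from which way the commutator is taken), and one where $C_i$ acting on $\partial_{x^j}\phi$ gives zero since $\phi$ is $p$-independent. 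I would then collect all terms, carefully tracking signs, and reorganize: the $-\partial_{x^i}$ from $[\gs_{\phi},B_i]$ and the $\partial_{x^i}$ from the first piece of $[\gs_{\phi},A_i]$ must be combined with pieces of $[\gs_{\phi},C_i]$ to assemble the $\frac{\log t}{t^2}\partial_i\partial_j\phi_{\infty}(p)(t\partial_{x^j}+\partial_{p^j})$ structure — actually here I should double-check: the claimed RHS has no bare $\partial_{x^i}$ term, so the $\pm\partial_{x^i}$ contributions must cancel, which indeed they do ($\partial_{x^i}$ from $[\partial_t,t\partial_{x^i}]$ against $-\partial_{x^i}$ from $[\gs_\phi,\partial_{p^i}]$).

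The main obstacle I anticipate is purely bookkeeping: correctly handling the $t$-dependent coefficients under $\partial_t$, getting every sign right in the nested commutators (especially $[\partial_{x^j}\phi\,\partial_{p^j}, \cdot]$ where one must decide whether the derivative lands on the coefficient or is carried along), and then recognizing that the $\frac{1-\log t}{t^2} = \frac{1}{t^2} - \frac{\log t}{t^2}$ splitting generated by differentiating $\frac{\log t}{t}$ is precisely what supplies both the $t^{-2}\partial_i\partial_k\phi_{\infty}(p)\partial_{p^k}$ term and (part of) the $-\frac{\log t}{t^2}\partial_i\partial_j\phi_{\infty}(p)\partial_{p^j}$ term in the stated formula, with the remaining $-\frac{\log t}{t^2}\partial_i\partial_j\phi_{\infty}(p)\cdot t\partial_{x^j}$ piece arising from regrouping. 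Once these are all assembled, matching against the three lines of \eqref{eq:gsLtildecommutator} is immediate. No genuinely hard analytic input is needed — this is a computational lemma — so the write-up should simply present the three commutator computations in sequence and then sum.
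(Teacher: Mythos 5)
Your overall strategy — split $L_i = A_i + B_i + C_i$ and compute $[\gs_\phi,\cdot]$ against each summand — is exactly the direct computation the paper has in mind, and your handling of $[\gs_\phi, t^{-1}\partial_{p^i}]$, $[\gs_\phi, A_i]$, and $[\gs_\phi, B_i]$ is correct. But there is a genuine error in your treatment of $[\gs_\phi, C_i]$ that leaves a key term unaccounted for.

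You claim that ``the transport term $p^j\partial_{x^j}$ commutes past everything in $C_i$ since $C_i$ has no $x$-dependence, contributing $0$.'' This is false: $C_i = \frac{\log t}{t}\partial_i\partial_k\phi_\infty(p)\partial_{p^k}$ contains $\partial_{p^k}$, and $\partial_{p^k}$ does not commute with the $p$-dependent coefficient $p^j$ in the transport term. The same mechanism you correctly invoked to get $[p^j\partial_{x^j},\partial_{p^i}]=-\partial_{x^i}$ applies here and gives
\[
\big[p^j\partial_{x^j},\, C_i\big]
= -\frac{\log t}{t}\,\partial_i\partial_k\phi_\infty(p)\,\big(\partial_{p^k} p^j\big)\,\partial_{x^j}
= -\frac{\log t}{t}\,\partial_i\partial_j\phi_\infty(p)\,\partial_{x^j},
\]
which is precisely $-\frac{\log t}{t^2}\partial_i\partial_j\phi_\infty(p)\cdot t\partial_{x^j}$, i.e.\@ the $t\partial_{x^j}$ half of the second term in \eqref{eq:gsLtildecommutator}. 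You try to produce this piece ``from regrouping,'' but the only other source you have identified — the $\partial_t\big(\frac{\log t}{t}\big)$ contribution — only ever generates $\partial_{p^k}$ operators, never $\partial_{x^j}$, so no regrouping can supply it: without the transport commutator above your computation simply cannot close. Separately, your force-term contribution $\big[\partial_{x^j}\phi\,\partial_{p^j}, C_i\big]$ should come out with a plus sign, $+\frac{\log t}{t}\partial_{x^j}\phi\,\partial_i\partial_k\partial_j\phi_\infty(p)\partial_{p^k}$, since it is the action $\partial_{x^j}\phi\,\partial_{p^j}$ applied to the coefficient of $C_i$ that survives (the term where $C_i$ hits $\partial_{x^j}\phi$ vanishes, as you note, but it is that vanishing term which carried the minus). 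Fixing these two points, the terms assemble correctly and the identity follows.
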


\begin{proof}
	The proof is a direct computation.
\end{proof}

\begin{remark}[Corrections to vector fields]
	Note that the vector fields obtained by setting $\phi_{\infty} \equiv 0$ in \eqref{eq:defofL} commute with the free transport operator (i.\@e.\@ the operator $\partial_t + p^i \partial_{x^i}$).  The logarithmic corrections in \eqref{eq:defofL} are crucial in ensuring the good commutation properties of Proposition \ref{prop:commVlasovLS} (see Remark \ref{rmk:goodvectorfields}), and the boundedness property of the right hand side of \eqref{eq:functionsofy1} below.  Corrections to vector fields with good commutation properties of the linearised part of the equation, of this form, played a central role in the proof \cite{Smu}, and the works \cite{FJS,LiTa} on the stability of Minkowski space for the Einstein--Vlasov system.  In fact, it is exactly because the solutions only scatter in a modified sense in $3+1$ dimensions that corrections to these vector fields are required in these works.  Note that these corrections are, in a sense, simpler in the present setting since they are defined directly with respect to the scattering profile $\phi_{\infty}$.
\end{remark}

\begin{remark}[Good commutation properties of $L_i$] \label{rmk:goodvectorfields}
	In order to understand the improvement that the corrected vector fields $L_i$ have over the non-corrected vector fields $t\partial_{x^i} + \partial_{p^i}$, it is a helpful exercise to assume the conclusions of the main results, in particular that solutions satisfy the pointwise estimates
	\begin{equation} \label{eq:goodvectorfields}
		\vert L_j f \vert + \vert t^{-1} \partial_p f\vert \leq \mathcal{F},
		\qquad
		\vert \nabla^2 \phi \vert
		\leq
		\frac{\mathcal{F}}{t^3},
		\qquad
		\vert \nabla^2 \check{\phi}_{[0]} \vert
		\leq
		\frac{\mathcal{F} \log t}{t^4},
	\end{equation}
	where $\check{\phi}_{[0]}(t,x) = \phi(t,x) - t^{-1} \phi_{\infty}(x/t)$, and check the behaviour of the respective commutators when applied to a solution.
	Indeed, assuming \eqref{eq:goodvectorfields}, the commutators \eqref{eq:gsLtildecommutator}, when applied to a solution $f$, have the property that
	\begin{multline*}
		\Big\vert \Big[
		\gs_{\phi}
		,
		L_i
		\Big] f \Big\vert
		\lesssim
		t^2 \vert \nabla^2 \check{\phi}_{[0]} \vert \, \vert t^{-1} \partial_p f \vert
		+
		\frac{\log t}{t^2} \vert \nabla^2 \phi_{\infty} \vert \, \vert L_j f \vert
		\\
		+
		\frac{(\log t)^2}{t^2} \vert \nabla^2 \phi_{\infty} \vert^2 \vert t^{-1} \partial_p f \vert
		+
		\frac{\log t}{t^2} \vert \nabla^3 \phi_{\infty} \vert \vert \nabla \phi \vert \vert t^{-1} \partial_p f \vert
		\lesssim
		\frac{\log t}{t^2} \mathcal{F}
		.
	\end{multline*}
	Contrast with the borderline behaviour exhibited by the commutator of $\gs_{\phi}$ with the non-corrected vector fields (obtained by setting $\phi_{\infty} \equiv 0$ in \eqref{eq:gsLtildecommutator}):
	\[
		\Big\vert \Big[
		\gs_{\phi}
		,
		(t\partial_{x^i} + \partial_{p^i})
		\Big] f \Big\vert
		\lesssim
		t^2 \vert \nabla^2 \phi \vert \, \vert t^{-1} \partial_p f \vert
		\lesssim
		\frac{\mathcal{F}}{t}.
	\]
	This borderline behaviour would lead to a logarithmic divergence upon integrating globally in time.
\end{remark}

\begin{remark}[Functions of $x - tp + \log t \nabla \phi_{\infty}(p)$] \label{rmk:functionsofy}
	Many of the objects considered in the following are functions of $x - tp + \log t \nabla \phi_{\infty}(p)$ and $p$, and the vector fields $L_i$ also have good properties when applied to such functions.  Indeed, for any smooth $h \colon \mathbb{R}^3 \times \mathbb{R}^3 \to \mathbb{R}$,
	\begin{equation} \label{eq:functionsofy1}
		L_i \big( h(x - tp + \log t \nabla \phi_{\infty}(p),p) \big)
		=
		\frac{(\log t)^2}{t} \partial_k \partial_i \phi_{\infty}(p) (\partial_{x^k}h)
		+
		\Big( \delta_{ik} + \frac{\log t}{t} \partial_k \partial_i \phi_{\infty}(p) \Big) (\partial_{p^k}h),
	\end{equation}
	and the coefficients of the derivatives of $h$ are not growing in $t$.  Contrast with the non-corrected vector fields $t\partial_{x^i} + \partial_{p^i}$.  Note also that
	\begin{equation} \label{eq:functionsofy2}
		(t^{-1} \partial_{p^i}) \big( h(x - tp + \log t \nabla \phi_{\infty}(p),p) \big)
		=
		\Big( - \delta_{ik} + \frac{\log t}{t} \partial_k \partial_i \phi_{\infty}(p) \Big) (\partial_{x^k}h)(y,p)
		+
		t^{-1} (\partial_{p^i}h)(y,p)
		.
	\end{equation}
	These facts are in particular used in the proofs of Proposition \ref{prop:phiderivexpansion} and Proposition \ref{prop:fKdef} below.
\end{remark}

One also has a weighted version of the Sobolev inequality in which the derivatives are taken with respect to the above vector fields.

\begin{proposition}[Sobolev inequality with vector field derivatives] \label{prop:Sobolevvf}
	There exists a constant $C$ such that, for any function $h \in H^2(\mathbb{R}^3_x \times \mathbb{R}^3_p)$ and any $t \geq 1$,
	\[
		\sup_{x \in \mathbb{R}^3} \Big( \int_{\mathbb{R}^3} \vert h(x,p) \vert^2 dp \Big)^{\frac{1}{2}}
		\leq
		\frac{C}{t^{\frac{3}{2}}}
		\sum_{\vert I \vert =1}^3 \sum_{n = 0}^2 \Big( \frac{\log t}{t} \Vert \partial^I \Delta \phi_{\infty} \Vert_{L^{\infty}}
		\Big)^n
		\sum_{\vert J \vert =0}^3 \Vert L^J h \Vert_{L^2_x L^2_p}
		.
	\]
\end{proposition}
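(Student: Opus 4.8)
The plan is to reduce the weighted Sobolev inequality on $\mathbb{R}^3_x \times \mathbb{R}^3_p$ to the standard (unweighted) $L^\infty$--$L^2$ Sobolev inequality of Proposition~\ref{prop:SobolevL2} applied in the $x$ variable only. First I would fix $p$ momentarily and view $h(\cdot, p)$ as a function on $\mathbb{R}^3_x$; then I would want an inequality of the form $\sup_x |g(x)| \lesssim t^{-3/2}\big(\Vert g\Vert_{L^2_x} + \sum_{i,j}\Vert(t\partial_{x^i})(t\partial_{x^j})g\Vert_{L^2_x}\big)$, which is exactly the scaled form of Proposition~\ref{prop:SobolevL2} recorded in Remark~\ref{rmk:Sobolev}. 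Applying this with $g = h(\cdot,p)$, squaring, and integrating in $p$ (using Minkowski's integral inequality to pull the $L^2_p$ norm inside, or simply applying it to $\Vert h(x,\cdot)\Vert_{L^2_p}^2$ directly after noting $\sup_x\int|h|^2dp \le \int \sup_x |h|^2 dp$ is the wrong direction — instead one should apply the 1D-in-$x$ Sobolev to the function $x \mapsto (\int |h(x,p)|^2 dp)^{1/2}$), one obtains
\[
	\sup_{x}\Big(\int_{\mathbb{R}^3}|h(x,p)|^2dp\Big)^{1/2}
	\lesssim
	\frac{1}{t^{3/2}}\Big(\Vert h\Vert_{L^2_xL^2_p} + \sum_{i,j=1}^3\Vert (t\partial_{x^i})(t\partial_{x^j})h\Vert_{L^2_xL^2_p}\Big).
\]
This is the key analytic step; the remaining work is purely algebraic, namely re-expressing the weighted pure-$x$ derivatives $t\partial_{x^i}$ in terms of the vector fields $L_i$.

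Next I would invert the relation \eqref{eq:defofL}. Writing $L_i = t\partial_{x^i} + (\delta_{ik} + \tfrac{\log t}{t}\partial_i\partial_k\phi_\infty(p))\partial_{p^k}$, we have $t\partial_{x^i} = L_i - \partial_{p^i} - \tfrac{\log t}{t}\partial_i\partial_k\phi_\infty(p)\partial_{p^k}$, and it is cleaner to combine this with $t^{-1}\partial_{p^i}$ to see that both $t\partial_{x^i}h$ and $t^{-1}\partial_{p^i}h$ can be written as linear combinations, with coefficients built from $\delta_{ik}$, $\tfrac{\log t}{t}\partial_i\partial_k\phi_\infty$, and their products, of $L_jh$ and $t^{-1}\partial_{p^j}h$ — which in turn is controlled by $L_jh$ and $\partial_{p^j}$-free terms; more directly, the span of $\{t\partial_{x^i}, t^{-1}\partial_{p^i}\}$ coincides with the span of $\{L_i, t^{-1}\partial_{p^i}\}$, with transition matrix $I + \tfrac{\log t}{t}(\nabla^2\phi_\infty)$-type corrections. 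Iterating this substitution twice to handle the second-order operators $(t\partial_{x^i})(t\partial_{x^j})$ generates, at worst, products of up to two factors of $\tfrac{\log t}{t}\partial\Delta\phi_\infty$-type coefficients (more precisely $\tfrac{\log t}{t}\partial^I\nabla^2\phi_\infty$, which is bounded by $\tfrac{\log t}{t}\Vert\partial^I\Delta\phi_\infty\Vert_{L^\infty}$ after crudely bounding each entry of the Hessian by the trace via the full second-derivative bound, or simply absorbing constants), acting on $L^Jh$ with $|J|\le 2$. Together with the zeroth-order term $\Vert h\Vert_{L^2_xL^2_p} \le \sum_{|J|\le 3}\Vert L^Jh\Vert$, this yields exactly the stated triple sum over $|I|=1,\dots,3$ (the $\partial^I$ ranging over the first derivatives that can hit $\phi_\infty$), $n=0,1,2$ (the number of correction factors), and $|J|=0,\dots,3$ (note the sum allows up to three $L$'s, comfortably covering the $|J|\le 2$ that actually appear).

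The step I expect to be the main bookkeeping obstacle — though not a conceptual one — is tracking which derivatives of $\phi_\infty$ appear and with what multiplicity when the substitution is iterated: a single $L_i$ contributes a coefficient involving $\nabla^2\phi_\infty$, but when a second vector field acts, it can differentiate that coefficient, producing $\nabla^3\phi_\infty$ (hence the $|I|$ up to $3$ in the statement, reading $\partial^I\Delta\phi_\infty$ as encoding up to three derivatives on $\phi_\infty$ beyond the Laplacian's two — or rather, the notation $\partial^I \Delta \phi_\infty$ with $|I| \le 3$ bundles together the relevant higher derivatives), and it can also produce $\partial_p$ acting on $h$ again, which must be fed back into the recursion. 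Keeping the powers $n$ of the small parameter $\tfrac{\log t}{t}\Vert\cdot\Vert_{L^\infty}$ correctly at $n \le 2$ requires observing that at second order one never accumulates more than two correction factors. I would organize this by first proving the scalar identity expressing $(t\partial_{x^{i_1}})(t\partial_{x^{i_2}})$ acting on a function in terms of $L^J$ with $|J|\le 2$ plus lower-order $L$-derivatives, with explicit (but schematic) coefficient bounds, and then substituting into the displayed Sobolev estimate above; everything else is routine. The condition $t\ge 1$ is used only to ensure $\log t \ge 0$ so the correction coefficients are genuinely small for $t$ large, and to make the scaling in Remark~\ref{rmk:Sobolev} valid.
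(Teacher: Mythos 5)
There is a genuine gap at the central step. You propose to ``invert'' the relation $L_i = t\partial_{x^i} + (\delta_{ik} + \tfrac{\log t}{t}\partial_i\partial_k\phi_\infty)\partial_{p^k}$ and thereby express $t\partial_{x^i}$ (and then $(t\partial_{x^i})(t\partial_{x^j})$) as an operator built only from the $L_j$'s, with coefficients involving $\tfrac{\log t}{t}\,\nabla^2\phi_\infty$ and its derivatives. But this inversion is impossible: pointwise, $t\partial_{x^i} h = L_i h - \partial_{p^i}h - \tfrac{\log t}{t}\partial_i\partial_k\phi_\infty\,\partial_{p^k}h$, and the $\partial_{p^k}h$ terms on the right are genuine, independent derivatives of $h$ that no further algebraic substitution can eliminate. (The ``span'' you invoke, $\{L_i, t^{-1}\partial_{p^i}\}=\{t\partial_{x^i},t^{-1}\partial_{p^i}\}$, includes $t^{-1}\partial_{p^i}$ as one of its generators --- it does not follow that $t\partial_{x^i}$ lies in the span of the $L_j$'s alone, and it doesn't.) Consequently your intermediate estimate, which controls $\sup_x\Vert h(x,\cdot)\Vert_{L^2_p}$ via $\Vert(t\partial_x)^I h\Vert_{L^2_xL^2_p}$ for $|I|\le 2$, cannot be converted into the stated bound with only $L^J h$ on the right-hand side: you would be stuck with uncontrolled $\partial_p h$ norms.

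What makes the proposition true is not an algebraic inversion but a cancellation under integration in $p$. The correct identity is
\[
t\partial_{x^i}\Big(\int_{\mathbb{R}^3} h(x,p)\,dp\Big)
=\int_{\mathbb{R}^3} L_i h\,dp \;+\;\frac{\log t}{t}\int_{\mathbb{R}^3}\partial_i\Delta\phi_\infty(p)\, h(x,p)\,dp,
\]
because $\int\partial_{p^i}h\,dp=0$ and, after integrating by parts in $p$, $\int \partial_i\partial_k\phi_\infty\,\partial_{p^k}h\,dp=-\int\partial_i\Delta\phi_\infty\,h\,dp$. Thus the $\partial_p$ part of $L_i$ either drops out as a boundary term or turns into the factor $\partial_i\Delta\phi_\infty$; this --- and not differentiation of the vector-field coefficients, as you guess --- is where the $\Vert\partial^I\Delta\phi_\infty\Vert_{L^\infty}$ weights in the statement come from. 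One then proves an $L^1$ version, $\sup_x\big|\int h\,dp\big|\lesssim t^{-3}\sum_n(\tfrac{\log t}{t}\Vert\partial^I\Delta\phi_\infty\Vert_{L^\infty})^n\sum_{|J|\le 3}\Vert L^J h\Vert_{L^1_xL^1_p}$, by writing the $\sup_x$ as a triple antiderivative in $x$ and iterating the identity above three times; replacing $h$ by $h^2$ and using $|L^I(h^2)|\lesssim\sum_{|I'|\le|I|}|L^{I'}h|^2$ gives the $L^2$-version in the proposition. Your plan of applying Remark~\ref{rmk:Sobolev} in $x$ to $g(x)=\Vert h(x,\cdot)\Vert_{L^2_p}$ is a reasonable instinct, but it separates the $x$-integration from the $p$-integration prematurely and so never sees the cancellation that converts $\partial_p$ into $\partial\Delta\phi_\infty$.
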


\begin{proof}
	First note that, for any smooth compactly supported function $h \colon \mathbb{R}^3_x \times \mathbb{R}^3_p \to \mathbb{R}$,
	\begin{equation} \label{eq:L1Sobolev}
		\sup_{x \in \mathbb{R}^3} \Big\vert \int_{\mathbb{R}^3}  h(x,p) dp \Big\vert
		\leq
		\frac{C}{t^3}
		\sum_{\vert I \vert =1}^3 \sum_{n = 0}^3 \Big( \frac{\log t}{t} \Vert \partial^I \Delta \phi_{\infty} \Vert_{L^{\infty}}
		\Big)^n
		\sum_{\vert J \vert =0}^3
		\int_{\mathbb{R}^3} \int_{\mathbb{R}^3} \vert L^J h(x,p) \vert dp dx.
	\end{equation}
	Indeed, for any $x=(x^1,x^2,x^3) \in \mathbb{R}^3$,
	\begin{align}
		\int_{\mathbb{R}^3}  h(x,p) dp
		&
		=
		\frac{1}{t} \int_{-\infty}^{x^3} t \partial_{x^3} \Big( \int_{\mathbb{R}^3}  h(x^1,x^2,\tilde{x}^3,p) dp \Big) d\tilde{x}^3
		\nonumber
		\\
		&
		=
		\frac{1}{t^3}
		\int_{-\infty}^{x^1} \int_{-\infty}^{x^2} \int_{-\infty}^{x^3}
		(t \partial_{x^1}) (t \partial_{x^2}) (t \partial_{x^3}) \Big(\int_{\mathbb{R}^3}  h(\tilde{x},p) dp \Big)
		d\tilde{x}.
		\label{eq:Sobolevproof}
	\end{align}
	Now,
	\[
		t \partial_{x^i} \Big(\int_{\mathbb{R}^3}  h(x,p) dp \Big)
		=
		\int_{\mathbb{R}^3} L_i h dp
		+
		\frac{\log t}{t} \int_{\mathbb{R}^3} \partial_i \Delta \phi_{\infty}(p) h(x,p) dp,
	\]
	where
	\[
		\int_{\mathbb{R}^3} \partial_{p^i} h(x,p) dp = 0,
		\qquad
		\int_{\mathbb{R}^3} \partial_{p^k}\Big( \partial_i \partial_k \phi_{\infty}(p) h(x,p) \Big) dp = 0,
	\]
	is used.  The estimate \eqref{eq:L1Sobolev} follows from repeatedly applying to \eqref{eq:Sobolevproof}.
	
	Replacing $h$ with $h^2$ in \eqref{eq:L1Sobolev}, and using the fact that
	\[
		\sum_{\vert I \vert =0}^3 \big\vert L^I  \big( h(x,p)^2 \big) \big\vert
		\leq
		C
		\sum_{\vert I \vert =0}^3 \vert L^I h(x,p) \vert^2,
	\]
	the proof follows.
\end{proof}

\subsection{Constants} \label{subsec:constants}
In what follows a constant $B>0$ will be fixed and the assumption that
\begin{equation} \label{eq:suppfinfty}
	\supp(f_{\infty}) \subset \{ (x,p) \in \mathbb{R}^3 \times \mathbb{R}^3 \mid \vert x \vert + \vert p \vert \leq B\},
\end{equation}
will be made.  An integer $N\geq 6$, corresponding to the number of derivatives of the solution which are estimated, will also be assumed fixed.  Recall the approximate solution, $(f_{[K]}, \varrho_{[K]}, \phi_{[K]})$, introduced in \eqref{eq:introansatz1}--\eqref{eq:introansatz2}.  The notation $K \in \mathbb{N}$ will be used to denote the order of this approximate solution (also denoted $k \in \mathbb{N}$ in Section \ref{section:finiteprob}).

The notation
\begin{equation} \label{eq:lesssim}
	a \lesssim b,
\end{equation}
will be used when there is a constant $C>0$, which may depend on $B$ and $N$, such that
\[
	a \leq C b.
\]
This constant $C$ implicit in \eqref{eq:lesssim} is \underline{not} allowed to depend on $K$.  Any constants which depend on $K$ will be denoted $C_K$.

\section{Precise versions of the main results}
\label{section:theorem}

In this section, more precise versions of Theorem \ref{thm:intromain}, Theorem \ref{thm:intromain2} and Theorem \ref{thm:intromain3} are stated.

Define first, for any given sequence $\{\mathfrak{n}(n) \}_{n=0}^{\infty}$,
for any smooth compactly supported function $f_{\infty} \colon \mathbb{R}^3 \times \mathbb{R}^3 \to [0,\infty)$, and $n \geq 0$,
\begin{equation} \label{eq:defofcalF}
	\mathcal{F}_{\infty}^n
	=
	\mathcal{F}_{\infty}^n[f_{\infty},\{\mathfrak{n}(n) \}]
	=
	\sum_{\vert I \vert + \vert J \vert \leq n}
	\Big(
	\Vert \partial_x^I \partial_p^J f_{\infty} \Vert_{L^2_x L^2_p}
	+
	\Vert \partial_x^I \partial_p^J f_{\infty} \Vert_{L^2_x L^2_p}^{\mathfrak{n}(n)}
	\Big).
\end{equation}
The sequences considered will always be increasing and have the property that $\mathfrak{n}(n)\in \mathbb{N}$ and $\mathfrak{n}(n) > n$ for all $n$.
In order to ease notation, the dependence of $\mathcal{F}_{\infty}^n$ on $f_{\infty}$, and the sequence $\{\mathfrak{n}(n) \}$, is typically suppressed.  In expressions to follow, the sequence $\{\mathfrak{n}(n) \}$ may change from line to line.

Recall the function $\phi_{\infty} \colon \mathbb{R}^3 \to \mathbb{R}$, defined to be the unique solution of
\[
	\Delta \phi_{\infty} (p) = \varrho_{\infty}(p),
	\qquad
	\phi_{\infty} (p) \to 0, \quad \text{as } \vert p \vert \to \infty,
	\qquad
	\varrho_{\infty}(p) = - \int_{\mathbb{R}^3} f_{\infty} (y,p) dy.
\]
Note that, assuming that $f_{\infty}$ satisfies \eqref{eq:suppfinfty} for some $B>0$, for any $n \geq 2$, the Sobolev inequality (see Proposition \ref{prop:SobolevL2}) and the gradient estimate \eqref{eq:gradelliptic} imply that
\begin{equation} \label{eq:phiinftypointwise}
	\sum_{\vert I \vert =0}^{n-2} \Vert \partial^{I} \nabla \phi_\infty \Vert_{L^{\infty}}
	\lesssim
	\sum_{\vert I \vert =0}^{n} \Vert \partial^{I} \nabla \phi_\infty \Vert_{L^2}
	\lesssim
	\sum_{\vert I \vert =0}^{n} \Vert \partial^{I} \varrho_\infty \Vert_{L^2}
	\lesssim
	\mathcal{F}_{\infty}^{n}.
\end{equation}

Given functions $f_{k,l} \colon \mathbb{R}^3 \times \mathbb{R}^3 \to \mathbb{R}$ and $\varrho_{k,l} \colon \mathbb{R}^3 \to \mathbb{R}$ for $k = 0,1,2,\ldots$ and $l=0,\ldots k$, define $\phi_{k,l} \colon \mathbb{R}^3 \to \mathbb{R}$ to be the unique solutions of the Poisson equation sourced by $\varrho_{k,l}$,
\begin{equation} \label{eq:mainthmPoisson}
	\Delta_{\mathbb{R}^3} \phi_{k,l} = \varrho_{k,l},
	\qquad
	\phi_{k,l}(w) \to 0, \quad \text{as } \vert w \vert \to \infty,
\end{equation}
and define functions, for $K \geq 0$,
\begin{equation} \label{eq:fKdefthm}
	f_{[K]}(t,x,p)
	:=
	\sum_{k=0}^K \sum_{l=0}^{k} \frac{(\log t)^l}{t^k} f_{k,l}\big(x-tp + \log t \, \nabla \phi_{\infty} ( p), p \big)
	,
\end{equation}
\begin{equation} \label{eq:rhophiKdefthm}
	\varrho_{[K]}(t,x)
	:= 
	\frac{1}{t^3}
	\sum_{k=0}^K \sum_{l=0}^{k} \frac{(\log t)^l}{t^k} \varrho_{k,l} \left( \frac{x}{t} \right)
	,
	\quad
	\phi_{[K]}(t,x)
	:=
	\frac{1}{t}
	\sum_{k=0}^K \sum_{l=0}^{k} \frac{(\log t)^l}{t^k} \phi_{k,l} \left( \frac{x}{t} \right)
	.
\end{equation}

The following theorem is a precise statement of Theorem \ref{thm:intromain}.

\begin{theorem}[Inverse modified scattering map --- precise statement of Theorem \ref{thm:intromain}] \label{thm:backwardsproblem}
	Consider a smooth compactly supported function $f_{\infty} \colon \mathbb{R}^3 \times \mathbb{R}^3 \to [0,\infty)$, satisfying \eqref{eq:suppfinfty} for some $B>0$, and some $N \geq 6$.  There exists $k_* = k_*(\mathcal{F}_{\infty}^{N+1})$ large and $T_0 = T_0(\mathcal{F}_{\infty}^{N+2k_*+4}) \in \mathbb{R}$ large such that:
	\begin{itemize}
		\item \textbf{\emph{Existence of solution:}}
			There exists a solution $(f,\varrho,\phi)$ of the Vlasov--Poisson system \eqref{eq:VP1}--\eqref{eq:VP2} on $[T_0,\infty)\times \mathbb{R}^3_x \times \mathbb{R}^3_p$ which attains the scattering data $f_{\infty}$ in the sense that, for all $t \geq T_0$,
	\begin{equation} \label{eq:dataattained}
		\sum_{\vert I \vert + \vert J \vert \leq N}
		\Big(
		\int \int
		\Big\vert
		\partial_x^I \partial_p^J
		\Big(
		f\big(t,x+tp - \log t \, \nabla \phi_{\infty}(p), p \big)
		-
		f_{\infty}(x,p)
		\Big)
		\Big\vert^2
		dp dx
		\Big)^{\frac{1}{2}}
		\lesssim
		\frac{\mathcal{F}_{\infty}^{N+4} (\log t)^2}{t},
	\end{equation}
	and
	\begin{equation} \label{eq:dataattained2}
		\sup_{x,p\in \mathbb{R}^3}
		\sum_{\vert I \vert + \vert J \vert \leq N-4}
		\Big\vert
		\partial_x^I \partial_p^J
		\Big(
		f\big(t,x+tp - \log t \, \nabla \phi_{\infty}(p), p \big)
		-
		f_{\infty}(x,p)
		\Big)
		\Big\vert
		\lesssim
		\frac{\mathcal{F}_{\infty}^{N+4} (\log t)^2}{t}.
	\end{equation}
	For any $t \geq T_0$, $f$ satisfies the support property
	\begin{equation} \label{eq:suppfstatement}
		\supp(f(t,\cdot,\cdot))
		\subset
		\{
		(x,p) \subset \mathbb{R}^3 \times \mathbb{R}^3
		\mid
		\vert x - t p + \log t \nabla \phi_{\infty}(p) \vert \leq 2\mathcal{F}_{\infty}^3 + B, \vert p \vert \leq 2 B
		\},
	\end{equation}
	where $B>0$ is as in \eqref{eq:suppfinfty}.  
	\end{itemize}
\end{theorem}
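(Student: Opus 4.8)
The plan is to prove existence by a limiting procedure over a family of ``finite problems'' posed backwards from large final times $T_f$, using the approximate solutions of order $K$ constructed in Section~\ref{section:expansions} (see Theorem~\ref{thm:approx}) with $K$ chosen large depending on $\mathcal{F}_\infty^{N+1}$; the resulting threshold will be the $k_*$ in the statement. Concretely, fix such a $K$, let $(f_{[K]},\varrho_{[K]},\phi_{[K]})$ be the associated approximate solution of order $K$, which by \eqref{eq:introapproxsolutions} satisfies $\gs_{\phi_{[K]}}f_{[K]} = O((\log t)^{1+K}/t^{2+K})$ and $\int f_{[K]}\,dp - \varrho_{[K]} = O((\log t)^{K}/t^{4+K})$, and for each large $T_f$ solve the Vlasov--Poisson system \eqref{eq:VP1}--\eqref{eq:VP2} on $[T_0,T_f]$ with final data $f|_{t=T_f}=f_{[K]}(T_f,\cdot,\cdot)$, calling the solution $f^{(T_f)}$. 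Local existence and a continuation criterion reduce matters to establishing uniform-in-$T_f$ a priori estimates on $[T_0,T_f]$, for some $T_0$ independent of $T_f$, together with propagation of the support property \eqref{eq:suppfstatement}.

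The heart of the argument is the energy estimate for $\check{f}_{[K]}^{(T_f)} := f^{(T_f)}-f_{[K]}$, which by \eqref{eq:introfcheckk1}--\eqref{eq:introfcheckk2} solves $\gs_\phi \check{f}_{[K]} = -\,\partial_{x^i}(\phi-\phi_{[K]})\,\partial_{p^i}f_{[K]} + O((\log t)^{1+K}/t^{2+K})$ with vanishing final data by \eqref{eq:introfinaltime}. One runs a bootstrap in which the support property \eqref{eq:suppfstatement} and the pointwise bounds of Remark~\ref{rmk:goodvectorfields} on $\nabla\phi$ and $\nabla^2\check{\phi}_{[0]}$ are assumed. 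Commuting the Vlasov equation with $L^I(t^{-1}\partial_p)^J$, $|I|+|J|\le N$, using the commutator identities of Proposition~\ref{prop:commVlasovLS} and the transport estimate of Proposition~\ref{prop:generalVlasovestimates}, one obtains for $\check{\mathcal{E}}_{[K]}(t)$ from \eqref{eq:introEtdef} an inequality of the schematic form $\check{\mathcal{E}}_{[K]}(t)\le \mathcal{F}\int_t^{T_f}\check{\mathcal{E}}_{[K]}(s)\,s^{-1}\,ds + \mathcal{F}\,(\log t)^{K+1}t^{-(K+1)}$, where $\mathcal{F}\sim\mathcal{F}_\infty^{N+1}$; here the inhomogeneity is estimated using the support bound \eqref{eq:introindicator}, the bound $\|\partial_p f_{[K]}\|\lesssim t^{-1/2}\mathcal{F}$ as in \eqref{eq:intropartialpf0}, the gradient estimate \eqref{eq:gradelliptic} in the form \eqref{eq:introL2phi}, and Cauchy--Schwarz to control $\check{\varrho}$ by $\check{f}_{[K]}$, while the dangerous commutator term $t\,\nabla^2\phi$ is handled by writing $\nabla^2\phi=\nabla^2\phi_{[0]}+\nabla^2\check{\phi}_{[0]}$ and exploiting the $(\log t)/t^{4}$ decay of $\nabla^2\check{\phi}_{[0]}$ — precisely the gain furnished by the corrected vector fields \eqref{eq:defofL}. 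Applying the Gr\"onwall inequality, Proposition~\ref{prop:Gronwall}, with weight $a(s)=\mathcal{F}/s$ yields $\check{\mathcal{E}}_{[K]}(t)\lesssim \mathcal{F}\,(\log t)^{K+1}t^{-(K+1)} + \mathcal{F}^2\,t^{-\mathcal{F}}\!\int_t^{T_f} s^{\mathcal{F}-(K+2)}(\log s)^{K+1}\,ds$, and choosing $K$ large enough relative to $\mathcal{F}$ (up to a logarithmic loss, $K+1>\mathcal{F}$) makes the remaining integral converge uniformly in $T_f$, closing the bootstrap with $\check{\mathcal{E}}_{[K]}(t)\lesssim \mathcal{F}_\infty^{m}\,(\log t)^{K+1}t^{-(K+1)}$ uniformly in $T_f$, provided $T_0=T_0(\mathcal{F}_\infty^{N+2k_*+4})$ is taken large. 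The support property \eqref{eq:suppfstatement} is recovered by integrating the characteristic ODEs of $\gs_\phi$ from $T_f$ back to $t$: since $\nabla\phi=\nabla\phi_{[K]}+\nabla\check{\phi}$ with $\|\nabla\check{\phi}\|_{L^\infty}$ controlled, via Propositions~\ref{prop:Sobolevvf} and \ref{prop:SobolevL2R6} together with the energy bound, by an integrable-in-$s$ quantity, the flow remains within an $O(\mathcal{F}_\infty^3)$ neighbourhood of the approximate characteristics, consistently with the bootstrap for the pointwise quantities of Remark~\ref{rmk:goodvectorfields}.

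Given these uniform estimates, existence on $[T_0,\infty)$ follows by a Cauchy argument in $T_f$: for $T_0\le t\le T_f<T_f'$ the difference $g:=f^{(T_f')}-f^{(T_f)}$ solves a linear inhomogeneous Vlasov equation with source $-\,\partial_{x^i}(\phi^{(T_f')}-\phi^{(T_f)})\,\partial_{p^i}f^{(T_f)}$ and final data $g|_{t=T_f}=f^{(T_f')}(T_f,\cdot,\cdot)-f_{[K]}(T_f,\cdot,\cdot)=\check{f}^{(T_f')}_{[K]}(T_f,\cdot,\cdot)$, whose energy is $\lesssim \mathcal{F}\,(\log T_f)^{K+1}T_f^{-(K+1)}\to 0$ as $T_f\to\infty$; repeating the Gr\"onwall estimate propagates this smallness to all of $[T_0,T_f]$, so $\{f^{(T_f)}\}$ is Cauchy in the relevant norms and converges to a solution $(f,\varrho,\phi)$ of Vlasov--Poisson on $[T_0,\infty)$ inheriting \eqref{eq:suppfstatement} and the bound $\check{\mathcal{E}}_{[K]}(t)\lesssim \mathcal{F}_\infty^{m}\,(\log t)^{K+1}t^{-(K+1)}$. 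To deduce \eqref{eq:dataattained}--\eqref{eq:dataattained2}, note that $y(t,x+tp-\log t\,\nabla\phi_\infty(p),p)=x$, so $f_{[K]}(t,x+tp-\log t\,\nabla\phi_\infty(p),p)=\sum_{k,l}(\log t)^l t^{-k} f_{k,l}(x,p)=f_\infty(x,p)+O((\log t)^{K}/t)$ together with all $\le N$ derivatives (as $f_{0,0}=f_\infty$); combining with the energy bound for $\check{f}_{[K]}$, taking $K\ge 1$, gives the $L^2$ statement \eqref{eq:dataattained}, and \eqref{eq:dataattained2} then follows from the Sobolev inequality of Proposition~\ref{prop:SobolevL2R6}, at the cost of four derivatives.

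The main obstacle is the uniform-in-$T_f$ closure of the energy estimate: the linear term $\mathcal{F}\int_t^{T_f}\check{\mathcal{E}}_{[K]}(s)\,s^{-1}\,ds$ carries no smallness for general $f_\infty$, so Gr\"onwall yields a bound degenerating like $t^{-(K+1)+\mathcal{F}}$ near the past endpoint unless $K$ is taken large relative to $\mathcal{F}_\infty^{N+1}$ — this is exactly the purpose of passing to high-order approximate solutions, and it requires care to track which Sobolev norm of $f_\infty$ governs $\mathcal{F}$ (hence $k_*$) versus the implicit constants and $T_0$. A further technical point is the simultaneous closure of the bootstrap for the pointwise quantities in Remark~\ref{rmk:goodvectorfields} and for the support, which must be run consistently with the energy bootstrap and which relies essentially on the decay of $\nabla^2\check{\phi}_{[0]}$ rather than of $\nabla^2\phi$.
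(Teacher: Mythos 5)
Your proposal reproduces the paper's own argument essentially step for step: backward finite problems with final data $f|_{t=T_f}=f_{[K]}(T_f,\cdot,\cdot)$, a bootstrap energy estimate for $\check f_{[K]}$ in the vector-field norm closed by Gr\"onwall with $K$ chosen large relative to $\mathcal F_\infty^{N+1}$, propagation of the support by integrating the characteristic ODEs of $\gs_\phi$ against approximate characteristics, a Cauchy argument in $T_f$ to pass to the limit, and finally the change of variables $y(t,x+tp-\log t\,\nabla\phi_\infty(p),p)=x$ together with Sobolev to recover \eqref{eq:dataattained}--\eqref{eq:dataattained2}. Two small presentational notes: the paper's actual bootstrap assumptions are the energy bound \eqref{eq:ba} together with the pointwise bound \eqref{eq:ba2} on $\nabla\phi - t^{-2}\nabla\phi_\infty(x/t)$ (with the support property then \emph{derived} in Proposition~\ref{prop:suppf}, not assumed, and the Remark~\ref{rmk:goodvectorfields} bounds being only heuristic motivation); and passing from the energy control of $L^I(t^{-1}\partial_p)^J\check f_{[K]}$ to the $\partial_x^I\partial_p^J$ derivatives appearing in \eqref{eq:dataattained} requires the explicit rewriting of $\partial_x,\partial_p$ in terms of $L_i$ and $t^{-1}\partial_{p^i}$ acting on $f(t,\tilde y,p)-f_\infty(x,p)$, which you gesture at but do not spell out; the paper carries this computation through in the final part of Section~\ref{section:logicofproof}.
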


The proof of Theorem \ref{thm:intromain} is based on a sequence of approximate solutions of the Vlasov--Poisson system, defined explicitly in terms of the scattering profile $f_{\infty}$.  The following theorem concerns the existence of these approximate solutions, along with their key properties.

\begin{theorem}[Explicit approximate solutions] \label{thm:backwardsproblem2}
	Under the assumptions of Theorem \ref{thm:backwardsproblem}:
	\begin{itemize}

		\item \textbf{\emph{Approximate solutions:}}
			There are sequences of smooth functions
			\[
				f_{k,l} \colon \mathbb{R}^3 \times \mathbb{R}^3 \to \mathbb{R},
				\qquad
				\varrho_{k,l} \colon \mathbb{R}^3 \to \mathbb{R},
			\]
			for $k = 0,1,2,\ldots$ and $l=0,\ldots k$ --- defined explicitly in terms of $f_{\infty}$ --- 
			such that $(f_{[K]}, \varrho_{[K]}, \phi_{[K]})$, defined by \eqref{eq:fKdefthm}--\eqref{eq:rhophiKdefthm}, is an \emph{approximate solution of order $K$}, in the sense that, for all $t \geq T_0$,
			\begin{align*}
				\sum_{\vert I \vert + \vert J \vert \leq N}
				\big\Vert L^I (t^{-1}\partial_p)^J \big( \gs_{\phi_{[K]}} f_{[K]} \big) (t,\cdot,\cdot) \big\Vert_{L^2_xL^2_p}
				&
				\leq
				C_{K} (\mathcal{F}_{\infty}^{N+2K+3})^2
				\frac{(\log t)^{1+K}}{t^{2+K}},
			\\
				\sum_{\vert I \vert \leq N}
				\big\Vert (t\partial_x)^I \Big( \int_{\mathbb{R}^3} f_{[K]} (t,\cdot,p) dp - \varrho_{[K]} (t,\cdot) \Big) \big\Vert_{L^2_x}
				&
				\leq
				C_{K} \mathcal{F}_{\infty}^{N+2K+3}
				\frac{(\log t)^{K+1}}{t^{\frac{5}{2}+K}},
			\end{align*}
			and the scattering data $f_{\infty}$ is attained in the sense \eqref{eq:dataattained2}.
			
			The functions $f_{k,l}$ and $\varrho_{k,l}$ satisfy the support property
			\[
				\supp(f_{k,l})
				\subset 
				\{ (x,p) \in \mathbb{R}^3 \times \mathbb{R}^3 \mid \vert x \vert \leq B, \vert p \vert \leq B \},
				\qquad
				\supp(\varrho_{k,l})
				\subset 
				\{ x \in \mathbb{R}^3 \mid \vert x \vert \leq B \},
			\]
			and the estimates
			\begin{equation} \label{eq:mainapproxthmfklrhokl}
				\sum_{\vert I \vert + \vert J \vert \leq N} \Vert \partial_x^I \partial_p^J f_{k,l} \Vert_{L^2_x L^2_p}
				\leq
				C_{k} \mathcal{F}_{\infty}^{N+k},
				\qquad
				\sum_{\vert I \vert + \vert J \vert \leq N}
				\Vert \partial_x^I \varrho_{k,l} \Vert_{L^2_x}
				+
				\Vert \partial_x^I \nabla \phi_{k,l} \Vert_{L^2_x}
				\leq
				C_{k} \mathcal{F}_{\infty}^{N+k}.
			\end{equation}
		
	\end{itemize}
\end{theorem}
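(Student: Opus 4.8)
The plan is to construct the coefficients $f_{k,l}$ and $\varrho_{k,l}$ \emph{recursively in} $k$, by inserting the ansatz \eqref{eq:fKdefthm}--\eqref{eq:rhophiKdefthm} into \eqref{eq:VP1}--\eqref{eq:VP2} and matching powers of $(\log t)^l t^{-k}$, and then to verify the two bounds of \eqref{eq:introapproxsolutions} in their $L^2$, vector-field-differentiated form, the support properties, and the estimates \eqref{eq:mainapproxthmfklrhokl}, as separate steps. Two expansions are needed as input. First, an expansion of $\nabla_x\phi_{[K]}(t,x)$, valid where $\vert y\vert+\vert x/t\vert\lesssim 1$, as a finite sum $t^{-2}\sum_{k,l}(\log t)^l t^{-k}\Psi_{k,l}(y,p)$ plus a controlled remainder: using $x/t=p+t^{-1}y-t^{-1}\log t\,\nabla\phi_\infty(p)$ from \eqref{eq:introdefofy} and Taylor expanding each $\nabla\phi_{k,l}(x/t)$ about $p$ by Proposition \ref{prop:Taylor}, one obtains such an expansion with $\Psi_{0,0}(y,p)=\nabla\phi_\infty(p)$ and with $\Psi_{k,l}$ explicit in terms of the $\phi_{k',l'}$, $k'\le k$, by \eqref{eq:Psikl1}--\eqref{eq:Psikl2}; this is Proposition \ref{prop:phiderivexpansion}. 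Second, since $\partial y_i/\partial p_j=-t\delta_{ij}+\log t\,\partial_i\partial_j\phi_\infty(p)$, one has $\det(\partial p/\partial y)=-t^{-3}\det(\mathrm{Id}-t^{-1}\log t\,\nabla^2\phi_\infty(p))^{-1}$ evaluated at $p=p(t,x,y)$, which likewise expands as $t^{-3}\sum_{k,l}(\log t)^l t^{-k}J_{k,l}$ with $J_{0,0}=-1$ (Proposition \ref{prop:Jexpansion}), cf.\@ \eqref{eq:introdetexpansion}.

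Substituting into the Vlasov equation, the terms $\partial_t f_{k,l}(y,p)+p^i\partial_{x^i}f_{k,l}(y,p)$ combine, using $\partial_t y=-p+t^{-1}\nabla\phi_\infty(p)$, into the single term $t^{-1}\nabla\phi_\infty(p)\cdot(\partial_x f_{k,l})(y,p)$ plus contributions from differentiating the scalar coefficients $(\log t)^l t^{-k}$, and this is cancelled precisely by the leading term $t^{-2}\Psi_{0,0}(y,p)$ of $\nabla_x\phi_{[K]}$ contracted with the factor $-t\,\mathrm{Id}$ in $\partial_p y$. After this cancellation $\gs_{\phi_{[K]}}f_{[K]}$ contains no term larger than $\mathcal{O}(\log t/t^2)$, and requiring the coefficient of each $(\log t)^l t^{-k}$ in the relevant range of $(k,l)$ to vanish yields explicit formulas for $f_{k,l}$ in terms of $\Psi_{k',l'}$, $k'\le k$, derivatives of $f_{k',l'}$, $k'<k$, and $f_\infty$ --- these are \eqref{eq:fklpsi2}--\eqref{eq:fklpsi3}. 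Carrying out the change of variables $p\mapsto y$ in $\int f_{[K]}\,dp$, Taylor expanding $f_{k,l}(y,p(t,x,y))$ and using the $J_{k,l}$-expansion, matching coefficients similarly gives formulas \eqref{eq:rhoklf2}--\eqref{eq:rhoklf3} for $\varrho_{k,l}$ in terms of the $f_{k',l'}$ and $J_{k',l'}$. The point that makes this scheme \emph{explicit}, rather than an implicit elliptic coupling, is Remark \ref{rmk:totalyderiv}: the part of $f_{k,l}$ depending on $\phi_{k,l}$ through $\Psi_{k,l}$ is a total $y$-divergence of a compactly supported function, so it drops out of $\int f_{k,l}\,dy$, and hence \eqref{eq:rhoklf2}--\eqref{eq:rhoklf3} express $\varrho_{k,l}$ only through $\varrho_{k',l'}$, $\phi_{k',l'}$, $f_{k',l'}$ with $k'\le k-1$ together with $f_\infty$. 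One therefore defines, at stage $k$: $\varrho_{k,l}$ from the lower-order data; then $\phi_{k,l}$ as the unique decaying solution of \eqref{eq:mainthmPoisson}, which exists and is smooth since $\varrho_{k,l}$ is smooth and compactly supported, with $\nabla\phi_{k,l}$ controlled by \eqref{eq:gradelliptic}; then $\Psi_{k,l}$ via \eqref{eq:Psikl1}--\eqref{eq:Psikl2}; and finally $f_{k,l}$ via \eqref{eq:fklpsi2}--\eqref{eq:fklpsi3}.

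With the $f_{k,l},\varrho_{k,l},\phi_{k,l}$ so defined, the two error bounds follow by estimating the remainders left by the expansions of $\nabla_x\phi_{[K]}$ and $\det(\partial p/\partial y)$, all of which carry an extra factor $(\log t)^{1+K}/t^{K}$ relative to the leading terms, after commuting with $L^I(t^{-1}\partial_p)^J$ and $(t\partial_x)^I$; here one uses Remark \ref{rmk:functionsofy}, so that $L_i$ and $t^{-1}\partial_{p^i}$ act on functions of $y$ and $p$ with coefficients bounded uniformly in $t$, the bound \eqref{eq:introindicator} on the $p$-support, of measure $\lesssim t^{-3}$, and the Sobolev inequalities of Propositions \ref{prop:SobolevL2}, \ref{prop:SobolevL2R6} and \ref{prop:Sobolevvf} --- this is Propositions \ref{prop:fKdef} and \ref{prop:rhoKdef}. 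The support properties are immediate by induction, since every term in \eqref{eq:fklpsi2}--\eqref{eq:fklpsi3} and \eqref{eq:rhoklf2}--\eqref{eq:rhoklf3} is a product in which the non-compactly supported factors $\phi_{k',l'}$ always multiply a compactly supported factor built from $f_\infty$ and lower-order $f_{k',l'}$, all supported in $\{\vert x\vert\le B,\vert p\vert\le B\}$. The estimates \eqref{eq:mainapproxthmfklrhokl} are proved by induction on $k$: at stage $k$ one bounds $\varrho_{k,l}$ by Cauchy--Schwarz on the fixed compact support in terms of the already-estimated lower-order quantities, then $\nabla\phi_{k,l}$ and its derivatives via \eqref{eq:gradelliptic}, then $f_{k,l}$ by the product rule, $\Vert gh\Vert_{L^2}\le\Vert g\Vert_{L^\infty}\Vert h\Vert_{L^2}$, and the Sobolev embedding; each step costs a bounded number of extra derivatives of $f_\infty$ --- whence the index $N+k$ --- and the multilinear structure produces the auxiliary powers $\mathfrak{n}(n)$ appearing in \eqref{eq:defofcalF}. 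Finally, the attainment \eqref{eq:dataattained2} of $f_\infty$ follows from $f_{0,0}=f_\infty$ together with $\vert y\vert\lesssim 1$ on the support, which makes every higher term $\tfrac{(\log t)^l}{t^k}f_{k,l}(y,p)$ with $k\ge 1$ of size $\mathcal{O}(\log t/t)$ in all relevant norms.

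The step I expect to be the main obstacle is establishing the decoupling of Remark \ref{rmk:totalyderiv} \emph{at every order} $k$ --- that the $\phi_{k,l}$-dependent part of $f_{k,l}$ is a total $y$-derivative --- since this is precisely what renders the order-by-order scheme solvable in closed form; verifying it requires keeping track of the exact algebraic form of the coefficients $\Psi_{k,l}$ and of the terms generated by repeatedly differentiating $f_{k',l'}(y,p)$ in $y$ and $p$, which in turn forces a careful organisation of the two Taylor expansions underlying Propositions \ref{prop:phiderivexpansion} and \ref{prop:Jexpansion}. A secondary, purely combinatorial difficulty is tracking how many derivatives of $f_\infty$, and with what multilinear degree, enter $f_{k,l}$ and $\varrho_{k,l}$, so that $\mathcal{F}_{\infty}^{N+k}$ with the correct exponents $\mathfrak{n}$ indeed suffices in \eqref{eq:mainapproxthmfklrhokl}, and so that the $t$-weights surviving after commutation with $L^I(t^{-1}\partial_p)^J$ are exactly those in the statement.
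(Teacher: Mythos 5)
Your proposal follows essentially the same route as the paper's proof (Section \ref{section:expansions} and in particular Propositions \ref{prop:phiderivexpansion}, \ref{prop:pexpansion}, \ref{prop:Jexpansion}, \ref{prop:fKdef}, \ref{prop:rhoKdef}, and the inductive argument of Section \ref{subsec:thmapprox}): you use the same two Taylor expansions, the same leading-order cancellation coming from $\gs_{\phi_{[K]}}(y^j)$, the same order-by-order matching to obtain \eqref{eq:fklpsi2}--\eqref{eq:fklpsi3} and \eqref{eq:rhoklf2}, the same Poisson/$\Psi_{k,l}$/$f_{k,l}$ ordering of the induction, and --- crucially --- you correctly single out the decoupling of Remark \ref{rmk:totalyderiv} (that the top-order $\phi_{k,l}$-dependence of $f_{k,l}$ enters as a total $y$-derivative of a compactly supported quantity, so it drops out of $\int f_{k,l}\,dy$) as the mechanism that makes the recursion explicit rather than an implicit elliptic coupling. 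No gaps of substance.
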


The functions $f_{k,l}$ and $\varrho_{k,l}$ are defined in Section \ref{section:expansions}, where the proof of Theorem \ref{thm:backwardsproblem2} is given.  See in particular Theorem \ref{thm:approx}, which also contains further properties of the approximate solutions.

The next theorem is a precise statement of Theorem \ref{thm:intromain2}, on the polyhomogeneous expansions satisfied by the solutions of Theorem \ref{thm:backwardsproblem}.

\begin{theorem}[Polyhomogeneous expansions --- precise statement of Theorem \ref{thm:intromain2}] \label{thm:backwardsproblem3}
	Under the assumptions of Theorem \ref{thm:backwardsproblem}:
	\begin{itemize}
			
		\item \textbf{\emph{Polyhomogeneous expansion in $L^2$:}}
			The approximate solution $(f_{[K]}, \varrho_{[K]}, \phi_{[K]})$ of Theorem \ref{thm:backwardsproblem2} agrees with true solution, of Theorem \ref{thm:backwardsproblem}, to order $K$ in the sense that, for any $K \in \mathbb{N}$ and $t\geq T_0$, the difference satisfies the $L^2$ estimates
	\begin{align} \label{eq:mainest1}
		\sum_{\vert I \vert + \vert J \vert \leq N}
		\Vert L^I (t^{-1}\partial_p)^J (f - f_{[K]}) (t,\cdot,\cdot) \Vert_{L^2_x L^2_p}
		&
		\leq
		C_K
		(\mathcal{F}_{\infty}^{N+2K+4})^2
		\frac{ (\log t)^{1+K}
		}{t^{1+ K}},
	\\
	\label{eq:mainest2}
		\sum_{\vert I \vert \leq N}
		\Vert (t\partial_x)^I ( \varrho - \varrho_{[K]}) (t,\cdot) \Vert_{L^2_x}
		&
		\leq
		C_{K} \mathcal{F}_{\infty}^{N+2K+4}
		\frac{(\log t)^{1+K}}{t^{\frac{5}{2} + K}},
	\\
	\label{eq:mainest3}
		\sum_{\vert I \vert \leq N}
		\Vert (t\partial_x)^I \nabla (\phi - \phi_{[K]}) (t,\cdot) \Vert_{L^2}
		&
		\leq
		C_{K} \mathcal{F}_{\infty}^{N+2K+4}
		\frac{(\log t)^{1+K}}{t^{\frac{3}{2} + K}}.
	\end{align}

		\item \textbf{\emph{Polyhomogeneous expansion in $L^{\infty}$:}}
			For any $K \in \mathbb{N}$ and $t\geq T_0$, the difference moreover satisfies the $L^{\infty}$ estimates
			\begin{align}
				\sum_{\vert I \vert + \vert J \vert \leq N-4}
				\sup_{x,p\in\mathbb{R}^3} \vert L^I (t^{-1}\partial_p)^J (f - f_{[K]}) (t,x,p) \vert
				&
				\leq
				C_K
				(\mathcal{F}_{\infty}^{N+2K+4})^2
				\frac{ (\log t)^{1+K}
				}{t^{1+ K}},
				\label{eq:mainest4}
				\\
				\sum_{\vert I \vert \leq N-2}
				\sup_{x\in\mathbb{R}^3} \vert (t\partial_x)^I ( \varrho - \varrho_{[K]}) (t,x) \vert
				&
				\leq
				C_{K} \mathcal{F}_{\infty}^{N+2K+4}
				\frac{(\log t)^{1+K}}{t^{4 + K}},
				\label{eq:mainest5}
				\\
				\sum_{\vert I \vert \leq N-2}
				\sup_{x\in\mathbb{R}^3} \vert (t\partial_x)^I \nabla (\phi - \phi_{[K]}) (t,x) \vert
				&
				\leq
				C_{K} \mathcal{F}_{\infty}^{N+2K+4}
				\frac{(\log t)^{1+K}}{t^{3 + K}}.
				\label{eq:mainest6}
			\end{align}
	
	\end{itemize}
\end{theorem}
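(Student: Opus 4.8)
The plan is to estimate the difference $\check{f}_{[K]} := f - f_{[K]}$ between the solution of Theorem~\ref{thm:backwardsproblem} and the approximate solution of Theorem~\ref{thm:backwardsproblem2}, following the finite-problem scheme of Section~\ref{subsec:introexistence}. Writing $\gs_{\phi} f_{[K]} = \gs_{\phi_{[K]}} f_{[K]} + \partial_{x^i}(\phi - \phi_{[K]})\partial_{p^i} f_{[K]}$ and using that $(f_{[K]},\varrho_{[K]},\phi_{[K]})$ is an approximate solution of order $K$,
\[
	\gs_{\phi} \check{f}_{[K]}
	=
	-\partial_{x^i}(\phi - \phi_{[K]})\,\partial_{p^i} f_{[K]} + G_{[K]},
	\qquad
	\varrho - \varrho_{[K]} = \int_{\mathbb{R}^3}\check{f}_{[K]}\,dp + g_{[K]},
	\qquad
	\Delta(\phi-\phi_{[K]}) = \varrho-\varrho_{[K]},
\]
where the two estimates of Theorem~\ref{thm:backwardsproblem2} bound the errors by $\sum_{|I|+|J|\le N}\|L^I(t^{-1}\partial_p)^J G_{[K]}\|_{L^2_xL^2_p}\le C_K(\mathcal{F}_{\infty}^{N+2K+3})^2(\log t)^{1+K}/t^{2+K}$ and $\sum_{|I|\le N}\|(t\partial_x)^I g_{[K]}\|_{L^2_x}\le C_K\mathcal{F}_{\infty}^{N+2K+3}(\log t)^{1+K}/t^{5/2+K}$. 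For each finite problem one has $\check{f}_{[K]}(T_f,\cdot,\cdot)=0$, which (or, after passing to the limit, the corresponding vanishing of the boundary term at $T_f=\infty$) provides the data for the transport estimate.

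First I would commute the equation for $\check{f}_{[K]}$ with the weighted operators $L^I(t^{-1}\partial_p)^J$, $|I|+|J|\le N$, using the commutator identities \eqref{eq:gsLtildecommutator}--\eqref{eq:gsdpcomm} of Proposition~\ref{prop:commVlasovLS}, and then apply the transport estimate of Proposition~\ref{prop:generalVlasovestimates} to the resulting system. The inhomogeneity $-\partial_x(\phi-\phi_{[K]})\cdot\partial_p f_{[K]}$ contributes, using the analogue of \eqref{eq:intropartialpf0} for $f_{[K]}$, a term of size $\mathcal{F}\int_t^{T_f} s^{-1/2}\|\nabla(\phi-\phi_{[K]})(s,\cdot)\|_{L^2_x}\,ds$; the gradient estimate \eqref{eq:gradelliptic}, applied to $\partial_x$-derivatives of $\phi-\phi_{[K]}$ and combined with the support property \eqref{eq:suppfstatement}, gives $\|(t\partial_x)^I\nabla(\phi-\phi_{[K]})\|_{L^2_x}\lesssim t\|(t\partial_x)^I(\varrho-\varrho_{[K]})\|_{L^2_x}$ (as in \eqref{eq:introL2phi}), while $\varrho-\varrho_{[K]} = \int\check{f}_{[K]}\,dp + g_{[K]}$ together with the $p$-support bound \eqref{eq:introindicator} and Cauchy--Schwarz converts $L^2_xL^2_p$ control of $\check{f}_{[K]}$ into $L^2_x$ control of $\int\check{f}_{[K]}\,dp$ with a gain of $t^{-3/2}$ (the $\partial_p$-terms produced when rewriting $t\partial_x$ via $L$ and $t^{-1}\partial_p$ vanish upon integrating in $p$, as in the proof of Proposition~\ref{prop:Sobolevvf}). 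The commutator terms involve $t\nabla^2\phi$; as in Remark~\ref{rmk:goodvectorfields} these are treated by writing $\nabla^2\phi = \nabla^2(t^{-1}\phi_{\infty}(x/t)) + \nabla^2\check{\phi}_{[0]}$, exploiting the near-cancellation of $t\nabla^2(t^{-1}\phi_{\infty}(x/t))$ against the explicit $t^{-2}\partial_i\partial_k\phi_{\infty}(p)$ term in \eqref{eq:gsLtildecommutator} on the support \eqref{eq:suppfstatement}, and controlling $\nabla^2\check{\phi}_{[0]}$ in $L^\infty_x$ via the Sobolev inequality of Proposition~\ref{prop:SobolevL2} and repeated use of \eqref{eq:gradelliptic} --- that is, using only the gradient estimate and never the full elliptic estimate \eqref{eq:fullelliptic}, per Remark~\ref{rmk:ellipticity}. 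Assembling all of this, with a routine but lengthy accounting of the $N\ge 6$ derivatives, produces a closed inequality
\[
	\check{\mathcal{E}}_{[K]}(t)
	\le
	\mathcal{F}\int_t^{T_f}\frac{\check{\mathcal{E}}_{[K]}(s)}{s}\,ds
	+
	\frac{C_K(\mathcal{F}_{\infty}^{N+2K+4})^2(\log t)^{1+K}}{t^{1+K-}}
\]
for the energy \eqref{eq:introEtdef} ($\mathcal{F}$ a norm of $f_{\infty}$, the $-$ a logarithmic loss), and the Gr\"onwall inequality of Proposition~\ref{prop:Gronwall} gives \eqref{eq:mainest1} --- but, exactly as in Section~\ref{subsec:introexistence}, only once $K\ge k_*$ is large enough relative to $\mathcal{F}$ that the Gr\"onwall exponent is dominated by the $t^{-1-K}$ decay. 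Passing to the limit $T_f\to\infty$ retains the estimate.

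For the remaining orders $0\le K<k_*$ I would use $f-f_{[K]} = (f-f_{[k_*]}) + (f_{[k_*]}-f_{[K]})$: the first term is controlled by the case already proved and is far smaller than required, while the explicit tail $f_{[k_*]}-f_{[K]} = \sum_{k=K+1}^{k_*}\sum_{l=0}^{k}(\log t)^l t^{-k} f_{k,l}(x-tp+\log t\,\nabla\phi_{\infty}(p),p)$ is bounded term by term --- using Remark~\ref{rmk:functionsofy} so that each $L^I(t^{-1}\partial_p)^J$ applied to $f_{k,l}(x-tp+\log t\,\nabla\phi_{\infty}(p),p)$ yields $t$-bounded coefficients times $\partial_x,\partial_p$-derivatives of $f_{k,l}$, then the change of variables $z = x - tp + \log t\,\nabla\phi_{\infty}(p)$ at fixed $p$, then the bounds \eqref{eq:mainapproxthmfklrhokl} of Theorem~\ref{thm:backwardsproblem2} --- with the dominant $k=K+1$ contribution producing precisely the claimed $(\log t)^{1+K}/t^{1+K}$ behaviour. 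The estimates \eqref{eq:mainest2}--\eqref{eq:mainest3} for $\varrho$ and $\phi$ then follow from \eqref{eq:mainest1} via $\varrho-\varrho_{[K]} = \int\check{f}_{[K]}\,dp + g_{[K]}$, the $t^{-3/2}$ Cauchy--Schwarz gain, and the $t$ gain from \eqref{eq:gradelliptic}; and the $L^\infty$ estimates \eqref{eq:mainest4}--\eqref{eq:mainest6} follow by applying the Sobolev inequality of Proposition~\ref{prop:SobolevL2R6} in $(x,p)$ (costing four derivatives, whence $N-4$) and Proposition~\ref{prop:SobolevL2} in $x$ (costing two derivatives, whence $N-2$) to the $L^2$ bounds just obtained. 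The main obstacle is closing the Gr\"onwall estimate above: the borderline term $\mathcal{F}\int_t^{T_f}\check{\mathcal{E}}_{[K]}(s)/s\,ds$ --- which is precisely the cost of using only \eqref{eq:gradelliptic} and not \eqref{eq:fullelliptic} (Remark~\ref{rmk:ellipticity}) --- means the energy method by itself delivers the sharp order-$K$ decay only for $K$ large compared with $f_{\infty}$, which is why the approximate solutions must be pushed to high order and why the passage to arbitrary $K$ must be routed through the explicit tail rather than a direct energy estimate.
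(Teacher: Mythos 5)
Your proposal is correct and follows essentially the same route as the paper: the energy-Gr\"onwall estimate for $\check{f}_{[K]}$ closing only for $K\geq k_*$ is precisely Proposition \ref{prop:fcheckkho} (built from Propositions \ref{prop:phicheckho}, \ref{prop:rhocheckho}, \ref{prop:fcheckkcommuted}); passage to the infinite interval is via the finite problems of Section \ref{section:finiteprob} and the Cauchy argument of Section \ref{section:logicofproof}; the regime $0\leq K<k_*$ is handled in the paper by exactly your decomposition $\check{f}_{[K]} = \check{f}_{[k_*]} - (f_{[k_*]}-f_{[K]})$ with the explicit tail bounded via \eqref{eq:flkL2estimate}; and \eqref{eq:mainest2}--\eqref{eq:mainest3} and the $L^{\infty}$ estimates are deduced exactly as you describe. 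The one point you handle somewhat more explicitly than the paper is the case $K\geq k_*$, $K\neq k$: you note the Gr\"onwall/finite-problem argument can be rerun at each such order (or that the boundary term at infinity vanishes for the limiting solution), whereas the paper states these estimates ``follow immediately'' from \eqref{eq:bathm1}--\eqref{eq:bathm3}, leaving implicit that the limiting solution is independent of the order of the final data used in the construction.
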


Finally, the next theorem is a more precise statement of Theorem \ref{thm:intromain3}, on the uniqueness of the solutions of Theorem \ref{thm:backwardsproblem}.

\begin{theorem}[Uniqueness of modified scattering solutions --- precise statement of Theorem \ref{thm:intromain3}] 
\label{thm:backwardsproblem4}
	Under the assumptions of Theorem \ref{thm:backwardsproblem}:
	\begin{itemize}
		\item \textbf{\emph{Uniqueness of solution:}}
			The solution $(f,\varrho,\phi)$ of Theorem \ref{thm:backwardsproblem} is unique in the class of solutions which agree with the approximate solution $(f_{[K]}, \varrho_{[K]}, \phi_{[K]})$ of Theorem \ref{thm:backwardsproblem2} for sufficiently large $K$.  More precisely, if $(f',\varrho',\phi')$ is another solution such that there is a constant $C$ satisfying
	\begin{equation} \label{eq:uniquenesscondition}
		\Vert (f' - f_{[K]}) (t,\cdot,\cdot) \Vert_{L^2_x L^2_p}
		\leq
		C
		\frac{ (\log t)^{1+K}
		}{t^{1+ K}},
	\end{equation}
	for some $K=K(\mathcal{F}_{\infty}^{N+1})$ and for all $t$ sufficiently large, and moreover $f'$ satisfies the support property \eqref{eq:suppfstatement}, then $(f',\varrho',\phi') = (f,\varrho,\phi)$.
	\end{itemize}
\end{theorem}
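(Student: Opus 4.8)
The plan is to estimate the difference $g := f'-f$ directly in $L^2_xL^2_p$, run the energy estimate on a finite interval $[t,T_f]$ and then let $T_f\to\infty$, using crucially that $f$ and $f'$ are both quantitatively close to the \emph{same} approximate solution $f_{[K]}$ for large $t$. First I would write the equation for $g$ in the form that avoids any derivative control on $f'$ (for which \eqref{eq:uniquenesscondition} provides none): since $\gs_{\phi'}f'=0$ and $\gs_{\phi'}f=\gs_{\phi}f+\partial_{x^i}(\phi'-\phi)\partial_{p^i}f=\partial_{x^i}(\phi'-\phi)\partial_{p^i}f$, one obtains
\[
	\gs_{\phi'}g=\partial_{x^i}(\phi-\phi')\partial_{p^i}f,
	\qquad
	\Delta(\phi-\phi')=\varrho-\varrho'=-\int_{\mathbb{R}^3}g\,dp,
\]
so the source involves $\partial_p f$, which is pointwise controlled by Theorem \ref{thm:backwardsproblem3}. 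Note also that $f$ and $f'$ both satisfy the support property \eqref{eq:suppfstatement}, hence so does $g$; in particular, at each fixed $(s,x)$ the $p$-support of $g$ has volume $\lesssim s^{-3}$ as in \eqref{eq:introindicator}, and $\varrho-\varrho'$ is supported in $\vert x\vert\lesssim s$.

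Next I would estimate the source term. Applying Proposition \ref{prop:generalVlasovestimates} to $g$ with potential $\phi'$ gives
\[
	\Vert g(t)\Vert_{L^2_xL^2_p}
	\lesssim
	\Vert g(T_f)\Vert_{L^2_xL^2_p}
	+
	\int_t^{T_f}\big\Vert \partial_{x^i}(\phi-\phi')\partial_{p^i}f(s)\big\Vert_{L^2_xL^2_p}\,ds.
\]
Combining the pointwise bound $\Vert\partial_p f(s,\cdot,\cdot)\Vert_{L^\infty}\lesssim s\,\mathcal{F}$ from Theorem \ref{thm:backwardsproblem3} (with $\mathcal{F}$ depending on $f_\infty$ through finitely many derivatives) with the thin-support volume bound, the gradient estimate \eqref{eq:gradelliptic} in the form $\Vert\nabla(\phi-\phi')(s)\Vert_{L^2_x}\lesssim s\Vert\Delta(\phi-\phi')(s)\Vert_{L^2_x}$, and the Cauchy--Schwarz bound $\Vert(\varrho-\varrho')(s)\Vert_{L^2_x}\lesssim s^{-3/2}\Vert g(s)\Vert_{L^2_xL^2_p}$ over the $p$-support, one finds that the integrand carries exactly the borderline weight
\[
	\big\Vert \partial_{x^i}(\phi-\phi')\partial_{p^i}f(s)\big\Vert_{L^2_xL^2_p}
	\lesssim
	\mathcal{F}\,\frac{\Vert g(s)\Vert_{L^2_xL^2_p}}{s},
\]
so that $\Vert g(t)\Vert_{L^2_xL^2_p}\leq C\Vert g(T_f)\Vert_{L^2_xL^2_p}+C\mathcal{F}\int_t^{T_f}s^{-1}\Vert g(s)\Vert_{L^2_xL^2_p}\,ds$.

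Then I would close the argument via Grönwall and the choice of $K$. Proposition \ref{prop:Gronwall} (with $b$ constant in $t$ and $a(s)=C\mathcal{F}/s$) yields $\Vert g(t)\Vert_{L^2_xL^2_p}\leq C\Vert g(T_f)\Vert_{L^2_xL^2_p}(T_f/t)^{C\mathcal{F}}$. On the other hand, by \eqref{eq:uniquenesscondition} for $f'$ and \eqref{eq:mainest1} for $f$,
\[
	\Vert g(T_f)\Vert_{L^2_xL^2_p}
	\leq
	\Vert(f'-f_{[K]})(T_f)\Vert_{L^2_xL^2_p}+\Vert(f-f_{[K]})(T_f)\Vert_{L^2_xL^2_p}
	\leq
	C_K\,\frac{(\log T_f)^{1+K}}{T_f^{1+K}},
\]
hence $\Vert g(t)\Vert_{L^2_xL^2_p}\leq C_K(\log T_f)^{1+K}\,T_f^{\,C\mathcal{F}-1-K}\,t^{-C\mathcal{F}}$. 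Choosing $K$ so that $K+1>C\mathcal{F}$ — which requires $K$ large, depending only on $\mathcal{F}$, i.e. on $f_\infty$ through finitely many derivatives, consistent with the statement $K=K(\mathcal{F}_\infty^{N+1})$ — the right-hand side tends to $0$ as $T_f\to\infty$. Thus $\Vert g(t)\Vert_{L^2_xL^2_p}=0$ for all large $t$, so $f'=f$; then $\varrho'=\int f'\,dp=\varrho$ and $\phi'=\phi$ by uniqueness of the decaying solution of $\Delta\psi=\varrho$, and equality on the remaining common interval follows from standard uniqueness for the Cauchy problem for \eqref{eq:VP1}--\eqref{eq:VP2}.

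I expect the main obstacle to be the precise bookkeeping of weights in the source estimate: the elliptic gain of one power of $s$, the two factors $s^{-3/2}$ coming from the thin $p$-supports of $f$ and of $g$, and the linear-in-$s$ growth of $\Vert\partial_p f\Vert_{L^\infty}$ must combine to give \emph{exactly} the critical rate $s^{-1}$, since any worse weight would break the Grönwall step. Given this borderline behaviour, the genuinely decisive point — and the reason uniqueness is only asserted within the class of solutions agreeing with $f_{[K]}$ to sufficiently high order — is that the final-time bound $\Vert g(T_f)\Vert_{L^2_xL^2_p}\sim T_f^{-(1+K)}$ decays fast enough to overcome the Grönwall amplification $T_f^{C\mathcal{F}}$, which forces $K$ to be taken large relative to $\mathcal{F}$.
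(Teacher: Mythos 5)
Your proposal is correct and follows essentially the same route as the paper: write the difference equation with the transport operator $\gs_{\phi'}$ so that the source involves $\partial_p f$ of the known solution only, show the source is bounded by $C\mathcal{F}\,s^{-1}\Vert g(s)\Vert_{L^2_xL^2_p}$, run Gr\"onwall to get amplification $(T_f/t)^{C\mathcal{F}}$, and beat it with the final-time bound $\lesssim (\log T_f)^{1+K}T_f^{-(1+K)}$ by choosing $K$ large. The only cosmetic difference is how you package the source estimate: you use $\Vert\partial_p f\Vert_{L^\infty}\lesssim s\mathcal{F}$ together with the thin $p$-support volume, whereas the paper uses the equivalent $\sup_x\Vert\partial_p f(t,x,\cdot)\Vert_{L^2_p}\lesssim\mathcal{F}\,t^{-1/2}$ directly; both give the same borderline weight $s^{-1}$.
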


The proofs of Theorem \ref{thm:backwardsproblem} and Theorem \ref{thm:backwardsproblem3} are based on a sequence of ``finite problems'', which are discussed in Section \ref{section:finiteprob}.  The proofs of Theorem \ref{thm:backwardsproblem} and Theorem \ref{thm:backwardsproblem3}, using the results of Section \ref{section:finiteprob}, are then given in Section \ref{section:logicofproof}, where the proof of Theorem \ref{thm:backwardsproblem4} is also given.

The reader interested only in seeing how the approximate solutions are used to establish Theorem \ref{thm:backwardsproblem}, Theorem \ref{thm:backwardsproblem3} and Theorem \ref{thm:backwardsproblem4}, who is willing to accept the statement of Theorem \ref{thm:backwardsproblem2} (or rather the version with additional statements, in Theorem \ref{thm:approx} below) without seeing the details of the construction, may skip ahead to Section \ref{section:finiteprob}.

\section{Approximate solutions}
\label{section:expansions}

In this section explicit definitions of $f_{k,l} \colon \mathbb{R}^3 \times \mathbb{R}^3 \to \mathbb{R}$ and $\varrho_{k,l} \colon \mathbb{R}^3 \to \mathbb{R}$ are given, for $k = 0,1,2,\ldots$ and $l=0,\ldots k$, and the corresponding $(f_{[K]},\varrho_{[K]}, \phi_{[K]})$, given by \eqref{eq:mainthmPoisson}--\eqref{eq:rhophiKdefthm}, are shown to be approximate solutions to the Vlasov--Poisson system \eqref{eq:VP1}--\eqref{eq:VP2}.

Recall the definition \eqref{eq:defofcalF} of $\mathcal{F}_{\infty}^n[f_{\infty},\{\mathfrak{n}(n) \}]$ and the convention that the dependence on the sequence $\{\mathfrak{n}(n)\}$ is omitted, and moreover that this omitted sequence can change from line to line.  This convention is used throughout this section.

The main result of this section is the following.

\begin{theorem}[Approximate solutions] \label{thm:approx}
	Let $N\geq 6$ be fixed and consider a smooth compactly supported function $f_{\infty} \colon \mathbb{R}^3 \times \mathbb{R}^3 \to [0,\infty)$ satisfying \eqref{eq:suppfinfty} for some $B>0$.  There exists $T_0>0$ and smooth functions $f_{k,l} \colon \mathbb{R}^3 \times \mathbb{R}^3 \to \mathbb{R}$ and $\phi_{k,l}, \varrho_{k,l} \colon \mathbb{R}^3 \to \mathbb{R}$ for $k = 0,1,2,\ldots$ and $l=0,\ldots k$, defined explicitly in terms of $f_{\infty}$ with
	\[
		f_{0,0} = f_{\infty}, \qquad \varrho_{0,0} = \varrho_{\infty}, \qquad \phi_{0,0} = \phi_{\infty},
	\]
	such that:
	\begin{itemize}
		\item
			The functions $f_{[K]}$, $\varrho_{[K]}$ and $\phi_{[K]}$, defined by \eqref{eq:mainthmPoisson}--\eqref{eq:rhophiKdefthm}, are an approximate solution of the Vlasov--Poisson system \eqref{eq:VP1}--\eqref{eq:VP2} of order $K$, in the sense that, for any $K \geq 1$ and for all $t \geq T_0$,
	\begin{equation} \label{eq:thmapprox1}
		\sum_{\vert I \vert + \vert J \vert \leq N}
		\big\Vert L^I (t^{-1}\partial_p)^J \big( \gs_{\phi_{[K]}} f_{[K]} \big) (t,\cdot,\cdot) \big\Vert_{L^2_xL^2_p}
		\leq
		C_{K} (\mathcal{F}_{\infty}^{N+2K+3})^2
		\frac{(\log t)^{1+K}}{t^{2+K}},
	\end{equation}
	\begin{equation} \label{eq:thmapprox2}
		\sum_{\vert I \vert \leq N}
		\big\Vert (t\partial_x)^I \Big( \int_{\mathbb{R}^3} f_{[K]} (t,\cdot,p) dp - \varrho_{[K]} (t,\cdot) \Big) \big\Vert_{L^2_x}
		\leq
		C_{K} \mathcal{F}_{\infty}^{N+2K+3}
		\frac{(\log t)^{K+1}}{t^{\frac{5}{2}+K}}.
	\end{equation}

		\item
			The functions $f_{[K]}$ moreover satisfy the esimtates, for all $t\geq T_0$,
	\begin{align}
		\sum_{\vert I \vert + \vert J \vert \leq N}
		\big\Vert L^I (t^{-1}\partial_p)^J f_{[K]} (t,\cdot,\cdot) \big\Vert_{L^2_xL^2_p}
		&
		\leq
		\mathcal{F}_{\infty}^{N}
		+
		\frac{C_{K} \mathcal{F}_{\infty}^{N+K} \log t}{t},
		\label{eq:fKestimate}
	\\
		\sum_{\vert I \vert + \vert J \vert \leq N}
		\big\Vert L^I (t^{-1}\partial_p)^J (f_{[K]} - f_{[0]}) (t,\cdot,\cdot) \big\Vert_{L^2_xL^2_p}
		&
		\leq
		\frac{C_{K} \mathcal{F}_{\infty}^{N+K} \log t}{t}.
		\label{eq:fKestimate2}
	\end{align}

		\item
			Finally, each $f_{k,l}$ and each $\varrho_{k,l}$ satisfies
	\begin{equation} \label{eq:fklsupport}
		\supp(f_{k,l})
		\subset 
		\{ (x,p) \in \mathbb{R}^3 \times \mathbb{R}^3 \mid \vert x \vert \leq B, \vert p \vert \leq B \},
		\qquad
		\supp(\varrho_{k,l})
		\subset 
		\{ x \in \mathbb{R}^3 \mid \vert x \vert \leq B \},
	\end{equation}
	along with the estimate, for $k \geq 1$,
	\begin{equation} \label{eq:flkL2estimate}
		\sum_{\vert I \vert + \vert J \vert \leq N} \Vert \partial_x^I \partial_p^J f_{k,l} \Vert_{L^2_x L^2_p}
		\leq
		C_{k} (\mathcal{F}_{\infty}^{N+k})^2,
		\qquad
				\sum_{\vert I \vert + \vert J \vert \leq N}
				\Vert \partial_x^I \varrho_{k,l} \Vert_{L^2_x}
				+
				\Vert \partial_x^I \nabla \phi_{k,l} \Vert_{L^2_x}
				\leq
				C_{k} \mathcal{F}_{\infty}^{N+k}.
	\end{equation}

	\end{itemize}
\end{theorem}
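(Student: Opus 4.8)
The plan is to define the coefficients $f_{k,l},\varrho_{k,l},\phi_{k,l}$ recursively in $k$ by inserting the ansatz \eqref{eq:fKdefthm}--\eqref{eq:rhophiKdefthm} into \eqref{eq:VP1}--\eqref{eq:VP2} and matching the coefficient of each power $(\log t)^l t^{-k}$, and then to read off the stated estimates from the resulting explicit formulas. Write $y=x-tp+\log t\,\nabla\phi_\infty(p)$ as in \eqref{eq:introdefofy}. Everything rests on two polyhomogeneous expansions valid in the support region \eqref{eq:introybound}: first, substituting $x/t=(y+tp-\log t\,\nabla\phi_\infty(p))/t$ and Taylor expanding $\nabla\phi_{[K]}(t,x)$ in $y$ around $p$ (Proposition \ref{prop:Taylor}) gives $\nabla\phi_{[K]}(t,x)=\sum_{k,l}t^{-2-k}(\log t)^l\,\Psi_{k,l}(y,p)+(\text{tail})$, with each $\Psi_{k,l}$ an explicit polynomial in the derivatives of $\phi_\infty$ and of the $\phi_{k',l'}$, $k'\le k$, evaluated at $p$ (Proposition \ref{prop:phiderivexpansion}; cf.\ \eqref{eq:intronablaphiexpansion} and \eqref{eq:Psikl1}--\eqref{eq:Psikl2}); second, inverting \eqref{eq:introdefofy} for $p=p(t,x,y)$ and expanding the Jacobian gives $\det(\partial p/\partial y)=\sum_{k,l}t^{-3-k}(\log t)^l J_{k,l}(x/t)+(\text{tail})$ with $J_{0,0}=-1$ (Proposition \ref{prop:Jexpansion}; cf.\ \eqref{eq:introdetexpansion}). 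Collecting the coefficient of $t^{-k}(\log t)^l$ in $\gs_{\phi_{[K]}}f_{[K]}$ then yields, for $1\le k\le K$, an identity \eqref{eq:fklpsi2}--\eqref{eq:fklpsi3} of the schematic form $f_{k,l}=(\text{lower-order data, }k'\le k-1)-\Psi^i_{k,l}(p)\,(\partial_{x^i}f_\infty)(y,p)+\dots$; collecting orders in $\int f_{[K]}\,dp-\varrho_{[K]}$ yields the identity \eqref{eq:rhoklf2}--\eqref{eq:rhoklf3} of the form $\varrho_{k,l}=-\int f_{k,l}(y,\cdot)\,dy+(\text{lower-order data})$.

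The recursion then runs as follows. Set $f_{0,0}=f_\infty$, $\varrho_{0,0}=\varrho_\infty$, $\phi_{0,0}=\phi_\infty$, and assume $f_{k',l'},\varrho_{k',l'},\phi_{k',l'}$ defined for all $k'\le k-1$. The crucial structural point --- Remark \ref{rmk:totalyderiv}, generalising \eqref{eq:introiteratedefintegrated} --- is that $\phi_{k,l}$ enters \eqref{eq:fklpsi2}--\eqref{eq:fklpsi3} only through terms $\partial_i\phi_{k,l}(p)(\partial_{x^i}f_\infty)(y,p)$ (and products with $\phi_\infty$-derivatives), all of which integrate to zero in $y$ since $\int(\partial_{x^i}f_\infty)(y,p)\,dy=0$; hence $\int f_{k,l}(y,\cdot)\,dy$, and therefore $\varrho_{k,l}$ via \eqref{eq:rhoklf2}--\eqref{eq:rhoklf3}, is \emph{explicitly} determined by the lower-order data. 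I would then take $\phi_{k,l}$ to be the Newtonian potential of $\varrho_{k,l}$, the unique smooth decaying solution of \eqref{eq:mainthmPoisson}, with $\nabla\phi_{k,l}$ controlled in $L^2$ by Proposition \ref{prop:gradelliptic}, and finally \emph{define} $f_{k,l}$ by \eqref{eq:fklpsi2}--\eqref{eq:fklpsi3}. With these choices the tails in the two expansions above are precisely $\gs_{\phi_{[K]}}f_{[K]}$ and $\int f_{[K]}\,dp-\varrho_{[K]}$, each a finite sum of lower-order coefficient functions against $L^2$ profiles supported in \eqref{eq:introybound}; quantifying this gives \eqref{eq:thmapprox1}--\eqref{eq:thmapprox2} (Propositions \ref{prop:fKdef} and \ref{prop:rhoKdef}).

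It remains to extract the support and size statements. For \eqref{eq:fklsupport}: every term in \eqref{eq:fklpsi2}--\eqref{eq:fklpsi3} carries a factor $(\partial_x^I\partial_p^J f_\infty)(y,p)$, forcing $\vert y\vert,\vert p\vert\le B$ (the coefficients are functions of $p$ alone), and every term in \eqref{eq:rhoklf2}--\eqref{eq:rhoklf3} carries a factor $\int(\partial_x^I\partial_p^J f_\infty)(y,w)\,dy$ or $(\partial_x^I\partial_p^J f_\infty)(\cdot,w)$, forcing $\vert w\vert\le B$. For \eqref{eq:flkL2estimate}, I would induct on $k$: $\varrho_{k,l}$ is a finite sum of products of $L^\infty$-bounded derivatives of $\phi_\infty$ and of the $\phi_{k',l'}$, $k'\le k-1$ --- bounded by powers of $\mathcal{F}_\infty^\bullet$ via \eqref{eq:phiinftypointwise} and the inductive hypothesis --- against $L^2$-norms of derivatives of $f_{k',l'}$ over the fixed compact set $\{\vert y\vert\le B\}$, so Cauchy--Schwarz in $y$ gives the $\varrho_{k,l}$ and $\nabla\phi_{k,l}$ bounds, and then the formula \eqref{eq:fklpsi2}--\eqref{eq:fklpsi3} gives the (quadratic) $f_{k,l}$ bound; at each order only a bounded number of extra derivatives of $f_\infty$ is consumed, so adjusting the sequence $\{\mathfrak{n}(n)\}$ produces the claimed $\mathcal{F}_\infty^{N+k}$ dependence with $C_k$ independent of $N$. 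Finally \eqref{eq:fKestimate}--\eqref{eq:fKestimate2} follow by applying $L^I(t^{-1}\partial_p)^J$ termwise to $f_{[K]}-f_{[0]}=\sum_{k=1}^K\sum_l t^{-k}(\log t)^l f_{k,l}(y,p)$: by Remark \ref{rmk:functionsofy} these operators act on functions of $(y,p)$ with $t$-uniformly bounded coefficients, and at fixed $t$ the map $x\mapsto y$ is volume-preserving, so each summand is at most $t^{-k}(\log t)^l$ times $\sum_{\vert I'\vert+\vert J'\vert\le N}\Vert\partial_x^{I'}\partial_p^{J'}f_{k,l}\Vert_{L^2_xL^2_p}$; summing over $k\ge1$ yields the $C_K\mathcal{F}_\infty^{N+K}\log t/t$ terms, while the $k=0$ piece $f_\infty(y,p)$ contributes $\mathcal{F}_\infty^N$.

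The hard part is the order-by-order bookkeeping. Structurally, one must check at \emph{every} order --- not just $k=0,1$ --- that the $\phi_{k,l}$-dependent part of $f_{k,l}$ integrates to zero in $y$, so that the Poisson constraint genuinely closes and $\varrho_{k,l}$ is explicit rather than implicit; quantitatively, one must track simultaneously, through the nested expansions for $\nabla\phi_{[K]}$ and $\det(\partial p/\partial y)$, both the number of derivatives of $f_\infty$ consumed and the polynomial degree in $\mathcal{F}_\infty$, in order to land on the precise exponents and $N$-independent constants asserted. The expansions of Propositions \ref{prop:phiderivexpansion} and \ref{prop:Jexpansion} are technically heavy but structurally routine given Proposition \ref{prop:Taylor} and the smoothness and compact support of $\phi_\infty$.
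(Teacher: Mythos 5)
Your proposal is correct and takes essentially the same route as the paper's proof: define the coefficients by substituting the ansatz and matching powers of $(\log t)^l t^{-k}$, build the two polyhomogeneous expansions for $\nabla\phi_{[K]}$ and $\det(\partial p/\partial y)$ (Propositions \ref{prop:phiderivexpansion} and \ref{prop:Jexpansion}), close the recursion via the cancellation of Remark \ref{rmk:totalyderiv} (the top-order $\phi_{k,l}$ enters $f_{k,l}$ only against $\partial_{x^i} f_\infty$, which integrates to zero in $y$, making $\varrho_{k,l}$ explicit), and then read off the support, $L^2$, and approximate-solution estimates from the explicit formulas together with Propositions \ref{prop:fKdef} and \ref{prop:rhoKdef}. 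The only real differences from the paper are cosmetic and at the level of bookkeeping you already flag as the heavy part.
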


Throughout this section, it is supposed that a smooth compactly supported function $f_{\infty} \colon \mathbb{R}^3 \times \mathbb{R}^3 \to [0,\infty)$ is given, and $B>0$ is such that \eqref{eq:suppfinfty} holds.  It is also assumed that $N\geq 6$ is fixed.

In Section \ref{subsec:notation} notation is introduced which is used throughout.

In Section \ref{subsec:fexpansion}, for the purpose of an inductive argument, the definitions of $f_{k,l}$ are given for $k \leq K$, under the assumption that smooth functions $\phi_{k,l}$ are given for all $k \leq K$.  The properties \eqref{eq:thmapprox1}, \eqref{eq:fKestimate}--\eqref{eq:fKestimate2} and the former of \eqref{eq:fklsupport} and \eqref{eq:flkL2estimate} are established, assuming that $\phi_{k,l}$ satisfy the latter of \eqref{eq:flkL2estimate}.  These properties hold independently of how $\phi_{k,l}$ are defined.

In Section \ref{subsec:rhoexpansion}, it is assumed, for the purpose of the inductive argument, that $K\geq 1$ is such that functions $f_{k,l}$ are given for $k \leq K-1$ and satisfy the former of \eqref{eq:fklsupport} and \eqref{eq:flkL2estimate}.  Functions $f_{K,l}$ are defined, along with functions $\varrho_{k,l}$ for all $k \leq K$.  These functions are shown to satisfy \eqref{eq:thmapprox2}, along with the latter of \eqref{eq:fklsupport} and \eqref{eq:flkL2estimate}.  Again, these properties hold independently of how $f_{k,l}$, for $k \leq K-1$, are defined.

Finally the proof of Theorem \ref{thm:approx} is given in Section \ref{subsec:thmapprox} using the results of Section \ref{subsec:fexpansion} and Section \ref{subsec:rhoexpansion} together with an induction argument.

The reader is referred to the discussion in Section \ref{subsec:introexpansion} for explicit treatments of the $K=0$ and $K=1$ cases of Theorem \ref{thm:approx}, and a guide to the $K \geq 2$ case.

\subsection{Notation}
\label{subsec:notation}

The following notation will be used throughout this section.
First, $y\colon [T_0,\infty) \times \mathbb{R}^3 \times \mathbb{R}^3 \to \mathbb{R}^3$ will denote the function
\begin{equation} \label{eq:defofy}
	y(t,x,p) = x - tp + \log t \nabla \phi_{\infty}(p).
\end{equation}
Next, for any $X\in \mathbb{R}^3$, define
\[
	\otimes_k X := \underbrace{X \otimes \ldots \otimes X}_{k \text{ times}},
\]
and for $X_1, \ldots, X_k \in \mathbb{R}^3$, with $X_1 = (X_1^1, X_1^2, X_1^3)$ etc.\@, and any function $\Phi \colon \mathbb{R}^3 \to \mathbb{R}^3$, define
\[
	X_1 \otimes \ldots \otimes X_k \cdot \nabla^k \Phi
	:=
	\sum_{i_1,\ldots, i_k=1}^3 X_1^{i_1} \ldots X_k^{i_k} \partial_{i_1} \ldots \partial_{i_k} \Phi
	\in \mathbb{R}^3.
\]
For a given subset $A\subset \mathbb{N}^d$, let $\iota_A$ be the indicator function of this set
\begin{equation} \label{eq:iotadef}
	\iota_A(\underline{k})
	=
	\begin{cases}
		1, & \text{if $\underline{k} \in A$},\\
		0, & \text{otherwise}.
	\end{cases}
\end{equation}

\subsection{The approximate solution $f_{[K]}$}
\label{subsec:fexpansion}

For the purpose of an inductive argument, given later in Section \ref{subsec:thmapprox}, suppose that $K\geq 1$ is such that some smooth functions $\phi_{k,l}$ are given, for $k=0,\ldots,K$, $l=0,\ldots,k$, with $\phi_{0,0}=\phi_{\infty}$, which satisfy, for some constants $C_{n,k}$, for all $n \geq 6$,
\begin{equation} \label{eq:phiklassum}
	\sum_{\vert I \vert \leq n} \Vert \partial^I \nabla \phi_{k,l} \Vert_{L^2}
	\leq
	C_{n,k} \mathcal{F}_{\infty}^{n+k}.
\end{equation}
Let $\phi_{[K]}$ be defined in terms of these functions by \eqref{eq:rhophiKdefthm}.
In this section the functions $f_{k,l}$ are defined in terms of $\phi_{k,l}$ and it is shown that, under the assumption that $\phi_{k,l}$ satisfy \eqref{eq:phiklassum}, the Vlasov equation is satisfied by $f_{[K]}$ and $\phi_{[K]}$ to order $K$, in the sense that the estimate \eqref{eq:thmapprox1} holds.

First it is shown that, for $p\in \mathbb{R}^3$, if the function $y(t,x,p)$ is bounded then, in the coefficients of $\nabla \phi_{[K]}(t,x)$ --- i.\@e.\@ in $\nabla \phi_{k,l}(x/t)$ --- the argument can be replaced using \eqref{eq:defofy} and Taylor expanded to give a series whose coefficients are functions of $y(t,x,p)$ and $p$.  See \eqref{eq:intronablaphiexpansion} for the example of the $K=1$ case.

Define smooth functions $\Psi_{k,l} \colon \mathbb{R}^3 \times \mathbb{R}^3 \to \mathbb{R}^3$, for $k = 0,\ldots, K$, $l= 0, \ldots, k$, by
\begin{align} \label{eq:Psikl1}
	\Psi_{0,0}(y,p)
	=
	\nabla \phi_{\infty} (p),
	\quad
	\Psi_{1,1}(y,p)
	=
	\nabla \phi_{1,1}(p) - \nabla \phi_{\infty}(p) \cdot \nabla^{2} \phi_{\infty} (p),
	\quad
	\Psi_{1,0}(y,p)
	=
	\nabla \phi_{1,0}(p) + y \cdot \nabla^{2} \phi_{\infty} (p),
\end{align}
and, for $k \geq 2$,
\begin{equation} \label{eq:Psikl2}
	\Psi_{k,l}(y,p)
	=
	\sum_{\substack{ k_1+k_2=k \\ l_1+l_2 = l}}
	\binom{k_2}{l_2} \frac{(-1)^{l_2}}{k_2!} \otimes_{k_2-l_2} y \otimes_{l_2} \nabla \phi_{\infty}(p) \cdot \nabla^{k_2+1} \phi_{k_1,l_1} (p).
\end{equation}
where, in all summations in this section, unless explicitly stated otherwise, $0\leq l_1 \leq k_1$ and $0\leq l_2 \leq k_2$.  It is helpful to also rewrite the expression \eqref{eq:Psikl2} with the $k_1=k$ term isolated,
\begin{equation} \label{eq:Psikl2iso}
	\Psi_{k,l}(y,p)
	=
	\nabla \phi_{k,l}(p)
	+
	\sum_{\substack{ k_1+k_2=k \\ l_1+l_2 = l \\ k_1 \leq k-1}}
	\binom{k_2}{l_2} \frac{(-1)^{l_2}}{k_2!} \otimes_{k_2-l_2} y \otimes_{l_2} \nabla \phi_{\infty}(p) \cdot \nabla^{k_2+1} \phi_{k_1,l_1} (p).
\end{equation}

\begin{proposition}[Expansion for $\nabla \phi(t,x)$] \label{prop:phiderivexpansion}
	Consider some $K\geq 0$ and smooth functions $\phi_{k,l} \colon \mathbb{R}^3 \to \mathbb{R}$ satisfying \eqref{eq:phiklassum} for all $k=0,\ldots,K$, $l=0,\ldots,k$.
	The explicit functions $\Psi_{k,l} \colon \mathbb{R}^3 \to \mathbb{R}^3$, defined by \eqref{eq:Psikl1}--\eqref{eq:Psikl2}, for $k = 0,1,2,\ldots$, $l= 0, \ldots, k$, satisfy, 
	for any $p \in \mathbb{R}^3$, any $B > 0$, and any $t \geq T_0$,
	\begin{multline} \label{eq:phiPsibound}
		\sup_{\vert y(t,x,p) \vert \leq B}
		\sum_{\vert I \vert + \vert J \vert \leq N}
		\Big\vert
		L^I (t^{-1}\partial_p)^J
		\Big(
		\nabla_{x} \phi_{[K]}(t,x)
		-
		\frac{1}{t^2}
		\sum_{k=0}^K \sum_{l = 0}^k \frac{(\log t)^l}{t^k} \Psi_{k,l}(y(t,x,p),p)
		\Big)
		\Big\vert
		\\
		\leq
		C_{K}
		\mathcal{F}_{\infty}^{N+2K+3}
		\frac{(\log t)^{K+1}}{t^{K+3}}
		.
	\end{multline}
	Moreover, for all $n \geq 2$,
	\begin{equation} \label{eq:Psiklbound}
		\sum_{\vert I \vert \leq n} \Vert \partial^I \Psi_{k,l} \Vert_{L^2_x L^2_p}
		\leq
		C_{n,k} \mathcal{F}_{\infty}^{n+k}.
	\end{equation}
\end{proposition}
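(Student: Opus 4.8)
The plan is to prove \eqref{eq:phiPsibound} first in the case $I=J=0$ by an explicit Taylor expansion, then to propagate it through the vector fields $L_i$ and $t^{-1}\partial_{p^i}$, and finally to read off \eqref{eq:Psiklbound} directly from the defining formulas \eqref{eq:Psikl1}--\eqref{eq:Psikl2}. For the base case $I=J=0$: differentiating \eqref{eq:rhophiKdefthm} gives $\nabla_x\phi_{[K]}(t,x)=t^{-2}\sum_{k=0}^K\sum_{l=0}^k(\log t)^l t^{-k}(\nabla\phi_{k,l})(x/t)$, and \eqref{eq:defofy} yields $x/t=p+t^{-1}\big(y(t,x,p)-\log t\,\nabla\phi_\infty(p)\big)$, so that $|x/t-p|\lesssim t^{-1}\log t\,(1+\mathcal{F}_{\infty}^{2})$ on the set $\{|y(t,x,p)|\le B\}$ by \eqref{eq:phiinftypointwise}, once $T_0$ is large. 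Applying Taylor's theorem (Proposition \ref{prop:Taylor}) to expand each $(\nabla\phi_{k,l})(x/t)$ about $p$ to order $K+1$, expanding the resulting tensor powers of $y-\log t\,\nabla\phi_\infty(p)$ by the binomial theorem, and collecting by powers of $\log t$ and $t^{-1}$, one finds that the coefficient of $(\log t)^{l'}t^{-(k'+2)}$ for $k'\le K$ is precisely the sum \eqref{eq:Psikl2} defining $\Psi_{k',l'}$: the $k_2$-th order Taylor term of $\nabla\phi_{k_1,l_1}$, with $l_2$ of the binomial factors drawn from $-\log t\,\nabla\phi_\infty(p)$, reproduces exactly the summand $\binom{k_2}{l_2}\tfrac{(-1)^{l_2}}{k_2!}\otimes_{k_2-l_2}y\otimes_{l_2}\nabla\phi_\infty(p)\cdot\nabla^{k_2+1}\phi_{k_1,l_1}(p)$ with $k_1+k_2=k'$, $l_1+l_2=l'$. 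The leftover contributions — the order-$(K+1)$ Taylor remainders and the collected terms of total order exceeding $K$ — are bounded, using $|x/t-p|\lesssim t^{-1}\log t$, the Sobolev inequality (Proposition \ref{prop:SobolevL2}) together with \eqref{eq:phiklassum} to control the required derivatives of $\phi_{k,l}$ in $L^\infty$, and \eqref{eq:phiinftypointwise} for $\phi_\infty$, by $C_K\,\mathcal{F}_{\infty}^{K+3}\,(\log t)^{K+1}t^{-(K+3)}$, which gives \eqref{eq:phiPsibound} for $I=J=0$.

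For general $|I|+|J|\le N$: since $\nabla_x\phi_{[K]}(t,x)$ is independent of $p$, one has $L_i\nabla_x\phi_{[K]}=t\partial_{x^i}\nabla_x\phi_{[K]}$ and $(t^{-1}\partial_{p^i})\nabla_x\phi_{[K]}=0$; hence $L^I(t^{-1}\partial_p)^J\nabla_x\phi_{[K]}$ vanishes if $J\neq0$ and equals $t^{-2}\sum_{k,l}(\log t)^l t^{-k}(\partial^I\nabla\phi_{k,l})(x/t)$ if $J=0$ — again an object of the form already treated, with $\phi_{k,l}$ replaced by a derivative or by $0$. On the other side, by the formulas \eqref{eq:functionsofy1}--\eqref{eq:functionsofy2} of Remark \ref{rmk:functionsofy}, $L^I(t^{-1}\partial_p)^J$ applied to $\Psi_{k,l}(y(t,x,p),p)$ is a finite sum of terms $c(t,p)\,(\partial_x^\alpha\partial_p^\beta\Psi_{k,l})(y,p)$ with $|\alpha|+|\beta|\le|I|+|J|$ and coefficients $c$ that are polynomials in $\tfrac{\log t}{t}$ whose coefficients are built from at most $N$ derivatives of $\phi_\infty$ — hence bounded on $\{|y|\le B\}$ via \eqref{eq:phiinftypointwise} — modulo terms carrying an extra factor of $t^{-1}$. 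I would then argue by induction on $|I|+|J|$: applying one more vector field to the identity already established for $(I,J)$ and reorganising should produce the same identity for the incremented multi-index, now with the $\phi_{k,l}$ replaced by first-order derivatives of themselves and the $\Psi_{k,l}$ replaced by the $\Psi$'s built from those derivatives, the coefficient matching on the two sides being verified directly against \eqref{eq:Psikl2} (the chain-rule action of $L_i$ and $t^{-1}\partial_{p^i}$ on $\Psi_{k,l}(y,p)$ mirrors the tensorial structure of \eqref{eq:Psikl2}). The error terms are estimated exactly as in the base case, but with up to $N$ additional derivatives falling on the potentials $\phi_{k,l}$ and on $\phi_\infty$; together with the order-$(K+1)$ Taylor remainders and \eqref{eq:phiklassum} this accounts for the index $N+2K+3$ and the decay $(\log t)^{K+1}t^{-(K+3)}$ in \eqref{eq:phiPsibound}.

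Finally, \eqref{eq:Psiklbound} should follow directly from \eqref{eq:Psikl1}--\eqref{eq:Psikl2}: each $\Psi_{k,l}$ is a finite sum of products of a polynomial in $y$ of degree at most $k$, a factor $\otimes_{l_2}\nabla\phi_\infty(p)$, and a factor $\nabla^{k_2+1}\phi_{k_1,l_1}(p)$ with $k_1+k_2=k$; applying $\partial^I$ with $|I|\le n$ by the Leibniz rule and estimating by H\"older's inequality — the factor carrying the most derivatives of $\phi_{k_1,l_1}$ (at most $n+k_2+1$ of them) in $L^2$, controlled by $\mathcal{F}_{\infty}^{n+k}$ via \eqref{eq:phiklassum} since $k_1+k_2=k$, and the $\nabla\phi_\infty$-factors in $L^\infty$ via \eqref{eq:phiinftypointwise} — yields the claimed $C_{n,k}\mathcal{F}_{\infty}^{n+k}$ (the $y$-polynomial factors being harmless in the bounded region where these functions are subsequently used). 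The step I expect to be the main obstacle is the inductive propagation in the previous paragraph: one must check that the delicate cancellations in the expansion — those which already at first order turn $\nabla\phi_\infty(x/t)-\nabla\phi_\infty(p)$ and the next Taylor correction into the exact combinations $\Psi_{1,1}$ and $\Psi_{1,0}$ of \eqref{eq:Psikl1} — survive differentiation by $L_i$ and $t^{-1}\partial_{p^i}$, i.e.\ that the coefficients of the differentiated expansion again reorganise into the $\Psi$'s of the differentiated potentials; this is a mechanical but lengthy combinatorial verification against \eqref{eq:Psikl2}, while everything else is routine estimation with Taylor's theorem, the Sobolev inequality, and \eqref{eq:phiklassum}.
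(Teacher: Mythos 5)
Your plan for the base case $I=J=0$ is correct and matches the paper exactly: write $\tfrac{x}{t}=p+\tfrac{y}{t}-\tfrac{\log t}{t}\nabla\phi_\infty(p)$, Taylor-expand each $\nabla\phi_{k,l}(\tfrac{x}{t})$ about $p$ to order $K$, use the binomial theorem on the tensor powers of $\tfrac{y}{t}-\tfrac{\log t}{t}\nabla\phi_\infty(p)$, collect by powers of $\log t$ and $t^{-1}$, and identify the coefficients with \eqref{eq:Psikl2}; the remainder is controlled by \eqref{eq:phiklassum}, the Sobolev inequality and \eqref{eq:phiinftypointwise}. Your treatment of \eqref{eq:Psiklbound} is also the paper's argument (Leibniz, $L^\infty$ on the $\phi_\infty$-factors via \eqref{eq:phiinftypointwise}, $L^2$ on the highest-derivative $\phi_{k,l}$-factor via \eqref{eq:phiklassum}).

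The one place where your formulation could lead you into a trap is the passage to $|I|+|J|\geq 1$ framed as an ``induction on $|I|+|J|$, applying one more vector field to the identity already established''. You cannot literally differentiate the inequality established at the previous step: if $D(t,x,p)$ is only known to be $O\big((\log t)^{K+1}/t^{K+3}\big)$, then $L_iD$ or $t^{-1}\partial_{p^i}D$ has no a priori bound from that alone, since the leading parts of $L_i$ and $t^{-1}\partial_{p^i}$ acting on functions of $(y,p)$ have $O(1)$ coefficients (they produce $\partial_{p^i}$ and $-\partial_{x^i}$ respectively, up to $O(\log t/t)$ corrections). Indeed, if one blindly computes $(t^{-1}\partial_{p^j})$ of the two sides separately, the $\nabla_x\phi_{[K]}$-side vanishes while individual $\Psi_{k,l}$-terms do not appear small: e.g.\ $(t^{-1}\partial_{p^j})\big(\tfrac{1}{t}\Psi_{1,0}(y,p)\big)$ has a contribution $-\tfrac{1}{t}\partial_j\nabla\phi_\infty(p)$ at order $t^{-3}$, which exceeds the claimed $(\log t)^{K+1}/t^{K+3}$ for $K\geq 1$. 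This contribution is only harmless because it cancels exactly against $(t^{-1}\partial_{p^j})\Psi_{0,0}(y,p)=t^{-1}\partial_j\nabla\phi_\infty(p)$ — the cancellation you rightly flag as needing verification. The safer route, and what the paper's terse ``the estimates for the higher order derivatives follow similarly, using Remark \ref{rmk:functionsofy} and $L_i(x^j/t)=\delta_i^j$'' actually means, is not to differentiate the inequality but to apply $L^I(t^{-1}\partial_p)^J$ directly to the \emph{explicit} Taylor formula with remainder. Both sides of that formula are built from the three variables $y$, $p$, and $\tfrac{x}{t}$, on all of which the vector fields act with bounded coefficients (by \eqref{eq:functionsofy1}--\eqref{eq:functionsofy2}, and since $L_i(x^j/t)=\delta_i^j$ and $t^{-1}\partial_{p^i}(x^j/t)=0$); the differentiated remainder therefore retains the $(\log t)^{K+1}/t^{K+3}$ smallness without any need to re-establish cancellations order by order. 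If you reformulate your inductive step in these terms — differentiate the formula, not the estimate — the argument closes cleanly and is exactly the paper's proof.
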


\begin{proof}
	First note that
	\begin{equation} \label{eq:phiKxovert}
		\nabla \phi_{[K]}(t,x)
		=
		\frac{1}{t^2}
		\Big[
		\nabla \phi_{\infty} \left( \frac{x}{t} \right)
		+
		\sum_{k=1}^K \sum_{l=0}^{k} \frac{(\log t)^l}{t^k} \nabla \phi_{k,l} \left( \frac{x}{t} \right)
		\Big].
	\end{equation}
	Note also that the assumption \eqref{eq:phiklassum}, along with the Sobolev inequality of Proposition \ref{prop:SobolevL2}, implies that, for any $n\geq 2$,
	\[
		\sum_{\vert I \vert \leq n-2} \Vert \partial^I \nabla \phi_{k,l} \Vert_{L^{\infty}}
		\leq
		C_{n,k} \mathcal{F}_{\infty}^{n+k}.
	\]
	Now,
	\[
		\frac{x}{t} =  p + \frac{y}{t} - \frac{\log t}{t} \nabla \phi_{\infty}(p),
	\]
	and so, for any function $\Phi \colon \mathbb{R}^3 \to \mathbb{R}^3$, by Taylor's Theorem (see Proposition \ref{prop:Taylor}) and the Binomial Theorem,
	\begin{multline*}
		\sup_{\vert y(t,x,p) \vert \leq B}
		\Big\vert
		\Phi \left( \frac{x}{t} \right)
		-
		\Big[
		\Phi(p)
		+
		\sum_{k=1}^K \sum_{l=0}^k \frac{(\log t)^l}{t^k} \binom{k}{l} \frac{(-1)^l}{k!} \otimes_{k-l} y \otimes_l \nabla \phi_{\infty}(p) \cdot \nabla^k \Phi (p)
		\Big]
		\Big\vert
		\\
		\lesssim
		(B + \Vert \nabla \phi_{\infty} \Vert_{L^{\infty}})^{K+1}
		\Vert \nabla^{K+1} \Phi \Vert_{L^{\infty}}
		\left( \frac{\log t}{t} \right)^{K+1}.
	\end{multline*}
	Using this expression with $\Phi = \nabla \phi_{\infty}$ and $\Phi = \nabla \phi_{k,l}$, for $k =1,\ldots,K$, $l=0,\ldots,k$, after inserting into \eqref{eq:phiKxovert} it follows that there is a sequence $\{\mathfrak{n}(n)\}$ such that
	\begin{multline*}
		\bigg\vert
		\nabla \phi_{[K]}(t,x)
		-
		\frac{1}{t^2}
		\sum_{k=0}^K \sum_{l=0}^{k} \frac{(\log t)^l}{t^k} 
		\sum_{\substack{ k_1+k_1=k \\ l_2+l_2 = l}}
		\binom{k_2}{l_2} \frac{(-1)^{l_2}}{k_2!} \otimes_{k_2-l_2} y \otimes_{l_2} \nabla \phi_{\infty}(p) \cdot \nabla^{k+1} \phi_{k_1,l_1} (p)
		\bigg\vert
		\\
		\lesssim
		\mathcal{F}_{\infty}^{2K+3}
		\frac{(\log t)^{K+1}}{t^{K+3}},
	\end{multline*}
	and the zeroth order part of the proof of \eqref{eq:phiPsibound} follows.  The estimates for the higher order derivatives follow similarly, using Remark \ref{rmk:functionsofy} and the fact that $L_i(x^j/t) = \delta_i^j$ for all $i,j=1,2,3$.

	Finally, the proof of \eqref{eq:Psiklbound} follows from the assumption \eqref{eq:phiklassum} and the Sobolev inequality of Proposition \ref{prop:SobolevL2}, using the inductive definitions \eqref{eq:Psikl1}--\eqref{eq:Psikl2}.
\end{proof}

We will now give explicit definitions of $f_{k,l}$, in terms of $\Psi_{k,l}$, to arrange that $\gs_{\phi_{[K]}} f_{[K]} $ vanishes to sufficiently high order, in the sense that \eqref{eq:thmapprox1} holds (see also \eqref{eq:introiteratedef1}--\eqref{eq:introiteratedef2} and the surrounding discussion for the $k=1$ case).

\begin{proposition}[Properties of $f_{k,l}$] \label{prop:fKdef}
	Consider some $K\geq 0$ and smooth functions $\phi_{k,l} \colon \mathbb{R}^3 \to \mathbb{R}$ satisfying \eqref{eq:phiklassum} for all $k=0,\ldots,K$, $l=0,\ldots,k$.
	The functions $f_{k,l} \colon \mathbb{R}^3 \times \mathbb{R}^3 \to \mathbb{R}$ defined by \eqref{eq:fklpsi1}--\eqref{eq:fklpsi3} below (with the functions $\Psi_{k,l}$ defined explicitly in terms of $\phi_{k,l}$ by \eqref{eq:Psikl1}--\eqref{eq:Psikl2}), satisfy,
	\begin{equation} \label{eq:fklsupportagain}
		\supp(f_{k,l})
		\subset 
		\{ (x,p) \in \mathbb{R}^3 \times \mathbb{R}^3 \mid \vert x \vert \leq B, \vert p \vert \leq B \},
	\end{equation}
	and, for all $k \geq 1$ and any $n \geq 6$,
	\begin{equation} \label{eq:fklestimatesagain}
		\sum_{\vert I \vert + \vert J \vert \leq n} \Vert \partial_x^I \partial_p^J f_{k,l} \Vert_{L^2_x L^2_p}
		\leq
		C_{n,k} (\mathcal{F}_{\infty}^{n+k})^2.
	\end{equation}
	For any $t \geq T_0$, the function $f_{[K]} \colon [T_0,\infty) \times \mathbb{R}^3_x \times \mathbb{R}^3_p \to \mathbb{R}$, defined by \eqref{eq:fKdefthm}, satisfies
	\begin{align}
		\sum_{\vert I \vert + \vert J \vert \leq N}
		\big\Vert L^I (t^{-1}\partial_p)^J f_{[K]} (t,\cdot,\cdot) \big\Vert_{L^2_xL^2_p}
		&
		\leq
		\mathcal{F}_{\infty}^{N}
		+
		\frac{C_{K} \mathcal{F}_{\infty}^{N+K} \log t}{t},
		\label{eq:fKestimateagain}
	\\
		\sum_{\vert I \vert + \vert J \vert \leq N}
		\big\Vert L^I (t^{-1}\partial_p)^J (f_{[K]} - f_{[0]}) (t,\cdot,\cdot) \big\Vert_{L^2_xL^2_p}
		&
		\leq
		\frac{C_{K} \mathcal{F}_{\infty}^{N+K} \log t}{t}.
		\label{eq:fKestimateagain2}
	\end{align}
	and
	\begin{equation} \label{eq:fKdefestimate}
		\sum_{\vert I \vert + \vert J \vert \leq N}
		\big\Vert L^I (t^{-1}\partial_p)^J \big( \gs_{\phi_{[K]}} f_{[K]} \big) (t,\cdot,\cdot) \Big\Vert_{L^2_xL^2_p}
		\leq
		C_{K} (\mathcal{F}_{\infty}^{N+2K+3})^2
		\frac{(\log t)^{1+K}}{t^{2+K}}.
	\end{equation}
\end{proposition}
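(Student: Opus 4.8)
The plan is to obtain the definitions (the equations \eqref{eq:fklpsi1}--\eqref{eq:fklpsi3} referred to in the statement) of the $f_{k,l}$ and the estimate \eqref{eq:fKdefestimate} simultaneously, by expanding $\gs_{\phi_{[K]}}f_{[K]}$ and forcing that expansion to vanish to the required order. First I would write $f_{[K]}(t,x,p)=\sum_{k,l}\frac{(\log t)^l}{t^k}f_{k,l}(y,p)$ with $y=y(t,x,p)$ as in \eqref{eq:defofy}, and apply $\gs_{\phi_{[K]}}=\partial_t+p^i\partial_{x^i}+\partial_{x^i}\phi_{[K]}\,\partial_{p^i}$ by the chain rule, using $\partial_{x^i}y^j=\delta_i^j$, $\partial_t y^j=-p^j+t^{-1}\partial_j\phi_{\infty}(p)$ and $\partial_{p^i}y^j=-t\delta_i^j+\log t\,\partial_i\partial_j\phi_{\infty}(p)$. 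In $\partial_t f_{[K]}+p^i\partial_{x^i}f_{[K]}$ the two transport contributions combine, leaving only the explicit coefficient derivatives $\partial_t\big(\frac{(\log t)^l}{t^k}\big)f_{k,l}$ plus a term $t^{-1}\partial_j\phi_{\infty}(p)(\partial_{y^j}f_{[K]})$. In $\partial_{x^i}\phi_{[K]}\,\partial_{p^i}f_{[K]}$ I would replace $\nabla_x\phi_{[K]}$ by its polyhomogeneous expansion $\frac1{t^2}\sum_{k',l'}\frac{(\log t)^{l'}}{t^{k'}}\Psi_{k',l'}(y,p)$ from Proposition \ref{prop:phiderivexpansion}, which is legitimate because $f_{[K]}$ and all its $x$- and $p$-derivatives are supported in $\{|y(t,x,p)|\le B\}$ by \eqref{eq:fklsupportagain}; the contribution of $\Psi_{0,0}=\nabla\phi_{\infty}$ paired with the $-t\delta_i^j$ part of $\partial_{p^i}y^j$ then cancels the $t^{-1}\nabla\phi_{\infty}\cdot\nabla_y f_{[K]}$ term exactly. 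After this cancellation $\gs_{\phi_{[K]}}f_{[K]}$ equals a finite sum of terms of the form $\frac{(\log t)^n}{t^m}\,\Psi^i_{k',l'}(y,p)(\partial f_{k,l})(y,p)$ and $\frac{(\log t)^n}{t^m}f_{k,l}(y,p)$, with $1\le m\le 2K+2$, plus an error of size $\lesssim\frac{(\log t)^{K+1}}{t^{K+3}}\cdot\|\partial_p f_{[K]}\|\lesssim\frac{(\log t)^{K+1}}{t^{K+2}}(\ldots)$ from the remainder in Proposition \ref{prop:phiderivexpansion}.

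The key observation --- the general-$K$ version of the computation around \eqref{eq:introiteratedef1}--\eqref{eq:introiteratedef2} --- is that, in the coefficient of $\frac{(\log t)^n}{t^{k+1}}$ in this sum, $f_{k,n}$ occurs linearly with nonzero coefficient $-k$ (for $k\ge1$), together with $f_{k,n+1}$, derivatives of $f_{k',l'}$ with only $k'\le k-1$, and the functions $\Psi_{k',l'}$ with $k'\le k$; since $\Psi_{k,l}$ depends on $\phi_{k,l}$ only through its linear top term \eqref{eq:Psikl2iso}, which is already available, the system is triangular. One therefore defines $f_{k,l}$ --- this is \eqref{eq:fklpsi1}--\eqref{eq:fklpsi3} --- by declaring the coefficient of $\frac{(\log t)^n}{t^m}$ to vanish for every $1\le m\le K+1$, solving upward in $k$ from $f_{0,0}=f_{\infty}$ and, for each $k$, downward in $l$ starting at $l=k$ (where $f_{k,k+1}\equiv0$). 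This yields an explicit expression for each $f_{k,l}$ in terms of $f_{\infty}$ and reduces $\gs_{\phi_{[K]}}f_{[K]}$ to the Proposition \ref{prop:phiderivexpansion} remainder plus the finitely many surviving terms, all of which have $m\ge K+2$ and $n\le m-1$, hence $\frac{(\log t)^n}{t^m}\lesssim\frac{(\log t)^{K+1}}{t^{K+2}}$ for $t\ge T_0$.

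It then remains to establish \eqref{eq:fklsupportagain}--\eqref{eq:fKdefestimate}, all by induction on $k$ using the explicit formulae. The support property \eqref{eq:fklsupportagain} is immediate: each $f_{k,l}(y,p)$ is a finite sum of products of a polynomial in $y$, smooth functions of $p$ (derivatives of $\phi_{\infty}$ and of the $\phi_{k',l'}$), and $x$- or $p$-derivatives of $f_{k',l'}$ with $k'<k$ or of $f_{k,l+1}$, none of which enlarges the $(y,p)$-support beyond $\{|y|\le B,|p|\le B\}$. For \eqref{eq:fklestimatesagain} the same induction applies: by the Leibniz rule one reduces to products of $L^2$ norms of derivatives of $f_{k',l'}$, $L^{\infty}$ norms of derivatives of the $\phi$'s --- controlled via Proposition \ref{prop:SobolevL2} and the hypothesis \eqref{eq:phiklassum} --- and suprema over the fixed compact supports of polynomials in $y$; the outcome is a polynomial, of $k$-bounded degree, in the $\mathcal{F}_{\infty}$-type norms of $f_{\infty}$, and any such polynomial is dominated by $C_k(\mathcal{F}_{\infty}^{n+k})^2$ after enlarging the sequence $\{\mathfrak{n}(n)\}$ in \eqref{eq:defofcalF} (using that $a^j\lesssim a+a^{\mathfrak{n}(n)}$ whenever $1\le j\le\mathfrak{n}(n)$). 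Estimates \eqref{eq:fKestimateagain}--\eqref{eq:fKestimateagain2} for $f_{[K]}$ follow from \eqref{eq:fklestimatesagain}, the support property, and the change of variables $x\mapsto y$, which at fixed $p$ is a volume-preserving translation so that $\|g(y,p)\|_{L^2_xL^2_p}=\|g\|_{L^2(\mathbb{R}^6)}$; applying $L^I(t^{-1}\partial_p)^J$ to $\frac{(\log t)^l}{t^k}f_{k,l}(y,p)$ one uses that these vector fields have no $\partial_t$ component --- so they pass through the $t$-dependent prefactor --- and act on functions of $(y,p)$ with $t$-bounded coefficients by Remark \ref{rmk:functionsofy}, so each term is $\lesssim\frac{(\log t)^l}{t^k}\sum_{|I'|+|J'|\le N}\|\partial_x^{I'}\partial_p^{J'}f_{k,l}\|_{L^2_xL^2_p}$; on summing, the $k\ge1$ terms give \eqref{eq:fKestimateagain2} while the $k=0$ term contributes the leading $\mathcal{F}_{\infty}^N$ in \eqref{eq:fKestimateagain}.

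Finally, \eqref{eq:fKdefestimate} is obtained by estimating the surviving terms of $\gs_{\phi_{[K]}}f_{[K]}$ with the same tools: the $t$-only prefactor is $\lesssim\frac{(\log t)^{K+1}}{t^{K+2}}$, and the $L^I(t^{-1}\partial_p)^J$-norm of the remaining factor $\Psi^i_{k',l'}(y,p)(\partial f_{k,l})(y,p)$ is controlled, via Leibniz, Remark \ref{rmk:functionsofy}, the change of variables, Proposition \ref{prop:SobolevL2}, the bound \eqref{eq:Psiklbound} and the bound \eqref{eq:fklestimatesagain}, by $C_K(\mathcal{F}_{\infty}^{N+2K+3})^2$; the Proposition \ref{prop:phiderivexpansion} remainder, multiplied by $\partial_p f_{[K]}=\mathcal{O}(t)$ and its weighted derivatives (controlled by \eqref{eq:phiPsibound}), contributes the same order. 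The main difficulty is the first step --- organising the chain-rule expansion so that the triangular structure of the recursion for $f_{k,l}$ and the cancellations down to order $K+2$ are transparent; once the definitions \eqref{eq:fklpsi1}--\eqref{eq:fklpsi3} and Proposition \ref{prop:phiderivexpansion} are in place, the remaining estimates are routine applications of the inequalities of Section \ref{section:prelim} and the commutation identities \eqref{eq:functionsofy1}--\eqref{eq:functionsofy2}.
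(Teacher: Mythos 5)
Your proposal is correct and follows essentially the same route as the paper: expand $\gs_{\phi_{[K]}}f_{[K]}$ by the chain rule, substitute the polyhomogeneous expansion of $\nabla_x\phi_{[K]}$ from Proposition~\ref{prop:phiderivexpansion}, observe that the resulting recursion for the coefficients of $(\log t)^l/t^{k+1}$ is triangular (in $k$ increasing and $l$ decreasing, with $f_{k,l}$ appearing with coefficient $-k$), define the $f_{k,l}$ so that the coefficients for $k\le K$ vanish, and then estimate the surviving terms and the $\nabla\phi_{[K]}$-remainder using Remark~\ref{rmk:functionsofy}, the Sobolev inequalities, \eqref{eq:Psiklbound} and \eqref{eq:fklestimatesagain}. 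The only presentational difference is that the paper states the explicit formulas \eqref{eq:fklpsi1}--\eqref{eq:fklpsi3} up front and verifies the cancellation in \eqref{eq:gsKfK}, whereas you derive them by imposing the cancellation, which is logically the same argument.
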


The functions $f_{k,l} \colon \mathbb{R}^3 \times \mathbb{R}^3 \to \mathbb{R}$, for $k = 0,\ldots, K$, $l= 0, \ldots, k$, are defined by
\begin{equation} \label{eq:fklpsi1}
	f_{0,0}(y,p) = f_{\infty}(y,p),
\end{equation}
and, inductively, for $1 \leq k \leq K$,
\begin{align}
	f_{k,k}(y,p)
	=
	\
	&
	-
	\frac{1}{k}
	\sum_{\substack{
	k_1+k_2=k \\ l_1+l_2=k \\ k_1 \leq k-1
	}}
	(\partial_{x^i} f_{k_1,l_1})(y,p) \Psi^i_{k_2,l_2}(y,p)
	+
	\frac{1}{k}
	\sum_{\substack{
	k_1+k_2=k-1 \\ l_1+l_2=k -1
	}}
	(\partial_{x^i} f_{k_1,l_1})(y,p) \Psi^j_{k_2,l_2}(y,p) \partial_i \partial_j \phi_{\infty}(p)
	,
	\label{eq:fklpsi2}
\end{align}
and, for $l=k-1,\ldots,0$,
\begin{align} \label{eq:fklpsi3}
	&
	f_{k,l}(y,p)
	=
	\frac{l+1}{k} f_{k,l+1}(y,p)
	-
	\frac{1}{k}
	\sum_{\substack{
	k_1+k_2=k \\ l_1+l_2=l \\ k_1 \leq k-1
	}}
	(\partial_{x^i} f_{k_1,l_1})(y,p) \Psi^i_{k_2,l_2}(y,p)
	\\
	\
	&
	+
	\frac{\iota_{l\geq 1}}{k}
	\sum_{\substack{
	k_1+k_2=k-1 \\ l_1+l_2=l -1
	}}
	(\partial_{x^i} f_{k_1,l_1})(y,p) \Psi^j_{k_2,l_2}(y,p) \partial_i \partial_j \phi_{\infty}(p)
	+
	\frac{1}{k}
	\sum_{\substack{
	k_1+k_2=k-1 \\ l_1+l_2=l
	}}
	(\partial_{p^i} f_{k_1,l_1})(y,p) \Psi^j_{k_2,l_2}(y,p) \partial_i \partial_j \phi_{\infty}(p)
	,
	\nonumber
\end{align}
where $\iota$ is defined by \eqref{eq:iotadef}.
Note that $k_1=k$ is excluded from each of the summations in \eqref{eq:fklpsi2} and \eqref{eq:fklpsi3}, and so the definitions are indeed inductive.

The relevant aspect of the form of these expressions --- apart from the fact that they achieve exact cancellations in the following proof --- is explained in Remark \ref{rmk:totalyderiv} below.

\begin{proof}[Proof of Proposition \ref{prop:fKdef}]
	The proof of \eqref{eq:fklsupportagain} and \eqref{eq:fklestimatesagain} follow from the definitions \eqref{eq:fklpsi1}--\eqref{eq:fklpsi3} by a straightforward induction argument, using \eqref{eq:Psiklbound} and the Sobolev inequality of Proposition \ref{prop:SobolevL2R6} for \eqref{eq:fklestimatesagain}.

	The estimates \eqref{eq:fKestimateagain} and \eqref{eq:fKestimateagain2} follow from the properties \eqref{eq:functionsofy1}--\eqref{eq:functionsofy2} of the vector fields $L_i$ and $t^{-1} \partial_{p^i}$ acting on functions of $y$ and $p$, and the Sobolev inequality of Proposition \ref{prop:SobolevL2R6}, using also \eqref{eq:Psiklbound} and \eqref{eq:fklestimatesagain}.

	For \eqref{eq:fKdefestimate}, note that, with $y(t,x,p)$ defined by \eqref{eq:defofy},
	\[
		\gs_{\phi_{[K]}} (y(t,x,p)^j)
		=
		t \Big( \frac{1}{t^2} \partial_i \phi_{\infty}(p) - \partial_{x^i} \phi_{[K]}(t,x) \Big) + \log t \ \partial_{x^i} \phi_{[K]}(t,x) \partial_i \partial_j \phi_{\infty} (p),
	\]
	and thus,
	\begin{align}
		&
		\gs_{\phi_{[K]}} f_{[K]}
		=
		\sum_{k=0}^K \sum_{l=0}^{k}
		\bigg[
		\frac{l(\log t)^{l-1} - k (\log t)^l}{t^{k+1}}
		f_{k,l}(y,p)
		+
		\frac{(\log t)^l}{t^k}
		\partial_{x^i} \phi_{[K]}(t,x) (\partial_{p^i} f_{k,l})(y,p)
		\nonumber
		\\
		&
		\qquad \qquad \qquad
		+
		\frac{(\log t)^l}{t^k}
		\Big[
		t \Big( \frac{1}{t^2} \partial_i \phi_{\infty}(p) - \partial_{x^i} \phi_{[K]}(t,x) \Big) + \log t \ \partial_{x^i} \phi_{[K]}(t,x) \partial_i \partial_j \phi_{\infty} (p)
		\Big]
		(\partial_{x^i} f_{k,l})(y,p)
		\bigg]
		\nonumber
		\\
		&
		=
		\sum_{k=0}^{2K+1} \sum_{l=0}^{k}
		\frac{(\log t)^l}{t^{k+1}}
		\bigg[
		\Big( - k f_{k,l}(y,p) + (l+1) f_{k,l+1}(y,p) \Big) \iota_{k \leq K}
		\label{eq:gsKfK}
		\\
		&
		\
		-
		\sum_{\substack{
		k_1+k_2=k \\ l_1+l_2=l \\ k_2 \geq 1
		}}
		\iota_{k_1,k_2 \leq K}
		(\partial_{x^i} f_{k_1,l_1})(y,p) \Psi^i_{k_2,l_2}(y,p)
		+
		\iota_{l \geq 1} \!\!
		\sum_{\substack{
		k_1+k_2=k-1 \\ l_1+l_2=l -1
		}}
		\iota_{k_1,k_2 \leq K}
		(\partial_{x^i} f_{k_1,l_1})(y,p) \Psi^j_{k_2,l_2}(y,p) \partial_i \partial_j \phi_{\infty}(p)
		\nonumber
		\\
		&
		\qquad
		+
		\iota_{l \neq k}
		\sum_{\substack{
		k_1+k_2=k-1 \\ l_1+l_2=l
		}}
		\iota_{k_1,k_2 \leq K}
		(\partial_{p^i} f_{k_1,l_1})(y,p) \Psi^j_{k_2,l_2}(y,p) \partial_i \partial_j \phi_{\infty}(p)
		\bigg]
		\nonumber
		\\
		&
		\qquad
		+
		\Big(\partial_{x^i} \phi_{[K]}(t,x)
		-
		\frac{1}{t^2}
		\sum_{k=0}^K \sum_{l = 0}^k \frac{(\log t)^l}{t^k} \Psi_{k,l}^i(y,p)
		\Big)
		\partial_{p^i} f_{[K]}(t,x,p).
		\nonumber
	\end{align}
	After inserting the definitions \eqref{eq:fklpsi1}--\eqref{eq:fklpsi3}, the terms in the first summation corresponding to $k=0,\ldots,K$ all vanish.
	Recall now \eqref{eq:Psiklbound} and \eqref{eq:fklestimatesagain}.  It follows from the properties \eqref{eq:functionsofy1}--\eqref{eq:functionsofy2} of the vector fields $L_i$ and $t^{-1} \partial_{p^i}$ acting on functions of $y$ and $p$, and the Sobolev inequality of Proposition \ref{prop:SobolevL2R6}, that, for any $k\geq K+1$ and $l \leq k$ and $k_1,k_2 \leq K$, $l_1,l_2$ such that $k_1+k_2 = k$, $l_1+l_2=l$, and any $N \geq 6$, 
	\[
		\sum_{\vert I \vert + \vert J \vert \leq N}
		\Vert
		L^I (t^{-1} \partial_p)^J \big( \frac{(\log t)^l}{t^{k+1}} \partial_{x^i} f_{k_1,l_1}(y,p) \Psi^j_{k_2,l_2}(y,p) \big)
		\Vert_{L^2_x L^2_p}
		\leq
		C_{K}
		(\mathcal{F}_{\infty}^{N+k+1})^2
		\frac{(\log t)^{K+1}}{t^{K+2}}.
	\]
	Similarly for all but the final term on the right hand side of \eqref{eq:gsKfK}.  The final term is estimated using Proposition \ref{prop:phiderivexpansion} and \eqref{eq:fKestimateagain},
	\begin{multline*}
		\sum_{\vert I \vert + \vert J \vert \leq N}
		\Big\Vert
		L^I (t^{-1} \partial_p)^J
		\Big(
		\Big(\partial_{x^i} \phi_{[K]}(t,x)
		-
		\frac{1}{t^2}
		\sum_{k=0}^K \sum_{l = 0}^k \frac{(\log t)^l}{t^k} \Psi_{k,l}^i(y,p)
		\Big)
		\partial_{p^i} f_{[K]}(t,x,p)
		\Big)
		\Big\Vert_{L^2_x L^2_p}
		\\
		\leq
		C_{K} (\mathcal{F}_{\infty}^{N+2K+3})^2
		\frac{(\log t)^{1+K}}{t^{2+K}}.
	\end{multline*}
	The proof of \eqref{eq:fKdefestimate} then follows from Proposition \ref{prop:phiderivexpansion}.
\end{proof}

\begin{remark}[Leading order term in $f_{k,l}$ vanishes upon integration in $y$] \label{rmk:totalyderiv}
	For fixed $1 \leq k\leq K$ and $0\leq l \leq k$, the expressions \eqref{eq:fklpsi1}--\eqref{eq:fklpsi3} for $f_{k,l}$ involve $\phi_{k',l'}$ for all $0 \leq k'\leq k$ and $0\leq l' \leq k'$.  Note however that, upon integration in $y$, the terms involving $\phi_{k,l'}$ vanish, for all $l' = 0,\ldots,k$ --- regardless of how $\phi_{k,l}$ are defined --- and only the terms involving $\phi_{k',l'}$ with $k'<k$ and $0\leq l'\leq k'$ remain.  See \eqref{eq:introiteratedefintegrated}, and the surrounding discussion, for this observation in the special case $k=1$.
	
	Indeed, considering the expression \eqref{eq:Psikl2iso}, it follows that the integral of \eqref{eq:fklpsi2} takes the form
	\begin{align} \label{eq:fklpsi2int}
		&
		\int_{\mathbb{R}^3} f_{k,k}(y,p) dy
		=
		\frac{1}{k}
		\sum_{\substack{
		k_1+k_2=k-1 \\ l_1+l_2=k -1
		}}
		\int_{\mathbb{R}^3} (\partial_{x^i} f_{k_1,l_1})(y,p) \Psi^j_{k_2,l_2}(y,p) \partial_i \partial_j \phi_{\infty}(p) dy
		\\
		&
		\qquad \qquad
		-
		\frac{1}{k}
		\sum_{\substack{
		k_1+k_2=k \\ l_1+l_2=k \\ k_2 \geq 1
		}}
		\sum_{\substack{ k_3+k_4=k_2 \\ l_3+l_4 = l_2 \\ k_4 \geq 1}}
		\binom{k_4}{l_4} \frac{(-1)^{l_4}}{k_4!} 
		\int_{\mathbb{R}^3}
		\otimes_{k_4-l_4} y \otimes_{l_4} \nabla \phi_{\infty}(p) \cdot \nabla^{k_4+1} \phi_{k_3,l_3} (p)
		(\partial_{x^i} f_{k_1,l_1})(y,p) 
		dy
		,
		\nonumber
	\end{align}
	and the integral of \eqref{eq:fklpsi3} takes the form, for $l=k-1,\ldots,0$,
	\begin{align} \label{eq:fklpsi3int}
		&
		\int_{\mathbb{R}^3} f_{k,l}(y,p) dy
		=
		\frac{l+1}{k} \int_{\mathbb{R}^3} f_{k,l+1}(y,p) dy
		\\
		&
		\qquad \qquad
		+
		\frac{\iota_{l\geq 1}}{k}
		\sum_{\substack{
		k_1+k_2=k-1 \\ l_1+l_2=l -1
		}}
		\int_{\mathbb{R}^3}
		(\partial_{x^i} f_{k_1,l_1})(y,p) \Psi^j_{k_2,l_2}(y,p) \partial_i \partial_j \phi_{\infty}(p)
		dy
		\nonumber
		\\
		&
		\qquad \qquad
		+
		\frac{1}{k}
		\sum_{\substack{
		k_1+k_2=k-1 \\ l_1+l_2=l
		}}
		\int_{\mathbb{R}^3}
		(\partial_{p^i} f_{k_1,l_1})(y,p) \Psi^j_{k_2,l_2}(y,p) \partial_i \partial_j \phi_{\infty}(p)
		dy
		\nonumber
		\\
		&
		\qquad \qquad
		-
		\frac{1}{k}
		\sum_{\substack{
		k_1+k_2=k \\ l_1+l_2=l \\ k_2 \geq 1
		}}
		\sum_{\substack{ k_3+k_4=k_2 \\ l_3+l_4 = l_2 \\ k_4 \geq 1}}
		\binom{k_4}{l_4} \frac{(-1)^{l_4}}{k_4!} 
		\int_{\mathbb{R}^3}
		\otimes_{k_4-l_4} y \otimes_{l_4} \nabla \phi_{\infty}(p) \cdot \nabla^{k_4+1} \phi_{k_3,l_3} (p)
		(\partial_{x^i} f_{k_1,l_1})(y,p)
		dy
		.
		\nonumber
	\end{align}
	Note again that terms of the form $\phi_{k,l'}$ are precluded from the final summations of \eqref{eq:fklpsi2int} and \eqref{eq:fklpsi3int} by the condition that $k_4 \geq 1$.
	
	This fact that the top order $\phi_{k,l}$ terms vanish upon integration in $y$ is used in the proof of Theorem \ref{thm:approx} in Section \ref{subsec:thmapprox} below.  See also Section \ref{subsubsec:fKlrhoKl} below.
\end{remark}

\subsection{The approximate solution $\varrho_{[K]}$}
\label{subsec:rhoexpansion}

Throughout this section suppose, for the purpose of the inductive argument given in Section \ref{subsec:thmapprox}, that $K\geq 1$ is such that some smooth functions $f_{k,l}$ are given, for $k=0,\ldots,K-1$, $l=0,\ldots,k$, with $f_{0,0}=f_{\infty}$, which satisfy the support property \eqref{eq:fklsupport} and, for some constants $C_{n,k}$, for all $n \geq 6$,
\begin{equation} \label{eq:calP0}
	\sum_{\vert I \vert + \vert J \vert \leq n} \Vert \partial_x^I \partial_p^J f_{k,l} \Vert_{L^2_x L^2_p}
	\leq
	C_{n,k} \mathcal{F}_{\infty}^{n+k}.
\end{equation}
In this section there are two main aims:
\begin{itemize}
	\item
		The first aim is to give explicit definitions of $\varrho_{k,l}$ in terms of $f_{k,l}$, for $0\leq k \leq K-1$ and $0\leq l \leq k$, so that $\varrho_{[K-1]}$ defined by \eqref{eq:rhophiKdefthm} is approximately equal to the spatial density of the function $f_{[K-1]}$ defined by \eqref{eq:fKdefthm}, i.\@e.\@ so that
		\begin{equation} \label{eq:rhosectionkeyproperty}
			\sum_{k=0}^{K-1} \sum_{l=0}^k \frac{(\log t)^l}{t^k}
			\Big( \int f_{k,l}(y(t,x,p),p) dp - \frac{1}{t^{3}} \varrho_{k,l} \Big( \frac{x}{t} \Big) \Big)
			=
			\mathcal{O} \bigg( \frac{(\log t)^{K-1}}{t^{4+(K-1)}} \bigg),
		\end{equation}
		or, more precisely, so that the estimate \eqref{eq:thmapprox2} holds for $K-1$.  This property is equivalent to the requirement that the relations \eqref{eq:rhoklf2} below hold, and it is indeed the relations \eqref{eq:rhoklf2} which are used to define the functions $\varrho_{k,l}$.
	\item
		The second aim is to give explicit definitions of functions $f_{K,l}$ and $\varrho_{K,l}$, for $l=0,\ldots,K$, in terms of the functions $f_{k,l}$, for $0\leq k \leq K-1$, $l=0,\ldots,k$.	
		Using the observation of Remark \ref{rmk:totalyderiv}, these functions can be defined in such a way that the relation \eqref{eq:rhoklf2} also holds for $k=K$, and thus that the property \eqref{eq:rhosectionkeyproperty} moreover holds for $K$ in place of $K-1$.  See Remark \ref{rmk:stillholds} below.
\end{itemize}
Both of these aims are achieved together in Proposition \ref{prop:rhoKdef} below.

\subsubsection{Expansions for $p(t,x,y)$ and $\det (\partial y/\partial p)$}

Before giving the definitions of $\varrho_{k,l}$, certain properties of the expression \eqref{eq:defofy} for $y(t,x,p)$ are established.  A computation gives
	\begin{equation} \label{eq:J1J20}
		\det \frac{\partial y}{\partial p}
		=
		-
		t^3
		+
		t^2 \log t \mathring{J}_1 (p)
		+
		t (\log t)^2 \mathring{J}_2 (p)
		+
		(\log t)^3 \mathring{J}_3 (p)
		,
	\end{equation}
	where
	\begin{equation} \label{eq:J1J2}
		\mathring{J}_1
		=
		\Delta \phi_{\infty},
	\qquad
		\mathring{J}_2
		=
		(\partial_1 \partial_2 \phi_{\infty})^2
		+
		(\partial_1 \partial_3 \phi_{\infty})^2
		+
		(\partial_2 \partial_3 \phi_{\infty})^2
		-
		\partial_1^2 \phi_{\infty} \partial_2^2 \phi_{\infty}
		-
		\partial_1^2 \phi_{\infty} \partial_3^2 \phi_{\infty}
		-
		\partial_2^2 \phi_{\infty} \partial_3^2 \phi_{\infty},
	\end{equation}
	and
	\begin{equation} \label{eq:J1J22}
		\mathring{J}_3
		=
		\partial_1^2 \phi_{\infty} \partial_2^2 \phi_{\infty} \partial_3^2 \phi_{\infty}
		+
		2 \partial_1 \partial_2 \phi_{\infty} \partial_1 \partial_3 \phi_{\infty} \partial_2 \partial_3 \phi_{\infty}
		-
		\partial_1^2 \phi_{\infty} (\partial_2 \partial_3 \phi_{\infty})^2
		-
		\partial_2^2 \phi_{\infty} (\partial_1 \partial_3 \phi_{\infty})^2
		-
		\partial_3^2 \phi_{\infty} (\partial_1 \partial_2 \phi_{\infty})^2
		.
	\end{equation}
	It therefore follows from the Implicit Function Theorem that, for bounded $p$, if $T_0$ is sufficiently large, the expression \eqref{eq:defofy} can be inverted to give $p(t,x,y)$ as a function of $t$, $x$ and $y$.

In order to define the expansion for $\varrho$ it is helpful to first provide an expansion for this function.

\begin{proposition}[Expansion for $p(t,x,y)$] \label{prop:pexpansion}
	There are explicit functions $p_{k,l} \colon \mathbb{R}^3 \times \mathbb{R}^3 \to \mathbb{R}^3$, for $k = 1,2,\ldots$, $l= 0, \ldots, k$, such that, for any $K \geq 1$, the function $p(t,x,y)$ satisfies, for any $B \geq 1$ and for all $t \geq T_0$,
	\[
		\sup_{\vert x/t \vert + \vert y \vert \leq B}
		\Big\vert
		p(t,x,y)
		-
		\Big[
		\frac{x}{t}
		+
		\sum_{k=1}^K \sum_{l=0}^k
		\frac{(\log t)^l}{t^k} p_{k,l} \Big( \frac{x}{t},y \Big)
		\Big]
		\Big\vert
		\lesssim
		\left( \frac{\log t}{t} \right)^{K+1}
		.
	\]
\end{proposition}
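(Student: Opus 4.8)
The plan is to turn the defining relation \eqref{eq:defofy} into a fixed point equation for $p$ and to extract the coefficients $p_{k,l}$ by solving it order by order in $\frac{\log t}{t}$, in the spirit of the $K=1$ computation recalled in Section \ref{subsec:introexpansion}. Writing $w=\tfrac{x}{t}$, the relation $y=x-tp+\log t\,\nabla\phi_\infty(p)$ is equivalent to
\begin{equation*}
	p = w - \frac{y}{t} + \frac{\log t}{t}\,\nabla\phi_\infty(p).
\end{equation*}
Since $\phi_\infty$ is smooth with $\nabla\phi_\infty$ and all its derivatives bounded (by \eqref{eq:phiinftypointwise}), for $T_0$ large the right hand side is, for each fixed $(t,x,y)$ with $\vert w\vert+\vert y\vert\leq B$, a self-map of the ball $\{\vert p\vert\leq B+1\}$ with Lipschitz constant $\frac{\log t}{t}\Vert\nabla^2\phi_\infty\Vert_{L^\infty}\leq\tfrac12$. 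This both recovers, quantitatively, the existence of $p(t,x,y)$ already granted by the Implicit Function Theorem, and shows $p(t,x,y)$ lies in this ball, where all derivatives of $\phi_\infty$ are under control.

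Next I would define the $p_{k,l}$ recursively. Substituting the formal series $p=w+\sum_{k\geq1}\sum_{l=0}^{k}\frac{(\log t)^l}{t^k}p_{k,l}(w,y)$ into the fixed point equation, Taylor expanding $\nabla\phi_\infty(p)=\sum_{m\geq0}\frac{1}{m!}\otimes_m(p-w)\cdot\nabla^{m+1}\phi_\infty(w)$ via Proposition \ref{prop:Taylor}, and equating coefficients of $\frac{(\log t)^l}{t^k}$ yields $p_{1,0}(w,y)=-y$, $p_{1,1}(w,y)=\nabla\phi_\infty(w)$, and, for $k\geq2$,
\begin{equation*}
	p_{k,l}(w,y)=\sum_{m=1}^{k-1}\frac{1}{m!}\sum_{\substack{k_1+\cdots+k_m=k-1\\ l_1+\cdots+l_m=l-1}} p_{k_1,l_1}\otimes\cdots\otimes p_{k_m,l_m}\cdot\nabla^{m+1}\phi_\infty(w),
\end{equation*}
the inner sum being over $0\leq l_i\leq k_i$. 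Since only indices with $k_i\leq k-1$ occur, this is a genuine recursion, so each $p_{k,l}$ is an explicit polynomial in $y$ with coefficients built from derivatives of $\phi_\infty$, hence smooth; moreover $l_i\leq k_i$ forces $l=1+\sum_i l_i\leq1+\sum_i k_i=k$, so the ansatz respects $0\leq l\leq k$ (with $p_{k,0}=0$ for $k\geq2$). This is the direct analogue, on the $p$ side, of the expansion for $\nabla\phi_{[K]}$ in Proposition \ref{prop:phiderivexpansion}, and the $p_{k,l}$ are exactly the coefficients that feed into the expansion of $\det(\partial y/\partial p)$ built from \eqref{eq:J1J20}.

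With the $p_{k,l}$ fixed, set $P_{[K]}(t,w,y):=w+\sum_{k=1}^{K}\sum_{l=0}^{k}\frac{(\log t)^l}{t^k}p_{k,l}(w,y)$. By construction, substituting $P_{[K]}$ into the right hand side of the fixed point equation and Taylor expanding to order $K$ reproduces $P_{[K]}$ exactly; the only discrepancy consists of terms of order $\frac{(\log t)^l}{t^k}$ with $k\geq K+1$, together with the integral Taylor remainder of $\nabla\phi_\infty$ at order $K+1$, and on $\{\vert w\vert+\vert y\vert\leq B\}$ --- where $\vert P_{[K]}-w\vert\lesssim\frac{\log t}{t}$ --- all of these are $\lesssim\big(\frac{\log t}{t}\big)^{K+1}$ by \eqref{eq:phiinftypointwise}. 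Hence
\begin{equation*}
	P_{[K]}=w-\frac{y}{t}+\frac{\log t}{t}\nabla\phi_\infty(P_{[K]})+E_{[K]},\qquad \vert E_{[K]}\vert\lesssim\Big(\frac{\log t}{t}\Big)^{K+1}.
\end{equation*}
Subtracting from $p=w-\frac{y}{t}+\frac{\log t}{t}\nabla\phi_\infty(p)$ and using the Lipschitz bound gives $\vert p-P_{[K]}\vert\leq\tfrac12\vert p-P_{[K]}\vert+\vert E_{[K]}\vert$, so $\vert p-P_{[K]}\vert\leq2\vert E_{[K]}\vert\lesssim\big(\frac{\log t}{t}\big)^{K+1}$, which (upon replacing $w$ by $x/t$) is the assertion of Proposition \ref{prop:pexpansion}.

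The main obstacle is the bookkeeping in the second step: one must identify precisely which products of the $p_{k_i,l_i}$ and which derivatives of $\phi_\infty$ enter at a prescribed order $\frac{(\log t)^l}{t^k}$, verify that only strictly smaller values of $k$ appear (so the recursion closes) and that $l\leq k$ is preserved, and --- most delicately --- check that the tail and the Taylor remainder are genuinely $O\big((\frac{\log t}{t})^{K+1}\big)$ uniformly on $\{\vert w\vert+\vert y\vert\leq B\}$. This last point relies on the fact that $p-w=O(\frac{\log t}{t})$ (rather than $O(\frac1t)$), so that the $j$-th order term in the Taylor expansion of $\nabla\phi_\infty(p)$ about $w$ has size $(\frac{\log t}{t})^{j}$; everything else is routine, and the analogous bounds for $(w,y)$-derivatives of the remainder, if they are needed downstream, follow in the same way since differentiating \eqref{eq:defofy} in $x$ or $y$ again produces a contraction of the same type.
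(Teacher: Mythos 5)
Your argument and the paper's are essentially the same: both rewrite the defining relation \eqref{eq:defofy} as the implicit equation $p = \tfrac{x}{t}\pm\tfrac{y}{t}+\tfrac{\log t}{t}\nabla\phi_\infty(p)$ and iterate, Taylor-expanding $\nabla\phi_\infty$ to produce the coefficients and the error bound. Where the paper describes the iterative substitution somewhat informally (``the explicit expressions are obtained by iteratively inserting''), you make it cleaner by packaging the existence of $p(t,x,y)$ as a Banach fixed point with Lipschitz constant $\tfrac{\log t}{t}\Vert\nabla^2\phi_\infty\Vert_{L^\infty}\le\tfrac12$, writing a closed recursion
\[
p_{k,l}=\sum_{m=1}^{k-1}\frac{1}{m!}\sum_{\substack{k_1+\cdots+k_m=k-1\\ l_1+\cdots+l_m=l-1}} p_{k_1,l_1}\otimes\cdots\otimes p_{k_m,l_m}\cdot\nabla^{m+1}\phi_\infty(w),
\]
and then extracting the $\lesssim(\tfrac{\log t}{t})^{K+1}$ bound by subtracting the fixed-point relation for $p$ from the approximate one for $P_{[K]}$ and absorbing the contraction factor. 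That is a slightly more systematic route to the same estimate and gives the constraint $l\le k$ for free.

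One point worth flagging: your fixed-point equation $p=\tfrac{x}{t}-\tfrac{y}{t}+\tfrac{\log t}{t}\nabla\phi_\infty(p)$ is the correct inversion of \eqref{eq:defofy}, whereas the paper's equation \eqref{eq:pofy} reads $p=\tfrac{x}{t}+\tfrac{y}{t}+\cdots$, with the consequence that the paper lists $p_{1,0}=y$ and $p_{2,1}=y\cdot\nabla^2\phi_\infty(w)$ while your (correct) values are $-y$ and $-y\cdot\nabla^2\phi_\infty(w)$. This looks like a benign sign typo in the paper --- the intro-level expansion preceding \eqref{eq:introdetexpansion} uses $-\tfrac1t y^i(\partial_{p^i}f_\infty)$, consistent with your sign --- and it does not change the structure or conclusion of the proposition, but it is a real discrepancy in the displayed formulas.
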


\begin{proof}
	The expression \eqref{eq:defofy} can be rewritten
	\begin{equation} \label{eq:pofy}
		p = \frac{x}{t} + \frac{y}{t} + \frac{\log t}{t} \nabla \phi_{\infty}(p).
	\end{equation}
	Rather than give an explicit expression for the $(k,l)$-th element of the expression, the general procedure for obtaining such an expression is described.  Expression \eqref{eq:pofy} moreover gives
	\[
		p
		=
		\frac{x}{t} + \frac{y}{t}
		+
		\frac{\log t}{t} \nabla \phi_{\infty} \Big( \frac{x}{t} + \frac{y}{t} + \frac{\log t}{t} \nabla \phi_{\infty}(p) \Big),
	\]
	and, for any $K \geq 1$, by Taylor's Theorem,
	\[
		\sup_{\vert x/t \vert + \vert y \vert \leq B}
		\Big\vert
		p(t,x,y)
		-
		\Big[
		\frac{x}{t} + \frac{y}{t}
		+
		\frac{\log t}{t}
		\Big[
		\phi_{\infty} \Big( \frac{x}{t} \Big)
		+
		\sum_{k=1}^K
		\otimes_k \Big( \frac{y}{t} + \frac{\log t}{t} \nabla \phi_{\infty}(p) \Big)
		\cdot \nabla^{k+1} \phi_{\infty} \Big( \frac{x}{t} \Big)
		\Big]
		\Big]
		\Big\vert
		\lesssim
		\left( \frac{\log t}{t} \right)^{K+1}.
	\]
	The explicit expressions are obtained by iteratively inserting the expression \eqref{eq:pofy} for $p$.
	
	The first elements of the sequence $\{p_{k,l}\}$ take the form
	\[
		p_{1,1}(w,y) = \nabla \phi_{\infty}(w),
		\qquad
		p_{1,0}(w,y) = y,
	\]
	\[
		p_{2,2}(w,y) = \nabla \phi_{\infty}(w) \cdot \nabla^2 \phi_{\infty}(w),
		\qquad
		p_{2,1}(w,y) = y \cdot \nabla^2 \phi_{\infty}(w),
		\qquad
		p_{2,0}(w,y) = 0.
	\]
\end{proof}

One similarly has an expansion for the Jacobian of the change $p \mapsto y(t,x,p)$.  See \eqref{eq:introdetexpansion} for a discussion of the first terms in this expansion.

\begin{proposition}[Expansion for $\mathrm{det} \frac{\partial p}{\partial y}$] \label{prop:Jexpansion}
	There are explicit functions $J_{k,l} \colon \mathbb{R}^3 \times \mathbb{R}^3 \to \mathbb{R}$, for $k = 0,1,2,\ldots$, $l= 0, \ldots, k$, such that, for any $K \geq 1$ and any $B \geq 1$, the determinant of $\frac{\partial p}{\partial y}$ satisfies, for all $t \geq T_0$,
	\[
		\sup_{\vert x/t \vert + \vert y \vert \leq B}
		\Big\vert
		\det \frac{\partial p}{\partial y}(t,x,y)
		-
		\frac{1}{t^3}
		\sum_{k=1}^K \sum_{l=0}^k
		\frac{(\log t)^l}{t^k} J_{k,l} \Big( \frac{x}{t},y \Big)
		\Big\vert
		\lesssim
		\frac{(\log t)^{K+1}}{t^{K+4}}
		.
	\]
\end{proposition}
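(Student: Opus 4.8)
The plan is to exploit that $\frac{\partial p}{\partial y}$ is the matrix inverse of $\frac{\partial y}{\partial p}$ (by the Implicit Function Theorem, as used just above the statement), so $\det \frac{\partial p}{\partial y} = \big(\det \frac{\partial y}{\partial p}\big)^{-1}$, and to feed the \emph{exact} polynomial formula \eqref{eq:J1J20}--\eqref{eq:J1J22} into this identity together with the expansion for $p(t,x,y)$ from Proposition \ref{prop:pexpansion}. Writing $\det \frac{\partial y}{\partial p} = -t^3\big(1 - g(t,p)\big)$ with
\[
	g(t,p)
	=
	\frac{\log t}{t} \mathring{J}_1(p)
	+
	\frac{(\log t)^2}{t^2} \mathring{J}_2(p)
	+
	\frac{(\log t)^3}{t^3} \mathring{J}_3(p),
\]
one has $\det \frac{\partial p}{\partial y}(t,x,y) = -\tfrac{1}{t^3}\big(1 - g(t,p(t,x,y))\big)^{-1}$, and the whole proof amounts to expanding the right hand side.

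The first point to establish is that the geometric series $(1-g)^{-1} = \sum_{m \ge 0} g^m$ is licit on the region in question. For $\vert x/t\vert + \vert y\vert \le B$, Proposition \ref{prop:pexpansion} gives (after taking $T_0$ large, depending on $B$ and $\Vert \nabla \phi_\infty\Vert_{L^\infty}$) that $p = p(t,x,y)$ is bounded, say $\vert p\vert \le B+1$; since $\phi_\infty$ is smooth and each $\mathring{J}_i$ is a fixed polynomial in its second derivatives by \eqref{eq:J1J2}--\eqref{eq:J1J22}, each $\mathring{J}_i(p)$ is bounded on this region by a constant depending only on $B$ and finitely many derivatives of $\phi_\infty$. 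Hence $\vert g(t,p)\vert \lesssim \log t/t \le \tfrac12$ for $t \ge T_0$, so
\[
	\det \frac{\partial p}{\partial y}(t,x,y)
	=
	-\frac{1}{t^3}\sum_{m=0}^{\infty} g\big(t,p(t,x,y)\big)^m ,
\]
and the tail $\sum_{m \ge K+1} g^m$ is $\mathcal{O}\big((\log t/t)^{K+1}\big)$, contributing $\mathcal{O}\big((\log t)^{K+1} t^{-K-4}\big)$ after the $-t^{-3}$ prefactor.

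Next I would substitute $p(t,x,y) = \tfrac{x}{t} + \sum_{k\ge 1}\sum_{l}\tfrac{(\log t)^l}{t^k} p_{k,l}(\tfrac{x}{t},y)$ from Proposition \ref{prop:pexpansion} into each $\mathring{J}_i(p)$ and Taylor expand around $\tfrac{x}{t}$ using Proposition \ref{prop:Taylor}; smoothness of $\mathring{J}_i$ and boundedness of its arguments make the order-$K$ truncation error $\mathcal{O}\big((\log t/t)^{K+1}\big)$, with constant controlled by $B$ and finitely many derivatives of $\phi_\infty$. Inserting these expansions of $\mathring{J}_i(p)$ into $g$, raising to powers $m = 0,\ldots,K$, multiplying out and collecting all contributions of a given order $(\log t)^l t^{-k}$ with $k \le K$, one obtains explicit coefficients — polynomial expressions in the $\mathring{J}_i$, the $p_{k,l}$, and the derivatives of $\phi_\infty$ — which one \emph{defines} to be the functions $J_{k,l}(\tfrac{x}{t},y)$ (reading off $J_{0,0}\equiv -1$). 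Every discarded term is a product of the geometric-series tail and/or the Taylor remainders with bounded factors, hence $\mathcal{O}\big((\log t/t)^{K+1}\big)$, which after the $-t^{-3}$ prefactor is the claimed bound $C(\log t)^{K+1} t^{-K-4}$, uniformly for $\vert x/t\vert + \vert y\vert \le B$.

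The only genuine difficulty is combinatorial bookkeeping: organising the product of the finite sum $\sum_{m=0}^{K} g^m$ with the Taylor and $p$-expansions so as to isolate, and then bound, the terms of each order in $\log t/t$; there is no analytic obstacle, since the boundedness of $p$ and of the $\mathring{J}_i$ on the relevant region, together with the smoothness of $\phi_\infty$, renders every remainder estimate routine. As a sanity check, the $m=1$ contribution is $-t^{-4}\log t\,\mathring{J}_1(p) = -t^{-4}\log t\,\Delta\phi_\infty(\tfrac{x}{t}) + \mathcal{O}\big((\log t)^2 t^{-5}\big)$, recovering \eqref{eq:introdetexpansion}.
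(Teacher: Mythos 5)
Your proposal is correct and follows essentially the same route as the paper's proof: factor $\det\frac{\partial y}{\partial p}=-t^3(1-g)$ with $g$ built from $\mathring J_1,\mathring J_2,\mathring J_3$, invert via a truncated geometric series, and then convert $\mathring J_i(p)$ into functions of $(x/t,y)$ using the expansion for $p(t,x,y)$ from Proposition \ref{prop:pexpansion}. The only difference is that you spell out the convergence of the geometric series and the Taylor remainder bounds, which the paper leaves implicit.
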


\begin{proof}
	Recall \eqref{eq:J1J20}--\eqref{eq:J1J22}.  It follows that, for any $K \geq 1$,
	\[
		\sup_{\vert x/t \vert + \vert y \vert \leq B}
		\Big\vert
		\det \frac{\partial p}{\partial y}(t,x,y)
		-
		\Big[
		-
		\frac{1}{t^3}
		-
		\frac{1}{t^3}
		\sum_{k=1}^K
		\Big(
		\frac{\log t}{t} \mathring{J}_1 (p)
		+
		\frac{(\log t)^2}{t^2} \mathring{J}_2 (p)
		+
		\frac{(\log t)^3}{t^3} \mathring{J}_3 (p)
		\Big)^k
		\Big]
		\Big\vert
		\lesssim
		\frac{(\log t)^{K+1}}{t^{K+4}}
		.
	\]
	The proof then follows from using the expansions for $p(t,x,y)$ from Proposition \ref{prop:pexpansion}.  The first elements of the sequence $\{J_{k,l}\}$ take the explicit form
	\[
		J_{0,0}(w,y) = - 1,
		\qquad
		J_{1,1}(w,y) = - \mathring{J}_1(w),
		\qquad
		J_{1,0}(w,y) = 0,
	\]
	\[
		J_{2,2}(w,y) = - \mathring{J}_2 (w) - (\mathring{J}_1 (w))^2 - \nabla \phi_{\infty}(w) \cdot \nabla \mathring{J}_1 (w),
		\qquad
		J_{2,1}(w,y) = - y \cdot \nabla \mathring{J}_1 (w),
		\qquad
		J_{2,0}(w,y) = 0.
	\]
\end{proof}

\subsubsection{The definitions of $\varrho_{k,l}$ for $k \leq K-1$}

The definitions of $\varrho_{k,l}$, for $k \leq K-1$, are now given.
For smooth functions $h \colon \mathbb{R}^3 \times \mathbb{R}^3 \to \mathbb{R}$, define,
\[
	H_{0,0}[h](w,y) := h(y,w)
\]
and, for $k \geq 1$ and $0 \leq l \leq k$,
\begin{align} \label{eq:rhoklf3}
	H_{k,l}[h](w,y)
	=
	\sum_{m=1}^k
	\frac{1}{m!}
	\sum_{\substack{k_1+\ldots+k_m=k \\ l_1+\ldots+l_m=l \\ 1 \leq k_i \leq k, \ 0 \leq l_i \leq k_i}}
	p_{k_1,l_1} (w,y) \otimes \ldots \otimes p_{k_m,l_m} (w,y)
	\cdot
	(\nabla_p^{m} h) (y,w),
\end{align}
where $\nabla_p h$ denotes the gradient of $h$ with respect to the second argument, and the functions $p_{k,l}$ are as in Proposition \ref{prop:pexpansion}.

Define now,
\begin{align} 
	\varrho_{0,0}(w)
	&
	=
	- \int f_{\infty}(y,w)dy,
	\label{eq:rhoklf0}
\end{align}
and, for $1 \leq k \leq K-1$, $l=0,\ldots,k$,
\begin{align} \label{eq:rhoklf2}
	\varrho_{k,l}(w)
	=
	\sum_{\substack{k_1+k_2+k_3=k \\ l_1+l_2+l_3=l \\ 0 \leq k_i \leq k, \ 0 \leq l_i \leq k_i}}
	\int_{\mathbb{R}^3}
	H_{k_2,l_2}[f_{k_1,l_1}] (w,y) J_{k_3,l_3}(w,y)
	dy.
\end{align}
Property \eqref{eq:rhoklf2} is the relation required to achieve cancellations of the form \eqref{eq:rhosectionkeyproperty}.  See the proof of Proposition \ref{prop:rhoKdef} below, where these cancellations are exploited to show estimate \eqref{eq:proprho2}.

\subsubsection{The definitions of $f_{K,l}$ and $\varrho_{K,l}$}
\label{subsubsec:fKlrhoKl}

Functions $f_{K,l}$ and $\varrho_{K,l}$, for $0\leq l \leq K$, are now defined --- explicitly in terms of the functions $f_{k,l}$, for $0\leq k \leq K-1$, $0\leq l \leq k$ --- so that the property \eqref{eq:rhoklf2} moreover holds for $k=K$.  This fact ensures that cancellations of the form \eqref{eq:rhosectionkeyproperty} are in fact achieved with $K$ in place of $K-1$.

First define smooth functions $\phi_{k,l} \colon \mathbb{R}^3 \to \mathbb{R}$, for $k=0,\ldots, K-1$, $l=0,\ldots,k$ as the unique solution of the Poisson equation \eqref{eq:mainthmPoisson} sourced by $\varrho_{k,l}$ as defined above.  The functions $\phi_{k,l}$ are expressed explicitly in terms of the functions $\varrho_{k,l}$ via the well known representation formula for the Poisson equation
\begin{equation} \label{eq:Poissonrep}
	\phi_{k,l}(w) = \frac{1}{4\pi} \int_{\mathbb{R}^3} \frac{\varrho_{k,l}(y)}{\vert y - w \vert} dy.
\end{equation}

Define a smooth function $\mathcal{P}_{K,K} \colon \mathbb{R}^3 \to \mathbb{R}$ by setting $\mathcal{P}_{K,K}(p)$ to be equal to the right hand side of \eqref{eq:fklpsi2int} with $k=K$.  Next define $\mathcal{P}_{K,l} \colon \mathbb{R}^3 \to \mathbb{R}$ inductively, for $l=K-1,K-2,\ldots,0$, by setting $\mathcal{P}_{K,l}(p)$ to be equal to the right hand side of \eqref{eq:fklpsi3int} with $k=K$.  (Recall the discussion in Remark \ref{rmk:totalyderiv} --- these definitions of $\mathcal{P}_{K,l}$ indeed only involve $f_{K,l}$ and $\phi_{K,l}$ up to order $K-1$, and so $\mathcal{P}_{K,l}$ are well defined.)

Define now, for $l=0,\ldots,K$,
\begin{align} \label{eq:rhoklf2toporder}
	\varrho_{K,l}(w)
	=
	-
	\mathcal{P}_{K,l}(w)
	+
	\sum_{\substack{k_1+k_2+k_3=K \\ l_1+l_2+l_3=l \\ 0 \leq k_i \leq K, \ 0 \leq l_i \leq k_i, \\ k_1\leq K-1, \ l_1 \leq l-1}}
	\int_{\mathbb{R}^3}
	H_{k_2,l_2}[f_{k_1,l_1}] (w,y) J_{k_3,l_3}(w,y)
	dy.
\end{align}

Finally, define a smooth function $f_{K,K} \colon \mathbb{R}^3 \times \mathbb{R}^3 \to \mathbb{R}$ by equation \eqref{eq:fklpsi2}, and define $f_{K,l} \colon \mathbb{R}^3 \times \mathbb{R}^3 \to \mathbb{R}$ inductively, for $l=K-1,K-2,\ldots,0$, by equation \eqref{eq:fklpsi3}, with $\phi_{K,l}$ defined by \eqref{eq:mainthmPoisson} sourced by $\varrho_{K,l}$.

\begin{remark}[The formula \eqref{eq:rhoklf2} also holds for $\varrho_{K,l}$] \label{rmk:stillholds}
	Note that, with these definitions, $f_{K,l}$ satisfy
\begin{equation} \label{eq:intfeqP}
	\int_{\mathbb{R}^3} f_{K,l}(y,w) dy = \mathcal{P}_{K,l}(w),
\end{equation}
and thus \eqref{eq:rhoklf2} also holds for $k=K$.  This important property is used in the proof of Proposition \ref{prop:rhoKdef} below.
\end{remark}

\subsubsection{The main properties of $\varrho_{k,l}$}

The main result of this section is the following proposition, which concerns properties of the functions $\varrho_{k,l}$ as defined above.

\begin{proposition}[Properties of $\varrho_{k,l}$] \label{prop:rhoKdef}
	Consider some $K\geq 0$ and smooth functions $f_{k,l} \colon \mathbb{R}^3 \times \mathbb{R}^3 \to \mathbb{R}$ satisfying \eqref{eq:phiklassum} for all $k=0,\ldots,K-1$, $l=0,\ldots,k$.
	
	The functions $\varrho_{k,l} \colon \mathbb{R}^3 \to \mathbb{R}$ defined by \eqref{eq:rhoklf0}--\eqref{eq:rhoklf2} and \eqref{eq:rhoklf2toporder} satisfy, for all $k=0,\ldots,K$, $l=0,\ldots,k$ and any $n \geq 6$,
	\begin{equation} \label{eq:proprho1}
		\sum_{\vert I \vert \leq n}
		\Vert \partial_x^I \varrho_{k,l} \Vert_{L^2_x}
		\leq
		C_{n,k} \mathcal{F}_{\infty}^{n+k},
		\qquad
		\supp(\varrho_{k,l})
		\subset
		\{ x\in \mathbb{R}^3 \mid \vert x \vert \leq B \}.
	\end{equation}
	The functions $\phi_{k,l} \colon \mathbb{R}^3 \to \mathbb{R}$, as defined above, satisfy, for all $k=0,\ldots,K$, $l=0,\ldots,k$ and any $n \geq 6$,
	\begin{equation} \label{eq:proprho1A}
		\sum_{\vert I \vert \leq n} \Vert \partial^I \nabla \phi_{k,l} \Vert_{L^2}
		\leq
		C_{n,k} \mathcal{F}_{\infty}^{n+k}.
	\end{equation}
	For any $t \geq T_0$, the function $\varrho_{[K]} \colon [T_0,\infty) \times \mathbb{R}^3 \to \mathbb{R}$, defined by \eqref{eq:rhophiKdefthm}, satisfies
	\begin{equation} \label{eq:proprho2}
		\sum_{\vert I \vert \leq N}
		\big\Vert (t\partial_x)^I \Big( \int_{\mathbb{R}^3} f_{[K]} (t,\cdot,p) dp - \varrho_{[K]} (t,\cdot) \Big) \big\Vert_{L^2_x}
		\leq
		C_{K} \mathcal{F}_{\infty}^{N+2K+3}
		\frac{(\log t)^{K+1}}{t^{\frac{5}{2}+K}},
	\end{equation}
	where $f_{[K]}$ is defined by \eqref{eq:fKdefthm}, with $f_{K,l}$ as defined above.
\end{proposition}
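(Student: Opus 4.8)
\textbf{Strategy, and the bounds on $\varrho_{k,l}$ and $\phi_{k,l}$.} The plan is to treat the three assertions in turn. The support statement in \eqref{eq:proprho1} is immediate: inspecting \eqref{eq:rhoklf0}, \eqref{eq:rhoklf2} and \eqref{eq:rhoklf2toporder}, every term contributing to $\varrho_{k,l}(w)$ is a $y$-integral of a product of a factor $\partial_p^J f_{k_1,l_1}(y,w)$ (with $k_1\le k$) and factors $p_{k_i,l_i}(w,y)$, $J_{k_i,l_i}(w,y)$ which, by the explicit formulas after Proposition \ref{prop:pexpansion} and Proposition \ref{prop:Jexpansion} and the structure of \eqref{eq:rhoklf3}, are polynomials in $y$ with coefficients built from derivatives of $\phi_\infty$ at $w$; since $f_{k_1,l_1}$ is supported in $\{|y|\le B,\ |w|\le B\}$ by \eqref{eq:fklsupport}, so is the integrand in $w$, whence $\supp(\varrho_{k,l})\subset\{|x|\le B\}$. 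The $L^2$ bound I would prove by strong induction on $k$: the base case $k=0$ is \eqref{eq:rhoklf0}, with $\|\partial^I\varrho_{0,0}\|_{L^2_w}^2=\int\bigl|\int_{|y|\le B}\partial_w^I f_\infty(y,w)\,dy\bigr|^2dw\lesssim\|f_\infty\|_{H^{|I|}}^2\lesssim(\mathcal{F}_\infty^{|I|})^2$ by Cauchy--Schwarz in the compactly supported $y$-integral; for $k\ge1$ one applies Leibniz, Cauchy--Schwarz in $y$ over $\{|y|\le B\}$, the inductive hypothesis \eqref{eq:calP0} on $f_{k_1,l_1}$, and the pointwise control \eqref{eq:phiinftypointwise} of derivatives of $\nabla\phi_\infty$ (via the Sobolev inequalities of Proposition \ref{prop:SobolevL2} and Proposition \ref{prop:SobolevL2R6}), the loss in derivatives of $f_\infty$ coming from the up to $\sim k_i$ derivatives of $\phi_\infty$ inside $p_{k_i,l_i}$, $J_{k_i,l_i}$. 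Then \eqref{eq:proprho1A} is immediate: $\partial^I\phi_{k,l}$ solves $\Delta(\partial^I\phi_{k,l})=\partial^I\varrho_{k,l}$ with the right side supported in $\{|x|\le B\}$, so $\|\partial^I\nabla\phi_{k,l}\|_{L^2}\lesssim\|r\,\partial^I\varrho_{k,l}\|_{L^2}\le B\|\partial^I\varrho_{k,l}\|_{L^2}$ by the gradient estimate \eqref{eq:gradelliptic} of Proposition \ref{prop:gradelliptic}.

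\textbf{The density estimate \eqref{eq:proprho2}.} Write $g(t,x)=\int_{\mathbb{R}^3}f_{[K]}(t,x,p)\,dp-\varrho_{[K]}(t,x)$. Since $f_{[K]}$, hence $\int f_{[K]}\,dp$, and also $\varrho_{[K]}(t,x)=t^{-3}\sum\frac{(\log t)^l}{t^k}\varrho_{k,l}(x/t)$, are supported in $\{|x|\lesssim t\}$, and differentiating does not enlarge support, it suffices — using $\|(t\partial_x)^I g\|_{L^2_x}\lesssim t^{3/2}\|(t\partial_x)^I g\|_{L^\infty_x}$ — to prove $\|(t\partial_x)^I g\|_{L^\infty}\lesssim C_K\mathcal{F}_\infty^{N+2K+3}(\log t)^{K+1}/t^{K+4}$ for $|I|\le N$. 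For $T_0$ large, by \eqref{eq:J1J20}--\eqref{eq:J1J22} and the Implicit Function Theorem the map $p\mapsto y(t,x,p)$ of \eqref{eq:defofy} is a diffeomorphism with inverse $p=p(t,x,y)$; since $y(t,x,p(t,x,y))=y$ identically, a change of variables gives
\[
	\int_{\mathbb{R}^3}f_{[K]}(t,x,p)\,dp
	=
	\int_{\mathbb{R}^3}\Big(\sum_{k=0}^K\sum_{l=0}^k\frac{(\log t)^l}{t^k}f_{k,l}\big(y,p(t,x,y)\big)\Big)\Big\vert\det\tfrac{\partial p}{\partial y}(t,x,y)\Big\vert\,dy .
\]
Taylor expanding each $f_{k,l}(y,p(t,x,y))$ in its second argument about $w:=x/t$ and inserting $p(t,x,y)=w+\sum\frac{(\log t)^l}{t^k}p_{k,l}(w,y)+\mathcal{O}((\log t/t)^{K+1})$ from Proposition \ref{prop:pexpansion}, one obtains, collecting powers, $f_{k,l}(y,p(t,x,y))=\sum_{k',l'}\frac{(\log t)^{l'}}{t^{k'}}H_{k',l'}[f_{k,l}](w,y)+\mathcal{O}((\log t/t)^{K+1})$ — exactly the content of the operators $H_{k',l'}$ of \eqref{eq:rhoklf3} — and similarly for $|\det\frac{\partial p}{\partial y}|$ via Proposition \ref{prop:Jexpansion}. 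Multiplying the three expansions, integrating in $y$, and reading off the coefficient of $(\log t)^l t^{-(3+k)}$ for $0\le k\le K$: by \eqref{eq:rhoklf2} when $k\le K-1$, and by \eqref{eq:rhoklf2toporder} together with the identity \eqref{eq:intfeqP} of Remark \ref{rmk:stillholds} (which makes \eqref{eq:rhoklf2} hold also at $k=K$) when $k=K$, this coefficient is precisely $\varrho_{k,l}(w)$. Hence $g$ is the sum of the product terms of combined order $k_1+k_2+k_3>K$ and the three Taylor remainders, each $\mathcal{O}((\log t)^{K+1}/t^{K+4})$ pointwise on $\{|y|+|x/t|\lesssim B\}$ (using \eqref{eq:flkL2estimate} and Proposition \ref{prop:SobolevL2R6} to bound the $C^{K+1}$-norms of the $f_{k,l}$ that appear). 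For the weighted derivatives one uses that $p(t,x,y)$ and $\det\frac{\partial p}{\partial y}(t,x,y)$, hence the entire integrand, depend on $x$ only through $w=x/t$, so $t\partial_{x^i}$ acts as $\partial_{w^i}$; by Leibniz each $\partial_w$ lands either on a factor $f_{k,l}(y,w)$, controlled via \eqref{eq:flkL2estimate} at the cost of more derivatives, or on $p_{k_i,l_i}$, $J_{k_i,l_i}$, controlled via \eqref{eq:phiinftypointwise} at the cost of more derivatives of $\phi_\infty$. Counting — at most $N$ derivatives from $(t\partial_x)^I$, plus $\mathcal{O}(K)$ from the orders of $p_{k,l}$, $J_{k,l}$ and from the order-$(K{+}1)$ remainders, plus those lost in Proposition \ref{prop:SobolevL2R6} — yields the factor $\mathcal{F}_\infty^{N+2K+3}$, completing the proof.

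\textbf{The main obstacle.} The one genuinely delicate point is the coefficient-matching at order $k=K$ above. The data $\varrho_{K,l}$, $\phi_{K,l}$, $f_{K,l}$ are defined in a mutually dependent way in Section \ref{subsubsec:fKlrhoKl} ($\varrho_{K,l}$ by \eqref{eq:rhoklf2toporder}, then $\phi_{K,l}$ by \eqref{eq:mainthmPoisson}, then $f_{K,l}$ by \eqref{eq:fklpsi2}--\eqref{eq:fklpsi3}); what rules out circularity, and what forces \eqref{eq:rhoklf2} to persist at $k=K$, is exactly the identity $\int f_{K,l}(y,w)\,dy=\mathcal{P}_{K,l}(w)$, i.e.\ the fact established in Remark \ref{rmk:totalyderiv} that, upon $y$-integration, the genuinely top-order contributions to $f_{K,l}$ (those involving $\phi_{K,l'}$) cancel. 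One therefore has to carry out the bookkeeping in the triple sum of \eqref{eq:rhoklf2} at $k=K$ carefully: isolate the $k_1=K$ term (equivalently $k_2=k_3=0$), namely $\int H_{0,0}[f_{K,l}]\,J_{0,0}\,dy=-\int f_{K,l}(y,w)\,dy=-\mathcal{P}_{K,l}(w)$, and check that it combines with the $\mathcal{P}_{K,l}$ and the remaining $k_1\le K-1$ sum in \eqref{eq:rhoklf2toporder} to reproduce the coefficient extracted from the expansion. Everything else — the change of variables, the three Taylor expansions, and the derivative count — is routine once Propositions \ref{prop:pexpansion}, \ref{prop:Jexpansion}, \ref{prop:fKdef} and the functional inequalities of Section \ref{section:prelim} are available.
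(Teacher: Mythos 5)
Your handling of \eqref{eq:proprho1}, \eqref{eq:proprho1A}, and the zeroth-order part of \eqref{eq:proprho2} follows the paper's route quite closely: induction for the $\varrho_{k,l}$ bounds, the gradient estimate of Proposition \ref{prop:gradelliptic} for $\phi_{k,l}$, the change of variables $p\mapsto y$, the two Taylor expansions via Propositions \ref{prop:pexpansion} and \ref{prop:Jexpansion}, the product expansion, and the coefficient-matching via \eqref{eq:rhoklf2} and \eqref{eq:intfeqP} — your ``main obstacle'' paragraph about isolating the $k_1=K$ term to break the apparent circularity is exactly the paper's point, as made in Remark \ref{rmk:stillholds}.

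Where you genuinely diverge from the paper is the higher-order part of \eqref{eq:proprho2}. You observe that after the change of variables the integrand $f_{k,l}(y,p(t,x,y))\det\frac{\partial p}{\partial y}(t,x,y)$ depends on $x$ only through $w=x/t$, so that $(t\partial_x)^I$ becomes $\partial_w^I$, and you then apply Leibniz to the already-established expansion. The paper does \emph{not} do this: it stays with the $p$-integral, writes $t\partial_{x^i}\int h\,dp = \int (t\partial_{x^i}h)\,dp$, and then telescopes via the identities $\int\partial_{p^i}(\cdots)\,dp=0$ (scaled by powers of $\frac{\log t}{t}\nabla^2\phi_\infty$) to convert $t\partial_{x^i}$ into $p$-derivative operators $G_l[h]$. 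This produces new densities $\tilde f_{k,l}$ to which the zeroth-order argument is re-applied, and then the functions $\varrho^I_{k,l}$ it yields are identified with $\partial^I\varrho_{k,l}$ by an (implicit) uniqueness-of-coefficients step. Your route is conceptually cleaner and avoids that final identification, but it requires something the paper has \emph{not} established: $\partial_w^I$-derivative bounds on the Taylor remainders in Propositions \ref{prop:pexpansion} and \ref{prop:Jexpansion} (those propositions state only sup-norm bounds on the remainders, with no derivative control in $w$). One would have to extend them — routine via the integral form of Taylor's theorem, at the cost of finitely many extra derivatives of $\phi_\infty$ — and also justify differentiating under the $\int_0^1(1-s)^K\cdots ds$ remainder integral in your error term $E^{K+1}_{f_{k,l}}$. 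The paper's telescoping trick is more involved but sidesteps exactly this need; your approach buys a shorter argument at the price of strengthening two earlier propositions.

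One small caution: you silently reproduce \eqref{eq:rhoklf2toporder} including the constraint $l_1\le l-1$, but the coefficient-matching at $k=K$ only needs the constraint $k_1\le K-1$ (since $k_1=K$ forces $k_2=k_3=0$ and hence $l_1=l$). When you ``check that it combines with the $\mathcal{P}_{K,l}$ and the remaining $k_1\le K-1$ sum'', you should verify that every $(k_1,l_1)$ with $k_1\le K-1$ and $l_1\le l$ either appears in the retained sum or contributes a vanishing $H_{k_2,l_2}J_{k_3,l_3}$ factor; this is the one place where the bookkeeping you flag as delicate needs to be carried out explicitly rather than cited.
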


\begin{proof}
	Consider first the properties \eqref{eq:proprho1}.  For $k =0,\ldots,K-1$, the properties \eqref{eq:proprho1} follow from the fact that each $f_{k,l}$, for $k=0,\ldots,K-1$, $l=0,\ldots,k$, satisfy the support property \eqref{eq:fklsupport} the estimates \eqref{eq:calP0}.
	
	The property \eqref{eq:proprho1A}, for $k =0,\ldots,K-1$, then follows from \eqref{eq:proprho1} and the gradient estimate of Proposition \ref{prop:gradelliptic} applied to the Poisson equation \eqref{eq:mainthmPoisson}.
	
	For $k = K$, the fact that \eqref{eq:proprho1A}, the support property \eqref{eq:fklsupport} and the estimates \eqref{eq:calP0} hold for $k =0,\ldots,K-1$ imply that $\mathcal{P}_{K,l}$ satisfy, for all $n \geq 6$,
	\begin{equation} \label{eq:calP1}
		\supp (\mathcal{P}_{K,l}) \subset \{ x \in \mathbb{R}^3 \mid \vert x \vert \leq B \},
		\qquad
		\sum_{\vert I \vert \leq n} \Vert \partial_x^I \mathcal{P}_{K,l} \Vert_{L^2_x}
		\leq
		C_{n,K} (\mathcal{F}_{\infty}^{n+K})^2.
	\end{equation}
	It also follows, from Proposition \ref{prop:fKdef}, that $f_{K,l}$ satisfies the support property \eqref{eq:fklsupport} and the estimate \eqref{eq:calP0}.  These facts imply that the properties \eqref{eq:proprho1} also hold for $k=K$.  The property \eqref{eq:proprho1A} for $k=K$ then follows again from the gradient estimate of Proposition \ref{prop:gradelliptic}.
	
	Consider now the estimate \eqref{eq:proprho2}.  First note that, for any smooth $h \colon \mathbb{R}^3 \times \mathbb{R}^3 \to \mathbb{R}$, satisfying
	\begin{equation} \label{eq:supportofhproprho}
		\supp(h) \subset \{ (x,p) \in \mathbb{R}^3 \times \mathbb{R}^3 \mid \vert x \vert \leq B, \vert p \vert \leq B \},
	\end{equation}
	by Taylor's Theorem (see Proposition \ref{prop:Taylor}), using the notation of Section \ref{subsec:notation},
	\begin{align*}
		h(y,p)
		=
		\
		&
		h\Big(y,\frac{x}{t}\Big)
		+
		\sum_{n=1}^K
		\frac{1}{n!} \otimes_n \Big(p - \frac{x}{t}\Big) \cdot (\nabla^n_p h)\Big(y,\frac{x}{t}\Big)
		+
		\frac{1}{K!} \otimes_{K+1} \Big(p - \frac{x}{t}\Big) \cdot \int_0^1 (1-s)^K (\nabla^{K+1}_p h)(y,sp) ds.
	\end{align*}	
	Thus Proposition \ref{prop:pexpansion} implies that
	\begin{align*}
		&
		h(y,p)
		=
		h \Big( y, \frac{x}{t} \Big)
		\\
		&
		\quad \qquad
		+
		\sum_{n=1}^K \frac{1}{n!}
		\sum_{k_1=1}^K \sum_{l_1=0}^{k_1} \ldots \sum_{k_n=1}^K \sum_{l_n=0}^{k_n}
		\frac{(\log t)^{l_1+\ldots + l_n}}{t^{k_1+ \ldots + k_n}}
		p_{k_1,l_1}\Big( \frac{x}{t},y \Big) \otimes \ldots \otimes p_{k_n,l_n}\Big( \frac{x}{t},y \Big)
		\cdot
		\nabla_p^{n} h \Big(y,\frac{x}{t} \Big)
		+
		E_h^{K+1}
		\\
		&
		\quad
		=
		h\Big(y, \frac{x}{t}\Big)
		+
		\sum_{k=1}^K \sum_{l=0}^k
		\frac{(\log t)^l}{t^k}
		\sum_{m=1}^k
		\frac{1}{m!}
		\sum_{\substack{k_1+\ldots+k_m=k \\ l_1+\ldots+l_m=l}}
		p_{k_1,l_1}\Big(\frac{x}{t},y\Big) \otimes \ldots \otimes p_{k_m,l_m}\Big(\frac{x}{t},y\Big)
		\cdot
		\nabla_p^{m} h \Big(y,\frac{x}{t}\Big)
		+
		E_h^{K+1}
		\\
		&
		\quad
		=
		h\Big(y, \frac{x}{t}\Big)
		+
		\sum_{k=1}^K \sum_{l=0}^k
		\frac{(\log t)^l}{t^k}
		H_{k,l}[h]\Big(\frac{x}{t},y\Big)
		+
		E_h^{K+1},
	\end{align*}
	where $H_{k,l}[h]$ is defined by \eqref{eq:rhoklf3}, and $E_h^{K+1}(y(t,x,p),p)$ satisfies
	\begin{multline} \label{eq:EhK1}
		\int_{\mathbb{R}^3} \vert E_h^{K+1}(y(t,x,p),p) \vert dp
		\lesssim
		\frac{(\log t)^{K+1}}{t^{K+1}}
		\sum_{\vert I \vert \leq K+1}
		\int  \vert \partial_p^I h (y,p(t,x,y)) \vert \, \Big\vert \det \frac{\partial p}{\partial y} \Big\vert dy
		\\
		\lesssim
		\frac{(\log t)^{K+1}}{t^{K+4}}
		\sum_{\vert I \vert \leq K+1}
		\sup_{p \in \mathbb{R}^3}
		\Big( \int \vert \partial_p^I h (y,p) \vert^2 dy \Big)^{\frac{1}{2}}
		\lesssim
		\frac{(\log t)^{K+1}}{t^{K+4}}
		\sum_{\vert I \vert \leq K+3} \Vert  \partial_p^I h \Vert_{L^2_x L^2_p},
	\end{multline}
	where the support property \eqref{eq:supportofhproprho} is used, along with the Sobolev inequality of Proposition \ref{prop:SobolevL2}.  Moreover,
	\begin{equation} \label{eq:EhK2}
		\supp(E_h^{K+1}(y(t,\cdot,p),p)) \subset \{ x\in \mathbb{R}^3 \mid \vert x \vert \leq (2B+\mathcal{F}_{\infty}^3) t \},
	\end{equation}
	which follows from the fact that $\vert x \vert \leq B + t B + \log t \mathcal{F}_{\infty}^3$ if $\vert y \vert \leq B$ and $\vert p \vert \leq B$.
	Thus
	\begin{align*}
		\int h(y(t,x,p),p) dp
		&
		=
		\int h(y,p(t,x,y)) \det \frac{\partial p}{\partial y}(t,x,y) dy
		\\
		&
		=
		\frac{1}{t^3}
		\sum_{k_1=0}^K \sum_{l_1=0}^{k_1} \sum_{k_2=0}^K \sum_{l_2=0}^{k_2}
		\frac{(\log t)^{l_1+l_2}}{t^{k_1 + k_2}}
		\int_{\mathbb{R}^3}
		H_{k_1,l_1}[h]\Big(\frac{x}{t},y\Big)
		J_{k_2,l_2}\Big(\frac{x}{t},y\Big)
		dy
		+
		\int E_h^{K+1} dp
		\\
		&
		=
		\frac{1}{t^3}
		\sum_{k=0}^K \sum_{l=0}^{k}
		\frac{(\log t)^{l}}{t^{k}}
		\sum_{\substack{m_1 + m_2=k \\ n_1+n_2=l}}
		\int_{\mathbb{R}^3}
		H_{m_1,n_1}[h]\Big(\frac{x}{t},y\Big)
		J_{m_2,n_2}\Big(\frac{x}{t},y\Big)
		dy
		+
		\int E_h^{K+1} dp,
	\end{align*}
	where $E_h^{K+1}$ varies from line to line, but always satisfies \eqref{eq:EhK1}--\eqref{eq:EhK2}.
	Applying with $h=f_{k,l}$, the definitions \eqref{eq:rhoklf0}--\eqref{eq:rhoklf2} of $\varrho_{k,l}$ for $0 \leq k \leq K-1$ and the fact that $\varrho_{K,l}$ also satisfies \eqref{eq:rhoklf2} (see the definition \eqref{eq:rhoklf2toporder} and the important property \eqref{eq:intfeqP}, discussed in Remark \ref{rmk:stillholds}) imply that
	\begin{align*}
		\sum_{k=0}^K \sum_{l=0}^{k}
		\frac{(\log t)^{l}}{t^{k}}
		\int f_{k,l}(y(t,x,p),p) dp
		&
		=
		\frac{1}{t^3}
		\sum_{k=0}^K \sum_{l=0}^{k}
		\frac{(\log t)^{l}}{t^{k}}
		\varrho_{k,l} \Big( \frac{x}{t} \Big)
		+
		\sum_{k=0}^K \sum_{l=0}^{k}
		E_{f_{k,l}}^{K+1}(y(t,x,p),p).
	\end{align*}
	The properties \eqref{eq:EhK1} and \eqref{eq:EhK2} imply that, for each $k,l$,
	\[
		\Big\Vert
		\int_{\mathbb{R}^3}
		E_{f_{k,l}}^{K+1}(y(t,\cdot,p),p)
		dp
		\Big\Vert_{L^2}
		\lesssim
		\frac{(\log t)^{K+1}}{t^{K+\frac{5}{2}}}
		\sum_{\vert I \vert \leq K+3} \Vert  \partial_p^I f_{k,l} \Vert_{L^2_x L^2_p}
		\lesssim
		\frac{(\log t)^{K+1}}{t^{K+\frac{5}{2}}} \mathcal{F}_{\infty}^{k+K+3},
	\]
	where the final inequality follows from \eqref{eq:calP0}.  The proof of the zeroth order part of \eqref{eq:proprho2} then follows.
	
	For the part of \eqref{eq:proprho2} involving higher order derivatives, it suffices to show that, for each $\vert I \vert \leq N$, there exists $\varrho_{[K]}^I$ such that
	\begin{equation} \label{eq:functionssatisfying}
		\big\Vert (t\partial_x)^I \Big( \int_{\mathbb{R}^3} f_{[K]} (t,\cdot,p) dp\Big)
		-
		\sum_{k=0}^K \sum_{l=0}^k \frac{(\log t)^l}{t^k} \varrho_{k,l}^I (\frac{\cdot}{t}) \big\Vert_{L^2_x}
		\leq
		C_{K} \mathcal{F}_{\infty}^{N+2K+3}
		\frac{(\log t)^{K+1}}{t^{\frac{5}{2}+K}}.
	\end{equation}
	It then follows that $\varrho_{k,l}^I = \partial^I\varrho_{k,l}$, and thus that \eqref{eq:proprho2} holds.
	
	To check that there are functions satisfying \eqref{eq:functionssatisfying}, consider first the case $\vert I \vert =1$, so that there is $i\in\{1,2,3\}$ such that $\partial_x^I = \partial_{x^i}$. Note first that, for any smooth $h \colon \mathbb{R}^3 \times \mathbb{R}^3 \to \mathbb{R}$,
	\begin{equation} \label{eq:horhoexp1}
		t \partial_{x^i} \Big( \int_{\mathbb{R}^3} h(y(t,x,p),p) dp \Big)
		=
		\int_{\mathbb{R}^3} (t \partial_{x^i}h)(y(t,x,p),p) dp.
	\end{equation}
	Moreover,
	\begin{align}
		0
		&
		=
		\int
		\partial_{p^i} \Big(
		h(y(t,x,p),p)
		\Big)
		dp
		\nonumber
		\\
		&
		=
		- t
		\int
		(\partial_{x^i}h)(y,p)
		dp
		+
		\log t 
		\int
		\partial_i \partial_k \phi_{\infty}(p)
		(\partial_{x^k}h)(y,p)
		dp
		+
		\int
		(\partial_{p^i}h)(y,p)
		dp,
		\label{eq:horhoexp2}
	\end{align}
	and so, adding to \eqref{eq:horhoexp1} gives,
	\[
		t \partial_{x^i} \Big( \int_{\mathbb{R}^3} h(y(t,x,p),p) dp \Big)
		=
		\log t 
		\int
		\partial_i \partial_k \phi_{\infty}(p)
		(\partial_{x^k}h)(y,p)
		dp
		+
		\int
		(\partial_{p^i}h)(y(t,x,p),p)
		dp.
	\]
	One can continue:
	\begin{align}
		0
		&
		=
		\frac{\log t}{t}
		\int
		\partial_{p^k} \Big(
		\partial_i \partial_k \phi_{\infty}(p)
		h(y(t,x,p),p)
		\Big)
		dp
		\nonumber
		\\
		&
		=
		- \log t
		\int
		\partial_i \partial_k \phi_{\infty}(p)
		(\partial_{x^k}h)(y,p)
		dp
		+
		\frac{(\log t)^2}{t}
		\int
		\partial_i \partial_k \phi_{\infty}(p) \partial_k \partial_{k_2} \phi_{\infty}(p) (\partial_{x^{k_2}}h)(y,p)
		dp
		\nonumber
		\\
		&
		\quad
		+
		\frac{\log t}{t}
		\int
		\partial_i \Delta \phi_{\infty}(p)
		\cdot h(y,p)
		+
		\partial_i \partial_k \phi_{\infty}(p) (\partial_{p^k}h)(y,p)
		dp,
		\nonumber
	\end{align}
	and more generally, for any $l \geq 0$,
	\begin{align}
		0
		&
		=
		\frac{(\log t)^{l+1}}{t^{l+1}}
		\int
		\partial_{p^{k_{l+1}}} \Big(
		\partial_i \partial_{k_1} \phi_{\infty}(p) \prod_{j=1}^{l} \partial_{k_j} \partial_{k_{j+1}} \phi_{\infty}(p) \ h(y(t,x,p),p)
		\Big)
		dp
		\nonumber
		\\
		&
		=
		- \frac{(\log t)^{l+1}}{t^{l}}
		\int
		\partial_i \partial_{k_1} \phi_{\infty}(p) \prod_{j=1}^{l} \partial_{k_j} \partial_{k_{j+1}} \phi_{\infty}(p)
		(\partial_{x^{k_{l+1}}}h)(y,p)
		dp
		\label{eq:horhoexp3}
		\\
		&
		\quad
		+
		\frac{(\log t)^{l+2}}{t^{l+1}}
		\int
		\partial_i \partial_{k_1} \phi_{\infty}(p) \prod_{j=1}^{l+1} \partial_{k_j} \partial_{k_{j+1}} \phi_{\infty}(p) 
		(\partial_{x^{k_{l+2}}}h)(y,p)
		dp
		\nonumber
		\\
		&
		\quad
		+
		\frac{(\log t)^{l}}{t^{l}}
		\int
		\Big[
		\partial_{p^{k_{l+1}}} \Big( \partial_i \partial_{k_1} \phi_{\infty}(p) \prod_{j=1}^{l} \partial_{k_j} \partial_{k_{j+1}} \phi_{\infty}(p) \Big)
		\cdot h(y,p)
		\nonumber
		\\
		&
		\quad \qquad
		+
		\partial_i \partial_{k_1} \phi_{\infty}(p) \prod_{j=1}^{l} \partial_{k_j} \partial_{k_{j+1}} \phi_{\infty}(p) (\partial_{p^{k_{l+1}}} h)(y,p)
		\Big]
		dp,
		\nonumber
	\end{align}
	where $\Pi_{j=1}^0 \partial_{k_j} \partial_{k_{j+1}} \phi_{\infty}(p) : = 1$.
	Summing \eqref{eq:horhoexp1}, \eqref{eq:horhoexp2}, along with \eqref{eq:horhoexp3} for $l=0,\ldots, K+1$, gives
	\begin{align*}
		&
		t \partial_{x^i} \Big( \int_{\mathbb{R}^3} h(y(t,x,p),p) dp \Big)
		=
		\sum_{l=0}^{K+1}
		\frac{(\log t)^{l}}{t^{l}}
		\int
		\Big[
		\partial_{p^{k_{l+1}}} \Big( \partial_i \partial_{k_1} \phi_{\infty}(p) \prod_{j=1}^{l} \partial_{k_j} \partial_{k_{j+1}} \phi_{\infty}(p) \Big)
		\cdot h(y,p)
		\\
		&
		\quad \qquad
		+
		\partial_i \partial_{k_1} \phi_{\infty}(p) \prod_{j=1}^{l} \partial_{k_j} \partial_{k_{j+1}} \phi_{\infty}(p) (\partial_{p^{k_{l+1}}} h)(y,p)
		\Big]
		dp
		+
		\int
		(\partial_{p^i}h)(y,p)
		dp
		\\
		&
		\quad \qquad
		+
		\frac{(\log t)^{K+3}}{t^{K+2}}
		\int
		\partial_i \partial_{k_1} \phi_{\infty}(p) \prod_{j=1}^{K+2} \partial_{k_j} \partial_{k_{j+1}} \phi_{\infty}(p) 
		(\partial_{x^{k_{K+3}}}h)(y,p)
		dp
		.
	\end{align*}
	Thus, if one defines
	\[
		G_0[h](y,p)
		=
		\partial_{p^{k}} \Big( \partial_i \partial_{k} \phi_{\infty}(p) \Big) \cdot h(y,p)
		+
		\partial_i \partial_{k} \phi_{\infty}(p) (\partial_{p^{k}} h)(y,p)
		+
		(\partial_{p^i}h)(y,p),
	\]
	and, for $l \geq 1$,
	\begin{align*}
		&
		G_l[h](y,p)
		=
		\partial_{p^{k_{l+1}}} \Big( \partial_i \partial_{k_1} \phi_{\infty}(p) \prod_{j=1}^{l} \partial_{k_j} \partial_{k_{j+1}} \phi_{\infty}(p) \Big)
		\cdot h(y,p)
		+
		\partial_i \partial_{k_1} \phi_{\infty}(p) \prod_{j=1}^{l} \partial_{k_j} \partial_{k_{j+1}} \phi_{\infty}(p) (\partial_{p^{k_{l+1}}} h)(y,p),
	\end{align*}
	it follows that
	\begin{multline*}
		\Big\vert
		t \partial_{x^i} \Big( \int_{\mathbb{R}^3} h(y(t,x,p),p) dp \Big)
		-
		\sum_{l=0}^{K}
		\frac{(\log t)^{l}}{t^{l}}
		\int G_l[h](y(t,x,p),p) dp
		\Big\vert 
		\\
		\lesssim
		\frac{(\log t)^{K+1}}{t^{K+1}}
		\Big\vert \int G_l[h](y(t,x,p),p) dp \Big\vert
		+
		\frac{(\log t)^{K+3}}{t^{K+2}} \mathcal{F}^4_{\infty} t^{-\frac{3}{2}}\Vert \nabla_p h \Vert_{L^2_x L^{\infty}_p}
		\lesssim
		\frac{(\log t)^{K+1}}{t^{K+\frac{5}{2}}}
		\mathcal{F}^4_{\infty} \Vert \nabla_p h \Vert_{L^2_x L^{\infty}_p},
	\end{multline*}
	where the latter holds for $t \geq T_0$ if $T_0$ is sufficiently large.  Hence
	\[
		t \partial_{x^i} \Big( \int_{\mathbb{R}^3} f_{[K]}(y(t,x,p),p) dp \Big)
		=
		\sum_{k=0}^K \sum_{l=0}^k \frac{(\log t)^l}{t^k} t \partial_{x^i} \Big( \int_{\mathbb{R}^3} f_{k,l}(y(t,x,p),p) dp \Big),
	\]
	satisfies
	\[
		\Big\vert
		t \partial_{x^i} \Big( \int_{\mathbb{R}^3} f_{[K]}(y(t,x,p),p) dp \Big)
		-
		\sum_{k=0}^K \sum_{l=0}^k \frac{(\log t)^l}{t^k}
		\int \tilde{f}_{k,l}(y(t,x,p),p) dp
		\Big\vert 
		\lesssim
		\frac{(\log t)^{K+1}}{t^{K+\frac{5}{2}}}
		(\mathcal{F}^4_{\infty}+\mathcal{F}^{k+3}_{\infty}),
	\]
	where
	\[
		\tilde{f}_{k,l}(y,p)
		:=
		\sum_{\substack{m+k'=k \\ m\leq l}}
		\int G_m[f_{k',l-m}](y,p).
	\]
	Revisiting the zeroth order case, with $\tilde{f}_{k,l}$ in place of $f_{k,l}$, it follows that \eqref{eq:functionssatisfying} holds where $\varrho_{k,l}^I$ is defined by replacing $f_{k,l}$ with $\tilde{f}_{k,l}$ in \eqref{eq:rhoklf2}.
	
	The cases involving higher order derivatives follows similarly.

\end{proof}

\subsection{Approximate solutions: the proof of Theorem \ref{thm:approx}}
\label{subsec:thmapprox}

The proof of Theorem \ref{thm:approx} can now be given.

\begin{proof}[Proof of Theorem \ref{thm:approx}]
	The first step in the proof of Theorem \ref{thm:approx} is to give explicit definitions of the functions
	\begin{equation} \label{eq:fklrhoklapproxproof}
		f_{k,l} \colon \mathbb{R}^3 \times \mathbb{R}^3 \to \mathbb{R},
		\qquad
		\text{ and }
		\qquad
		\phi_{k,l}, \varrho_{k,l} \colon \mathbb{R}^3 \to \mathbb{R},
	\end{equation}
	for $k = 0,1,2,\ldots$ and $l=0,\ldots k$.  Define first
	\[
		f_{0,0} = f_{\infty}, \qquad \varrho_{0,0} = \varrho_{\infty}, \qquad \phi_{0,0} = \phi_{\infty}.
	\]
	Next suppose, inductively, that $K\geq 1$ is such that \eqref{eq:fklrhoklapproxproof} have been defined --- and satisfy \eqref{eq:fklsupport} and \eqref{eq:flkL2estimate} --- for all $0 \leq k \leq K-1$ and $0 \leq l \leq k$.  Define smooth functions
	\[
		\mathcal{P}_{K,l} \colon \mathbb{R}^3 \to \mathbb{R},
		\qquad
		0 \leq l \leq K,
	\]
	by first setting $\mathcal{P}_{K,K}(p)$ to be equal to the right hand side of \eqref{eq:fklpsi2int} with $k=K$.  Then define $\mathcal{P}_{K,l} \colon \mathbb{R}^3 \to \mathbb{R}$ inductively, for $l=K-1,K-2,\ldots,0$, by setting $\mathcal{P}_{K,l}(p)$ to be equal to the right hand side of \eqref{eq:fklpsi3int} with $k=K$.  (Recall the discussion in Remark \ref{rmk:totalyderiv} --- these definitions of $\mathcal{P}_{K,l}$ indeed only involve \eqref{eq:fklrhoklapproxproof} up to order $K-1$ and so $\mathcal{P}_{K,l}$ are well defined.)

Define now, for $l=0,\ldots,K$, functions $\varrho_{K,l}$, for $0\leq l \leq K$, by equation \eqref{eq:rhoklf2toporder}.  The functions $\phi_{K,l}$ are then defined as the unique solutions of the Poisson equation \eqref{eq:mainthmPoisson} sourced by $\varrho_{K,l}$.  (Recall that $\phi_{K,l}$ are expressed explicitly in terms of $\varrho_{K,l}$ by the well known representation formula \eqref{eq:Poissonrep} for the Poisson equation.)  Finally, define $f_{K,K} \colon \mathbb{R}^3 \times \mathbb{R}^3 \to \mathbb{R}$ by equation \eqref{eq:fklpsi2}, and define $f_{K,l} \colon \mathbb{R}^3 \times \mathbb{R}^3 \to \mathbb{R}$ inductively, for $l=K-1,K-2,\ldots,0$, by equation \eqref{eq:fklpsi3} (where $\Psi_{k,l}$ are defined in terms of $\phi_{k,l}$ by \eqref{eq:Psikl1}--\eqref{eq:Psikl2}).  Recall that these definitions imply that \eqref{eq:rhoklf2} holds with $k=K$ (see Remark \ref{rmk:stillholds}) which is an important component of the estimate \eqref{eq:proprho2} of Proposition \ref{prop:rhoKdef}.
	
	The proof of \eqref{eq:thmapprox2} and the latter of \eqref{eq:fklsupport}, and the estimate for $\varrho_{K,l}$ in \eqref{eq:flkL2estimate}, for $k =K$, follow from Proposition \ref{prop:rhoKdef}.  The gradient estimate of Proposition \ref{prop:gradelliptic} then implies that $\phi_{K,l}$ satisfy \eqref{eq:phiklassum} (or, equivalently, the estimate for $\phi_{K,l}$ in \eqref{eq:flkL2estimate}).
	The proof of properties \eqref{eq:thmapprox1}, \eqref{eq:fKestimate}--\eqref{eq:fKestimate2} and the former of \eqref{eq:fklsupport} and \eqref{eq:flkL2estimate}, for $k =K$, then follow from Proposition \ref{prop:fKdef}.
\end{proof}

\section{The finite problems}
\label{section:finiteprob}

In this section solutions of Vlasov--Poisson will be considered on certain finite intervals of the form $[T,T_f]$.  ``Remainder quantities'', defined by subtracting the approximate solution of Section \ref{section:expansions}, will be considered, and the solutions are defined so as to have trivial ``finite data'' for the remainder at time $t=T_f$.  These problems are referred to as ``finite problems'' in view of the finite length of the interval $[T,T_f]$.  The proof of the main results, in Section \ref{section:logicofproof}, will follow from considering these finite problems and letting $T_f\to \infty$.

Throughout this section, suppose a smooth compactly supported function $f_{\infty} \colon \mathbb{R}^3 \times \mathbb{R}^3 \to [0,\infty)$, satisfying \eqref{eq:suppfinfty} for some $B>0$, and large times $T_f \gg T_0 \gg 1$ are given, where $T_0$ is assumed to be suitably large so as to satisfy the hypothesis of Theorem \ref{thm:approx}.  Suppose also that $N\geq 6$ is fixed.  Recall the functions $f_{[k]} \colon [T_0,\infty) \times \mathbb{R}^3 \times \mathbb{R}^3 \to \mathbb{R}$, $\varrho_{[k]}, \phi_{[k]} \colon [T_0,\infty) \times \mathbb{R}^3 \to \mathbb{R}$ --- defined, for all $k \geq 1$, explicitly in terms of $f_{\infty}$ --- of Theorem \ref{thm:approx}.

Given a solution $(f,\varrho, \phi)$ of the Vlasov--Poisson system \eqref{eq:VP1}--\eqref{eq:VP2}, define, for each $k\geq 1$, functions $\check{f}_{[k]} \colon [T_0,\infty) \times \mathbb{R}^3 \times \mathbb{R}^3 \to \mathbb{R}$, and $\check{\varrho}_{[k]}, \check{\phi}_{[k]} \colon [T_0,\infty) \times \mathbb{R}^3 \to \mathbb{R}$, by
\begin{equation} \label{eq:defoffcheck}
	\check{f}_{[k]}(t,x,p) = f(t,x,p) - f_{[k]}(t,x,p),
	\qquad
	\check{\varrho}_{[k]}(t,x) = \varrho(t,x) - \varrho_{[k]}(t,x),
	\qquad
	\check{\phi}_{[k]}(t,x) = \phi(t,x) - \phi_{[k]}(t,x).
\end{equation}
These functions are referred to as ``remainders''.

The main result of this section is the following.

\begin{theorem}[Estimates for solutions of the finite problems] \label{thm:bootstrap}
	There exists $k_* = k_*(\mathcal{F}_{\infty}^{N+1})$ large such that the following holds. 
	
	For any fixed $k \geq k_*$, suppose there exists a solution $f\colon [T,T_f] \times \mathbb{R}^3\times \mathbb{R}^3 \to [0,\infty)$ of Vlasov--Poisson \eqref{eq:VP1}--\eqref{eq:VP2}, for some $T_0 \leq T < T_f$, which satisfies the final condition
	\begin{equation} \label{eq:fTf}
	f (T_f,x,p)
	=
	f_{[k]}(T_f,x,p),
\end{equation}
	i.\@e.\@ $\check{f}_{[k]}(T_f,\cdot,\cdot) \equiv 0$, and suppose moreover that $\check{f}_{[k]}$ satisfies the bootstrap assumption
	\begin{equation} \label{eq:ba}
		\sup_{t\in [T,T_f]}
		\sum_{\vert I \vert + \vert J\vert \leq N}
		\big\Vert L^{I} (t^{-1}\partial_p)^{J} \check{f}_{[k]} (t,\cdot,\cdot) \big\Vert_{L^2_x L^2_p}
		\leq
		(\mathcal{F}_{\infty}^{N+2k+4})^2,
	\end{equation}
	and that $\phi$ satisfies the bootstrap assumption, for all $T\leq t \leq T_f$,
	\begin{equation} \label{eq:ba2}
		\sup_{x\in \mathbb{R}^3}
		\left\vert
		\nabla \phi (t,x) - t^{-2} \nabla \phi_{\infty} \left(\frac{x}{t} \right)
		\right\vert
		\leq
		\frac{\mathcal{F}_{\infty}^{N+2k+4}}{t^{\frac{5}{2}}}.
	\end{equation}
	Then, if $T_0$ is sufficiently large (with respect to $k$ and $\mathcal{F}_{\infty}^{N+2k+4}$), the solution in fact satisfies 
	\begin{align} \label{eq:bathm1}
		\sum_{\vert I \vert + \vert J \vert \leq N}
		\Vert L^I (t^{-1}\partial_p)^J \check{f}_{[k]} (t,\cdot,\cdot) \Vert_{L^2_x L^2_p}
		&
		\leq
		C_k
		(\mathcal{F}_{\infty}^{N+2k+4})^2
		\frac{ (\log t)^{1+k}
		}{t^{1+ k}},
	\\
	\label{eq:bathm2}
		\sum_{\vert I \vert \leq N}
		\Vert (t\partial_x)^I \check{\varrho}_{[k]} (t,\cdot) \Vert_{L^2_x}
		&
		\leq
		C_k \mathcal{F}_{\infty}^{N+2k+4}
		\frac{(\log t)^{1+k}}{t^{\frac{5}{2} + k}},
	\end{align}
	and
	\begin{align} \label{eq:bathm3}
		\sum_{\vert I \vert \leq N}
		\Vert (t\partial_x)^I \nabla \check{\phi}_{[k]} (t,\cdot) \Vert_{L^2}
		\leq
		C_{k} \mathcal{F}_{\infty}^{N+2k+4}
		\frac{(\log t)^{1+k}}{t^{\frac{3}{2} + k}}.
	\end{align}
	In particular, if $T_0$ is sufficiently large, then \eqref{eq:ba} holds with the right hand side replaced by $\frac{1}{2} (\mathcal{F}_{\infty}^{N+2k+4})^2$ and \eqref{eq:ba2} holds with the right hand side replaced by $\frac{\mathcal{F}_{\infty}^{N+2k+4}}{2t^{5/2}}$.  Finally,
	\begin{equation} \label{eq:suppoffcheck}
		\supp(\check{f}_{[k]}(t,\cdot,\cdot))
		\subset
		\{
		(x,p) \subset \mathbb{R}^3 \times \mathbb{R}^3
		\mid
		\vert x - t p + \log t \nabla \phi_{\infty}(p) \vert \leq 2\mathcal{F}_{\infty}^3 + B, \vert p \vert \leq 2 B
		\},
	\end{equation}
	where $B>0$ is as in \eqref{eq:suppfinfty}.
\end{theorem}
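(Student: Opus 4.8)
\noindent The plan is a backward-in-time bootstrap argument on $[T,T_f]$: the assumptions \eqref{eq:ba}, \eqref{eq:ba2} are used to control the support of $\check{f}_{[k]}$ and the coefficients (most importantly $\nabla^2\phi$) entering the nonlinearity and the commutators, and the goal is to improve them to \eqref{eq:bathm1}--\eqref{eq:bathm3} by deriving a Gr\"onwall inequality for the commuted $L^2$ energy $\check{\mathcal{E}}_{[k]}(t) := \sum_{\vert I\vert+\vert J\vert\leq N}\Vert L^I(t^{-1}\partial_p)^J\check{f}_{[k]}(t,\cdot,\cdot)\Vert_{L^2_xL^2_p}$. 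The ingredients are the $L^2$ transport estimate (Proposition \ref{prop:generalVlasovestimates}), the gradient estimate \eqref{eq:gradelliptic} --- but \emph{not} the full elliptic estimate \eqref{eq:fullelliptic}, cf.\ Remark \ref{rmk:fullelliptic} --- the density estimate \eqref{eq:thmapprox2}, the weighted Sobolev inequalities of Remark \ref{rmk:Sobolev} and Proposition \ref{prop:Sobolevvf}, and the commutation identities of Proposition \ref{prop:commVlasovLS}.

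\emph{Support.} I would first establish \eqref{eq:suppoffcheck}. Since $f_{[k]}$ is supported where $\vert y\vert\leq B$, $\vert p\vert\leq B$ by \eqref{eq:fklsupport}, and $\gs_{\phi}\check{f}_{[k]}$ inherits the support of $f$ and $f_{[k]}$, it suffices to propagate the support of $f$ backwards from $t=T_f$, where \eqref{eq:fTf} gives $f(T_f,\cdot,\cdot)=f_{[k]}(T_f,\cdot,\cdot)$. Along the characteristics of $\gs_{\phi}$ one has $\dot{p}=\nabla\phi$ and, as in the proof of \eqref{eq:fKdefestimate}, $\dot{y}=t\big(t^{-2}\nabla\phi_{\infty}(p)-\nabla\phi\big)+\log t\,(\nabla\phi\cdot\nabla^2\phi_{\infty}(p))$; using \eqref{eq:ba2}, \eqref{eq:phiinftypointwise}, $x/t=p+y/t-t^{-1}\log t\,\nabla\phi_{\infty}(p)$, and the cancellation of the form \eqref{eq:intronablaphicancel}, one gets $\vert\dot{p}\vert+\vert\dot{y}\vert\lesssim\mathcal{F}_{\infty}^{N+2k+4}(\log t)\,t^{-2}$ inside $\supp f$. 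Integrating backwards from $T_f$ and using $\int_{T_0}^{\infty}(\log s)s^{-2}\,ds\to0$ as $T_0\to\infty$, for $T_0$ large the total displacement of $(y,p)$ is below $\mathcal{F}_{\infty}^3$, which yields \eqref{eq:suppoffcheck}; in particular $\check{\varrho}_{[k]}(t,\cdot)$ is supported in $\{\vert x\vert\lesssim t\}$ and $\int\mathds{1}_{\supp\check{f}_{[k]}}(t,x,p)\,dp\lesssim t^{-3}$ uniformly in $x$ (cf.\ \eqref{eq:introindicator}).

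\emph{Energy estimate.} The remainder solves $\gs_{\phi}\check{f}_{[k]}=-\partial_{x^i}\check{\phi}_{[k]}\,\partial_{p^i}f_{[k]}-\partial_{x^i}\check{\phi}_{[k]}\,\partial_{p^i}\check{f}_{[k]}-\gs_{\phi_{[k]}}f_{[k]}$, with $\check{f}_{[k]}(T_f,\cdot,\cdot)=0$. Commuting with $L^I(t^{-1}\partial_p)^J$ via Proposition \ref{prop:commVlasovLS}, applying Proposition \ref{prop:generalVlasovestimates} over $[t,T_f]$ and summing, I obtain $\check{\mathcal{E}}_{[k]}(t)\lesssim\int_t^{T_f}(\text{source})(s)\,ds$, with the source controlled term by term: (i) the commutator contributions --- here the logarithmic corrections built into $L_i$ (Proposition \ref{prop:commVlasovLS}, cf.\ Remark \ref{rmk:goodvectorfields}) make $t^{-2}\nabla^2\phi_{\infty}(p)-t\,\nabla^2\phi_{[0]}$ of size $\mathcal{O}((\log t)t^{-3})$ by \eqref{eq:intronablaphicancel}, so all commutator terms are bounded by $\mathcal{F}_{\infty}^{N+2k+4}(\log s)s^{-2}\check{\mathcal{E}}_{[k]}(s)+s^{-1}\check{\mathcal{E}}_{[k]}(s)^2$; (ii) the term $\partial_{x^i}\check{\phi}_{[k]}\partial_{p^i}f_{[k]}$, bounded via \eqref{eq:introL2phi} and $\sup_x(\int\vert\partial_p f_{[k]}\vert^2dp)^{1/2}\leq t^{-1/2}\mathcal{F}$ with $\mathcal{F}$ a \emph{fixed} ($k$-independent) norm of $f_{\infty}$ (cf.\ \eqref{eq:intropartialpf0}, \eqref{eq:fKestimate}), giving $\lesssim\mathcal{F}s^{-1/2}\Vert\check{\varrho}_{[k]}(s,\cdot)\Vert_{L^2}$; (iii) the genuinely quadratic term $\partial_{x^i}\check{\phi}_{[k]}\partial_{p^i}\check{f}_{[k]}\lesssim s^{-1}\check{\mathcal{E}}_{[k]}(s)^2$; and (iv) $\gs_{\phi_{[k]}}f_{[k]}$, which by \eqref{eq:thmapprox1} contributes $\mathcal{O}\big(C_k(\mathcal{F}_{\infty}^{N+2k+3})^2(\log s)^{1+k}s^{-2-k}\big)$. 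To close the loop in (i)--(iii) one writes $\check{\varrho}_{[k]}=\int\check{f}_{[k]}\,dp+(\int f_{[k]}\,dp-\varrho_{[k]})$, so Cauchy--Schwarz with $\int\mathds{1}_{\supp\check{f}_{[k]}}dp\lesssim t^{-3}$ and \eqref{eq:thmapprox2} give $\Vert\check{\varrho}_{[k]}(s,\cdot)\Vert_{L^2}\lesssim s^{-3/2}\check{\mathcal{E}}_{[k]}(s)+C_k\mathcal{F}s^{-5/2-k}(\log s)^{k+1}$, whence, via \eqref{eq:gradelliptic} applied also to $\nabla\partial^I\phi$ and the weighted Sobolev inequality, $\Vert\nabla^2\check{\phi}_{[0]}(s,\cdot)\Vert_{L^{\infty}}\lesssim s^{-3}\check{\mathcal{E}}_{[k]}(s)+(\text{l.o.t.})$, turning the quadratic pieces in (i), (iii) into $s^{-1}\check{\mathcal{E}}_{[k]}(s)^2$.

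\emph{Conclusion.} Collecting these bounds, with $b(t):=C_k(\mathcal{F}_{\infty}^{N+2k+3})^2(\log t)^{1+k}t^{-1-k}$ and $\mathcal{F}$ a fixed norm of $f_{\infty}$,
\[
	\check{\mathcal{E}}_{[k]}(t)
	\leq
	\mathcal{F}\int_t^{T_f}\frac{\check{\mathcal{E}}_{[k]}(s)}{s}\,ds
	+
	C\int_t^{T_f}\frac{\check{\mathcal{E}}_{[k]}(s)^2}{s}\,ds
	+
	b(t),
	\qquad
	\check{\mathcal{E}}_{[k]}(T_f)=0.
\]
I would close this by a further continuity argument in the lower endpoint $t$: on the relatively open set (nonempty near $T_f$) where $\check{\mathcal{E}}_{[k]}(t)\leq 2b(t)$, the quadratic term is $\lesssim\int_t^{T_f}s^{-1}b(s)^2\,ds\ll b(t)$ and the linear term is $\leq\mathcal{F}\int_t^{T_f}s^{-1}\cdot2b(s)\,ds\leq\tfrac{2\mathcal{F}}{k+1}b(t)\leq\tfrac12 b(t)$ as soon as $k\geq k_*=k_*(\mathcal{F})$, so that $\check{\mathcal{E}}_{[k]}(t)<2b(t)$ there and the set is all of $[T,T_f]$; this is \eqref{eq:bathm1}, and \eqref{eq:bathm2}, \eqref{eq:bathm3} then follow from the density and elliptic estimates, with all three improving \eqref{eq:ba}--\eqref{eq:ba2} by a factor $\tfrac12$ for $T_0$ large. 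The main obstacle is the bookkeeping of powers of $s$ and $\log s$ through the many commutator and nonlinear terms, so that --- apart from $b(t)$ and the harmless quadratic contributions --- only the single borderline $1/s$-weighted term survives, and crucially with a coefficient $\mathcal{F}$ independent of $k$; it is exactly this $k$-uniformity that permits $k$ to be chosen large enough to beat the resulting Gr\"onwall growth.
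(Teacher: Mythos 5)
Your overall strategy --- backward energy estimate, using the bootstrap assumptions to control the support and the commutator coefficients, and beating the borderline linear term via Gr\"onwall by taking $k$ large because its coefficient is $k$-independent --- is the same as the paper's. There are two differences worth noting. First, a computational slip: since $\gs_{\phi}$ already contains the full $\partial_{x^i}\phi\,\partial_{p^i}$, the source for $\check{f}_{[k]}$ is exactly $\gs_{\phi}\check{f}_{[k]} = -\partial_{x^i}\check{\phi}_{[k]}\partial_{p^i}f_{[k]} - \gs_{\phi_{[k]}}f_{[k]}$ (this is \eqref{eq:defofFk}), and the extra term $-\partial_{x^i}\check{\phi}_{[k]}\partial_{p^i}\check{f}_{[k]}$ you write would appear only if the operator on the left were $\gs_{\phi_{[k]}}$; using $\gs_{\phi}$ is the natural choice precisely because the $L^2$ estimate of Proposition~\ref{prop:generalVlasovestimates} then costs nothing at zeroth order and avoids that explicit quadratic term. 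The slip is non-fatal --- your closure would still absorb it --- but it inflates the quadratic part unnecessarily. Second, you close the resulting inequality differently: you retain a genuine $\int_t^{T_f} s^{-1}\check{\mathcal{E}}_{[k]}(s)^2\,ds$ term and run a separate nested continuity argument on the set $\{\check{\mathcal{E}}_{[k]} \leq 2b\}$. The paper instead inserts the a priori bound \eqref{eq:ba} directly into the commutator estimates (Proposition~\ref{prop:fcheckkcommuted}), then takes $T_0$ large so the bootstrap-produced coefficient $(\mathcal{F}_{\infty}^{N+2k+4})^2\log t/t$ drops below the $k$-independent $\mathcal{F}_{\infty}^{N+1}$, and applies the \emph{linear} Gr\"onwall once in Proposition~\ref{prop:fcheckkho}. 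Both are valid; your route is slightly more self-contained but duplicates a continuity argument that the paper defers to Theorem~\ref{thm:finiteprob}, while the paper's route keeps Theorem~\ref{thm:bootstrap} as a pure ``bootstrap improvement'' step. You also correctly identify that the key mechanism is the $k$-uniformity of the $s^{-1}$-weighted linear coefficient; that is exactly what makes the choice $k_* = k_*(\mathcal{F}_{\infty}^{N+1})$ possible in Proposition~\ref{prop:fcheckkho}.
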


An overview of the main aspects of the proof of of Theorem \ref{thm:bootstrap} is given in Section \ref{subsec:introexistence}.

Recall again the convention, discussed at the beginning of Section \ref{section:theorem}, for the suppressed sequence appearing in $\mathcal{F}_{\infty}^n$.  We remark, in particular, that if $\{\mathfrak{n}(n)\}$ is the suppressed sequence appearing in \eqref{eq:ba}--\eqref{eq:ba2}, then the suppressed sequence in \eqref{eq:bathm1}--\eqref{eq:bathm3} will be a different sequence $\{\mathfrak{n}'(n)\}$, where $\mathfrak{n}'(n) \geq \mathfrak{n}(n)$ for all $n$.

The results of this section will all be in the above setting.
In particular, $N \geq 6$ will be assumed to be fixed throughout, and the dependence of constants on $N$ is suppressed.
In Section \ref{subsec:remaindersystem} the system of equations satisfied by the remainder quantities is obtained.  In Section \ref{subsec:fcheckkestimates} these reminder quantities are estimated, and in Section \ref{subsec:proofofbootstrap} the proof of Theorem \ref{thm:bootstrap} is completed.

\subsection{System of equations for remainders}
\label{subsec:remaindersystem}

The proof of Theorem \ref{thm:bootstrap} will proceed by considering the remainders \eqref{eq:defoffcheck}.  Recall that the final condition \eqref{eq:fTf} is defined so that the remainder $\check{f}_{[k]}$ vanishes at time $t=T_f$:
\begin{equation} \label{eq:fcheckTf}
	\check{f}_{[k]}(T_f,x,p)
	=
	0,
	\qquad
	\text{for all } x,p \in \mathbb{R}^3.
\end{equation}
The remainders satisfy the system of equations,
\begin{equation} \label{eq:differenceeqns}
	\gs_{\phi} \check{f}_{[k]}
	=
	F_{[k]},
	\qquad
	\Delta \check{\phi}_{[k]}
	=
	\check{\varrho}_{[k]},
	\qquad
	\check{\varrho}_{[k]}(t,x) - \int_{\mathbb{R}^3} \check{f}_{[k]}(t,x,p) dp
	=
	P_{[k]},
\end{equation}
where the functions $F_{[k]} \colon [T_0,\infty) \times \mathbb{R}^3 \times \mathbb{R}^3 \to \mathbb{R}$ and $P_{[k]} \colon [T_0,\infty) \times \mathbb{R}^3 \to \mathbb{R}$ are given by
\begin{equation} \label{eq:defofFk}
	F_{[k]}
	=
	\gs_{\phi} \check{f}_{[k]} = - \gs_{\phi} f_{[k]} = - \partial_{x^i} \check{\phi}_{[k]} \partial_{p^i} f_{[k]} - \gs_{\phi_{[k]}} f_{[k]},
\end{equation}
and
\[
	P_{[k]}(t,x)
	=
	\check{\varrho}_{[k]}(t,x) - \int_{\mathbb{R}^3} \check{f}_{[k]}(t,x,p) dp
	=
	- \varrho_{[k]}(t,x) + \int_{\mathbb{R}^3} f_{[k]}(t,x,p) dp.
\]
The relevant properties of $F_{[k]}$ and $P_{[k]}$ are given by the following proposition, which is a consequence of Theorem \ref{thm:approx}.

\begin{proposition}[Estimates for $F_{[k]}$ and $P_{[k]}$] \label{prop:systemfcheck}
	For each $k \geq 0$, if $T_0 = T_0(\mathcal{F}_{\infty}^{N+k})$ is sufficiently large, the functions $F_{[k]}$ and $P_{[k]}$ satisfy, for all $t \geq T_0$,
	\begin{equation} \label{eq:Fk}
		\sum_{\vert I \vert + \vert J \vert \leq N}
		\big\Vert L^I (t^{-1}\partial_p)^J F_{[k]} (t,\cdot, \cdot) \big\Vert_{L^2_x L^2_p}
		\leq
		\frac{C \mathcal{F}_{\infty}^{N+1}}{t^{\frac{3}{2}}}
		\sum_{\vert I\vert =1}^{N+1}
		\Vert (t \partial_x)^I \check{\phi}_{[k]}(t,\cdot) \Vert_{L^2}
		+
		C_{k}
		(\mathcal{F}_{\infty}^{N+2k+3})^2
		\frac{(\log t)^{1+k}}{t^{2+k}}
		,
	\end{equation}
	where the constant $C$ does not depend on $k$, and
	\begin{equation} \label{eq:Pk}
		\sum_{\vert I \vert \leq N}
		\big\Vert (t\partial_x)^I P_{[k]} (t,\cdot) \big\Vert_{L^2_x}
		\leq
		C_{k} \mathcal{F}_{\infty}^{N+2k+3}
		\frac{(\log t)^{1+k}}{t^{\frac{5}{2} + k}}
		.
	\end{equation}
\end{proposition}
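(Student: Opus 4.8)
The plan is to treat the two assertions separately. The estimate \eqref{eq:Pk} for $P_{[k]}$ is immediate: by definition $P_{[k]}(t,x) = \int_{\mathbb{R}^3} f_{[k]}(t,x,p)\,dp - \varrho_{[k]}(t,x)$, so \eqref{eq:Pk} is precisely the second estimate \eqref{eq:thmapprox2} of Theorem \ref{thm:approx}. For \eqref{eq:Fk} I use the decomposition \eqref{eq:defofFk}, namely $F_{[k]} = -\gs_{\phi_{[k]}} f_{[k]} - \partial_{x^i}\check{\phi}_{[k]}\,\partial_{p^i} f_{[k]}$. The contribution of $\gs_{\phi_{[k]}} f_{[k]}$ is bounded directly by the first estimate \eqref{eq:thmapprox1} of Theorem \ref{thm:approx}, which is exactly the second term on the right hand side of \eqref{eq:Fk}. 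It therefore remains only to estimate $\sum_{|I|+|J|\le N}\| L^I(t^{-1}\partial_p)^J(\partial_{x^i}\check{\phi}_{[k]}\,\partial_{p^i} f_{[k]})\|_{L^2_x L^2_p}$, and to show it is controlled by the first term on the right hand side of \eqref{eq:Fk}.

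The first step for the coupling term is to balance the weights, writing $\partial_{x^i}\check{\phi}_{[k]}\,\partial_{p^i} f_{[k]} = \big( t\partial_{x^i}\check{\phi}_{[k]} \big)\big( t^{-1}\partial_{p^i} f_{[k]} \big)$, and then to expand $L^I(t^{-1}\partial_p)^J$ of this product by the Leibniz rule for compositions of the first-order operators $L_j$ and $t^{-1}\partial_{p^j}$ (which requires only that these operators be derivations, not that they commute). Since $\check{\phi}_{[k]}$ is a function of $(t,x)$ alone, every occurrence of $t^{-1}\partial_{p^j}$ kills it, while every $L_j$ acts on it as $t\partial_{x^j}$ and preserves the property of being a function of $(t,x)$; hence each resulting term has the shape $\big((t\partial_x)^{I'}\check{\phi}_{[k]}\big)(t,x)\cdot \big( L^{I''}(t^{-1}\partial_p)^{J''}(t^{-1}\partial_{p^i} f_{[k]})\big)(t,x,p)$ with $1 \le |I'| \le N+1$ and $|I''|+|J''| \le N$. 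Using the elementary bound $\|g(x)h(x,p)\|_{L^2_xL^2_p} \le \|g\|_{L^2_x}\,\sup_x\|h(x,\cdot)\|_{L^2_p}$, each such term is at most $\|(t\partial_x)^{I'}\check{\phi}_{[k]}\|_{L^2_x}$ times $\sup_x\|(L^{I''}(t^{-1}\partial_p)^{J''}(t^{-1}\partial_{p^i} f_{[k]}))(t,x,\cdot)\|_{L^2_p}$, which already produces the sum $\sum_{|I|=1}^{N+1}\|(t\partial_x)^{I}\check{\phi}_{[k]}\|_{L^2}$ appearing in \eqref{eq:Fk}.

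The remaining point is that $\sup_x\|(L^{I''}(t^{-1}\partial_p)^{J''}(t^{-1}\partial_{p^i} f_{[k]}))(t,x,\cdot)\|_{L^2_p}$ is bounded by $C \mathcal{F}_{\infty}^{N+1} t^{-3/2}$, up to $k$-dependent pieces that are lower order. Here one applies the Sobolev inequality with vector field derivatives, Proposition \ref{prop:Sobolevvf}, which is precisely the ingredient that supplies the decay $t^{-3/2}$ while keeping the argument $L^2$-based: it bounds this supremum by $C t^{-3/2}$ times $\sum_{n=0}^2 (t^{-1}\log t\,\|\partial^I\Delta\phi_\infty\|_{L^\infty})^n$ times $\sum_{|J|\le 3}\|L^J L^{I''}(t^{-1}\partial_p)^{J''}(t^{-1}\partial_{p^i} f_{[k]})\|_{L^2_x L^2_p}$. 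The middle factor is $\lesssim 1$ once $T_0$ is taken large relative to $\|\partial^I\Delta\phi_\infty\|_{L^\infty}$ — and hence, by Sobolev and the gradient estimate \eqref{eq:gradelliptic}, relative to a norm of $f_\infty$; this is where the hypothesis that $T_0 = T_0(\mathcal{F}_{\infty}^{N+k})$ be large is used. The last factor involves $f_{[k]}$ differentiated a bounded (fixed) number of times more than $N$ by the operators $L_j$, $t^{-1}\partial_{p^j}$; splitting $f_{[k]} = f_{[0]} + (f_{[k]}-f_{[0]})$ and using \eqref{eq:fKestimate} for the first summand and \eqref{eq:fKestimate2} for the second (together with the rules \eqref{eq:functionsofy1}--\eqref{eq:functionsofy2} for these operators acting on functions of $y(t,x,p)$ and $p$, cf.\ Remark \ref{rmk:functionsofy}), the $f_{[0]}$ contribution yields the asserted term with a constant $C$ independent of $k$, while the $f_{[k]}-f_{[0]}$ contribution carries an extra factor $t^{-1}\log t$ times a $k$-dependent constant and, for $t\ge T_0$, is absorbed into either term on the right hand side of \eqref{eq:Fk}. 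The cases with $|I|+|J|>0$ and the resulting zeroth- and lower-order corrections are handled by the same scheme.

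The main obstacle is not any individual estimate but the combinatorial bookkeeping: one must track, throughout the Leibniz expansion, exactly how many derivatives land on $\check{\phi}_{[k]}$ versus on $f_{[k]}$, so that the $\check{\phi}_{[k]}$ factor never exceeds $N+1$ weighted derivatives and the $f_{[k]}$ factor stays within the range covered by Theorem \ref{thm:approx} after the fixed number of additional derivatives consumed by Proposition \ref{prop:Sobolevvf}; and one must keep the $k$-dependence confined to the manifestly lower-order term $C_k(\mathcal{F}_{\infty}^{N+2k+3})^2(\log t)^{1+k}/t^{2+k}$. The latter is essential for the overall strategy, since the order $k$ of the approximate solution will later be chosen as a function of $\mathcal{F}_{\infty}$, so the constant $C$ multiplying $\sum_{|I|=1}^{N+1}\|(t\partial_x)^{I}\check{\phi}_{[k]}\|_{L^2}$ in \eqref{eq:Fk} must be $k$-independent; this is exactly why $f_{[k]}$ is split off against $f_{[0]}$ in the last step.
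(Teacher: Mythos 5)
Your handling of $P_{[k]}$ and of the $\gs_{\phi_{[k]}}f_{[k]}$ contribution to $F_{[k]}$ matches the paper's proof exactly, and the overall decomposition \eqref{eq:defofFk} and the ingredients you invoke (Leibniz for $L_j$ and $t^{-1}\partial_{p^j}$, Proposition \ref{prop:Sobolevvf}, the estimates \eqref{eq:fKestimate}--\eqref{eq:fKestimate2}, the split $f_{[k]} = f_{[0]} + (f_{[k]}-f_{[0]})$) are the right ones. However, your H\"older split for the coupling term is not careful enough to produce the factor $\mathcal{F}_{\infty}^{N+1}$ asserted in \eqref{eq:Fk}. You always place $\check{\phi}_{[k]}$ in $L^2_x$ and $t^{-1}\partial_{p^i}f_{[k]}$ in $L^\infty_x L^2_p$. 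When few of the Leibniz derivatives fall on $\check{\phi}_{[k]}$ (in the extreme case $|I'|=1$), the $f_{[k]}$ factor already carries up to $N+1$ vector field derivatives, and Proposition \ref{prop:Sobolevvf} then demands three more; so you need control over $N+4$ weighted derivatives of $f_{[k]}$, and invoking \eqref{eq:fKestimate} at that order produces $\mathcal{F}_{\infty}^{N+4}$, not $\mathcal{F}_{\infty}^{N+1}$. This is still a $k$-independent constant, so the rough shape of the later Gr\"onwall argument survives, but the proposition as stated is not obtained, and the explicit dependence $k_* = k_*(\mathcal{F}_{\infty}^{N+1})$ in Theorem \ref{thm:backwardsproblem} would have to be degraded.

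The paper closes this by splitting according to how many Leibniz derivatives land on $\check{\phi}_{[k]}$. When at most $N-2$ weighted $x$-derivatives fall on $\check{\phi}_{[k]}$, the paper uses the other H\"older split, $\| g h \|_{L^2_xL^2_p} \leq \|g\|_{L^\infty_x}\|h\|_{L^2_xL^2_p}$, estimating $\check{\phi}_{[k]}$ in $L^\infty_x$ via the Sobolev inequality of Remark \ref{rmk:Sobolev} (which, because $\check{\phi}_{[k]}$ depends only on $(t,x)$, costs just two extra $t\partial_x$'s, landing at $N$ and still within $\sum_{|I|=1}^{N+1}\|(t\partial_x)^I\check{\phi}_{[k]}\|_{L^2}$), and keeping $f_{[k]}$ in $L^2_xL^2_p$ with at most $N+1$ derivatives. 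Only when $N-1$ or more weighted derivatives fall on $\check{\phi}_{[k]}$ --- in which case $f_{[k]}$ carries at most $N-2$ --- does the paper put $\check{\phi}_{[k]}$ in $L^2_x$ and apply Proposition \ref{prop:Sobolevvf} to $f_{[k]}$, landing again at $N+1$. Both branches therefore stay within $\mathcal{F}_{\infty}^{N+1}$. If you add this two-case split (and consequently track that neither factor ever sees more than $N+1$ weighted derivatives after Sobolev), your argument becomes a correct proof of \eqref{eq:Fk} as stated.
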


\begin{proof}[Proof of Proposition \ref{prop:systemfcheck}]
	The estimate \eqref{eq:Pk} follows immediately from Theorem \ref{thm:approx} (see \eqref{eq:thmapprox2}).
	
	For \eqref{eq:Fk}, consider first the first term on the right hand side of \eqref{eq:defofFk}.  Note that, since $N \geq 3$,
	\begin{align*}
		&
		\sum_{\vert I \vert + \vert J \vert \leq N}
		\big\Vert L^I (t^{-1}\partial_p)^J \big( \partial_{x^i} \check{\phi}_{[k]} \partial_{p^i} f_{[k]} \big) (t,\cdot, \cdot) \big\Vert_{L^2_x L^2_p}
		\lesssim
		\\
		&
		\qquad\qquad \qquad
		\sum_{\vert I \vert = 1}^{N-2}
		\sup_{x\in \mathbb{R}^3} \big\vert (t\partial_x)^I \check{\phi}_{[k]} (t,\cdot) \big\vert
		\sum_{\substack{\vert I \vert + \vert J \vert \leq N+1\\ \vert J \vert \geq 1}}
		\big\Vert L^I (t^{-1}\partial_p)^J f_{[k]} (t,\cdot, \cdot) \big\Vert_{L^{2}_x L^2_p}
		\\
		&
		\qquad\qquad \qquad
		+
		\sum_{\vert I \vert = 1}^{N+1}
		\big\Vert (t\partial_x)^I \check{\phi}_{[k]} (t,\cdot) \big\Vert_{L^{2}_x}
		\sum_{\substack{\vert I \vert + \vert J \vert \leq N-2\\ \vert J \vert \geq 1}}
		\sup_{x\in \mathbb{R}^3}  \big\Vert L^I (t^{-1}\partial_p)^J f_{[k]} (t,x, \cdot) \big\Vert_{L^2_p},
	\end{align*}
	It follows from the Sobolev inequality --- see Remark \ref{rmk:Sobolev} --- that
	\[
		\sup_{x\in \mathbb{R}^3} 
		\sum_{\vert I\vert =1}^{N-2} \vert (t \partial_x)^I \check{\phi}_{[k]} (t,x) \vert
		\leq
		C
		t^{-\frac{3}{2}} \sum_{\vert I\vert =1}^N \Vert (t \partial_x)^I \check{\phi}_{[k]} (t,\cdot) \Vert_{L^2_x},
	\]
	and similarly, by Theorem \ref{thm:approx} (see \eqref{eq:fKestimate}) and the Sobolev inequality of Proposition \ref{prop:Sobolevvf},
	\begin{multline*}
		\sum_{\vert I \vert + \vert J \vert \leq N-2}
		\sup_{x\in \mathbb{R}^3}
		t^{\frac{3}{2}}
		\big\Vert L^I (t^{-1}\partial_p)^J f_{[k]} (t,x, \cdot) \big\Vert_{L^2_p}
		+
		\sum_{\vert I \vert + \vert J \vert \leq N+1}
		\big\Vert L^I (t^{-1}\partial_p)^J f_{[k]} (t,\cdot, \cdot) \big\Vert_{L^2_xL^2_p}
		\\
		\leq
		C\mathcal{F}_{\infty}^{N+1}
		+
		\frac{C_{k} \mathcal{F}_{\infty}^{N+k} \log t}{t}
		\leq
		C\mathcal{F}_{\infty}^{N+1},
	\end{multline*}
	where the constant $C$ does not depend on $k$, and the latter inequality follows after taking $T_0$ suitably large.  The second term on the right hand side of \eqref{eq:defofFk} is estimated in Theorem \ref{thm:approx} (see \eqref{eq:thmapprox1}).  The proof of \eqref{eq:Fk} then follows.
\end{proof}

In the following proposition the size of the support of $\check{f}_{[k]}$ is estimated.

\begin{proposition}[The support of $\check{f}_{[k]}$] \label{prop:suppf}
	If $T_0 = T_0(\mathcal{F}_{\infty}^{N+2k+4})$ is sufficiently large, then
	\begin{align*}
		\supp(\check{f}_{[k]}(t,\cdot,\cdot))
		&
		\subset
		\{
		(x,p) \subset \mathbb{R}^3 \times \mathbb{R}^3
		\mid
		\vert x - t p + \log t \nabla \phi_{\infty}(p) \vert \leq 2\mathcal{F}_{\infty}^3 + B, \vert p \vert \leq 2 B
		\}
		\\
		&
		\subset
		\{ (x,p) \subset \mathbb{R}^3 \times \mathbb{R}^3 \mid \vert x \vert \leq 2\mathcal{F}_{\infty}^3 + 3 B t, \vert p \vert \leq 2 B \}.
	\end{align*}
\end{proposition}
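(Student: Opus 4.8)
The plan is to propagate the support of $f$ along the characteristics of the Vlasov operator $\gs_{\phi}$ and to combine the outcome with the support property \eqref{eq:fklsupport} of the approximate solution. Since $\gs_{\phi} f = 0$, the function $f$ is constant along the characteristics $s\mapsto (X(s),P(s))$ solving $\dot X = P$, $\dot P = \nabla_x\phi(s,X)$, and these generate a diffeomorphism of phase space; hence $\supp(f(t,\cdot,\cdot))$ is the image, under the flow from time $T_f$ to time $t$, of $\supp(f(T_f,\cdot,\cdot))$. By the final condition \eqref{eq:fTf} and \eqref{eq:fklsupport}, this latter set is contained in $\{(x,p)\mid \vert x - T_f p + \log T_f\,\nabla\phi_{\infty}(p)\vert \leq B,\ \vert p\vert \leq B\}$. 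Thus the statement reduces to showing that along any characteristic defined on $[t,T_f]$ with $\vert P(T_f)\vert \leq B$ and $\vert Y(T_f)\vert \leq B$, where $Y(s) := X(s) - sP(s) + \log s\,\nabla\phi_{\infty}(P(s))$, one has $\vert P(s)\vert \leq 2B$ and $\vert Y(s)\vert \leq 2\mathcal{F}_{\infty}^3 + B$ for all $s\in[t,T_f]$; the claimed support of $f(t,\cdot,\cdot)$ is then immediate, that of $f_{[k]}(t,\cdot,\cdot)$ lies in the smaller set just named, and the second, coarser inclusion follows by estimating $\vert x\vert \leq t\vert p\vert + \log t\,\Vert\nabla\phi_{\infty}\Vert_{L^{\infty}} + 2\mathcal{F}_{\infty}^3 + B$ and absorbing the logarithm for $t\geq T_0$ large, using \eqref{eq:phiinftypointwise}.

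The heart of the matter is the following computation: differentiating $Y$ along a characteristic and using $\dot X - P = 0$ gives
\[
	\dot Y(s)
	=
	- s\,\nabla_x\phi(s,X(s))
	+
	\frac{1}{s}\nabla\phi_{\infty}(P(s))
	+
	\log s \ \nabla^2\phi_{\infty}(P(s))\,\nabla_x\phi(s,X(s)).
\]
The bootstrap assumption \eqref{eq:ba2} together with \eqref{eq:phiinftypointwise} gives $\vert\nabla_x\phi(s,X)\vert \lesssim \mathcal{F}_{\infty}^{N+2k+4}\,s^{-2}$, so $\vert\dot P\vert \lesssim \mathcal{F}_{\infty}^{N+2k+4}\,s^{-2}$ is integrable on $[T_0,\infty)$; and rewriting $-s\,\nabla_x\phi(s,X) = -s^{-1}\nabla\phi_{\infty}(X/s) + O(\mathcal{F}_{\infty}^{N+2k+4}s^{-3/2})$ via \eqref{eq:ba2}, the first two terms of $\dot Y$ combine to $s^{-1}\big(\nabla\phi_{\infty}(P) - \nabla\phi_{\infty}(X/s)\big)$ plus an integrable error. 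The \emph{crucial cancellation} is that, on any sub-interval of $[t,T_f]$ on which $\vert Y\vert \leq 2\mathcal{F}_{\infty}^3 + B$ and $\vert P\vert \leq 2B$, the identity $X/s = P + Y/s - s^{-1}\log s\,\nabla\phi_{\infty}(P)$ and the bounds $\Vert\nabla\phi_{\infty}\Vert_{L^{\infty}} + \Vert\nabla^2\phi_{\infty}\Vert_{L^{\infty}} \lesssim \mathcal{F}_{\infty}^3$ force $\vert X/s - P\vert \lesssim (\mathcal{F}_{\infty}^3 + B)\,s^{-1}\log s$, so this term, and likewise the last term of $\dot Y$, is $O\big((\mathcal{F}_{\infty}^{N+2k+4})^2(1+B)\,s^{-2}\log s\big)$. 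Altogether $\vert\dot Y(s)\vert \lesssim (\mathcal{F}_{\infty}^{N+2k+4})^2(1+B)\,(\log s)\,s^{-3/2}$ on such a sub-interval.

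The proof is then completed by a continuity argument running backwards from $s = T_f$. Let $s_*$ be the infimum of those $s\in[t,T_f]$ for which $\vert Y(\sigma)\vert \leq 2\mathcal{F}_{\infty}^3 + B$ and $\vert P(\sigma)\vert \leq 2B$ hold for all $\sigma\in[s,T_f]$; these bounds hold with room to spare at $s = T_f$. If $s_* > t$, integrating the above estimates from $s_*$ to $T_f$ gives $\vert P(s_*)\vert \leq B + C\,\mathcal{F}_{\infty}^{N+2k+4}\,T_0^{-1}$ and $\vert Y(s_*)\vert \leq B + C\,(\mathcal{F}_{\infty}^{N+2k+4})^2(1+B)\,(\log T_0)\,T_0^{-1/2}$; taking $T_0 = T_0(\mathcal{F}_{\infty}^{N+2k+4})$ sufficiently large makes the right-hand sides strictly less than $2B$ and $2\mathcal{F}_{\infty}^3 + B$ respectively (the case $f_{\infty}\equiv 0$, in which $f$ and $f_{[k]}$ vanish identically, being trivial), contradicting the definition of $s_*$ by continuity of the characteristics. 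Hence $s_* = t$, and the reduction is complete. The main obstacle is precisely the cancellation in $\dot Y$ — that the two $O(s^{-1})$ contributions annihilate, leaving an integrable drift — and the bookkeeping needed to run the continuity argument; both rely essentially on the bootstrap bound \eqref{eq:ba2} and on choosing $T_0$ large.
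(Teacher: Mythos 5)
Your proof is correct and is a close cousin of the paper's, but the implementation differs enough to be worth contrasting. Both arguments propagate the support along the characteristics backwards from $t=T_f$, where the final condition \eqref{eq:fTf} together with the support property \eqref{eq:fklsupport} pins down $\supp(f(T_f,\cdot,\cdot))$, and both hinge on the same cancellation: using \eqref{eq:ba2} to replace $\nabla\phi(s,X)$ by $s^{-2}\nabla\phi_\infty(X/s)$, and then observing that $X/s$ stays close to the momentum so that $\nabla\phi_\infty(X/s)$ nearly cancels the $\frac{1}{s}\nabla\phi_\infty(\cdot)$ term, leaving an integrable drift. Where you differ: the paper does not track $y$ along the flow directly; instead it defines $\overline{X}(s,T_f,x,p)$ and $\overline{P}(s,T_f,x,p)$ as the deviations from explicit approximate trajectories built from the \emph{initial} data $(x,p)$ at $T_f$ (so the reference trajectory is $s\mapsto x-(T_f-s)p - \log(s/T_f)\nabla\phi_\infty(p)$, frozen at $p$), derives the coupled system $\dot{\overline{X}}=\overline{P}$, $\dot{\overline{P}}=\nabla\phi(s,X)-s^{-2}\nabla\phi_\infty(p)$, and closes with a double integration plus Gr\"{o}nwall (Proposition \ref{prop:Gronwall}); a short conversion is then needed to pass from a bound on $\overline{X}$, $\overline{P}$ to the bound on $y(t,X(t),P(t))$. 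You instead track $Y(s)=y(s,X(s),P(s))$ itself, which is precisely the quantity appearing in the support claim, so no conversion step is required — arguably the cleaner normalization. On the other hand you use an open–closed continuity argument whereas the paper gets by with Gr\"{o}nwall; in fact your continuity argument is slightly stronger than needed, since the bound on $P$ follows unconditionally from $|\nabla\phi|\lesssim \mathcal{F}_\infty^{N+2k+4}s^{-2}$ (no bootstrap needed), and once $P$ is bounded the inequality for $|\dot Y|$ is linear in $|Y|$, so Gr\"{o}nwall would close it directly. Both approaches also require the same observation that one may assume $\mathcal{F}_\infty^3>0$ (else $f\equiv f_{[k]}\equiv 0$), and both need $T_0$ large to absorb the $\log$ terms in the coarser second inclusion. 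Your argument is correct as written.
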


\begin{proof}
	Theorem \ref{thm:approx}, in particular the property \eqref{eq:fklsupport}, implies that
	\[
		\supp(f_{[k]})
		\subset
		\{
		(x,p) \subset \mathbb{R}^3 \times \mathbb{R}^3
		\mid
		\vert x - t p + \log t \nabla \phi_{\infty}(p) \vert \leq B, \vert p \vert \leq B
		\}.
	\]
	Consider, therefore the support of $f$.
	Note that, for $f_{T_f}(x,p) := f(T_f,x,p)$,
	\[
		\vert x-T_f p + \log T_f \, \nabla \phi_{\infty} (p) \vert
		\leq
		B,
		\qquad
		\text{for all } (x,p) \in \supp(f_{T_f}),
	\]
	which follows from the definition \eqref{eq:fTf} of $f_{T_f}$ and the fact that each $f_{k,l}$ satisfies \eqref{eq:fklsupport}.  Define the characteristic curves of $\gs$, denoted $X, P \colon [T,T_f] \times [T,T_f] \times \mathbb{R}^3 \times \mathbb{R}^3 \to \mathbb{R}^3$, as solutions of
	\[
		\frac{dX(s,t,x,p)}{ds} = P(s,t,x,p),
		\qquad
		\frac{dP(s,t,x,p)}{ds}
		=
		(\nabla \phi)(s,X(s,t,x,p)),
	\]
	along with the initial conditions
	\[
		X(t,t,x,p) = x,
		\qquad
		P(t,t,x,p) = p.
	\]
	Clearly, for any $x,p \in \mathbb{R}^3$, $T \leq s \leq T_f$,
	\[
		f(s,X(s,T_f,x,p), P(s,T_f,x,p)) = f(T_f,x,p),
	\]
	and so
	\begin{equation} \label{eq:suppfcharacteristics}
		\supp(f)
		=
		\{ (s,X(s,T_f,x,p), P(s,T_f,x,p)) \mid (x,p) \in \supp(f_{T_f}), T \leq s \leq T_f \}.
	\end{equation}
	We proceed by subtracting from $X$ and $P$ suitable approximate solutions of the characteristic equations and estimating the difference (cf.\@ \cite{LiTa}, where similar approximate solutions of the characteristic equations are introduced in the context of the Einstein--Vlasov system).  Define
	\[
		\overline{X}(s,t,x,p) = X(s,t,x,p) - \Big( x - (t-s)p - \log \Big( \frac{s}{t} \Big) \nabla \phi_{\infty}(p) \Big),
		\qquad
		\overline{P}(s,t,x,p) = P(s,t,x,p) - \Big( p - \frac{1}{s} \nabla \phi_{\infty}(p) \Big),
	\]
	so that
	\begin{equation} \label{eq:chareqns}
		\frac{d\overline{X}(s,t,x,p)}{ds} = \overline{P}(s,t,x,p),
		\qquad
		\frac{d\overline{P}(s,t,x,p)}{ds}
		=
		(\nabla \phi)(s,X(s,t,x,p)) - \frac{1}{s^2} \nabla \phi_{\infty}(p),
	\end{equation}
	along with the initial conditions
	\[
		X(t,t,x,p) = 0,
		\qquad
		P(t,t,x,p) = \frac{1}{t} \nabla \phi_{\infty}(p).
	\]
	
	Consider some $(x,p) \in \supp(f_{T_f})$, $T \leq t \leq s \leq T_f$.  Using the fact that
	\[
		\sup_{x\in \mathbb{R}^3}
		\left\vert
		\nabla \phi (t,x) - t^{-2} \nabla \phi_{\infty} \left(\frac{x}{t} \right)
		\right\vert
		\leq
		\frac{\mathcal{F}_{\infty}^{N+2k+4}}{t^{\frac{5}{2}}},
		\qquad
		\sup_{p\in \mathbb{R}^3} \Big( \vert \nabla \phi_{\infty}(p) \vert + \vert \nabla^2 \phi_{\infty}(p) \vert \Big) \lesssim \mathcal{F}_{\infty}^{3},
	\]
	(see the bootstrap assumption \eqref{eq:ba2} and the fact \eqref{eq:phiinftypointwise}) and,
	\begin{align*}
		\left\vert \frac{X(s,T_f,x,p)}{s} - p \right\vert
		&
		=
		s^{-1}\left\vert \overline{X}(s,T_f,x,p) - \log s \, \nabla \phi_{\infty}(p) + x - T_f p + \log T_f \nabla \phi_{\infty}(p) \right\vert
		\\
		&
		\lesssim
		\frac{\vert \overline{X}(s,T_f,x,p) \vert}{s} + \frac{\log s}{s} \mathcal{F}_{\infty}^{3} + \frac{B}{s}
		,
	\end{align*}
	it follows that
	\begin{align*}
		\left\vert \frac{d\overline{P}(s,T_f,x,p)}{ds} \right\vert
		&
		\lesssim
		\left\vert (\nabla \phi)(s,X(s,T_f,x,p)) - \frac{1}{s^2} (\nabla \phi_{\infty}) \left(\frac{X(s,T_f,x,p)}{s} \right) \right\vert
		\\
		&
		\quad
		+
		\frac{1}{s^2} \left\vert (\nabla \phi_{\infty}) \left( \frac{X(s,T_f,x,p)}{s} \right) - (\nabla \phi_{\infty})(p) \right\vert
		\\
		&
		\lesssim
		\mathcal{F}_{\infty}^{3} \frac{\vert \overline{X}(s,T_f,x,p) \vert}{s^3}
		+
		\frac{\mathcal{F}_{\infty}^{N+2k+4}}{s^{\frac{5}{2}}}
		+
		\frac{\mathcal{F}_{\infty}^{3} (B + \log s \mathcal{F}_{\infty}^{3})}{s^3}.
	\end{align*}
	Integrating the equations \eqref{eq:chareqns} backwards from time $T_f$, the fact that
	\[
		\int_t^{T_f} \int_s^{T_f} \frac{\vert \overline{X}(s',T_f,x,p) \vert}{(s')^3} ds' ds
		=
		\int_t^{T_f} (s-t) \frac{\vert \overline{X}(s,T_f,x,p) \vert}{s^3} ds,
	\]
	then implies that
	\begin{align*}
		\vert \overline{X}(t,T_f,x,p) \vert
		&
		\lesssim
		\vert \nabla \phi_{\infty}(p) \vert
		+
		\frac{\mathcal{F}_{\infty}^{N+2k+4}}{t^{\frac{1}{2}}}
		+
		\frac{\mathcal{F}_{\infty}^{3} (B + \log t \mathcal{F}_{\infty}^{3})}{t}
		+
		\mathcal{F}_{\infty}^{3} \int_t^{T_f} \frac{\vert \overline{X}(s,T_f,x,p) \vert}{s^2} ds
		\\
		&
		\leq
		\frac{3}{2}\mathcal{F}_{\infty}^{3}
		+
		C\mathcal{F}_{\infty}^{3} \int_t^{T_f} \frac{\vert \overline{X}(s,T_f,x,p) \vert}{s^2} ds
		,
	\end{align*}
	if $T_0$ is suitably large.
	The Gr\"{o}nwall inequality, Proposition \ref{prop:Gronwall}, then gives
	\[
		\vert \overline{X}(t,T_f,x,p) \vert + t \vert \overline{P}(t,T_f,x,p) \vert
		\leq
		\frac{3}{2}\mathcal{F}_{\infty}^3
		+
		C(\mathcal{F}_{\infty}^3)^2 \int_t^{T_f} \frac{1}{s^2} e^{\int_t^s \frac{C\mathcal{F}_{\infty}^{3}}{(s')^2}ds'} ds
		\leq
		\frac{3}{2}\mathcal{F}_{\infty}^3
		+
		\frac{C(\mathcal{F}_{\infty}^3)^2 e^{\frac{C\mathcal{F}_{\infty}^{3}}{t}}}{t}.
	\]
	It then follows that
	\[
		\big\vert
		X(t,T_f,x,p) - t p + \log t \nabla \phi_{\infty}(p) \big)
		\big\vert
		+
		t
		\vert
		P(t,T_f,x,p) - p
		\vert
		\leq
		2\mathcal{F}_{\infty}^3 + B,
	\]
	which, by \eqref{eq:suppfcharacteristics}, completes the proof.
\end{proof}

\subsection{Estimates for remainders}
\label{subsec:fcheckkestimates}

In this section the remainder quantities $\check{f}_{[k]}$, $\check{\varrho}_{[k]}$, $\check{\phi}_{[k]}$ are estimated.  The main result of this section is the following estimate for $\check{f}_{[k]}$.

\begin{proposition}[Estimates for $L^I (t^{-1}\partial_p)^J\check{f}_{[k]}$] \label{prop:fcheckkho}
	Under the assumptions of Theorem \ref{thm:bootstrap}, if $k$ is sufficiently large ($k> (C+1) \mathcal{F}_{\infty}^{N+1}-1$ suffices, where $C$ is the constant from \eqref{eq:Fk}), and $T_0$ is sufficiently large (depending on $k$ and $\mathcal{F}_{\infty}^{N+2k+4}$) then, for all $T\leq t \leq T_f$,
	\begin{equation} \label{eq:fcheckkho2}
		\sum_{\vert I \vert + \vert J \vert \leq N}
		\Vert L^I (t^{-1}\partial_p)^J \check{f}_{[k]} (t,\cdot,\cdot) \Vert_{L^2_x L^2_p}
		\leq
		C_k
		(\mathcal{F}_{\infty}^{N+2k+4})^2
		\frac{ (\log t)^{1+k}
		}{t^{1+ k}}.
	\end{equation}
\end{proposition}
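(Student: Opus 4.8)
The plan is to derive a backward Grönwall inequality for the $L^2$ energy
\[
\check{\mathcal{E}}_{[k]}(t):=\sum_{\vert I\vert+\vert J\vert\leq N}\big\Vert L^I(t^{-1}\partial_p)^J\check{f}_{[k]}(t,\cdot,\cdot)\big\Vert_{L^2_xL^2_p}
\]
of \eqref{eq:introEtdef} and to close it using the largeness of $k$. For each pair $(I,J)$ with $\vert I\vert+\vert J\vert\leq N$ I would commute $L^I(t^{-1}\partial_p)^J$ through the first equation of \eqref{eq:differenceeqns}; by Proposition \ref{prop:commVlasovLS} this gives $\gs_\phi\big(L^I(t^{-1}\partial_p)^J\check{f}_{[k]}\big)=L^I(t^{-1}\partial_p)^J F_{[k]}+\mathcal{C}^{I,J}$, where $\mathcal{C}^{I,J}$ is a sum of commutator terms of total order $\leq N$. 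Proposition \ref{prop:generalVlasovestimates}, together with the vanishing final data \eqref{eq:fcheckTf}, then yields, for $T\leq t\leq T_f$,
\[
\big\Vert L^I(t^{-1}\partial_p)^J\check{f}_{[k]}(t)\big\Vert_{L^2_xL^2_p}\lesssim\int_t^{T_f}\Big(\big\Vert L^I(t^{-1}\partial_p)^J F_{[k]}(s)\big\Vert_{L^2_xL^2_p}+\big\Vert\mathcal{C}^{I,J}(s)\big\Vert_{L^2_xL^2_p}\Big)\,ds,
\]
(with the single dangerous commutator term below run instead directly on $\frac{d}{dt}\Vert L^I(t^{-1}\partial_p)^J\check{f}_{[k]}\Vert_{L^2}^2=2\int\!\!\int L^I(t^{-1}\partial_p)^J\check{f}_{[k]}\cdot\gs_\phi(\cdots)$, so as to have an $x$-integration by parts available). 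Everything thus reduces to bounding the two integrands by $C(\mathcal{F}_\infty^{N+1}+1)\,s^{-1}\check{\mathcal{E}}_{[k]}(s)$ plus an integrable source of size $C_k(\mathcal{F}_\infty^{N+2k+3})^2(\log s)^{1+k}s^{-2-k}$.

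For the inhomogeneity I would invoke Proposition \ref{prop:systemfcheck}: the $\gs_{\phi_{[k]}}f_{[k]}$-part of $F_{[k]}$ is already of the advertised source size, and the $\partial_{x^i}\check{\phi}_{[k]}\partial_{p^i}f_{[k]}$-part is bounded by $\mathcal{F}_\infty^{N+1}s^{-3/2}\sum_{\vert I\vert\leq N+1}\Vert(s\partial_x)^I\check{\phi}_{[k]}\Vert_{L^2}$. This closes through the chain $\check{\phi}_{[k]}\leadsto\check{\varrho}_{[k]}\leadsto\check{f}_{[k]}$: the gradient estimate \eqref{eq:gradelliptic} applied to $\Delta\check{\phi}_{[k]}=\check{\varrho}_{[k]}$ gives $\Vert(s\partial_x)^I\nabla\check{\phi}_{[k]}\Vert_{L^2}\lesssim s\Vert(s\partial_x)^I\check{\varrho}_{[k]}\Vert_{L^2}$ (legitimate since $\check{\varrho}_{[k]}$ is supported in $\vert x\vert\lesssim s$ by Proposition \ref{prop:suppf}, so the Hardy weight $r$ is $\lesssim s$ there); then $\check{\varrho}_{[k]}=\int\check{f}_{[k]}\,dp+P_{[k]}$, the $O(s^{-3})$ bound on the $p$-support volume from Proposition \ref{prop:suppf}, and Cauchy--Schwarz give $\Vert(s\partial_x)^I\int\check{f}_{[k]}\,dp\Vert_{L^2_x}\lesssim s^{-3/2}\sum\Vert(s\partial_x)^I\check{f}_{[k]}\Vert_{L^2_xL^2_p}$, while $(s\partial_x)^I$ is rewritten in terms of the $L_j$ and $(s^{-1}\partial_p)^{j'}$ at the price of lower-order terms carrying factors $\nabla^{\geq2}\phi_\infty$ controlled by \eqref{eq:phiinftypointwise}; the $P_{[k]}$ contribution is directly estimated by \eqref{eq:Pk}. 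The net is that the $F_{[k]}$ integrand is $\lesssim\mathcal{F}_\infty^{N+1}s^{-1}\check{\mathcal{E}}_{[k]}(s)+C_k(\mathcal{F}_\infty^{N+2k+3})^2(\log s)^{1+k}s^{-2-k}$.

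The commutator $\mathcal{C}^{I,J}$ is expanded using \eqref{eq:gsLtildecommutator}--\eqref{eq:gsdpcomm}. Every term except the one built from $t\partial_{x^i}\partial_{x^k}\phi$ is benign: using \eqref{eq:phiinftypointwise} for the $\nabla^2\phi_\infty,\nabla^3\phi_\infty$ factors, the bootstrap \eqref{eq:ba2} for $\nabla\phi$, and the bootstrap \eqref{eq:ba} with the structure of $L_i$ and $t^{-1}\partial_{p^i}$, each is bounded in $L^2_xL^2_p$ by $Cs^{-1}\log s\cdot\check{\mathcal{E}}_{[k]}(s)$ plus strictly lower-order pieces, exactly as in Remark \ref{rmk:goodvectorfields}. \textbf{The main obstacle is the term containing $t\partial_{x^i}\partial_{x^k}\phi$}, since the paper deliberately uses only the gradient estimate \eqref{eq:gradelliptic} and not the full Hessian estimate \eqref{eq:fullelliptic} (Remark \ref{rmk:ellipticity}), so $\nabla^2\check{\phi}_{[k]}$ is not directly controlled in $L^2$. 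I would split $\phi=\phi_{[k]}+\check{\phi}_{[k]}$: for the $\phi_{[k]}$-part, the combination $t^{-2}\partial_i\partial_k\phi_\infty(p)-t\partial_{x^i}\partial_{x^k}\phi_{[k]}$ is $O((\log s)^{1+k}s^{-3})$ on $\supp(\check{f}_{[k]})$, because there $x/t=p+O(\log t/t)$ and by the expansion of $\nabla\phi_{[k]}$ of Proposition \ref{prop:phiderivexpansion} (the higher-order analogue of the cancellation \eqref{eq:intronablaphicancel}); for the leftover $t\partial_{x^i}\partial_{x^k}\check{\phi}_{[k]}=\partial_{x^i}(t\partial_{x^k}\check{\phi}_{[k]})$ one exploits that $\phi$ is $p$-independent, integrating by parts in $x^i$ in the energy identity and moving the derivative onto the lower-order $\check{f}_{[k]}$-factor and onto $t\partial_{x^k}\check{\phi}_{[k]}$ (which is controlled in $L^\infty_x$ by $\mathcal{F}_\infty^{N+1}s^{-1/2}$ via Remark \ref{rmk:Sobolev}, \eqref{eq:gradelliptic} and the $\check{\varrho}_{[k]}\leadsto\check{f}_{[k]}$ chain), using $\partial_{x^i}=s^{-1}(L_i-\partial_{p^i}-\tfrac{\log s}{s}\partial_i\partial_j\phi_\infty\partial_{p^j})$ to keep all derivatives inside $\check{\mathcal{E}}_{[k]}$, and converting the resulting $L^1_x$ integral back to $L^2_x$ via $\supp(\check{f}_{[k]})\subset\{\vert x\vert\lesssim s\}$ from Proposition \ref{prop:suppf}. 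The delicate calculational point is the bookkeeping that keeps every intermediate quantity at order $\leq N$ and avoids spurious growing powers of $s$ — which is precisely what the weighted vector-field calculus ($L_i$, $t^{-1}\partial_{p^i}$) was set up for. Collecting all terms, $\mathcal{C}^{I,J}$ contributes $\lesssim(\mathcal{F}_\infty^{N+1}+1)s^{-1}\check{\mathcal{E}}_{[k]}(s)+C_k(\mathcal{F}_\infty^{N+2k+3})^2(\log s)^{1+k}s^{-2-k}$ as well.

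Putting the pieces together, $\check{\mathcal{E}}_{[k]}(t)\leq C(\mathcal{F}_\infty^{N+1}+1)\int_t^{T_f}s^{-1}\check{\mathcal{E}}_{[k]}(s)\,ds+C_k(\mathcal{F}_\infty^{N+2k+3})^2(\log t)^{1+k}t^{-(1+k)}$ for $T\leq t\leq T_f$ (the extra power of $s$ relative to the $F_{[k]}$ source is gained from the $\int_t^{T_f}$; an integration by parts against $(\log s)^{1+k}$ keeps the log power unchanged). The Grönwall inequality, Proposition \ref{prop:Gronwall}, with $a(s)=C(\mathcal{F}_\infty^{N+1}+1)s^{-1}$, then gives a remainder integral $t^{-C(\mathcal{F}_\infty^{N+1}+1)}\int_t^{T_f}(\log s)^{1+k}s^{-(2+k)+C(\mathcal{F}_\infty^{N+1}+1)}\,ds$, and the hypothesis $k>(C+1)\mathcal{F}_\infty^{N+1}-1$ makes the exponent $(2+k)-C(\mathcal{F}_\infty^{N+1}+1)>1$, so this integral converges uniformly in $T_f$ and equals $O\big((\log t)^{1+k}t^{-(1+k)+C(\mathcal{F}_\infty^{N+1}+1)}\big)$; the powers of $t$ cancel and one is left with $\check{\mathcal{E}}_{[k]}(t)\leq C_k(\mathcal{F}_\infty^{N+2k+4})^2(\log t)^{1+k}t^{-(1+k)}$ — with no logarithmic loss — after absorbing $(\mathcal{F}_\infty^{N+2k+3})^3$ into $(\mathcal{F}_\infty^{N+2k+4})^2$ by enlarging the suppressed exponent $\mathfrak{n}$. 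This is \eqref{eq:fcheckkho2}. (All the $T_0$-dependent errors used along the way — in the $\phi_{[k]}$-cancellation, in Proposition \ref{prop:systemfcheck}, in replacing $\check{\mathcal{E}}_{[k]}$ by its bootstrap bound where a quadratic term occurs — are arranged to be small by taking $T_0$ large depending on $k$ and $\mathcal{F}_\infty^{N+2k+4}$.)
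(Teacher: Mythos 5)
Your proposal follows the same overall architecture as the paper's proof: commute the Vlasov operator with $L^I(t^{-1}\partial_p)^J$, close the chain $\check{\phi}_{[k]}\leadsto\check{\varrho}_{[k]}\leadsto\check{f}_{[k]}$ via the gradient estimate, Cauchy--Schwarz on the $p$-support, and the estimate for $P_{[k]}$, and then use the Gr\"onwall inequality with the largeness of $k$ to beat the exponent $C\mathcal{F}_\infty^{N+1}$ coming from $a(s)=C\mathcal{F}_\infty^{N+1}s^{-1}$. This is essentially Propositions \ref{prop:systemfcheck}--\ref{prop:fcheckkcommuted} followed by the proof as given.

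The one place where you depart materially, and where you manufacture unnecessary difficulty, is the treatment of the Hessian term $t\partial_{x^i}\partial_{x^k}\phi$ in \eqref{eq:gsLtildecommutator}. You write that ``$\nabla^2\check{\phi}_{[k]}$ is not directly controlled in $L^2$'' and propose an integration by parts in $x$ inside the energy identity, converting an $L^1_x$ integral back to $L^2_x$ via the support. This is not needed: since $t\partial_{x^j}$ commutes with $\Delta_{\mathbb{R}^3}$, Proposition \ref{prop:phicheckho} already gives
\[
\Vert \nabla (t\partial_x)^I \check{\phi}_{[k]}(t,\cdot) \Vert_{L^2} \lesssim t \Vert (t\partial_x)^I \check{\varrho}_{[k]}(t,\cdot) \Vert_{L^2}
\]
for all $\vert I \vert \leq N$, which in particular controls $t^2\partial^2\check\phi$ in $L^2$ in terms of $(t\partial_x)^1\check\varrho$, and so up to $N+1$ weighted spatial derivatives of $\check\phi$ in terms of $N$ derivatives of $\check\varrho$ (hence of $\check f$). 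Moreover, since $\check\phi$ is $p$-independent, $L_j(t\partial_x)^K\check\phi = (t\partial_x)^{K+e_j}\check\phi$, so repeated commutation stays inside the same family of quantities. Remark \ref{rmk:ellipticity} rules out only the \emph{full} elliptic estimate \eqref{eq:fullelliptic}, i.e.\@ $\Vert\partial^2 h\Vert_{L^2}\lesssim\Vert\Delta h\Vert_{L^2}$; the gradient estimate at one derivative higher is still available and is exactly what is used in \eqref{eq:commuted1}. The integration-by-parts detour is therefore a red herring, though it could likely be made to work.

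Two smaller points. First, the paper splits $\phi = \phi_{[0]} + \check{\phi}_{[0]}$ inside the commutator (not $\phi_{[k]}+\check{\phi}_{[k]}$ as you do) and then bridges to $\check\phi_{[k]}$ via \eqref{eq:fKestimate2}; these are interchangeable. Second, your claim that $t^{-2}\partial_i\partial_k\phi_\infty(p) - t\partial_{x^i}\partial_{x^k}\phi_{[k]}$ is $O\big((\log s)^{1+k}s^{-3}\big)$ on the support is not correct --- Proposition \ref{prop:phiderivexpansion} gives corrections already at order $\log t/t^3$ --- but this is harmless since $\log t/t^2 \cdot t^{-1}\partial_p\check f$ is integrable and in fact better than the $s^{-1}\check{\mathcal{E}}$ bound you need.
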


The reader is again referred to Section \ref{subsec:introexistence} for an overview of the proof of Proposition \ref{prop:fcheckkho}.
The proof requires the following preliminary results.

\begin{proposition}[Estimates for $(t\partial_x)^I \check{\phi}_{[k]}$] \label{prop:phicheckho}
	For any multi-index $I$ such that $\vert I \vert \leq N$, for all $k \geq 0$ and all $T\leq t \leq T_f$,
	\[
		\Vert \nabla (t\partial_x)^I \check{\phi}_{[k]} (t,\cdot) \Vert_{L^2}
		\lesssim
		t \Vert (t\partial_x)^I \check{\varrho}_{[k]} (t,\cdot) \Vert_{L^2}.
	\]
\end{proposition}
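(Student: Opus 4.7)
The plan is to combine the Poisson equation satisfied by $\check{\phi}_{[k]}$, the fact that the Laplacian commutes with $(t\partial_x)^I$ (since $t$ is constant in $x$ and the Cartesian partial derivatives commute with $\Delta_{\mathbb{R}^3}$), the gradient estimate of Proposition \ref{prop:gradelliptic}, and a support bound on $\check{\varrho}_{[k]}$ inherited from the support properties of $\varrho$ and $\varrho_{[k]}$.

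First I would note that, from the second equation in \eqref{eq:differenceeqns}, $\Delta \check{\phi}_{[k]} = \check{\varrho}_{[k]}$, so applying $(t\partial_x)^I$ on both sides yields
\[
\Delta \bigl( (t\partial_x)^I \check{\phi}_{[k]} \bigr)(t,\cdot) = (t\partial_x)^I \check{\varrho}_{[k]}(t,\cdot).
\]
Proposition \ref{prop:gradelliptic} then gives
\[
\Vert \nabla (t\partial_x)^I \check{\phi}_{[k]}(t,\cdot) \Vert_{L^2} \lesssim \Vert r \, (t\partial_x)^I \check{\varrho}_{[k]}(t,\cdot) \Vert_{L^2},
\]
where $r(x) = \vert x \vert$.

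Next I would use a support argument to convert the weight $r$ into the factor $t$. The function $\varrho_{[k]}(t,\cdot)$ is supported in $\{\vert x \vert \leq Bt\}$, since \eqref{eq:rhophiKdefthm} expresses $\varrho_{[k]}(t,x)$ in terms of the functions $\varrho_{k,l}(x/t)$ and each $\varrho_{k,l}$ is supported in $\{\vert w \vert \leq B\}$ by \eqref{eq:fklsupport}. On the other hand, by Proposition \ref{prop:suppf}, together with the support of $f_{[k]}$ from Theorem \ref{thm:approx}, the function $f(t,\cdot,\cdot)$ is supported in $\{\vert x \vert \leq 2\mathcal{F}_{\infty}^3 + 3Bt\}$, so the same is true of $\varrho(t,\cdot) = \int f(t,\cdot,p)\,dp$. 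Consequently $\check{\varrho}_{[k]}(t,\cdot)$, and hence $(t\partial_x)^I \check{\varrho}_{[k]}(t,\cdot)$, is supported in $\{\vert x \vert \leq Ct\}$ for some constant $C > 0$ (once $T_0$ is chosen large). On this set $r(x) \leq Ct$, yielding
\[
\Vert r \, (t\partial_x)^I \check{\varrho}_{[k]}(t,\cdot) \Vert_{L^2} \leq C t \, \Vert (t\partial_x)^I \check{\varrho}_{[k]}(t,\cdot) \Vert_{L^2},
\]
which combined with the previous display completes the proof.

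There is no real obstacle here: the only subtlety is ensuring that the weight $r$ produced by the Hardy-based gradient estimate \eqref{eq:gradelliptic} can indeed be replaced by $t$, and this is precisely what the support bound on $\check{\varrho}_{[k]}$ provides. The fact that $(t\partial_x)^I$ is a differential operator with constant (in $x$) coefficients is crucial both for commuting with $\Delta$ and for preserving the relevant support property.
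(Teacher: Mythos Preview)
Your proof is correct and follows essentially the same approach as the paper: commute $(t\partial_x)^I$ through $\Delta$, apply the gradient estimate of Proposition~\ref{prop:gradelliptic}, and then replace the weight $r$ by $t$ using the support bound on $\check{\varrho}_{[k]}$ coming from Proposition~\ref{prop:suppf} and \eqref{eq:fklsupport}. The paper's proof is simply a terser version of exactly this argument.
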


\begin{proof}
	Recall that
	\[
		\Delta_{\mathbb{R}^3} \check{\phi}_{[k]}
		=
		\check{\varrho}_{[k]}.
	\]
	The proof follows from Proposition \ref{prop:gradelliptic} and the fact that $\vert x \vert \leq 3Bt$ in $\mathrm{supp}(\check{\varrho}_{[k]})$ (see Proposition \ref{prop:suppf}, and the property \eqref{eq:fklsupport}), since $t \partial_{x^i}$ commutes with $\Delta_{\mathbb{R}^3}$.
\end{proof}

\begin{proposition}[Estimates for $(t\partial_x)^I \check{\varrho}_{[k]}$] \label{prop:rhocheckho}
	For any multi-index $I$ such that $\vert I \vert \leq N$, for all $k \geq 0$ and all $T\leq t \leq T_f$,
	\[
		\Vert (t\partial_x)^I \check{\varrho}_{[k]} (t,\cdot) \Vert_{L^2_x}
		\lesssim
		t^{-\frac{3}{2}} \Vert L^I \check{f}_{[k]} (t,\cdot,\cdot) \Vert_{L^2_x L^2_p}
		+
		C_{k} \mathcal{F}_{\infty}^{N+2k+3}
		\frac{(\log t)^{1+k}}{t^{\frac{5}{2} + k}}.
	\]
\end{proposition}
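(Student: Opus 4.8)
The plan is to start from the third relation in \eqref{eq:differenceeqns}, which expresses $\check\varrho_{[k]}$ as the spatial density of $\check f_{[k]}$ up to the explicit error $P_{[k]}$; applying $(t\partial_x)^I$ gives
\[
	(t\partial_x)^I \check\varrho_{[k]}(t,x)
	=
	(t\partial_x)^I\Big( \int_{\mathbb{R}^3} \check f_{[k]}(t,x,p)\, dp \Big)
	+
	(t\partial_x)^I P_{[k]}(t,x).
\]
The contribution of the second term is controlled directly by the estimate \eqref{eq:Pk} of Proposition \ref{prop:systemfcheck}, which produces precisely the $C_k \mathcal{F}_\infty^{N+2k+3}(\log t)^{1+k} t^{-5/2-k}$ term on the right hand side of the claimed inequality. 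So the remaining work is to estimate $(t\partial_x)^I\big(\int \check f_{[k]}\, dp\big)$, and the key idea is to trade the $x$-derivatives $(t\partial_x)^I$ for the good vector fields $L^I$ under the integral sign, at the cost only of lower order errors.

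To do this I would use, exactly as in the proof of Proposition \ref{prop:Sobolevvf}, the identity
\[
	t\partial_{x^i}\Big( \int_{\mathbb{R}^3} h(x,p)\, dp \Big)
	=
	\int_{\mathbb{R}^3} L_i h(x,p)\, dp
	+
	\frac{\log t}{t}\int_{\mathbb{R}^3} \partial_i \Delta\phi_\infty(p)\, h(x,p)\, dp,
\]
valid for any smooth $h$ with compact $p$-support (it follows from $\int \partial_{p^i} h\, dp = 0$ and $\int \partial_{p^j}(\partial_i\partial_j\phi_\infty(p)\, h)\, dp = 0$). Iterating this $|I|$ times, and using that $L_i$ maps functions of $p$ alone to functions of $p$ alone, one obtains
\[
	(t\partial_x)^I\Big( \int_{\mathbb{R}^3} \check f_{[k]}(t,x,p)\, dp \Big)
	=
	\int_{\mathbb{R}^3} L^I \check f_{[k]}(t,x,p)\, dp
	+
	\sum_{|I'| < |I|} \int_{\mathbb{R}^3} m_{I'}(t,p)\, L^{I'} \check f_{[k]}(t,x,p)\, dp,
\]
where each $m_{I'}(t,p)$ is a product of at least $|I|-|I'| \geq 1$ factors of the form $\tfrac{\log t}{t}\, \partial^J \Delta\phi_\infty(p)$; by \eqref{eq:phiinftypointwise} these satisfy $|m_{I'}(t,p)| \lesssim \big(\tfrac{\log t}{t}\, \mathcal{F}_\infty^{N+3}\big)^{|I|-|I'|} \leq 1$ once $T_0$ is sufficiently large.

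The final step is to take $L^2_x$ norms. By Proposition \ref{prop:suppf}, for $T_0$ large the $p$-support of $\check f_{[k]}(t,x,\cdot)$ is contained in a ball of radius $\lesssim t^{-1}$ and so has Lebesgue measure $\lesssim t^{-3}$ (cf.\ \eqref{eq:introindicator}); hence, by Cauchy--Schwarz in $p$, any smooth $g$ with the same support properties as $\check f_{[k]}$ obeys $\Vert \int_{\mathbb{R}^3} g(\cdot,p)\, dp \Vert_{L^2_x} \lesssim t^{-3/2} \Vert g \Vert_{L^2_x L^2_p}$. Applied to $g = L^I \check f_{[k]}$ this gives the main term $t^{-3/2}\Vert L^I \check f_{[k]}(t,\cdot,\cdot)\Vert_{L^2_x L^2_p}$, and applied to the correction terms it gives $t^{-3/2}\sum_{|I'| < |I|}\Vert L^{I'}\check f_{[k]}(t,\cdot,\cdot)\Vert_{L^2_x L^2_p}$, which is of the same form (and in fact carries an additional $\tfrac{\log t}{t}$ gain, irrelevant here) and is absorbed once the estimate is summed over all $|I| \leq N$, as it always is in the applications. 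I do not expect a serious obstacle here: this is essentially a bookkeeping lemma, and the only delicate point is the commutation $(t\partial_x)^I \leftrightarrow L^I$ beneath the integral --- one must verify that every error term produced is of $\partial_p$-type, hence either integrates to zero or yields one of the controlled $\tfrac{\log t}{t}$ corrections above --- together with the $t^{-3}$ bound on the measure of the $p$-support, which is exactly the content of Proposition \ref{prop:suppf}.
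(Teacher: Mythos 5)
Your proof is correct and follows essentially the same structure as the paper's: decompose $\check{\varrho}_{[k]} = \int\check f_{[k]}\,dp + P_{[k]}$, control $(t\partial_x)^I P_{[k]}$ via Proposition \ref{prop:systemfcheck}, trade $(t\partial_x)^I$ for $L^I$ under the $p$-integral, and close with Cauchy--Schwarz together with the $t^{-3}$ bound on the $p$-support measure from Proposition \ref{prop:suppf}.

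There is, however, one point where your version is actually more careful than the paper's, and it is worth flagging. The paper's proof asserts the \emph{exact} identity $(t\partial_x)^I\check{\varrho}_{[k]} = \int L^I\check f_{[k]}\,dp + (t\partial_x)^I P_{[k]}$, deducing it from $\int\partial_{p^i}L^J\check f_{[k]}\,dp = 0$. But this deduction does not go through as stated: the $p$-vector-field part $Y_i = \partial_{p^i} + \tfrac{\log t}{t}\partial_i\partial_j\phi_\infty(p)\partial_{p^j}$ of $L_i$ is not divergence-free in $p$ (its divergence is $\tfrac{\log t}{t}\partial_i\Delta\phi_\infty(p)$), so integrating $L_ih$ over $p$ produces the correction $\tfrac{\log t}{t}\int\partial_i\Delta\phi_\infty(p)\,h\,dp$ --- exactly the term that the paper itself records in the proof of Proposition \ref{prop:Sobolevvf}. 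Your iteration of that correct identity, with the resulting lower-order terms $m_{I'}(t,p)L^{I'}\check f_{[k]}$ bounded by powers of $\tfrac{\log t}{t}\mathcal{F}_\infty^{N+3}\leq 1$, is the right argument. The upshot, as you note, is that the honest conclusion has $\sum_{|I'|\leq|I|}\Vert L^{I'}\check f_{[k]}\Vert_{L^2_xL^2_p}$ on the right rather than the single term $\Vert L^I\check f_{[k]}\Vert_{L^2_xL^2_p}$; this is immaterial in every application of the proposition (they all sum over $|I|\leq N$), but the proposition as literally stated is slightly too strong, and your remark correctly identifies and defuses that discrepancy. One could tighten your bookkeeping a little --- when you iterate, the lower-order coefficients can also pick up $Y_i$-derivatives of earlier multipliers, not just bare factors of $\tfrac{\log t}{t}\partial^J\Delta\phi_\infty$ --- but every such factor is controlled by $\mathcal{F}_\infty$-type quantities via \eqref{eq:phiinftypointwise}, so the estimate stands.
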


\begin{proof}
	Recall (see Proposition \ref{prop:systemfcheck}) that
	\[
		\check{\varrho}_{[k]}(t,x) 
		=
		\int_{\mathbb{R}^3} \check{f}_{[k]}(t,x,p) dp
		+
		P_{[k]},
	\]
	where $P_{[k]}$ satisfies \eqref{eq:Pk}.
	Since
	\[
		\int_{\mathbb{R}^3} \partial_{p^i} L^J \check{f}_{[k]} (t,x,p) dp = 0,
	\]
	for $i=1,2,3$ and each multi-index $J$, it follows that, for any multi-index $I$,
	\[
		(t\partial_x)^I \check{\varrho}_{[k]} (t,x) = \int_{\mathbb{R}^3} L^I \check{f}_{[k]} (t,x,p) dp
		+
		(t\partial_x)^I P_{[k]} (t,x).
	\]
	Hence
	\begin{multline*}
		\Vert (t\partial_x)^I \check{\varrho}_{[k]} (t,\cdot) \Vert_{L^2_x}
		\lesssim
		\Big( \int_{\mathbb{R}^3} \Big( \int_{\mathbb{R}^3} L^I \check{f}_{[k]} (t,x,p) dp \Big)^2 dx \Big)^{\frac{1}{2}}
		+
		\Vert (t\partial_x)^I P_{[k]} (t,\cdot) \Vert_{L^2_x}
		\\
		\lesssim
		\Big( 
		\int_{\mathbb{R}^3} \int_{\mathbb{R}^3} \mathds{1}_{\check{f}_{[k]}} (t,x,p) dp
		\int_{\mathbb{R}^3} \vert L^I \check{f}_{[k]} (t,x,p) \vert^2 dp dx
		\Big)^{\frac{1}{2}}
		+
		\Vert (t\partial_x)^I P_{[k]} (t,\cdot) \Vert_{L^2_x}.
	\end{multline*}
	Note now that
	\[
		\int_{\mathbb{R}^3} \mathds{1}_{\supp(\check{f}_{[k]})} (t,x,p) dp \lesssim \frac{1 + (\mathcal{F}_{\infty}^3)^3}{t^3},
	\]
	which follows from the fact that
	\[
		\Big\vert
		\frac{x}{t}
		+
		\frac{\log t}{t} \nabla\phi_{\infty} \Big( \frac{x}{t} \Big)
		- p
		\Big\vert
		\leq
		\frac{3\mathcal{F}_{\infty}^3 + 2B}{t}
		\quad
		\text{in}
		\quad
		\supp(\check{f}_{[k]}).
	\]
	See Proposition \ref{prop:suppf} (and cf.\@ Proposition \ref{prop:pexpansion}).
	The proof then follows from the estimate \eqref{eq:Pk}.
\end{proof}

In the following proposition, the terms arising from commuting the equation \eqref{eq:differenceeqns} for $\check{f}_{[k]}$ with the vector fields of Section \ref{subsec:vectorfields} are estimated, using the bootstrap assumption \eqref{eq:ba}.

\begin{proposition}[Commuted equation for $\check{f}_{[k]}$] \label{prop:fcheckkcommuted}
	If the bootstrap assumption \eqref{eq:ba} holds, then
	\begin{multline} \label{eq:commuted2}
		\sum_{\vert I \vert + \vert J \vert \leq N}
		\Vert \gs_{\phi} (L^I (t^{-1}\partial_p)^J \check{f}_{[k]}) \Vert_{L^2_x L^2_p}
		\lesssim
		\sum_{\vert I \vert + \vert J \vert \leq N}
		\Vert L^I (t^{-1}\partial_p)^J F_{[k]} \Vert_{L^2_x L^2_p}
		\\
		+
		\frac{C_{k} \mathcal{F}_{\infty}^{N+2k+3} (\log t)^2}{t^2}
		\sum_{\vert I \vert + \vert J \vert =1}^N
		\Vert L^{I} (t^{-1}\partial_p)^{J} \check{f}_{[k]} \Vert_{L^2_x L^2_p}
		+
		(\mathcal{F}_{\infty}^{N+2k+4})^2
		\frac{\log t}{t^{\frac{5}{2}}} 
		\sum_{\vert K \vert = 1}^{N+1} \Vert (t \partial_x)^K \check{\phi}_{[k]} \Vert_{L^2_x}
		.
	\end{multline}
\end{proposition}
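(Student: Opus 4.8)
The plan is to commute the Vlasov equation $\gs_\phi \check f_{[k]} = F_{[k]}$ (the first equation in \eqref{eq:differenceeqns}) with the operators $L^I (t^{-1}\partial_p)^J$ for $|I|+|J|\le N$, collect the commutator terms, and estimate each one in $L^2_xL^2_p$ using the commutation identities of Proposition \ref{prop:commVlasovLS}, the bootstrap assumptions \eqref{eq:ba}--\eqref{eq:ba2}, the support property of Proposition \ref{prop:suppf}, and the approximate-solution estimates of Theorem \ref{thm:approx}. First I would write, schematically, $\gs_\phi (L^I(t^{-1}\partial_p)^J \check f_{[k]}) = L^I(t^{-1}\partial_p)^J F_{[k]} + \sum [\gs_\phi, \text{(vf)}](\cdots)$, where each commutator term is, by \eqref{eq:gsLtildecommutator}--\eqref{eq:gsdpcomm}, a sum of terms in which a coefficient (built from $\nabla^2\phi_{\infty}$, $\nabla^3\phi_{\infty}$, $\nabla^2\phi$, $\nabla\phi$, with explicit powers of $t$ and $\log t$) multiplies a derivative of $\check f_{[k]}$ of order $\le N$ of the same type (i.e.\ another $L^{I'}(t^{-1}\partial_p)^{J'}$ with $|I'|+|J'|\le N$). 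The only genuinely dangerous coefficient is the one involving $t\,\partial_{x^i}\partial_{x^j}\phi$ from the first term of \eqref{eq:gsLtildecommutator}; every other coefficient already carries a factor $\log t/t^2$ (times bounded derivatives of $\phi_{\infty}$, controlled by \eqref{eq:phiinftypointwise}) and so directly produces the middle term on the right of \eqref{eq:commuted2}.

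For the dangerous term, the key is to split $t\,\partial_{x^i}\partial_{x^j}\phi = t\,\partial_{x^i}\partial_{x^j}\check\phi_{[0]} + t^{-2}\partial_i\partial_j\phi_{\infty}(x/t)$ (as in Remark \ref{rmk:goodvectorfields}). The second piece exactly cancels the $t^{-2}\partial_i\partial_k\phi_{\infty}(p)$ term in \eqref{eq:gsLtildecommutator} up to an error controlled by $\sup|\nabla\phi_{\infty}(x/t)-\nabla\phi_{\infty}(p)|$, which by \eqref{eq:intronablaphicancel} and the support property of Proposition \ref{prop:suppf} is $\lesssim \mathcal{F}_\infty^3 (\log t)/t$ times $\|\nabla^3\phi_\infty\|_{L^\infty}$; this feeds into the middle term of \eqref{eq:commuted2}. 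The first piece $t\,\partial_{x^i}\partial_{x^j}\check\phi_{[0]}$ is estimated by writing $\check\phi_{[0]} = \check\phi_{[k]} + (\phi_{[k]}-\phi_{[0]})$: the difference $\phi_{[k]}-\phi_{[0]}$ of approximate solutions is bounded pointwise by $C_k\mathcal{F}_\infty^{N+k}(\log t)/t^4$ (from \eqref{eq:fKestimate2}-type bounds on the approximate $\varrho$'s together with the elliptic estimate of Proposition \ref{prop:gradelliptic}), contributing to the middle term; and the $\check\phi_{[k]}$ part is handled by \emph{not} placing $t^2\partial_x^2\check\phi_{[k]}$ in $L^\infty$ but instead pairing it with $t^{-1}\partial_p\check f_{[k]}$ in $L^\infty$ — using the Sobolev inequality of Proposition \ref{prop:Sobolevvf} and the bootstrap \eqref{eq:ba}, which gives $\|t^{-1}\partial_p L^{I'}(\cdots)\check f_{[k]}\|_{L^\infty_xL^2_p} \lesssim t^{-3/2}(\mathcal{F}_\infty^{N+2k+4})^2$ — so that $t^2\partial_x^2\check\phi_{[k]}$ need only be controlled in $L^2_x$, where $\|(t\partial_x)^{K}\check\phi_{[k]}\|_{L^2}$ with $|K|=2$ appears; this produces the final term of \eqref{eq:commuted2}, with the extra power $t^{-5/2}$ coming from the $t^{-3/2}$ Sobolev gain on $\check f_{[k]}$ together with the $t^{-1}$ from $t^2/t^3$ and the weight bookkeeping.

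I expect the main obstacle to be the careful bookkeeping in the last step: making sure that after distributing $L^I(t^{-1}\partial_p)^J$ across a commutator term, every factor lands in a norm for which we have an estimate, that the total number of derivatives on each factor stays $\le N$ (using $N\ge 6$ so that Sobolev embedding on $\mathbb{R}^3$ and $\mathbb{R}^6$ is available with derivatives to spare), that the distribution of derivatives onto the $\phi_{\infty}$-coefficients stays within the range where \eqref{eq:phiinftypointwise} applies, and that the quadratic-in-$\mathcal{F}_\infty$ bootstrap factors are distributed so that the final bound reads $(\mathcal{F}_\infty^{N+2k+4})^2$ in the last term and $C_k\mathcal{F}_\infty^{N+2k+3}$ in the middle term as claimed. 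None of the individual estimates is hard, but getting the powers of $\log t$, $t$, and $\mathcal{F}_\infty$ to match \eqref{eq:commuted2} precisely — in particular the $(\log t)^2/t^2$ in the middle term, which comes from squaring the single $\log t/t$ loss in \eqref{eq:intronablaphicancel} or from the $(\log t)^2/t^2\,|\nabla^2\phi_\infty|^2$ term already present in the estimate in Remark \ref{rmk:goodvectorfields} — requires attention. The Leibniz-rule combinatorics is routine and would be suppressed.
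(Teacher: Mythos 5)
Your proposal follows the same route as the paper's proof: commute the equation with $L^I(t^{-1}\partial_p)^J$, use Proposition \ref{prop:commVlasovLS}, split the dangerous coefficient $t\,\partial_x^2\phi$ by writing $\phi = \check\phi_{[0]} + t^{-1}\phi_\infty(x/t)$ so that the leading $\phi_\infty$ pieces partially cancel, then further split $\check\phi_{[0]} = \check\phi_{[k]} + (\phi_{[k]}-\phi_{[0]})$ (estimating the latter via \eqref{eq:fKestimate2} and the elliptic estimate), and finally pair $(t\partial_x)^K\check\phi_{[k]}$ in $L^2_x$ against $\check f_{[k]}$ in $L^\infty_x L^2_p$ using Proposition \ref{prop:Sobolevvf} and the bootstrap \eqref{eq:ba} — this is exactly the paper's argument. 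The only slip is a minor bookkeeping one, which you in any case flag yourself: the $\log t$ in the last term of \eqref{eq:commuted2} does not only come from the $t^{-3/2}$ Sobolev gain and the $t^{-1}$ weight on $(t\partial_x)^2\check\phi_{[k]}$; it is forced by the additional commutator contribution $\log t\,\partial_i\partial_k\partial_l\phi_\infty(p)\,\partial_{x^l}\check\phi_{[0]}\cdot t^{-1}\partial_{p^k}$ from \eqref{eq:gsLtildecommutator}, which carries coefficient $\frac{\log t}{t}(t\partial_x)\check\phi_{[0]}$, worse by a $\log t$ than the $\frac{1}{t}(t\partial_x)^2\check\phi_{[0]}$ you focus on.
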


\begin{proof}
	Recall Proposition \ref{prop:commVlasovLS}, which implies that, for $i=1,2,3$,
	\begin{align*}
		\Big[
		\gs_{\phi}
		,
		L_i
		\Big]
		=
		\
		&
		-
		\frac{\log t}{t^2} \partial_i \partial_{j} \phi_{\infty}(p) \,
		L_j
		+
		\Big[
		t^{-1} \Big( \partial_i \partial_{k} \phi_{\infty}(p) - \partial_i \partial_{k} \phi_{\infty}\Big( \frac{x}{t} \Big) \Big)
		+
		\frac{\log t}{t^2} \partial_{l} \phi_{\infty}(p) \, \partial_i \partial_k \partial_{l} \phi_{\infty}(p)
		\\
		&
		+
		\frac{(\log t)^2}{t^2} \partial_i \partial_{j} \phi_{\infty}(p) 
		\partial_j \partial_k \phi_{\infty}(p)
		-
		t \partial_{x^i} \partial_{x^k} \check{\phi}_{[0]}(t,x)
		+
		\log t \, \partial_i \partial_k \partial_{l} \phi_{\infty}(p) \, \partial_{x^l} \check{\phi}_{[0]}(t,x)
		\Big]
		t^{-1} \partial_{p^k}
		,
	\end{align*}
	and
	\begin{align*}
		\Big[
		\gs_{\phi}
		,
		t^{-1} \partial_{p^i}
		\Big]
		=
		-
		\frac{1}{t^2}
		L_i
		+
		\frac{\log t}{t^2} \partial_i \partial_k \phi_{\infty}(p) \, t^{-1} \partial_{p^k}
		.
	\end{align*}
	Note that, for any smooth $h \colon \mathbb{R}^3 \to \mathbb{R}$,
	\[
		L_i \Big( h(p) - h \Big( \frac{x}{t} \Big) \Big)
		=
		(\partial_i h)(p) - (\partial_i h) \Big( \frac{x}{t} \Big)
		+
		\frac{\log t}{t} \partial_i \partial_k \phi_{\infty}(p) (\partial_k h)(p)
		,
	\]
	and so
	\[
		\sup_{\vert y(t,x,p) \vert \leq B} \Big\vert L_i \Big( h(p) - h \Big( \frac{x}{t} \Big) \Big) \Big\vert
		\lesssim
		\Vert \nabla^2 h \Vert_{L^{\infty}}
		\frac{2B + \mathcal{F}_{\infty}^{3} \log t}{t}
		+
		\Vert \nabla h \Vert_{L^{\infty}} \frac{\mathcal{F}_{\infty}^3 \log t}{t}
		.
	\]
	It follows from a simple induction argument, along with \eqref{eq:phiinftypointwise} and the fact that, for any $n \geq 0$ and any multi-indices $I$, $J$ such that $\vert I \vert + \vert J \vert \leq n$,
	\begin{multline} \label{eq:commuted1}
		\Vert \gs_{\phi} (L^I (t^{-1}\partial_p)^J \check{f}_{[k]}) \Vert_{L^2_x L^2_p}
		\leq
		\Vert L^I (t^{-1}\partial_p)^J F_{[k]} \Vert_{L^2_x L^2_p}
		+
		C
		\frac{\mathcal{F}_{\infty}^{n+3} (\log t)^2}{t^2}
		\sum_{\vert I ' \vert + \vert J'\vert =1}^n
		\Vert L^{I'} (t^{-1}\partial_p)^{J'} \check{f}_{[k]} \Vert_{L^2_x L^2_p}
		\\
		+
		C
		(1 + \mathcal{F}_{\infty}^{n+3})
		\,
		\frac{\log t}{t}
		\Big(
		\sum_{\vert K \vert = 1}^{n+1} \Vert (t \partial_x)^K \check{\phi}_{[0]} \Vert_{L^2_x}
		\sum_{\vert I ' \vert + \vert J'\vert =1}^{\lfloor \frac{n+2}{2} \rfloor}
		\Vert L^{I'} (t^{-1}\partial_p)^{J'} \check{f}_{[k]} \Vert_{L^{\infty}_x L^2_p}
		\\
		+
		\sum_{\vert K \vert = 1}^{\lfloor \frac{n+4}{2} \rfloor} \Vert (t \partial_x)^K \check{\phi}_{[0]} \Vert_{L^{\infty}_x}
		\sum_{\vert I ' \vert + \vert J'\vert =1}^n
		\Vert L^{I'} (t^{-1}\partial_p)^{J'} \check{f}_{[k]} \Vert_{L^2_x L^2_p}
		\Big)
		,
	\end{multline}
	The Sobolev inequality of Proposition \ref{prop:Sobolevvf} and the fact \eqref{eq:phiinftypointwise} (since $N \geq 3$), implies that the right hand side of \eqref{eq:commuted1} is controlled by
	\begin{multline*} 
		\sum_{\vert I \vert + \vert J \vert \leq N}
		\Vert L^I (t^{-1}\partial_p)^J F_{[k]} \Vert_{L^2_x L^2_p}
		\\
		+
		\Big(
		\frac{\mathcal{F}_{\infty}^{N+3} (\log t)^2}{t^2}
		+
		(1 + \mathcal{F}_{\infty}^{N+3})
		\,
		\frac{\log t}{t^{\frac{5}{2}}} 
		\sum_{\vert K \vert = 1}^{N+1} \Vert (t \partial_x)^K \check{\phi}_{[0]} \Vert_{L^2_x}
		\Big)
		\sum_{\vert I \vert + \vert J \vert =1}^N
		\Vert L^{I} (t^{-1}\partial_p)^{J} \check{f}_{[k]} \Vert_{L^2_x L^2_p}
		.
	\end{multline*}
	Note now that $\check{\phi}_{[0]} = \check{\phi}_{[k]} + (\phi_{[k]} - \phi_{[0]})$, and that Proposition \ref{prop:phicheckho} and Proposition \ref{prop:rhocheckho} imply that
	\[
		\sum_{\vert I\vert =1}^{N+1}
		\Vert (t \partial_x)^I (\phi_{[k]} - \phi_{[0]})(t,\cdot) \Vert_{L^2}
		\lesssim
		t^{\frac{1}{2}} 
		\sum_{\vert I \vert \leq N} \Vert L^I (f_{[k]} - f_{[0]}) (t,\cdot,\cdot) \Vert_{L^2_x L^2_p}
		\leq
		\frac{C_{k} \mathcal{F}_{\infty}^{N+2k+3} \log t}{t^{\frac{1}{2}}},
	\]
	where \eqref{eq:fKestimate2} has been used.  It follows that
	\begin{multline*} 
		\sum_{\vert I \vert + \vert J \vert \leq N}
		\Vert \gs_{\phi} (L^I (t^{-1}\partial_p)^J \check{f}_{[k]}) \Vert_{L^2_x L^2_p}
		\lesssim
		\sum_{\vert I \vert + \vert J \vert \leq N}
		\Vert L^I (t^{-1}\partial_p)^J F_{[k]} \Vert_{L^2_x L^2_p}
		\\
		+
		\Big(
		\frac{C_{k} \mathcal{F}_{\infty}^{N+2k+3} (\log t)^2}{t^2}
		+
		(1 + \mathcal{F}_{\infty}^{N+3})
		\,
		\frac{\log t}{t^{\frac{5}{2}}} 
		\sum_{\vert K \vert = 1}^{N+1} \Vert (t \partial_x)^K \check{\phi}_{[k]} \Vert_{L^2_x}
		\Big)
		\sum_{\vert I \vert + \vert J \vert =1}^N
		\Vert L^{I} (t^{-1}\partial_p)^{J} \check{f}_{[k]} \Vert_{L^2_x L^2_p}
		.
	\end{multline*}
	The proof of \eqref{eq:commuted2} follows from inserting the bootstrap assumption \eqref{eq:ba}.
	
\end{proof}

The proof of Proposition \ref{prop:fcheckkho} can now be given.

\begin{proof}[Proof of Proposition \ref{prop:fcheckkho}]
	If $T_0=T_0(\mathcal{F}_{\infty}^{N+2k+4})$ is suitably large so that the coefficient of the final term in Proposition \ref{prop:fcheckkcommuted} satisfies
	\[
		(\mathcal{F}_{\infty}^{N+2k+4})^2
		\frac{\log t}{t}
		\leq
		\mathcal{F}_{\infty}^{N+1},
	\]
	then
	Proposition \ref{prop:systemfcheck} and Proposition \ref{prop:fcheckkcommuted} imply that,
	\begin{align*}
		\sum_{\vert I \vert + \vert J \vert \leq N}
		\Vert \gs_{\phi} (L^I (t^{-1}\partial_p)^J \check{f}_{[k]}) \Vert_{L^2_x L^2_p}
		\leq
		\
		&
		C_{k}
		(\mathcal{F}_{\infty}^{N+2k+3})^2
		\frac{(\log t)^{1+k}}{t^{2+k}}
		+
		\frac{C \mathcal{F}_{\infty}^{N+1}}{t^{\frac{3}{2}}}
		\sum_{\vert I\vert =1}^{N+1}
		\Vert (t \partial_x)^I \check{\phi}_{[k]}(t,\cdot) \Vert_{L^2}
		\\
		&
		+
		\frac{C_{k} \mathcal{F}_{\infty}^{N+2k+3} (\log t)^2}{t^2}
		\sum_{\vert I \vert + \vert J \vert =1}^N
		\Vert L^{I} (t^{-1}\partial_p)^{J} \check{f}_{[k]} \Vert_{L^2_x L^2_p}
		.
	\end{align*}
	Now Proposition \ref{prop:phicheckho} and Proposition \ref{prop:rhocheckho} imply that
	\[
		\sum_{\vert I\vert =1}^{N+1}
		\Vert (t \partial_x)^I \check{\phi}_{[k]}(t,\cdot) \Vert_{L^2}
		\lesssim
		t^{\frac{1}{2}} 
		\sum_{\vert I \vert \leq N} \Vert L^I \check{f}_{[k]} (t,\cdot,\cdot) \Vert_{L^2_x L^2_p}
		+
		C_{k} \mathcal{F}_{\infty}^{N+2k+3}
		\frac{(\log t)^{1+k}}{t^{\frac{1}{2} + k}}.
	\]
	Thus, if $T_0 = T_0(\mathcal{F}_{\infty}^{N+2k+3},k)$ is suitably large so that
	\[
		\frac{C_{k} \mathcal{F}_{\infty}^{N+2k+3} (\log t)^2}{t}
		\leq
		\mathcal{F}_{\infty}^{N+1},
	\]
	then
	\begin{align*}
		\sum_{\vert I \vert + \vert J \vert \leq N}
		\Vert \gs_{\phi} (L^I (t^{-1}\partial_p)^J \check{f}_{[k]}) \Vert_{L^2_x L^2_p}
		\leq
		\frac{C \mathcal{F}_{\infty}^{N+1}}{t}
		\sum_{\vert I \vert + \vert J \vert \leq N}
		\Vert L^I (t^{-1}\partial_p)^J \check{f}_{[k]} \Vert_{L^2_x L^2_p}
		+
		C_{k}
		(\mathcal{F}_{\infty}^{N+2k+4})^2
		\frac{(\log t)^{1+k}}{t^{2+k}}.
	\end{align*}
	Defining
	\[
		\check{\mathcal{E}}_{[k]}(t)
		: =
		\sum_{\vert I \vert +\vert J \vert = 0}^N \Vert L^I (t^{-1} \partial_p)^J \check{f}_{[k]}(t,\cdot,\cdot) \Vert_{L^2_x L^2_p},
	\]
	and noting that the vanishing of $\check{f}_{[k]}$ at time $T_f$ (see \eqref{eq:fcheckTf}) implies that $\check{\mathcal{E}}_{[k]}(T_f) = 0$,
	Proposition \ref{prop:generalVlasovestimates} then implies that, for all $T \leq t \leq T_f$,
	\begin{equation} \label{eq:eqnforEcheckk}
		\check{\mathcal{E}}_{[k]}(t)
		\leq
		C_{k}
		(\mathcal{F}_{\infty}^{N+2k+4})^2
		\frac{(\log t)^{1+k}}{t^{1+k}}
		+
		C \mathcal{F}_{\infty}^{N+1}
		\int_t^{T_f}
		\frac{\check{\mathcal{E}}_{[k]}(s)}{s}
		ds.
	\end{equation}
	Defining
	\[
		a(t)
		=
		\frac{C \mathcal{F}_{\infty}^{N+1}}{t},
		\qquad
		b(t) = C_{k}
		(\mathcal{F}_{\infty}^{N+2k+4})^2
		\frac{(\log t)^{1+k}}{t^{1+k}},
	\]
	it follows that
	\[
		e^{\int_t^s a(s')ds'} = \left( \frac{s}{t} \right)^{C \mathcal{F}_{\infty}^{N+1}}.
	\]
	Thus, if $k> C \mathcal{F}_{\infty}^{N+1} - 1$,
	\begin{align*}
		\int_t^{T_f} a(s) b(s) e^{\int_t^s a(s')ds'}ds
		&
		\leq
		\frac{C_k \mathcal{F}_{\infty}^{N+1}
		(\mathcal{F}_{\infty}^{N+2k+4})^2
		}{t^{C\mathcal{F}_{\infty}^{3}}}
		\int_t^{T_f} \frac{(\log s)^{1+k}}{s^{2 + k - C\mathcal{F}_{\infty}^{N+1}}}ds
		\leq
		\frac{C_k \mathcal{F}_{\infty}^{N+1}
		(\mathcal{F}_{\infty}^{N+2k+4})^2 (\log t)^{1+k}
		}{t^{1+ k}},
	\end{align*}
	and the Gr\"{o}nwall inequality, Proposition \ref{prop:Gronwall}, implies that
	\[
		\check{\mathcal{E}}_{[k]}(t)
		\leq
		C_k
		(\mathcal{F}_{\infty}^{N+2k+4})^2(1+\mathcal{F}_{\infty}^{N+1})
		\frac{ (\log t)^{1+k}
		}{t^{1+ k}}
		,
	\]
	which completes the proof of the estimate \eqref{eq:fcheckkho2} (after increasing the elements of the sequence $\{\mathfrak{n}(n)\}$ appropriately).
	
\end{proof}

This section is ended with the following proposition, which will be used to recover the bootstrap assumption \eqref{eq:ba2}.

\begin{proposition}[Estimate for $\nabla \check{\phi}_{[0]}$] \label{prop:checkphi0}
	Under the assumptions of Proposition \ref{prop:fcheckkho}, for all $T\leq t \leq T_f$,
	\begin{equation} \label{eq:phizero}
		\sum_{\vert K \vert = 1}^{N+1} \Vert (t \partial_x)^K \check{\phi}_{[0]} \Vert_{L^2_x}
		\lesssim
		\frac{C_{k} \mathcal{F}_{\infty}^{N+2k+4} \log t}{t^{\frac{1}{2}}}.
	\end{equation}
\end{proposition}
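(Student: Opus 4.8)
The plan is to decompose $\check\phi_{[0]} = \check\phi_{[k]} + (\phi_{[k]} - \phi_{[0]})$ and to estimate each summand by converting the derivatives falling on the potential into derivatives on the associated spatial density, via the gradient estimate of Proposition \ref{prop:gradelliptic}. For the first summand, fix a multi-index $K$ with $1 \le |K| \le N+1$ and write $(t\partial_x)^K = (t\partial_{x^i})(t\partial_x)^{K'}$ with $|K'| = |K|-1 \le N$. Then $\| (t\partial_x)^K \check\phi_{[k]} \|_{L^2_x} \le t \, \| \nabla (t\partial_x)^{K'} \check\phi_{[k]} \|_{L^2_x} \lesssim t^2 \, \| (t\partial_x)^{K'} \check\varrho_{[k]} \|_{L^2_x}$, where the second inequality is Proposition \ref{prop:phicheckho} (equivalently Proposition \ref{prop:gradelliptic} applied to $\Delta (t\partial_x)^{K'} \check\phi_{[k]} = (t\partial_x)^{K'} \check\varrho_{[k]}$, together with the fact from Proposition \ref{prop:suppf} that $|x| \lesssim t$ on $\mathrm{supp}(\check\varrho_{[k]})$). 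Applying Proposition \ref{prop:rhocheckho} and then Proposition \ref{prop:fcheckkho} bounds $\| (t\partial_x)^{K'} \check\varrho_{[k]} \|_{L^2_x}$ by $t^{-3/2}\| L^{K'} \check f_{[k]} \|_{L^2_x L^2_p} + C_k \mathcal{F}_\infty^{N+2k+3} (\log t)^{1+k} t^{-5/2-k} \lesssim C_k (\mathcal{F}_\infty^{N+2k+4})^2 (\log t)^{1+k} t^{-5/2-k}$; multiplying by $t^2$ leaves a contribution of size $C_k (\mathcal{F}_\infty^{N+2k+4})^2 (\log t)^{1+k} t^{-1/2-k}$, which for $t \ge T_0$ is $\lesssim C_k \mathcal{F}_\infty^{N+2k+4}\, \log t\, / t^{1/2}$ after using $(\log t)^k/t^k \le C_k$ and absorbing $(\mathcal{F}_\infty^{N+2k+4})^2 \lesssim \mathcal{F}_\infty^{N+2k+4}$ into a larger choice of the suppressed sequence $\{\mathfrak{n}(n)\}$.

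For the second summand $\phi_{[k]} - \phi_{[0]}$, the support property \eqref{eq:fklsupport} from Theorem \ref{thm:approx} shows that $\mathrm{supp}(\varrho_{[k]} - \varrho_{[0]}) \subset \{|x| \lesssim t\}$, so the same manoeuvre yields $\| (t\partial_x)^K (\phi_{[k]} - \phi_{[0]}) \|_{L^2_x} \lesssim t^2 \, \| (t\partial_x)^{K'} (\varrho_{[k]} - \varrho_{[0]}) \|_{L^2_x}$ for $1 \le |K| \le N+1$. Writing $\varrho_{[k]} - \varrho_{[0]} = \int_{\mathbb{R}^3} (f_{[k]} - f_{[0]}) \, dp - (P_{[k]} - P_{[0]})$ with $P_{[\cdot]}$ as in Proposition \ref{prop:systemfcheck}, and using that $\int_{\mathbb{R}^3} \partial_{p^i}(\cdots) \, dp = 0$ in order to trade $(t\partial_x)^{K'}$ acting on the momentum integral for $L^{K'}$ (up to genuinely lower-order terms carrying an extra factor $t^{-1}\log t$), one obtains $\| (t\partial_x)^{K'} (\varrho_{[k]} - \varrho_{[0]}) \|_{L^2_x} \lesssim t^{-3/2} \sum_{|I| \le N} \| L^I (f_{[k]} - f_{[0]}) \|_{L^2_x L^2_p} + \sum_{|I| \le N} ( \| (t\partial_x)^I P_{[k]} \|_{L^2_x} + \| (t\partial_x)^I P_{[0]} \|_{L^2_x} )$. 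The first sum is controlled by estimate \eqref{eq:fKestimate2} of Theorem \ref{thm:approx} and the $P$-terms by estimate \eqref{eq:Pk} of Proposition \ref{prop:systemfcheck}, and all three are $\lesssim C_k \mathcal{F}_\infty^{N+2k+3}\, \log t\, / t^{5/2}$; multiplying by $t^2$ gives a contribution $\lesssim C_k \mathcal{F}_\infty^{N+2k+3}\, \log t\, / t^{1/2}$ (this is precisely the bound already recorded within the proof of Proposition \ref{prop:fcheckkcommuted}). Summing the two contributions and bounding $\mathcal{F}_\infty^{N+2k+3} \le \mathcal{F}_\infty^{N+2k+4}$ yields \eqref{eq:phizero}.

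There is no serious analytic difficulty here, since every ingredient has already been established; the work is purely in the bookkeeping of the powers of $t$. The key balances to watch are the factor $t^2$ produced by undoing two weighted spatial derivatives against the gradient estimate (each $(t\partial_x)$ that is removed costs a $t$: one to pass from $t\partial_x$ to $t\nabla$, one from $\|\nabla h\|_{L^2} \lesssim \|r\Delta h\|_{L^2} \lesssim t \|\Delta h\|_{L^2}$), and the compensating factor $t^{-3/2}$ from Cauchy--Schwarz against the $p$-support of $\check f_{[k]}$ (respectively $f_{[k]} - f_{[0]}$), which has measure $\lesssim t^{-3}$ by the support analysis of Proposition \ref{prop:suppf} (cf.\ Proposition \ref{prop:pexpansion}). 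The one mild subtlety worth flagging is that the rate $t^{-1/2}\log t$ in \eqref{eq:phizero} is \emph{not} driven by the true remainder $\check\phi_{[k]}$ --- whose contribution decays like $t^{-1/2-k}(\log t)^{1+k}$, i.e.\ strictly better once $k \ge 1$ --- but by the difference $\phi_{[k]} - \phi_{[0]}$ of approximate potentials, whose density difference is only $O(t^{-1}\log t)$ in the relevant norm.
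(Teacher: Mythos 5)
Your proof is correct and takes essentially the same route as the paper, invoking the same chain of results (Propositions~\ref{prop:phicheckho}, \ref{prop:rhocheckho}, \ref{prop:fcheckkho} and estimate~\eqref{eq:fKestimate2}) and correctly identifying $\phi_{[k]} - \phi_{[0]}$ as the dominant contribution. The only difference is organisational: you split at the level of the potential, $\check\phi_{[0]} = \check\phi_{[k]} + (\phi_{[k]} - \phi_{[0]})$, and run the elliptic-to-transport chain on each piece, whereas the paper applies Propositions~\ref{prop:phicheckho} and~\ref{prop:rhocheckho} once to reduce $\check\phi_{[0]}$ to $t^{1/2}\sum_{|I|\le N}\Vert L^I \check f_{[0]} \Vert_{L^2_xL^2_p}$ and only then splits $\check f_{[0]} = \check f_{[k]} + (f_{[k]} - f_{[0]})$.
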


\begin{proof}
	Proposition \ref{prop:phicheckho} and Proposition \ref{prop:rhocheckho} imply that
	\begin{multline*}
		\sum_{\vert I\vert =1}^{N+1}
		\Vert (t \partial_x)^I (\check{\phi}_{[0]})(t,\cdot) \Vert_{L^2}
		\lesssim
		t^{\frac{1}{2}} 
		\sum_{\vert I \vert \leq N} \Vert L^I \check{f}_{[0]} (t,\cdot,\cdot) \Vert_{L^2_x L^2_p}
		\\
		\leq
		t^{\frac{1}{2}} 
		\sum_{\vert I \vert \leq N}
		\big(
		\Vert L^I \check{f}_{[k]} (t,\cdot,\cdot) \Vert_{L^2_x L^2_p}
		+
		\Vert L^I (f_{[k]} - f_{[0]}) (t,\cdot,\cdot) \Vert_{L^2_x L^2_p}
		\big)
		\leq
		\frac{C_{k} \mathcal{F}_{\infty}^{N+2k+4} \log t}{t^{\frac{1}{2}}},
	\end{multline*}
	where the final inequality follows from Proposition \ref{prop:fcheckkho} and Theorem \ref{thm:approx} (see \eqref{eq:fKestimate2}).
\end{proof}

\subsection{Estimates for the finite problems: the proof Theorem \ref{thm:bootstrap}}
\label{subsec:proofofbootstrap}
The proof of Theorem \ref{thm:bootstrap} can now be given.

\begin{proof}[Proof of Theorem \ref{thm:bootstrap}]
	The estimate \eqref{eq:bathm1} follows from Proposition \ref{prop:fcheckkho}.  The estimate \eqref{eq:bathm2} follows from Proposition \ref{prop:rhocheckho} and the estimate \eqref{eq:bathm1}, and the estimate \eqref{eq:bathm3} follows from Proposition \ref{prop:phicheckho} and the estimate \eqref{eq:bathm2}.
	
	The bootstrap assumption \eqref{eq:ba} clearly holds with the right hand side replaced by $\frac{1}{2} (\mathcal{F}_{\infty}^{N+2k+4})^2$ if $T_0$ is sufficiently large.  For the improvement of \eqref{eq:ba2}, note that Proposition \ref{prop:checkphi0} and the Sobolev inequality of Remark \ref{rmk:Sobolev} imply that, for all $T \leq t \leq T_f$,
	\begin{equation} \label{eq:phicheck0}
		\sup_{x\in \mathbb{R}^3}
		\vert \nabla \check{\phi}_{[0]}(t,x) \vert
		\leq
		\frac{C}{t^{\frac{5}{2}}}
		\sum_{\vert I \vert = 1}^{N+1} \Vert (t \partial_x)^I \check{\phi}_{[0]} \Vert_{L^2_x}
		\leq
		\frac{C_{k} \mathcal{F}_{\infty}^{N+2k+4} \log t}{t^3},
	\end{equation}
	and so, if $T_0$ is sufficiently large, \eqref{eq:ba2} holds with the right hand side replaced by $\frac{\mathcal{F}_{\infty}^{N+2k+4}}{2t^{5/2}}$.
	
	Finally, the property \eqref{eq:suppoffcheck} follows from Proposition \ref{prop:suppf}.
\end{proof}

\section{The proof of the main results}
\label{section:logicofproof}

In this section the proofs of Theorem \ref{thm:backwardsproblem}, Theorem \ref{thm:backwardsproblem3}, and Theorem \ref{thm:backwardsproblem4} are given.  The following (non-optimal) local well posedness theorem will be used.

\begin{theorem}[Local existence for Vlasov--Poisson] \label{thm:localexistence}
	There is $k$ such that, for any $t_0\in \mathbb{R}$ and any smooth compactly supported function $f_0 \colon \mathbb{R}^3_x \times \mathbb{R}^3_p \to [0,\infty)$, there exists $T=T(\Vert f_0 \Vert_{H^k_xH^k_p}) > 0$ and a unique smooth solution $(f, \varrho, \phi)$ of the Vlasov--Poisson system \eqref{eq:VP1}--\eqref{eq:VP2} on $(t_0-T,t_0+T) \times \mathbb{R}^3_x \times \mathbb{R}^3_p$ such that
	\[
		f(t_0,x,p) = f_0(x,p),
	\]
	for all $(x,p) \in \mathbb{R}^3 \times \mathbb{R}^3$.
\end{theorem}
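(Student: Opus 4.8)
The plan is to establish this by a standard iteration scheme adapted to the Vlasov--Poisson system, exploiting that the data $f_0$ is smooth and compactly supported so that all norms in play are finite. First I would fix a large integer $k$ (it suffices to take $k$ large enough that the Sobolev embeddings $H^k(\mathbb{R}^3) \hookrightarrow C^2(\mathbb{R}^3)$ and $H^k(\mathbb{R}^6)\hookrightarrow C^1(\mathbb{R}^6)$ hold, and that $k$ derivatives can be estimated in the nonlinearity with room to spare; $k=6$ or so is comfortably enough, consistent with the $N\ge 6$ used elsewhere in the paper), and set up the iterates $(f^{(n)},\varrho^{(n)},\phi^{(n)})$ by declaring $f^{(0)}(t,x,p)=f_0(x,p)$, and, given $f^{(n)}$, defining $\varrho^{(n)}(t,x)=\int_{\mathbb{R}^3} f^{(n)}(t,x,p)\,dp$, letting $\phi^{(n)}$ be the Newtonian potential of $\varrho^{(n)}$ via the representation formula \eqref{eq:Poissonrep}, and then letting $f^{(n+1)}$ solve the linear transport equation $\gs_{\phi^{(n)}} f^{(n+1)}=0$ with $f^{(n+1)}(t_0,\cdot,\cdot)=f_0$ — this last step is solved explicitly by transporting $f_0$ along the (smooth, globally defined on a short time interval) characteristics of the vector field $\partial_t + p^i\partial_{x^i} + \partial_{x^i}\phi^{(n)}\partial_{p^i}$.

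Next I would prove uniform bounds on a short interval $(t_0-T,t_0+T)$ with $T$ depending only on $\Vert f_0\Vert_{H^k_xH^k_p}$. The key point is that each $f^{(n)}$ has compact support in $(x,p)$ for $t$ in the interval: since $f_0$ is supported in a fixed ball and the characteristics move with speed controlled by $\sup|p|$ and $\sup|\nabla\phi^{(n)}|$, the support stays in a ball whose radius grows at a rate controlled by the bootstrapped norm, so the $p$-support and $x$-support remain bounded and the $L^1_p$ integral defining $\varrho^{(n)}$ is controlled by the $L^\infty$ norm times the volume of the $p$-support. Combining this with the elliptic estimate for the Poisson equation — here one may simply use the full elliptic estimate \eqref{eq:fullelliptic} together with its higher-order analogues, since this is the only place in the paper where local well-posedness is invoked and there is no need to be economical — gives $\Vert \nabla\phi^{(n)}\Vert_{H^{k}} \lesssim \Vert \varrho^{(n)}\Vert_{H^{k-1}} \lesssim \Vert f^{(n)}\Vert_{H^{k}_xH^{k}_p}$ on the support. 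Then commuting $\partial_x^I\partial_p^J$ (for $|I|+|J|\le k$) through the transport equation for $f^{(n+1)}$ produces terms of the form $(\partial^{\alpha}\nabla\phi^{(n)})(\partial^{\beta} f^{(n+1)})$ with $|\alpha|+|\beta|\le k+1$; the top-order term has $\nabla\phi^{(n)}$ undifferentiated (bounded in $L^\infty$ via Sobolev) times $k$ derivatives of $f^{(n+1)}$, while all other terms distribute derivatives so that both factors are controlled in $L^\infty$ and $L^2$ respectively. A Grönwall argument in the resulting differential inequality $\frac{d}{dt}\Vert f^{(n+1)}\Vert_{H^k_xH^k_p} \lesssim P(\Vert f^{(n)}\Vert_{H^k_xH^k_p})\,\Vert f^{(n+1)}\Vert_{H^k_xH^k_p}$ then yields, for $T$ small depending only on $\Vert f_0\Vert_{H^k_xH^k_p}$, a uniform bound $\sup_{|t-t_0|<T}\Vert f^{(n)}(t)\Vert_{H^k_xH^k_p} \le 2\Vert f_0\Vert_{H^k_xH^k_p}$ for all $n$, along with a uniform bound on the supports.

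Then I would show the iterates form a Cauchy sequence in a lower-order norm, say $L^2_xL^2_p$: the difference $f^{(n+1)}-f^{(n)}$ satisfies $\gs_{\phi^{(n)}}(f^{(n+1)}-f^{(n)}) = -\partial_{x^i}(\phi^{(n)}-\phi^{(n-1)})\partial_{p^i}f^{(n)}$ with zero data at $t_0$, and the $L^2$ transport estimate (Proposition \ref{prop:generalVlasovestimates}, applied on an interval with an endpoint at $t_0$ — or rather its evident forward-in-time analogue) bounds $\Vert f^{(n+1)}-f^{(n)}\Vert_{L^2_xL^2_p}$ by $T$ times $\Vert \nabla(\phi^{(n)}-\phi^{(n-1)})\Vert_{L^\infty}\Vert\partial_p f^{(n)}\Vert_{L^2_xL^2_p}$, and the elliptic estimate together with the uniform $H^k$ bound and Sobolev gives $\Vert\nabla(\phi^{(n)}-\phi^{(n-1)})\Vert_{L^\infty}\lesssim \Vert f^{(n)}-f^{(n-1)}\Vert_{L^2_xL^2_p}$; shrinking $T$ if necessary makes the map a contraction. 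Interpolating the $L^2$ convergence against the uniform $H^k$ bound gives convergence in $H^{k'}$ for any $k'<k$, which is ample to pass to the limit in the equations and obtain a solution $(f,\varrho,\phi)$ of class $H^{k}$; a standard bootstrap (differentiating the equation and re-running the estimate at each order, using that $f_0$ is smooth) upgrades this to a smooth solution. Uniqueness follows from the same $L^2$ difference estimate applied to two solutions with the same data. The main obstacle, and the only genuinely non-routine point, is bookkeeping the loss of one derivative in the map $f\mapsto \nabla\phi$ versus the gain from integrating the transport equation in time: one must check that the top-order energy estimate closes, i.e.\ that the term with all $k$ derivatives on $f$ carries an undifferentiated (hence $L^\infty$-bounded) coefficient, which it does precisely because differentiating the transport term $\partial_{x^i}\phi\,\partial_{p^i}f$ either keeps all derivatives on $f$ (coefficient $\nabla\phi \in L^\infty$) or moves at least one onto $\phi$, in which case $\phi$ has at most $k+1$ derivatives but then $f$ has at most $k-1$ and both factors are controlled — so no derivative loss actually occurs.
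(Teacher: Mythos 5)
The paper does not actually prove Theorem~\ref{thm:localexistence}; it is stated as a standard, ``non-optimal'' local well-posedness result for Vlasov--Poisson, invoked without proof or citation in Section~\ref{section:logicofproof}. Your proposal supplies a full iteration argument, which is the standard route in the literature (e.g.\ characteristics plus Picard iteration as in Glassey's or Rein's treatments), and the broad architecture you describe --- iterate the transport along characteristics, bound the supports, propagate $H^k$ norms via commuting $\partial_x^I\partial_p^J$ and Gr\"onwall, contract in a lower norm, then bootstrap smoothness --- is correct in outline and would indeed establish the theorem.

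There is, however, one step where the argument as written does not quite close. In the contraction estimate you assert
\[
\Vert \nabla(\phi^{(n)}-\phi^{(n-1)}) \Vert_{L^\infty}
\lesssim
\Vert f^{(n)}-f^{(n-1)} \Vert_{L^2_xL^2_p},
\]
but this inequality is false as stated: writing $\nabla\phi = -\tfrac{1}{4\pi}\,\tfrac{x}{|x|^3}\ast\varrho$, the kernel $|x|^{-2}$ lies in $L^{q'}_{\mathrm{loc}}(\mathbb{R}^3)$ only for $q'<3/2$, so Young's inequality gives $\Vert\nabla\phi\Vert_{L^\infty}\lesssim\Vert\varrho\Vert_{L^q}$ only for $q>3$; control of $\varrho^{(n)}-\varrho^{(n-1)}$ in $L^2_x$ (which is what the compact $p$-support and $\Vert f^{(n)}-f^{(n-1)}\Vert_{L^2_xL^2_p}$ give you directly) is one derivative short. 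There are two standard fixes: either (i) interpolate the $L^2$ difference of the densities against the uniform $L^\infty$ bound furnished by the $H^k$ estimate (so, e.g., $\Vert\varrho^{(n)}-\varrho^{(n-1)}\Vert_{L^4}\le\Vert\varrho^{(n)}-\varrho^{(n-1)}\Vert_{L^2}^{1/2}\Vert\varrho^{(n)}-\varrho^{(n-1)}\Vert_{L^\infty}^{1/2}$), which turns the iteration inequality into $a_{n+1}\le CT\, a_n^{1/2}$ --- not a linear contraction, but still forces $a_n\to 0$ given the a priori bound; or (ii) run the contraction directly in a higher norm such as $H^{k-1}_xH^{k-1}_p$ or $L^\infty$, where the elliptic-to-pointwise step loses nothing. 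Either patch is routine, but as written the contraction step has a genuine gap.

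One further minor point of comparison with the paper's conventions: you propose invoking the full elliptic estimate \eqref{eq:fullelliptic} here, and that is perfectly acceptable for this auxiliary lemma --- the paper's deliberate avoidance of \eqref{eq:fullelliptic} (see Remark~\ref{rmk:ellipticity}) is a design choice aimed at the global scattering estimates, not at the local existence theory, which the authors are content to treat as a black box.
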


Let $T_0 \gg 1$ be sufficiently large so as to satisfy the assumptions of Theorem \ref{thm:approx} and Theorem \ref{thm:bootstrap}.   Before the proofs of Theorem \ref{thm:backwardsproblem}, Theorem \ref{thm:backwardsproblem3}, and Theorem \ref{thm:backwardsproblem4} are given, Theorem \ref{thm:bootstrap} is used to establish, for each $T_f>T_0$, the existence of a solution of Vlasov--Poisson on the interval $[T_0,T_f]$ with trivial remainder $\check{f}_{[k]}$ at $t=T_f$.

\begin{theorem}[Existence of solutions of the finite problems on {$[T_0,T_f]$}] \label{thm:finiteprob}
	Let $k>k_*$ be as in Theorem \ref{thm:bootstrap}.  For any $T_f > T_0$, there exists a solution $f\colon [T_0,T_f] \times \mathbb{R}^3\times \mathbb{R}^3 \to [0,\infty)$ of Vlasov--Poisson \eqref{eq:VP1}--\eqref{eq:VP2} such that
	\begin{equation} \label{eq:section6finalcond}
		f\vert_{t=T_f} = f_{[k]}(T_f,\cdot,\cdot),
	\end{equation}
	which satisfies the estimates \eqref{eq:bathm1}--\eqref{eq:bathm3} and the support property \eqref{eq:suppoffcheck} for all $t\in [T_0,T_f]$.
\end{theorem}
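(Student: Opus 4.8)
The plan is to run a continuity/bootstrap argument backwards in time from $t=T_f$, using the local existence theorem (Theorem \ref{thm:localexistence}) to propagate and Theorem \ref{thm:bootstrap} to close the estimates uniformly. First I would fix $k>k_*$ as in Theorem \ref{thm:bootstrap}, and take $T_0$ large enough (depending on $k$ and $\mathcal{F}_{\infty}^{N+2k+4}$) that all the smallness conditions invoked in Proposition \ref{prop:systemfcheck}, Proposition \ref{prop:fcheckkho} and the proof of Theorem \ref{thm:bootstrap} hold. The initial datum at $t=T_f$ is $f_{[k]}(T_f,\cdot,\cdot)$, which by Theorem \ref{thm:approx} (property \eqref{eq:fklsupport} and estimate \eqref{eq:flkL2estimate}) is smooth, compactly supported, and has $H^k_xH^k_p$ norm bounded in terms of $\mathcal{F}_{\infty}^{N+k}$ and $C_k$ — crucially, independently of $T_f$ once $T_0$ is fixed, since the prefactors $(\log T_f)^l/T_f^k$ only shrink the higher-order coefficients. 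Theorem \ref{thm:localexistence} then produces a unique smooth solution on some interval $(T_f-\tau, T_f]$ with $\check f_{[k]}(T_f,\cdot,\cdot)\equiv 0$; note $\tau$ depends only on that (uniform) Sobolev norm of $f_{[k]}(T_f,\cdot,\cdot)$, hence not on $T_f$.

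Next I would define $T_* \in [T_0,T_f)$ to be the infimum of times $T$ such that the solution extends to $[T,T_f]$ and satisfies the bootstrap assumptions \eqref{eq:ba} and \eqref{eq:ba2} on $[T,T_f]$, as well as the support property \eqref{eq:suppoffcheck} (all of which hold near $t=T_f$ by continuity since $\check f_{[k]}(T_f,\cdot,\cdot)=0$ and $\nabla\phi(T_f,\cdot)=t^{-2}\nabla\phi_{\infty}(\cdot/T_f)$ up to the approximate-solution error, which is $\mathcal O((\log T_f)/T_f^{1/2}\cdot T_f^{-2})$ and hence strictly smaller than the bootstrap radius). On $[T_*,T_f]$ Theorem \ref{thm:bootstrap} applies and improves \eqref{eq:ba}--\eqref{eq:ba2} to their halved versions \eqref{eq:bathm1}--\eqref{eq:bathm3} together with the sharp support statement \eqref{eq:suppoffcheck}. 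The improved bounds control $\Vert f(t,\cdot,\cdot)\Vert_{H^N_xH^N_p}$ uniformly on $[T_*,T_f]$ (since $f = \check f_{[k]} + f_{[k]}$, and $\check f_{[k]}$ is small in the $L$- and $t^{-1}\partial_p$-weighted $L^2$ norms, which dominate the standard Sobolev norm on the compact support region by \eqref{eq:suppoffcheck} and the fact that $L_k = t\partial_{x^k}+\partial_{p^k}+\tfrac{\log t}{t}\partial_k\partial_i\phi_{\infty}(p)\partial_{p^i}$ is invertible for the relevant vector-field structure at each fixed $t\ge T_0$). Therefore if $T_*>T_0$, Theorem \ref{thm:localexistence} extends the solution past $T_*$, and the strictly-improved bootstrap inequalities persist by continuity onto a slightly larger interval, contradicting minimality of $T_*$. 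Hence $T_*=T_0$, the solution exists on all of $[T_0,T_f]$, and \eqref{eq:bathm1}--\eqref{eq:bathm3} and \eqref{eq:suppoffcheck} hold there.

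The main obstacle, and the point requiring care, is the standard subtlety of backward-in-time continuity arguments: one must ensure the local-existence time $\tau$ does not shrink to zero as one approaches $T_*$, i.e.\@ that the $H^k_xH^k_p$ norm of the solution stays bounded, uniformly in $T_f$, on the whole interval. This is where the improved estimates \eqref{eq:bathm1}--\eqref{eq:bathm3}, rather than merely the bootstrap assumptions, are essential: they give a bound on the solution that is strictly smaller (by powers of $(\log t)/t$) than the a priori hypothesis, so the open/closed dichotomy closes, and simultaneously they give $N$ derivatives in the weighted $L^2$ sense, which — because $N\ge 6$ and $N\ge k$ is \emph{not} assumed, so one should take $N$ large enough, or rather note that Theorem \ref{thm:localexistence} needs only \emph{some} fixed number $k$ of derivatives and $N\ge 6 \ge k$ after possibly enlarging $N$ — suffice to feed back into Theorem \ref{thm:localexistence}. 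A secondary point is that one must check the solution remains nonnegative; this is immediate since the Vlasov equation transports $f$ along characteristics and the datum $f_{[k]}(T_f,\cdot,\cdot)$ need not itself be nonnegative, but one does not actually claim nonnegativity of the finite-problem solutions — only the limiting solution in Section \ref{section:logicofproof} is shown to attain the nonnegative profile $f_{\infty}$; so this can be left as is, with $f\colon[T_0,T_f]\times\mathbb R^3\times\mathbb R^3\to\mathbb R$ and nonnegativity deferred. Finally, uniqueness on $[T_0,T_f]$ with the given final data follows from the uniqueness clause of Theorem \ref{thm:localexistence} applied on overlapping short intervals.
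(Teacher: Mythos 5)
Your proposal is correct and follows essentially the same route as the paper: define the set (or infimum time) where the solution exists and the bootstrap assumptions \eqref{eq:ba}--\eqref{eq:ba2} hold, show it is non-empty and closed using Theorem \ref{thm:localexistence} together with the smallness of $\check f_{[k]}(T_f,\cdot,\cdot)$ and $\nabla\check\phi_{[0]}(T_f,\cdot)$, and show it is open by invoking Theorem \ref{thm:bootstrap} to strictly improve the bootstrap constants. Your aside on nonnegativity correctly flags a small imprecision in the statement (the final data $f_{[k]}(T_f,\cdot,\cdot)$ need not be nonnegative, so neither is the finite-problem solution), but this does not affect the argument.
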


\begin{proof}
	Let $\mathcal{B} \subset [T_0,T_f]$ be the set of times $T\in [T_0,T_f]$ such that a solution $f\colon [T,T_f] \times \mathbb{R}^3\times \mathbb{R}^3 \to [0,\infty)$ of Vlasov--Poisson \eqref{eq:VP1}--\eqref{eq:VP2} exists on $[T,T_f]$ satisfying the final condition \eqref{eq:section6finalcond} and the estimates \eqref{eq:ba}--\eqref{eq:ba2}.
	
	The set $\mathcal{B}$ is a non-empty, closed subset of $[T_0,T_f]$, by Theorem \ref{thm:localexistence} and the fact that $\check{f}_{[k]}(T_f,\cdot,\cdot) \equiv 0$ and
	\[
		\sup_{x\in \mathbb{R}^3}
		\vert \nabla \check{\phi}_{[0]}(T_f,x) \vert
		\leq
		\frac{C_{k} \mathcal{F}_{\infty}^{N+2k+4} \log T_f}{(T_f)^3}
		\leq
		\frac{\mathcal{F}_{\infty}^{N+2k+4}}{2(T_f)^{\frac{5}{2}}},
	\]
	for $T_0$ as in Theorem \ref{thm:bootstrap} (see, for example, the estimate \eqref{eq:phicheck0}).  If $T\in \mathcal{B}$ satisfies $T>T_0$, Theorem \ref{thm:localexistence} implies that there is $\delta>0$ such that the solution extends to the interval $[T-\delta,T_f]$.  Theorem \ref{thm:bootstrap} implies that the inequalities \eqref{eq:ba}--\eqref{eq:ba2} in fact hold with better constants and thus, by continuity, continue to hold for all $t\in [T - \delta,T_f]$ if $\delta$ is sufficiently small.  The set $\mathcal{B}$ is therefore open, and hence equal to $[T_0,T_f]$, and the proof follows from Theorem \ref{thm:bootstrap}.
\end{proof}

The proofs of Theorem \ref{thm:backwardsproblem} and Theorem \ref{thm:backwardsproblem3} can now be given.  It is convenient to give both together.

\begin{proof}[Proof of Theorem \ref{thm:backwardsproblem} and Theorem \ref{thm:backwardsproblem3}]
	The proof of Theorem \ref{thm:backwardsproblem} and Theorem \ref{thm:backwardsproblem3} is divided into several steps.  First, the main part of the proof --- namely the existence of the solution $(f,\varrho,\phi)$ --- is given.  Then properties of the solution, including the expansions of Theorem \ref{thm:backwardsproblem3} and the support property \eqref{eq:suppfstatement}, are established.  Finally, the part of Theorem \ref{thm:backwardsproblem} concerning the fact that the solution attains the scattering data $f_{\infty}$ --- in the sense that the estimate \eqref{eq:dataattained} holds --- is shown.
	
	\noindent \textbf{The existence of the solution:}
	Consider some $T_f'>T_f >T_0$.  By Theorem \ref{thm:finiteprob}, there exist corresponding solutions
	\[
		f^{(T_f')} \colon [T_0,T_f'] \times \mathbb{R}^3 \times \mathbb{R}^3 \to [0,\infty),
		\qquad
		f^{(T_f)} \colon [T_0,T_f] \times \mathbb{R}^3 \times \mathbb{R}^3 \to [0,\infty),
	\]
	of the Vlasov--Poisson system, which each satisfy the estimates \eqref{eq:bathm1}--\eqref{eq:bathm3} and the support property \eqref{eq:suppoffcheck}.  The differences
	\[
		\widetilde{f} := f^{(T_f')} - f^{(T_f)},
		\qquad
		\widetilde{\phi} := \phi^{(T_f')} - \phi^{(T_f)},
		\qquad
		\widetilde{\varrho} := \varrho^{(T_f')} - \varrho^{(T_f)},
	\]
	satisfy the system
	\begin{equation} \label{eq:differences}
		\gs_{\phi'} \widetilde{f} = \partial_{x^i}\widetilde{\phi} \, \partial_{p^i} f^{(T_f)},
	\qquad
		\Delta_{\mathbb{R}^3} \widetilde{\phi} = \widetilde{\varrho},
	\end{equation}
	on $[T_0,T_f]$, where $\phi' = \phi^{(T_f')}$, along with the final condition
	\begin{equation} \label{eq:ftildefinal}
		\widetilde{f}(T_f,x,p) = \check{f}_{[k]}^{(T_f')}(T_f,x,p),
	\end{equation}
	for all $x,p\in \mathbb{R}^3$.
	
	The proof proceeds by estimating this difference on $[T_0,T_f]$, in a manner similar to the proof of Theorem \ref{thm:bootstrap}.  Indeed, using the latter of \eqref{eq:differences} and the fact that $\widetilde{f}$ satisfies the support property \eqref{eq:suppoffcheck}, minor adaptations of Proposition \ref{prop:phicheckho} and Proposition \ref{prop:rhocheckho} give, for any multi-index $I$, and any $T_0 \leq t \leq T_f$,
	\begin{equation} \label{eq:phitilderhotilde}
		\Vert \nabla (t\partial_x)^I \widetilde{\phi} (t,\cdot) \Vert_{L^2}
		\lesssim
		t \Vert (t\partial_x)^I \widetilde{\varrho} (t,\cdot) \Vert_{L^2}
		\lesssim
		t^{-\frac{1}{2}} \Vert L^I \widetilde{f} (t,\cdot,\cdot) \Vert_{L^2_x L^2_p}.
	\end{equation}
	Thus, minor adaptations of the proofs of Proposition \ref{prop:systemfcheck} and Proposition \ref{prop:fcheckkcommuted}, using the fact that $f^{(T_f')}$ and $f^{(T_f)}$ satisfy the estimates \eqref{eq:bathm1}--\eqref{eq:bathm3}, give
	\begin{align*}
		\sum_{\vert I \vert + \vert J \vert \leq N} \!\!
		\Vert \gs_{\phi} (L^I (t^{-1}\partial_p)^J \widetilde{f}) \Vert_{L^2_x L^2_p}
		\lesssim
		\
		&
		\frac{\mathcal{F}_{\infty}^{N+1}}{t^{\frac{3}{2}}}
		\sum_{\vert I\vert =1}^{N+1}
		\Vert (t \partial_x)^I \widetilde{\phi}(t,\cdot) \Vert_{L^2}
		+
		\frac{\mathcal{F}_{\infty}^{N+3} (\log t)^2}{t^2}  \!\!
		\sum_{\vert I \vert + \vert J \vert \leq N}  \!\!
		\Vert L^{I} (t^{-1}\partial_p)^{J} \widetilde{f} \Vert_{L^2_x L^2_p}
		\\
		\lesssim
		\
		&
		\frac{\mathcal{F}_{\infty}^{N+1}}{t}
		\sum_{\vert I \vert + \vert J \vert \leq N}
		\Vert L^{I} (t^{-1}\partial_p)^{J} \widetilde{f} \Vert_{L^2_x L^2_p}
		.
	\end{align*}
	Thus, using the estimate \eqref{eq:bathm1} for the final condition \eqref{eq:ftildefinal}, Proposition \ref{prop:generalVlasovestimates} gives that
	\[
		\widetilde{\mathcal{E}}(t) : = \sum_{\vert I \vert + \vert J \vert \leq N}
		\Vert L^{I} (t^{-1}\partial_p)^{J} \widetilde{f} (t,\cdot,\cdot) \Vert_{L^2_x L^2_p}
	\]
	satisfies, for all $T_0 \leq t \leq T_f$,
	\[
		\widetilde{\mathcal{E}}(t)
		\leq
		C_k
		(\mathcal{F}_{\infty}^{N+2k+4})^2
		\frac{(\log T_f)^{1+k}}{(T_f)^{1 + k}}
		+
		\int_t^{T_f}
		\frac{C\mathcal{F}_{\infty}^{N+1}}{s}
		\widetilde{\mathcal{E}}(s)
		ds.
	\]
	The Gr\"{o}nwall inequality, Proposition \ref{prop:Gronwall}, with 
	\[
		a(t)
		=
		\frac{C\mathcal{F}_{\infty}^{N+1}}{s}
		,
		\qquad
		b(t) \equiv C_k
		(\mathcal{F}_{\infty}^{N+2k+4})^2
		\frac{(\log T_f)^{1+k}}{(T_f)^{1 + k}}
	\]
	then gives
	\begin{align*}
		\widetilde{\mathcal{E}}(t)
		\leq
		C_k
		(\mathcal{F}_{\infty}^{N+2k+4})^2
		\frac{(\log T_f)^{1+k}}{(T_f)^{1 + k}}
		\bigg(
		1
		+
		\frac{C\mathcal{F}_{\infty}^{N+1}}{t^{C\mathcal{F}_{\infty}^{N+1}}}
		\int_t^{T_f}
		s^{C\mathcal{F}_{\infty}^{N+1} - 1}
		ds
		\bigg).
	\end{align*}
	Thus, provided $k \geq C\mathcal{F}_{\infty}^{N+1} + 1$,
	\begin{align*}
		\sup_{T_0 \leq t \leq T_f}
		\sum_{\vert I \vert + \vert J \vert \leq N}
		\Vert L^{I} (t^{-1}\partial_p)^{J} (f^{(T_f')} - f^{(T_f)}) (t,\cdot,\cdot) \Vert_{L^2_x L^2_p}
		\leq
		\frac{C_k
		(\mathcal{F}_{\infty}^{N+2k+4})^2
		}{T_f}.
	\end{align*}
	Returning to \eqref{eq:phitilderhotilde}, one has similar estimates for $\phi^{(T_f')} - \phi^{(T_f)}$ and $\varrho^{(T_f')} - \varrho^{(T_f)}$.  It follows that, for any increasing sequence $T_f^n \to \infty$, the sequence $\{ (f^{(T_f^n)}, \phi^{(T_f^n)}, \varrho^{(T_f^n)})\}$ is Cauchy on their common domains of the elements, in the above norms, and thus converges to a unique limit $(f,\phi, \varrho)$ (independent of the sequence $\{T_f^n\}$ chosen) which moreover satisfies the estimates \eqref{eq:bathm1}--\eqref{eq:bathm3} and the support property \eqref{eq:suppoffcheck} on $[T_0,\infty)$ and solves the Vlasov--Poisson system \eqref{eq:VP1}--\eqref{eq:VP2} on $[T_0,\infty)$.
	
	\noindent \textbf{Properties of the solution:}
	The fact that $f$ satisfies the support property \eqref{eq:suppfstatement} follows from \eqref{eq:fklsupport}, along with the fact that each $f^{(T_f)}$ satisfies \eqref{eq:suppoffcheck}.
	
	Consider now the estimates \eqref{eq:mainest1}--\eqref{eq:mainest3} and recall $k_*$ from Theorem \ref{thm:bootstrap}.  For $K \geq k_*$ the estimates \eqref{eq:mainest1}--\eqref{eq:mainest3} follow immediately from the fact that $(f,\varrho,\phi)$ satisfy \eqref{eq:bathm1}--\eqref{eq:bathm3}.  For $0 \leq K < k_*$, note that
	\[
		\check{f}_{[K]}(t,x,p)
		=
		\check{f}_{[k_*]}(t,x,p)
		-
		\sum_{k=K+1}^{k_*} \sum_{l=0}^{k} \frac{(\log t)^l}{t^k} f_{k,l}\big(x-tp + \log t \, \nabla \phi_{\infty} ( p), p \big),
	\]
	and thus \eqref{eq:flkL2estimate} and \eqref{eq:bathm1} imply that
	\[
		\sum_{\vert I \vert + \vert J \vert \leq N}
		\Vert L^I (t^{-1}\partial_p)^J \check{f}_{[K]} (t,\cdot,\cdot) \Vert_{L^2_x L^2_p}
		\leq
		C_{k_*}
		\Big(
		(\mathcal{F}_{\infty}^{N+2k_*+4})^2
		\frac{ (\log t)^{1+k_*}
		}{t^{1+ k_*}}
		+
		\sum_{k=K+1}^{k_*} \frac{(\log t)^k}{t^k} (\mathcal{F}_{\infty}^{N+k})^2
		\Big),
	\] 
	and \eqref{eq:mainest1} follows if $T_0$ is suitably large.  Similarly for \eqref{eq:mainest2}--\eqref{eq:mainest3}.
	
	The $L^{\infty}$ estimates \eqref{eq:mainest4} follow from the $L^2$ estimates \eqref{eq:mainest1} and the Sobolev inequality of Proposition \ref{prop:SobolevL2R6}, and the $L^{\infty}$ estimates \eqref{eq:mainest5}--\eqref{eq:mainest6} follow from the $L^2$ estimates \eqref{eq:mainest2}--\eqref{eq:mainest3} and the Sobolev inequality of Remark \ref{rmk:Sobolev}.
	
	\noindent \textbf{The scattering data is attained:}
	In order to see that the data $f_{\infty}$ is attained --- in the sense that the estimate \eqref{eq:dataattained} holds --- first note that, for
	\[
		y(t,x,p) = x - tp + \log t \nabla \phi_{\infty}(p),
		\qquad
		\tilde{y}(t,x,p) = x + tp - \log t \nabla \phi_{\infty}(p),
	\]
	one has
	\[
		\int\int \vert f(t,\tilde{y}(t,x,p),p) - f_{\infty}(x,p) \vert^2 dp dx
		=
		\int\int \vert f(t,x,p) - f_{\infty}(y(t,x,p),p) \vert^2 dp dx,
	\]
	and so the zeroth order part of the estimate \eqref{eq:dataattained} follows from setting $K=0$ in \eqref{eq:mainest1}.  For the terms involving first order derivatives, recall first Remark \ref{rmk:functionsofy} (considering, in particular, \eqref{eq:functionsofy1} and \eqref{eq:functionsofy2} with $h =f_{\infty}$) and note also that 
	\begin{align*}
		\partial_{x^i} \Big( f(t,\tilde{y}(t,x,p),p) \Big)
		=
		\
		&
		(\partial_{x^i} f) (t,\tilde{y}(t,x,p),p),
	\\
		\partial_{p^i} \Big( f(t,\tilde{y}(t,x,p),p) \Big)
		=
		\
		&
		\Big( \delta_{ij} - \frac{\log t}{t} \partial^2_{i,j} \phi_{\infty}(p) \Big)(L_i f) (t,\tilde{y}(t,x,p),p)
		\\
		&
		+
		\frac{(\log t)^2}{t} \partial_{i,j}^2 \phi_{\infty}(p) \partial_{j,k}^2 \phi_{\infty}(p) (t^{-1}\partial_{p^k} f) (t,\tilde{y}(t,x,p),p),
	\end{align*}
	and
	\begin{align*}
		(\partial_{x^i}f_{\infty})(y(t,x,p),p)
		=
		\
		&
		-
		(t^{-1} \partial_{p^i}) \big( f_{\infty}(y(t,x,p),p) \big)
		\\
		&
		+
		\frac{\log t}{t} \partial_k \partial_i \phi_{\infty}(p) (\partial_{x^k}f_{\infty})(y(t,x,p),p)
		+
		t^{-1} (\partial_{p^i}f_{\infty})(y(t,x,p),p)
		\\
		(\partial_{p^i}f_{\infty})(y(t,x,p),p)
		=
		\
		&
		L_i \big( f_{\infty}(y(t,x,p),p) \big)
		\\
		&
		-
		\frac{(\log t)^2}{t} \partial_k \partial_i \phi_{\infty}(p) (\partial_{x^k}f_{\infty})(y(t,x,p),p)
		-
		\frac{\log t}{t} \partial_k \partial_i \phi_{\infty}(p) (\partial_{p^k}f_{\infty})(y(t,x,p),p)
		.
	\end{align*}
	It follows that
	\begin{align*}
		&
		(\partial_{x^i}f)(t,x,p) - (\partial_{x^i}f_{\infty})(y(t,x,p),p)
		=
		-
		(t^{-1} \partial_{p^i}) \big( f(t,x,p) - f_{\infty}(y(t,x,p),p) \big)
		\\
		&
		+
		\frac{1}{t} \big( \partial_{x^i} + t \partial_{p^i} \big) f(t,x,p)
		-
		\frac{\log t}{t} \partial_k \partial_i \phi_{\infty}(p) (\partial_{x^k}f_{\infty})(y(t,x,p),p)
		-
		t^{-1} (\partial_{p^i}f_{\infty})(y(t,x,p),p).
	\end{align*}
	Hence
	\begin{align*}
		&
		\int\int \big\vert \partial_{x^i} \big( f(t,\tilde{y}(t,x,p),p) - f_{\infty}(x,p) \big) \big\vert^2 dp dx
		=
		\int\int \big\vert (\partial_{x^i} f)(t,x,p) - (\partial_{x^i} f_{\infty}) (y(t,x,p),p) \big\vert^2  dp dx
		\\
		&
		\qquad \qquad
		\lesssim
		\int\int \big\vert t^{-1} \partial_{p^i} \big( f(t,x,p) -  f_{\infty} (y(t,x,p),p) \big) \big\vert^2 dp dx
		+
		\frac{\mathcal{F}_{\infty}^3 \log t}{t}
		\lesssim
		\frac{\mathcal{F}_{\infty}^{N+4}\log t}{t}
		,
	\end{align*}
	and similarly,
	\begin{align*}
		&
		\int\int \big\vert \partial_{p^i} \big( f(t,\tilde{y}(t,x,p),p) - f_{\infty}(x,p) \big) \big\vert^2 dp dx
		\\
		&
		\qquad \qquad
		\lesssim
		\int\int \big\vert L_i \big( f(t,x,p) -  f_{\infty} (y(t,x,p),p) \big) \big\vert^2 dp dx
		+
		\frac{\mathcal{F}_{\infty}^3 (\log t)^2}{t}
		\lesssim
		\frac{\mathcal{F}_{\infty}^{N+4} (\log t)^2}{t}
		,
	\end{align*}
	by \eqref{eq:mainest1} with $K=0$.  The part of \eqref{eq:dataattained} concerning higher order derivatives follows similarly.
	The $L^{\infty}$ estimate \eqref{eq:dataattained2} follows from \eqref{eq:dataattained} and the Sobolev inequality of Proposition \ref{prop:SobolevL2R6}.
	\end{proof}
	
	Finally, the proof of Theorem \ref{thm:backwardsproblem4} is given.
	
	\begin{proof}[Proof of Theorem \ref{thm:backwardsproblem4}]
	Consider the solution $(f,\varrho,\phi)$ of Theorem \ref{thm:backwardsproblem}, and
	suppose that $(f',\varrho',\phi')$ is another solution which satisfies \eqref{eq:uniquenesscondition} for some $K=K(\mathcal{F}_{\infty}^{N+1})$ and for all $t$ sufficiently large, and $f'$ satisfies the support property \eqref{eq:suppfstatement}.  It follows that the difference of the two solutions satisfies the system
	\[
		\gs_{\phi'} (f-f') = \partial_{x^i}(\phi - \phi ') \, \partial_{p^i} f,
	\qquad
		\Delta_{\mathbb{R}^3} (\phi - \phi ') = \varrho - \varrho'.
	\]
	Since
	\[
		\sup_{x\in\mathbb{R}^3} \Vert \partial_{p^i} f (t,x,\cdot) \Vert_{L^2_p}
		\leq
		\frac{C\mathcal{F}_{\infty}^{N}}{t^{\frac{1}{2}}},
		\qquad
		\Vert \nabla (\phi - \phi ') (t,\cdot) \Vert_{L^2}
		\lesssim
		t \Vert (\varrho - \varrho ') (t,\cdot) \Vert_{L^2}
		\lesssim
		t^{-\frac{1}{2}} \Vert (f-f') (t,\cdot,\cdot) \Vert_{L^2_x L^2_p}.
	\]
	where the latter follows from the support property \eqref{eq:suppfstatement} (see Proposition \ref{prop:phicheckho} and Proposition \ref{prop:rhocheckho}), it follows that
	\[
		\Vert \gs_{\phi'} (f-f')(t,\cdot,\cdot)\Vert_{L^2_x L^2_p}
		\leq
		\frac{C\mathcal{F}_{\infty}^{N}}{t} \Vert (f-f')(t,\cdot,\cdot)\Vert_{L^2_x L^2_p}.
	\]
	Proposition \ref{prop:generalVlasovestimates} and the Gr\"{o}nwall inequality, Proposition \ref{prop:Gronwall}, with $a(t) = t^{-1}C\mathcal{F}_{\infty}^{N}$, $b(t) \equiv \Vert (f-f')(T,\cdot,\cdot)\Vert_{L^2_x L^2_p}$ then imply that, for any $T$ large and $t\leq T$,
	\begin{align*}
		\Vert (f-f')(t,\cdot,\cdot)\Vert_{L^2_x L^2_p}
		&
		\leq
		\Vert (f-f')(T,\cdot,\cdot)\Vert_{L^2_x L^2_p}
		\Big(
		1
		+
		\frac{C\mathcal{F}_{\infty}^{N}}{t^{C\mathcal{F}_{\infty}^{N}}} \int_t^{T} s^{C \mathcal{F}_{\infty}^{N} - 1} ds
		\Big)
		\\
		&
		\leq
		C_K
		\Big(
		1
		+
		(\mathcal{F}_{\infty}^{N+2K+4})^2
		\Big)
		\frac{ (\log T)^{1+K}
		}{T^{1+ K}} 
		T^{C\mathcal{F}_{\infty}^{N}}.		
	\end{align*}
	where the latter follows from \eqref{eq:mainest1} and the assumption \eqref{eq:uniquenesscondition}.  If $K \geq C \mathcal{F}_{\infty}^{N}$, letting $T\to \infty$, it follows that the left hand side vanishes for all $t$, and thus $(f',\varrho ',\phi ') \equiv (f,\varrho,\phi)$.
\end{proof}

\bibliography{scatteringVP}{}
\bibliographystyle{plain}

\end{document}